\newtheorem{theorem}{Theorem}[subsection]
\newtheorem{lemma}[theorem]{Lemma}
\newtheorem{cor}[theorem]{Corollary}
\newtheorem{proposition}[theorem]{Proposition}
\theoremstyle{remark}
\newtheorem{remark}[theorem]{Remark}
\numberwithin{equation}{subsection}
\newcommand{\K}{\ensuremath{\mathbb{K}}}
{\catcode`\@=11 \gdef\mnote#1{\marginpar{\footnotesize
 \tolerance\@M\spaceskip2.6\p@ plus10\p@ minus.9\p@\rm#1}}}
\def\Dg:{\endgraf{\bf Dg:\enspace}\ignorespaces}
\let\Bbb\mathbb
\let\Cal\mathcal
\def\sm{\smallsetminus}
\def\Map{\operatorname{Mod}}
\def\MW{\operatorname{MW}}
\newcommand{\be}{\begin{equation}}
\newcommand{\ee}{\end{equation}}
\newcommand*{\encircled}[1]{\relax\ifmmode\mathpalette\@encircled@math{#1}\else\@encircled{#1}\fi}
\newcommand*{\@encircled@math}[2]{\@encircled{$\m@th#1#2$}}
\newcommand*{\@encircled}[1]{%
  \tikz[baseline,anchor=base]{\node[draw,circle,outer sep=0pt,inner sep=.2ex] {#1};}}
\newcommand{\oset}[2]{%
  {\mathop{#2}\limits^{\vbox to -2\ex@{\kern-\tw@\ex@
   \hbox{\scriptsize #1}\vss}}}}
\newcommand*{\bottan}{$\overline{\raisebox{1pt}[0.95\height]{$ \bigcirc$}}$}
\newcommand*{\bott}{$\overline{\raisebox{0pt}[1.05\height]{\large $\bigcirc$}}$}
\def\upp{\setul{4pt}{}\ul{\large $\bigcirc$}}
\def\uptan{\setul{2pt}{}\ul{\large $\bigcirc$}}
\def\VV{\Large $\bigcup$\hskip-3.3mm O}
\def\bottann{\raisebox{2mm}{\scalebox{-1}{\uptan}}}
\def\upbottan{\uptan\hskip-4mm \bottann}
\def\A{\mathbf{A}}
\def\cod#1#2#3{#1\,#2\,#3}
\def\L{\Lambda}
\def\LL{\mathcal L}
\def\Z{\Bbb Z}
\def\R{\Bbb R}
\def\C{\Bbb C}
\def\Q{\Bbb Q}
\def\P{\Bbb P}
\def\T{\Bbb T}
\def\D{\Delta}
\def\DD{\operatorname{discr}}
\def\W{W^\R}
\def\Kl{\Bbb K}
\def\SSS{\Bbb S}
\def\S{\Sigma}
\def\FFF{\mathcal F^\infty}
\def\n{\mathfrak n}
\def\Nlin{\Cal N}
\def\Hl{\Cal H_{\ell}}
\def\Tri{\ell}
\def\del{\partial}
\def\ind{\operatorname{ind}}
\def\Maps{\operatorname \Map^s}
\def\Im{\operatorname{Im}}
\def\Ker{\operatorname{Ker}}
\def\Rp#1{\Bbb{RP}^{#1}}
\def\Aut{\operatorname{Aut}}
\def\rk{\operatorname{rk}}
\def\id{\operatorname{id}}
\def\Arf{\operatorname{Arf}}
\def\conj{\operatorname{conj}}
\def\bhv{\operatorname{\varUpsilon}}
\def\q{\operatorname{\widehat q}}
\newcommand*\circled[1]{\tikz[baseline=(char.base)]{
            \node[shape=circle,draw,inner sep=1pt] (char) {#1};}}
\def\Sec{\operatorname{Sec}}
\def\q{\frak q}
\def\l{\lambda}
\def\d{\delta}
\let\ll\lambda
\def\dsum{\bot\!\!\!\bot}
\let\a=\alpha
\let\b=\beta
\def\bb{\frak b}
\let\k=\varkappa
\let\g=\gamma
\let\e=\varepsilon
\let\v=\upsilon
\let\kk=\kappa
\let\lla=\langle
\let\rra=\rangle
\let\ge\geqslant 
\let\le\leqslant 
\let\la\langle
\let\ra\rangle
\let\til\widetilde
\def\gg#1#2{\la{#2}-{#1}\ra}
\def\o{\frak o}
\newcommand{\addresseshere}{%
  \enddoc@text\let\enddoc@text\relax}
\title[]{The real Mordell-Weil group of rational elliptic surfaces and 
real lines on del Pezzo surfaces of degree $K^2=1$}
\author[]
{S.~Finashin, V.~Kharlamov}
\address{Middle East Technical University,
Department of Mathematics\endgraf Ankara 06531 Turkey}
\address{Universit\'{e} de Strasbourg et IRMA (CNRS)\endgraf 7 rue Ren\'{e}-Descartes, 67084 Strasbourg Cedex, France}
\keywords{Mordell-Weil groups, Real lines, Elliptic surfaces, del Pezzo surfaces}
\subjclass[2020] {Primary: 14P25. Secondary:  14J27, 14J26.}
\begin{document}
\begin{abstract} 
We undertake a study of topological properties of the real Mordell-Weil group $\MW_\R$ of real  
rational elliptic surfaces $X$ which we accompany by a related study of real lines 
on $X$ and on the "subordinate" del Pezzo surfaces $Y$ of degree 1. We give an explicit description of isotopy types of real lines on $Y_\R$
and an explicit presentation of $\MW_\R$ in the mapping class group $\Map(X_\R)$.
Combining these results we establish an explicit formula for the action of $\MW_\R$ in $H_1(X_\R)$.
\end{abstract}

\maketitle

\setlength\epigraphwidth{.80\textwidth}
\epigraph{The most fascinating thing about algebra and geometry is the way\\ 
they struggle to help each other to emerge from the chaos of non–being,\\
from those dark depths of subconscious where all roots of intellectual creativity reside.
}{Yu.~I.~ Manin "Von Zahlen und Figuren"\\}

\section{Introduction}\label{intro}
By a {\it line} on a del Pezzo or 
elliptic surface 
we mean a rational embedded $(-1)$-curve (in other words,
a rational embedded curve of anti-canonical degree $1$). 
As is known, in the case of relatively minimal non-singular rational elliptic surfaces without multiple fibers the set of lines coincides with the set of sections.
We send the reader to consult Section \ref{conventions} for an account of 
other
specific terminological conventions we use.

\subsection{Prologue}
Our initial motivation came from a search for how the wall-crossing invariant 
count of real rational curves on real del Pezzo surfaces introduced
in \cite{Surgery} can be extended to other real rational surfaces. This brought us to investigate one of the first cases, the case of lines on a real rational elliptic surface, and to study directly related questions arising in this setting: (1)
how the real lines are arranged on  the real loci $X_\R$ of  real, without multiple fibers, relatively minimal,
non-singular rational elliptic surfaces $X$;
(2)  how the real Mordell-Weil group of $X$ acts on its real lines and what is its presentation in the mapping class group, $\Map(X_\R)$, of its real locus.

To respond to these questions, we perform, first, a study of real lines of {\it subordinate} real del Pezzo surfaces of degree 1 (that is the surfaces $Y$ obtained
by contracting a line on $X$), introduce a division of real lines in 5 types, enumerate the lines of each type for every  real deformation class of
del Pezzo surfaces of degree 1
and describe their position on the real locus of the surface up to isotopy. It is by combining these results with a study of a topological analog of the real Mordell-Weil group that we respond to the 
questions posed above.

\subsection{On the del Pezzo side}\label{onside}
 A standard model for a real del Pezzo surface $Y$ of degree 1 is given by a double covering of a real quadratic cone $Q\subset \P^3$ branched 
 at the vertex of $Q$ and along a transversal intersection  $C$ of $Q$ with a real cubic surface. This reduces the study of real lines
on $Y$  to a study of the {\it positive tritangents}, that is, the real hyperplane sections $l$ of $Q$ tritangent to $C$ whose real part $l_\R$ is contained in 
the half $Q_\R^+$ of $Q_\R\sm C_\R$ which is the image of $Y_\R$.

The real deformation classes of sextics
$C\subset Q$ that arise as branching loci for $Y\to Q$
are listed in  Tab. \ref{tab1}
(see, {\it e.g.}, \cite[A3.6.1]{DIK}).
There, the code $\la |||\ra$ refers to $C_\R$ having three ``parallel'' connected
components {\it embracing the vertex $\v$} of $Q$.
The code $\la p | q\ra$ with $p\ge 0,  q\ge0$
means that $C_\R$ contains one component which embraces the vertex and $p+q$ components which bound disjoint discs and placed
 in $Q_\R$ so that: $q$ of them bound disc components of $Q_\R^+$ and are called {\it negative ovals}, while the other $p$ bound disc components of the opposite half of $Q_\R$ and are called {\it positive ovals}.
 The components embracing the vertex
are called {\it $J$-components.}

Our division of real lines on $Y$ in 5 types is invariant under {\it Bertini involution} (that is the deck transformation of the covering $Y\to Q$) and can be translated into a division of positive tritangents to $C$ in 5 types as follows.
For a given tritangent, 
we let $\tau$ be the number of ovals with an odd number of tangency points counted with multiplicities,
and if $1\le\tau\le3$ assign type $T_\tau$ to this tritangent.
If $\tau=0$, we distinguish two types, $T_0$ and $T_0^{*}$.
A tritangent with $\tau=0$ is of type $T_0^*$  if it has two tangency points to the same oval and these tangency points belong to the arcs of the oval separated by
the generatrix of $Q_\R$
traced through the tangency point with the  $J$-component 
(see Fig. \ref{0type-examples}); otherwise (if the tangency points with the oval are not separated by the generatrix
through the J-tangency point, or if there
are no points of tangency with the ovals)
the tritangent is classified as type $T_0$.
\begin{figure}[h!]
\caption{}\label{0type-examples}
\begin{tabular}{cc}
\hskip-3mm\includegraphics[height=1.4cm]{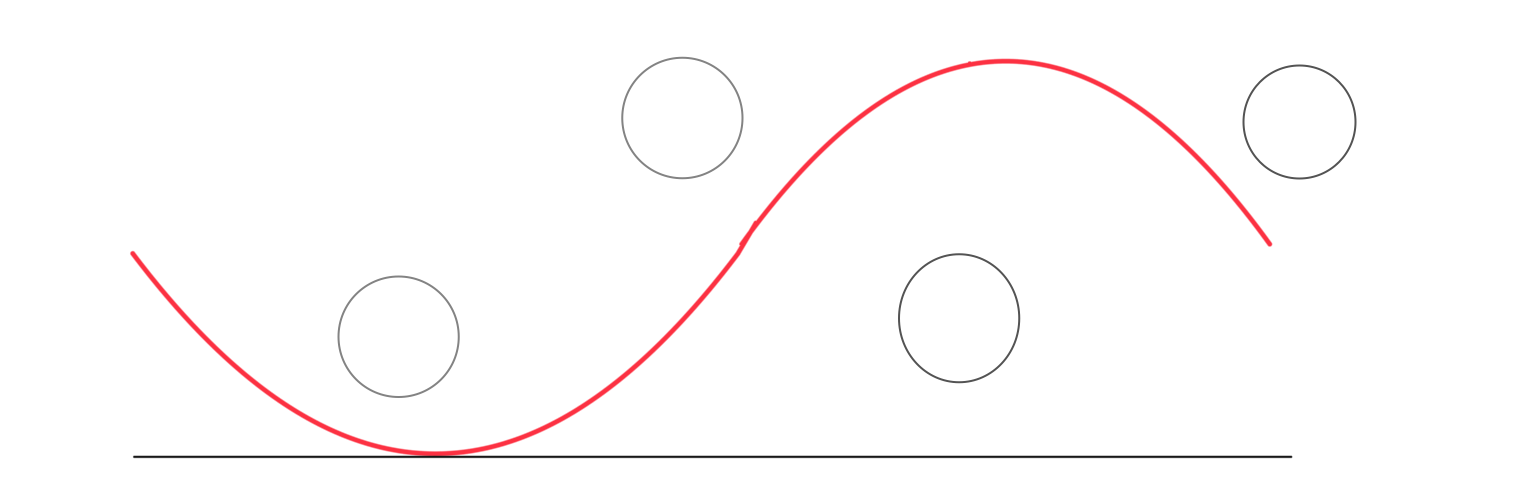}
\includegraphics[height=1.4cm]{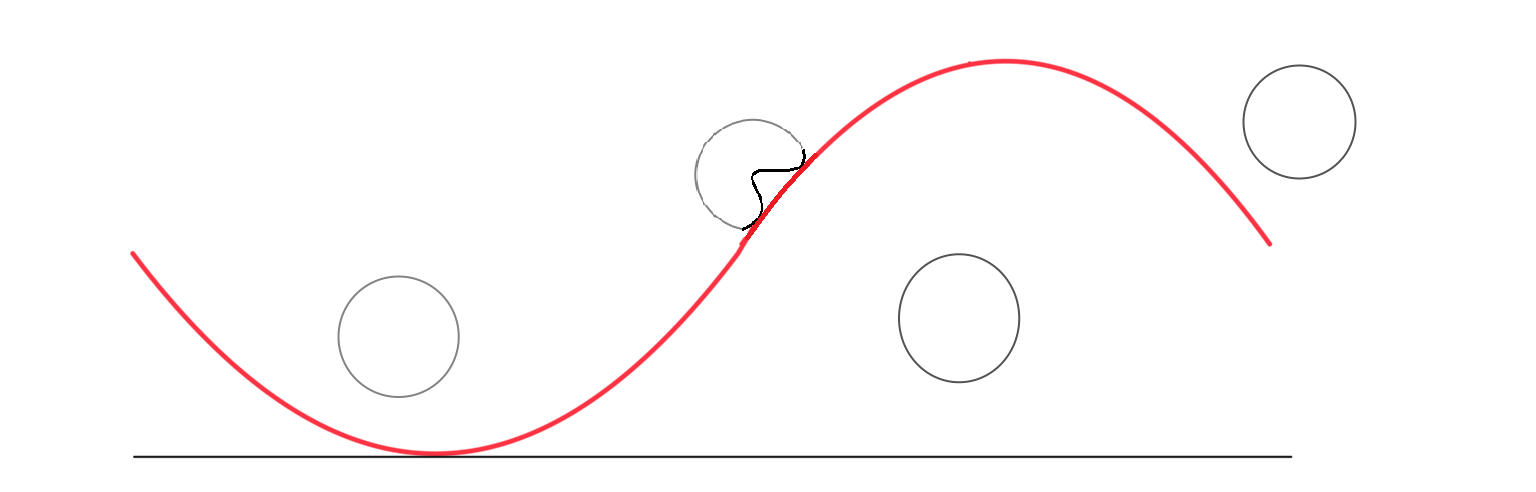}
\includegraphics[height=1.4cm]{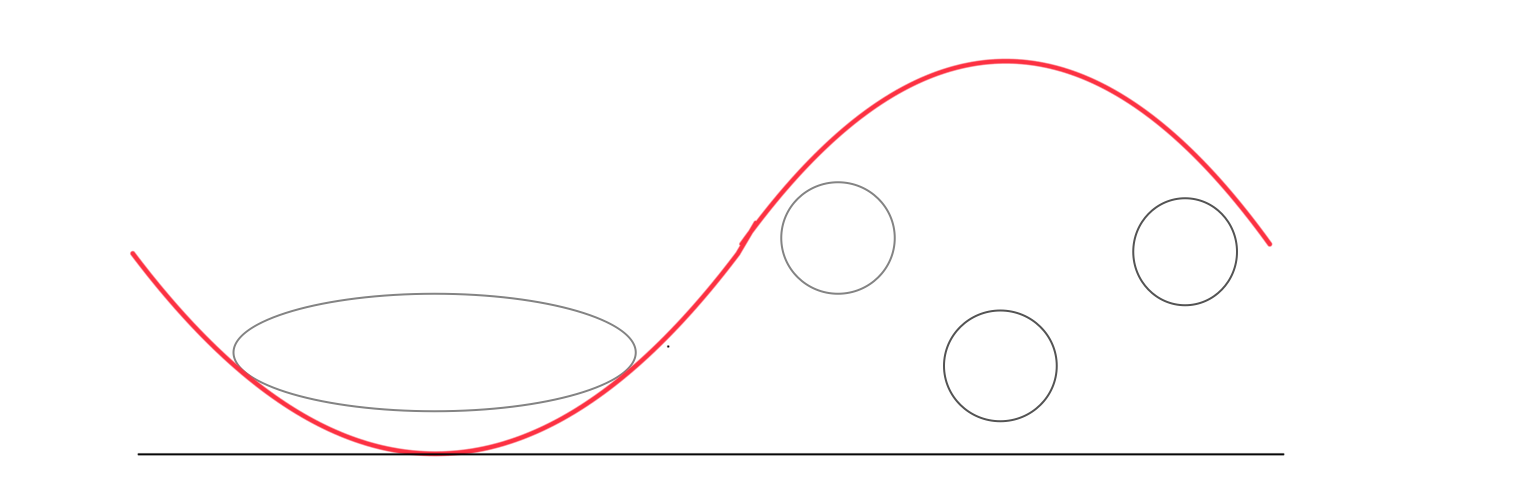}\\
\hskip0mm$T_0$\hskip40mm$T_0$\hskip40mm $T_0^*$
\end{tabular}
\caption*{\footnotesize The bottom segments depict the $J$-component. The curved lines represent a positive tritangent. The central sketch shows 
a pair of tangency points not separated by a generatrix through a J-tangency point.}
\end{figure}

\begin{theorem}\label{5types}
The number of  positive tritangents of  a given  type depends on the topological type
of  $C_\R\subset Q_\R$, and the choice of a half $Q^+_\R$,  as is indicated in Tab.\,\ref{tab1}.
\end{theorem}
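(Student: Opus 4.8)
The plan is to reduce the statement to a deformation-invariance assertion together with a finite, per-class enumeration. First I would record that on a del Pezzo surface $Y$ of degree $1$ every line is a rigid $(-1)$-curve, so each of the $240$ exceptional classes is represented by a unique irreducible curve; hence a line is real exactly when its class is $\conj^*$-invariant, where $\conj^*$ is the action of complex conjugation on $\Pic Y$. Via the double covering $Y\to Q$ the lines come in pairs exchanged by the Bertini involution, lying over the hyperplane sections of $Q$ tritangent to the branch sextic $C$; a real line on $Y_\R$ is precisely one lying over a \emph{positive} tritangent, i.e. a real tritangent whose real locus falls in the half $Q^+_\R$ that is the image of $Y_\R$. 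Thus the asserted table is a count of positive tritangents sorted by the five types $T_0,T_0^*,T_1,T_2,T_3$, which is what must be established.

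Second, I would establish invariance. Since $\conj^*$ is determined up to conjugacy in $\Aut(\Pic Y, K_Y)$ by the real deformation class of $Y$, and these classes are in bijection with the topological types of $C_\R\subset Q_\R$ listed in Tab.\,\ref{tab1} (DIK), the set of $\conj^*$-fixed exceptional classes, hence the total number of real lines, depends only on the class. The finer claim, that the number in each of the five types is constant on a class, I would prove by a continuity argument: along a path of real surfaces within one class each real line is the unique real representative of a fixed exceptional class, so it varies continuously without degenerating, while its three tangency points with $C$ move without crossing a wall; therefore neither the parity data recording $\tau$ nor the separation datum distinguishing $T_0$ from $T_0^*$ can jump. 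The point requiring care here is to verify that the walls, at which a tangency point meets a component of $C_\R$, passes the generatrix through the $J$-tangency point, or collides with another tangency point, are exactly those excluded by remaining inside a single deformation class.

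Third, with invariance in hand it suffices to evaluate the counts on one convenient representative of each class. For each topological type in Tab.\,\ref{tab1} I would take an explicit real model of $C\subset Q$, most efficiently a small real perturbation of a degenerate branch curve (a union of hyperplane sections, or a curve with prescribed symmetry) for which the tritangents are either visible directly or arise in controlled families out of the degeneration. For each surviving real tritangent I would read off $\tau$ and, when $\tau=0$, test the separation condition of Fig.\,\ref{0type-examples}, tallying the entries of the table. The running total must then equal the number of $\conj^*$-fixed exceptional classes computed lattice-theoretically, which furnishes a decisive consistency check.

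The main obstacle I anticipate is the separation criterion underlying the split $T_0$ versus $T_0^*$: unlike $\tau$, it is not visibly a lattice invariant, so confirming both its deformation invariance and its value on each tritangent demands a genuine real-analytic analysis of how a tritangent conic is positioned relative to the $J$-component and its generatrices. I would handle this by encoding the separation as a mod-$2$ linking-type quantity that can be transported between the chosen model and a general member of the class, so that the per-class count of $T_0^*$ lines becomes a computable invariant rather than an incidental feature of a particular picture.
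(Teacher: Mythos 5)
Your outline (reduce to counting positive tritangents, invoke deformation invariance, then evaluate on one representative per class) agrees with the paper's strategy in its broadest strokes, and your first paragraph is consistent with Propositions \ref{delPezzo-line-root-correspondence} and \ref{one-to-one}. But there is a genuine gap at the center of the proposal: the enumeration itself. For the class $\la4\,|\,0\ra$ there are $120$ positive tritangents, and your plan for sorting them into the five types is to ``read off $\tau$, test the separation condition, and tally'' on a small perturbation of a union of hyperplane sections. No method is supplied for doing this, and tracking $120$ tritangents through such a degeneration (where the limiting tritangents are degenerate sections through nodes of the limit curve) is precisely the difficulty the theorem poses, not a routine verification. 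The paper's proof supplies the missing engine: Lemma \ref{intersection-criterion} and Proposition \ref{oval-bridge-components} translate the geometric data $(S_{in},S_{tan})$ of a tritangent into the decomposition $v=v^o+v^b$ of the mod-$2$ residue of its root in $V=\L/2\L$ (the oval-bridge decomposition of Proposition \ref{natural-decomposition}), and then each entry of Tab.\,\ref{tab1} becomes finite arithmetic --- counting solutions of $\q_0(v)=1$ with prescribed $(S_{in},S_{tan})$ and lifting back to roots via Lemma \ref{roots-mod2} and Proposition \ref{lift-to-root} (Propositions \ref{15-8}, \ref{tan-in-p}, \ref{tan-in-1-1}). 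Your proposed consistency check (running total equals the number of $\conj$-anti-invariant exceptional classes) only controls the column sums of the table, never its individual entries, so it cannot substitute for this step.

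Moreover, the two points you yourself flag as delicate are exactly where the lattice dictionary is indispensable, and they remain unconstructed in your write-up. The ``mod-$2$ linking-type quantity'' you hope will govern the $T_0$ versus $T_0^*$ split exists and is an intersection number: Corollary \ref{types2-3} shows that the covering line satisfies $L\cdot O=\pm2$ exactly when the two tangencies with the oval are separated by the generatrix through the $J$-tangency, and $L\cdot O=0$ otherwise; this is what drives Proposition \ref{separate-23}. Once $\tau$ and this separation datum are recognized as intersection numbers they are automatically locally constant, which is also what legitimizes the reduction to one model per class; your purely geometric continuity argument, by contrast, must exclude events such as two real tangency points on an oval merging and escaping into a complex-conjugate pair --- an event not forbidden by $C$ remaining nonsingular, and one your wall analysis never rules out. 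In short, the ingredients you anticipate are the right ones, but constructing them is the proof, and the proposal stops where the paper's argument begins.
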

One of the main tools in the proof of this theorem is a certain {\it oval-bridge decomposition} (see Section \ref{oval-bridge-classes}) 
which allows us to develop a lattice arithmetic approach not only to enumerate tritangents, but also to control their isotopic types.
In what concerns the isotopy types, formulating the results requires a special encoding and therefore we refer the reader to Section \ref{topology} for precise statements.

\begin{table}[h!]
\caption{Number of 
positive tritangents of given type}\label{tab1}
\boxed{
\begin{tabular}{c|c|c|c|c|c|c|c|c|c|c|c}
$C_\R$&$\la4|0\ra$&$\la3|0\ra$&$\la2|0\ra$&$\la1|0\ra$&$\la0|0\ra$&$\la1|1\ra$
&$\la|||\ra$&$\la0|1\ra$&$\la0|2\ra$&$\la0|3\ra$&$\la0|4\ra$
\\
\hline
$T_0$&4&4&4&4&4&3&12&3&2&1&0\\
$T_0^*$&4&3&2&1&0&1&0&0&0&0&0\\
$T_1$&32&24&16&8&0&8&0&0&0&0&0\\
$T_2$&48&24&8&0&0&0&0&0&0&0&0\\
$T_3$&32&8&0&0&0&0&0&0&0&0&0\\
\end{tabular}}
\end{table}

\subsection{On the Mordell-Weil side}
In most of our results  on elliptic surfaces, we make the following  assumption  (recall that for us a real line is the same as a real section).

{\bf Assumption $\A$.} {\it 
$X$ is a real, non-singular, relatively minimal rational elliptic surface 
whose fibers are all reduced irreducible and whose set of real lines is non-empty.} 

The real Mordell-Weil group of $X$  has a simple lattice description
and is preserved under deformations through surfaces satisfying assumption $\A$
(see Proposition \ref{lattice-MW}). However, there is no "royal road" to extract from such a description
a topological information on the action of the Mordell-Weil group on
the real loci. In our study of rational elliptic surfaces, $f: X\to \P^1$, satisfying Assumption A we overcome this difficulty by appealing systematically to subordinate del Pezzo surfaces, $Y$, 
for which we developed in Section \ref{arithmetic}
a good control on the isotopy types of real lines via an arithmetic of the lattices $\L_Y=\ker(1+\conj_*)\cap K_Y^\perp\subset H_2(Y)$ that are associated with the action in $H_2(Y)$
of the complex conjugation involution $\conj: Y\to Y$.
These lattices are determined by the topology of $Y_\R$ (see Tab. \ref{C-Y-X-correspondence}).
Furthermore, the
pullback map $H_2(Y)\to H_2(X)$ identifies $\L_Y$ with $\L_X=\ker(1+\conj_*)\cap \la K_X,L\ra^\perp\subset H_2(X)$, where $L$ is the line chosen for the contraction $X\to Y$, and we use a shortened notation $\L$ for both of them, when
it does not lead to confusion.

\begin{table}[h!]
\caption{}
\label{C-Y-X-correspondence}
\scalebox{0.9}{
\hskip-2mm
\begin{tabular}{c|c|c|c|c|c|c|c}
\hline
$C_\R$&$\la4|0\ra$&$\la3|0\ra$&$\la2|0\ra$&$\la1|0\ra$&$\la1|1\ra$&$\la|||\ra$&$\la0|q\ra$, $q\le 4$\\
$Y_\R$&$\Rp2\#4\T^2$&$\Rp2\#3\T^2$&$\Rp2\#2\T^2$&$\Rp2\#\T^2$&\scalebox{0.9}{$\Rp2\#\T^2\+\SSS^2$}&$\Rp2\+\Kl$&$\Rp2\+q\SSS^2$\\
$X_\R$&$\Kl\#4\T^2$&$\Kl\#3\T^2$&$\Kl\#2\T^2$&$\Kl\#\T^2$&\scalebox{0.9}{$\Kl\#\T^2\+\SSS^2$}&$\Kl\+\Kl$&$\Kl\+q\SSS^2$\\
\hline
\scalebox{0.85}{$\L$}&$E_8$&$E_7$&$D_6$&$D_4\oplus A_1$&$D_4$&$D_4$&$(4-q)A_1$
\\
\hline
\end{tabular}}
\caption*{\footnotesize $\T^2$, $\K$ and $\Rp2$ stand for a torus, a Klein bottle, and a projective plane (for us, $\Rp2$ is a topological surface, whereas $\P^2_\R$
belongs to the algebro-geometric category). The signs $\#$ and $\+$ stand for the connected sum and the disjoint sum respectively.}
\end{table}

Finally,
we complete this approach by giving for all types of real rational elliptic surfaces
an explicit presentation of the real Mordell-Weil group in the mapping class group of the real locus of a surface (see Sections \ref{topologicalMW} and \ref{case-by-case}).

As a first application of the above approach, we observe the following infiniteness results for the integer homology classes realized in $H_1(X_\R)$
by real lines and real vanishing cycles (by a {\it vanishing cycle} we mean a vanishing, arbitrarily oriented,
embedded circle
appearing on $X_\R$ under a real nodal degeneration).

To state these results,
we choose an orientation of $\P^1_\R$, orient the real lines $L_\R\subset X_\R$ so that 
$f\vert_{L_\R}:L_\R\to \P^1_\R$ is orientation-preserving, and denote by $\Nlin$
the number of 
classes $[L_\R]\in H_1(X_\R)$ realized by real lines. By {\it vanishing classes} we mean classes realized by the vanishing cycles.

\begin{theorem}\label{line-number}
Under the assumption $\A$, the topology of $X_\R$ determines
the number $\Nlin$ of classes realized in $H_1(X_\R)$ by real lines 
as  is
indicated in Tab.\,\ref{line-number-table}.
In particular,  $\Nlin$ is infinite if and only if $X_\R$ contains a component $\Kl\#p\T^2$ with $p\ge1$.
\end{theorem}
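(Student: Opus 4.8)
The plan is to recast the count $\Nlin$ as the size of a single orbit and then to read it off from the explicit action formula. Since on a rational elliptic surface without multiple fibers a real line is the same as a real section, the set of real lines, once a zero section $L_0$ is fixed, is a torsor over $\MW_\R$: every real line is $L_g=t_g(L_0)$ for the translation $t_g\colon X_\R\to X_\R$ by a unique $g\in\MW_\R$. As $t_g$ is a diffeomorphism, the oriented class satisfies $[L_{g,\R}]=(t_g)_*[L_{0,\R}]$ in $H_1(X_\R)$, so the classes realized by real lines form exactly the orbit $\MW_\R\cdot[L_{0,\R}]$ and $\Nlin=\#\bigl(\MW_\R\cdot[L_{0,\R}]\bigr)$. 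First I would record that this orbit sits inside one coset of $\ker f_*$: because $t_g$ is fiberwise, $f\circ t_g=f$, hence $f_*[L_{g,\R}]=f_*[L_{0,\R}]$ equals the chosen generator of $H_1(\P^1_\R)=\Z$ for every $g$.

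With this orbit description in hand, the exact numbers in Tab.~\ref{line-number-table} would be obtained case by case from the two main outputs of the preceding sections: the explicit presentation of $\MW_\R$ inside $\Map(X_\R)$ and the explicit formula for the induced action on $H_1(X_\R)$. Concretely, for each diffeomorphism type of $X_\R$ in Tab.~\ref{C-Y-X-correspondence} the action formula turns the orbit map $g\mapsto(t_g)_*[L_{0,\R}]$ into an explicit expression on the finitely generated abelian group $H_1(X_\R)$, and $\Nlin$ becomes the cardinality of its image on $\MW_\R$ — a finite lattice computation whose input is the lattice presentation of $\MW_\R$ from Proposition~\ref{lattice-MW}.

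The qualitative dichotomy is then where the real work lies. On a component of $X_\R$ with no torus handle — a summand $\SSS^2$, or a Klein-bottle component $\Kl$ — the regular real fibre class is $2$-torsion and the orientation-reversing monodromy of the Klein bottle collapses the fibrewise winding of a section to a $\Z/2$-valued invariant; since a connected section meets a single component and the translations permute only finitely many components, the orbit is finite. This covers $\Kl\+q\SSS^2$ and $\Kl\+\Kl$, where $\Nlin<\infty$. When instead $X_\R$ carries a component $\Kl\#p\T^2$ with $p\ge1$, I would use the action formula to exhibit a translation $g$ whose iterates $(t_{g}^{\,n})_*[L_{0,\R}]$ add unbounded multiples of an infinite-order class supplied by the handle tori, so that these classes are pairwise distinct and the orbit is infinite.

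The main obstacle is precisely this last step: showing that the orbit is \emph{genuinely} infinite, not merely that $\ker f_*$ has positive free rank (which by itself is not decisive, as $\Kl\+\Kl$ already has an infinite $\ker f_*$ while $\Nlin$ stays finite). One must locate inside $\ker f_*$ an infinite-order class that is actually reached by an honest Mordell--Weil translation of $L_0$, and verify that the orientation-reversing monodromy of the Klein-bottle part does not annihilate the corresponding winding once the handles are present. It is here that the explicit action formula — rather than the abstract lattice description of $\MW_\R$ — is indispensable, and it is the reason the infiniteness criterion singles out the components $\Kl\#p\T^2$ with $p\ge1$.
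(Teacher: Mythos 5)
Your framework is the same as the paper's: the paper likewise computes $\Nlin$ as the cardinality of the orbit of $[L_\R]$ under $\Phi(\MW_\R)$ and reads the table off case by case from the presentation results of Sections \ref{topologicalMW}--\ref{case-by-case}. The genuine gap is that your proposal stops exactly at the step you yourself single out as decisive: you write that you \emph{would} ``exhibit a translation $g$ whose iterates add unbounded multiples of an infinite-order class,'' and your final paragraph then declares this the main obstacle without resolving it. As written, the ``in particular'' clause of the theorem — the infiniteness dichotomy — is never established. What closes this in the paper is concrete and uses only material already available to you: whenever $X_\R$ contains $\Kl\#p\T^2$ with $p\ge1$, the simultaneous twist $s_1=t_{a_1}t_{b_1}$ satisfies $s_1^n\in\Phi(\MW_\R)$ for all $n$ (Proposition \ref{Phi-p} for $1\le p\le3$, Proposition \ref{index2} for $p=4$, Proposition \ref{T-and-S} for $\Kl\#\T^2\+\SSS^2$; geometrically $s_1=\Phi(g_{B_1})$ for a bridge-class $B_1$, Lemma \ref{MW-basic}(3)). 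Since $L_\R$ meets the two-component fiber $a_1\cup b_1$ in $a_1$, one gets $[s_1^n(L_\R)]=[L_\R]+n[a_1]$, and $[a_1]$ has infinite order in $H_1(X_\R)$, so the classes are pairwise distinct and $\Nlin=\infty$. Note that the paper needs only this one computation, not the full matrix formula of Theorem \ref{real-action-matrix} (which in the paper's logical order comes later, though there would be no circularity in using it).

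The finite counts are also left unexecuted. Your qualitative finiteness argument for handle-free components is sound, but the exact values $4$, $2$, $1$ require two inputs that your ``finite lattice computation'' does not perform: first, the upper bound that on each Klein-bottle component only the two classes $[L_\R]$ and $[L_\R]+[c_1]$ (with $[c_1]$ the $2$-torsion fiber class) are realizable by smooth sections; second, the realizability of both of them by actual real lines, which is precisely the surjectivity of $\Phi$ (Proposition \ref{Phi-p} at $p=0$, Propositions \ref{spherical} and \ref{2K-case}), together with $\L=0$ forcing a unique real line when $X_\R=\Kl\+4\SSS^2$ (Proposition \ref{line-root-correspondence}). All of these statements are available in the sections preceding the theorem, so the missing steps are ones of execution rather than of ideas; but as it stands the proposal proves neither the infinite nor the finite half of the table.
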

\vskip-1mm
\begin{table}[h!]
\caption{Number $\Nlin$ of classes in $H_1(X_\R)$ realized by real lines}
\label{line-number-table}
\boxed{
\begin{tabular}{c|c|c|c|c|c}
$X_\R$&$\Kl\#p\T^2, 0<p\le4$&$\Kl\#\T^2\+\SSS^2$&$\Kl\+\Kl$&$\Kl\+q\SSS^2, 0\le q<4$&$\Kl\+4\SSS^2$\\
\hline
$\Nlin$&$\infty$&$\infty$&$4$&$2$&$1$
\end{tabular}}
\end{table}

\begin{theorem}\label{vanishing-classes}
If 
$X$ satisfies the assumption $\A$ and 
$X_\R$ contains a component $\Kl\#p\T^2$ with $p\ge1$, then $H_1(X_\R)$ contains an infinite number of vanishing classes.
\end{theorem}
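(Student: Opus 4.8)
The plan is to derive the statement from the infinitude of line classes already established in Theorem~\ref{line-number}, using that $\MW_\R$ acts on $X_\R$ by diffeomorphisms which carry vanishing cycles to vanishing cycles. Write $V\subseteq H_1(X_\R)$ for the set of classes realized by real vanishing cycles. I would organize the argument around three points: that $V$ is invariant under the action of $\MW_\R$ on $H_1(X_\R)$; that $V$ contains a class coming from a root of $\L$; and that this class lies in an infinite $\MW_\R$-orbit.

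First, for the invariance, I would use the presentation of $\MW_\R$ in $\Map(X_\R)$ from Sections~\ref{topologicalMW} and~\ref{case-by-case}: each element of $\MW_\R$ is realized by the real locus of a fiberwise translation automorphism of $X$, which is defined over $\R$ and commutes with $\conj$. Such an automorphism carries real one-parameter degenerations acquiring a real node to degenerations of the same kind, hence sends a real vanishing cycle to a real vanishing cycle. Consequently the induced automorphism of $H_1(X_\R)$ preserves $V$.

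Next, to produce an element of $V$, I would appeal to $\L$. Since $X_\R$ has a component $\Kl\#p\T^2$, Tab.~\ref{C-Y-X-correspondence} gives $\L=\ker(1+\conj_*)$ equal to one of $E_8,E_7,D_6,D_4\oplus A_1$, each containing roots $\delta$ with $\delta^2=-2$. For such a root, $\conj_*\delta=-\delta$, so $\delta$ is represented by a conjugation-invariant vanishing sphere on which $\conj$ acts as a reflection; its fixed circle is a real vanishing cycle $v_0$, giving a class $[v_0]\in V$. By Picard–Lefschetz the monodromy of this degeneration acts on $H_1(X_\R)$ as the Dehn twist along $v_0$, which lets me write $[v_0]$ as the difference $[L'_\R]-[L_\R]$ of the classes of a real line $L$ meeting $v_0$ transversally in an odd number of points and its image $L'$ under the monodromy.

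Finally, I would show the $\MW_\R$-orbit of $[v_0]$ is infinite, which combined with the first two points would finish the argument. Because $\L$ is negative definite, $\MW_\R$ acts on it through a finite subgroup of $O(\L)$, so a finite-index subgroup $G\le\MW_\R$ fixes $\delta$; yet by Theorem~\ref{line-number} the translates $g_*[L_\R]$, $g\in G$, are infinite in number. The hard part will be to propagate this infinitude to $[v_0]$ itself: over $\Z/2$ the class $[v_0]$ is determined by $\delta\bmod 2$ and is $G$-invariant, so one must control the integral lift and exhibit a translation moving $[v_0]$ by a nonzero element of $2H_1(X_\R)$ directed along a $\T^2$-handle of the component $\Kl\#p\T^2$. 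Using the identification $[v_0]=[L'_\R]-[L_\R]$ this amounts to checking that $g_*[L'_\R]-g_*[L_\R]$ takes infinitely many values, i.e. that the two line classes are not translated in lockstep; I expect this final verification, resting on the explicit action formula and on the arithmetic of $\L$ from Section~\ref{arithmetic}, to be the main obstacle.
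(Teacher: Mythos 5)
Your overall strategy --- produce one real vanishing class and then move it by the $\MW_\R$-action, whose invariance on the set of vanishing classes follows by re-marking the degenerating family --- is indeed the paper's strategy, and that first (invariance) point is sound. The first genuine gap is in how you produce the class $v_0$: it is simply not true that a root $\delta\in\L$ with $\conj_*\delta=-\delta$ is ``represented by a conjugation-invariant vanishing sphere on which $\conj$ acts as a reflection'' with fixed circle a real vanishing cycle. Anti-invariance of the homology class yields neither such an invariant sphere nor, more importantly, an actual real nodal degeneration of the algebraic surface contracting an embedded circle; the paper warns about exactly this in Section \ref{puzzle10}: for $Y_\R=\Rp2\#4\T^2$ only $10$ of the $120$ pairs of roots of $\L=E_8$ are real vanishing classes. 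What does work, and what the paper uses, is a concrete geometric construction: a positive oval $o_i$ of the branch sextic can be contracted by a real $1$-nodal degeneration (Lemma \ref{line-through-node}), and after the base change $\tau=t^2$ and Atiyah's smoothing this lifts to a real nodal degeneration of $X$ contracting $o_i$, so the oval-classes are genuine real vanishing classes.

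The second gap is the one you yourself flag as ``the main obstacle'': the infinitude of the orbit of $[v_0]$ in $H_1(X_\R)$ is precisely the substance of the theorem, and your proposal leaves it unproven. Moreover, the intermediate reasoning points the wrong way: the observation that $\MW_\R$ acts on $\L$ through a finite group concerns the action on $H_2(X)$ (where $\MW$ indeed acts trivially on $K^\perp/K$, cf. Proposition \ref{Eichler-Siegel}), whereas the relevant action is the one on $H_1(X_\R)$, which is \emph{not} finite --- that is the whole point; and the curve $L'$ you obtain as the monodromy image of $L$ is only a smooth section, not a real line, so the identity $[v_0]=[L'_\R]-[L_\R]$ with both classes realized by lines is unjustified. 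The paper closes this gap with one more output of Lemma \ref{line-through-node}: a real hyperplane section through the node, which after smoothing gives a real line $L'$ with $L'\cdot O_i=\pm1$. By Corollary \ref{tire-bouchon} and Proposition \ref{L-and-g}, the element $g=\gg{L}{L'}\in\MW_\R$ restricts on the handle $N_i$ to $s_i^{\pm1}$ or $(\D_i s_i^{-1})^{\pm1}$, and its iterates send $o_i$ to $\pm(o_i\pm n(a_i-b_i))$, $n\in\Z$, which are infinitely many distinct integral classes. Without an argument producing a real line (not merely a section) having odd intersection with the chosen vanishing cycle, your outline cannot be completed.
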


For a better presentation of the results on topological properties of the action of the  real
Mordell-Weil group, $\MW_\R(X)$, on the real locus $X_\R$
of a real elliptic surface $X$, 
we define a topological analog of $\MW_\R(X)$ as a subgroup $\Maps(X_\R)$ of the mapping class group $\Map(X_\R)$
formed by isotopy classes of fiber-preserving diffeomorphisms $X_\R\to X_\R$ acting by group-shifts in each non-singular real fiber.
One of the objects of our study is the natural homomorphism $\Phi:\MW_\R(X)\to \Maps(X_\R)$.

\begin{theorem}\label{MW-homomorphism}
Under the assumption $\A$, the homomorphism $\Phi:\MW_\R(X)\to \Maps(X_\R)$
has the image and kernel determined by $X_\R$ as is indicated in Tab.\,\ref{MW-action}.
\end{theorem}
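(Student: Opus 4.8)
The plan is to make both the source and the target of $\Phi$ into explicit finitely generated abelian groups and then identify the homomorphism, working through the topological types of $X_\R$ from Tab.\,\ref{C-Y-X-correspondence}. I would first analyze the target $\Maps(X_\R)$ using the singular fibration $f|_{X_\R}\colon X_\R\to\P^1_\R=S^1$. Away from the real critical values each fiber is a disjoint union of ovals, every oval being a torsor over the identity component $E^0(\R)\cong S^1$ of the real elliptic group; a fiberwise group-shift is a continuous assignment $b\mapsto t(b)$ of group element, taken up to fiber-preserving isotopy. Computing $\pi_0$ of the group of such shifts, I expect that a torus handle of $X_\R$ contributes a factor $\Z$ — the winding number of $t$ around the base relative to the fiber $S^1$, realized as a power of a fiberwise Dehn twist — while a component with orientation-reversing fiber monodromy (a Klein-bottle summand) admits no winding and contributes a single $\Z/2$ detected by the residual half-rotation ($2$-torsion) invariant, and a sphere component, swept by ovals that are born and die at two real nodal fibers, contributes nothing. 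This presents $\Maps(X_\R)$ as a concrete group $\Z^{a}\oplus(\Z/2)^{b}$ for each row of the table.

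Next I would present the source through Proposition \ref{lattice-MW}, which realizes $\MW_\R(X)$ in terms of the lattice $\L$, and describe $\Phi$ concretely: a real section meets each real fiber in a real point and hence defines the fiberwise shift by that point, so $\Phi$ records exactly the winding and $2$-torsion invariants above. The heart of the matter is to evaluate these invariants on a set of generators of $\MW_\R(X)$, and for this I would transport the question to the subordinate del Pezzo surface. Under the contraction $X\to Y$ and the pullback $H_2(Y)\to H_2(X)$ that identifies $\L_Y$ with $\L_X$, each real section corresponds to a real line, and the isotopy type of that line on $Y_\R$ — pinned down by the oval-bridge decomposition of Section \ref{oval-bridge-classes} and the type classification underlying Theorem \ref{5types} — dictates how the associated shift winds in the fibers and which $2$-torsion coset it occupies. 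This converts the computation of $\Phi$ into the lattice arithmetic of Section \ref{arithmetic}.

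With both groups and the homomorphism explicit, $\Im\Phi$ is the subgroup of $\Maps(X_\R)$ generated by the winding/coset vectors of the Mordell-Weil generators, and $\Ker\Phi$ is the sublattice on which all these invariants vanish; I would then read off the two answers row by row to fill Tab.\,\ref{MW-action}. A built-in consistency check is the dichotomy of Theorem \ref{line-number}: the image must be infinite precisely when $X_\R$ carries a component $\Kl\#p\T^2$ with $p\ge1$, since only a genuine torus handle yields nonzero winding and hence infinitely many isotopy classes, whereas in the $\Kl\+\Kl$ and $\Kl\+q\SSS^2$ cases the image is finite and built only from the $2$-torsion factors — indeed the collapse visible at $q=4$ signals that there the realizable half-rotation disappears, so that $\Phi$ need not be onto and the image can be a proper subgroup of $\Maps(X_\R)$.

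The step I expect to be the main obstacle is the one in the second paragraph: proving that the arithmetic of $\L$ genuinely computes the topological winding and $2$-torsion invariants, i.e. that the presentation of $\MW_\R$ inside $\Map(X_\R)$ built in Sections \ref{topologicalMW} and \ref{case-by-case} coincides with $\Phi$. This requires tight control of the isotopy type of each real section across all real fibers, in particular of its behaviour near the nodal fibers where ovals appear or vanish — exactly the data the subordinate del Pezzo machinery was developed to supply — and careful bookkeeping of the orientation conventions (the fixed orientations of $\P^1_\R$ and of the lines) so that the winding numbers acquire well-defined signs.
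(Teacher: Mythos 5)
Your overall strategy---present $\Maps(X_\R)$ explicitly via the real fibration, evaluate $\Phi$ on lattice generators of $\MW_\R$ by descending to the subordinate del Pezzo surface, then read off image and kernel case by case---is indeed the paper's plan (Sections \ref{sect5} and \ref{sect6}). But your structure theorem for the target group is wrong, and the error is fatal. You claim each torus handle contributes one factor $\Z$ (a winding number realized by a fiberwise Dehn twist), so that $\Maps(\Kl\#p\T^2\+q\SSS^2)$ would be $\Z^{p}\oplus\Z/2$. The correct group is $\Z^{2p}\oplus\Z/2$ (Proposition \ref{maps-spherical}, and the row $\Map^s(X_\R)$ of Tab.\,\ref{MW-action} that you are supposed to reproduce). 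What your list of invariants misses is this: over a handle region $N_i$ the real fiber has two components $a_i,b_i$, and the real group of such a fiber is $S^1\times\Z/2$, not $S^1$; a family of shifts may pass through the non-identity component, producing a mapping class $\D_i$ that swaps $a_i$ and $b_i$. This ``half-shift'' is not a power of any Dehn twist; it satisfies $\D_i^2=t_{c_i}t_{c_{i+1}}$ and, together with the simultaneous twist $s_i=t_{a_i}t_{b_i}$, gives two independent generators per handle (Proposition \ref{Delta-lemma}, Corollary \ref{handle-group}). These classes are not ignorable: they are exactly what $\Phi$ produces, since the Mordell--Weil translation carrying $L$ to $L_{O_i}$ (an oval-class generator) restricts on $N_i$ to $\D_i s_i^{-2}$ (Lemma \ref{MW-basic}(1)).

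Concretely, with your target group the theorem you are proving becomes false by a rank count: for $X_\R=\Kl\#4\T^2$ one has $\MW_\R=E_8$ of rank $8$, while your $\Maps(X_\R)$ has rank $4$, forcing $\rk\Ker\Phi\ge 4$, whereas the table asserts $\Ker\Phi=0$. Likewise the two genuinely delicate entries of the table---$\Im\Phi$ of index $2$ in $\Z^8\oplus\Z/2$ for $\Kl\#4\T^2$, and $\Im\Phi=\Z\oplus\Z/2\subsetneq\Z^2\oplus\Z/2$ for $\Kl\#\T^2\+\SSS^2$---cannot be seen by winding-plus-torsion bookkeeping at all: in the paper they fall out of the relations $\D_i^2=t_{c_i}t_{c_{i+1}}$ combined with the identities $g_{O_i}=\D_is_i^{-2}$, $g_{B_i}=s_i$, $g_{B_{i,i+1}}=s_is_{i+1}t_{c_{i+1}}^{-1}$, which is precisely the case-by-case computation of Propositions \ref{index2}--\ref{spherical}. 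Your second step (computing $\Phi$ on geometric root generators via the oval-bridge decomposition) is the right idea and is where those identities come from, but it cannot be carried out until the target group, including the elements $\D_i$, is set up correctly.
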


\begin{table}[h!]
\caption{Image and kernel of $\Phi:\MW_\R(X) \to
\Maps(X_\R)$}\label{image-kernel}
\label{MW-action}
\scalebox{0.83}{
\hskip-2mm\begin{tabular}{c|c|c|c|c|c|c|c}
\hline
$X_\R$&$\Kl\#4\T^2$&$\Kl\#3\T^2$&$\Kl\#2\T^2$&$\Kl\#\T^2$&\scalebox{0.9}{$\Kl\#\T^2\+\SSS^2$}&$\Kl\+\Kl$&$\K\+q\SSS^2, q\le4$\\
\hline
\scalebox{0.9}{$\MW_\R=\L$}&$E_8$&$E_7$&$D_6$&$D_4\oplus A_1$&$D_4$&$D_4$&$(4-q)A_1$\\
$\Map^s(X_\R)$&$\Z^8\oplus\Z/2$&$\Z^6\oplus\Z/2$&$\Z^4\oplus\Z/2$&$\Z^2\oplus\Z/2$&$\Z^2\oplus\Z/2$&$\Z/2\oplus\Z/2$&$\Z/2$\\
$\Im(\Phi)$&$\Z^8$&$\Z^6\oplus\Z/2$&$\Z^4\oplus\Z/2$&$\Z^2\oplus\Z/2$&$\Z\oplus\Z/2$&$\Z/2\oplus\Z/2$&$\begin{cases}\Z/2, q<4\\ 0, \ \ \ q=4\end{cases}$\\
$\Ker(\Phi)$&$0$&$\Z$&$\Z^2$&$\Z^3$&$\Z^3$&$\Z^4$&$\Z^{4-q}$\\
\hline
\end{tabular}}
\end{table}

For an explicit presentation of the subgroup $\Maps(X_\R)\subset \Map(X_\R)$ and that of $\Phi(\MW_\R(X))$, we 
refer the reader to Sections \ref{topologicalMW} and \ref{case-by-case}. 

Theorem \ref{MW-homomorphism} implies (see Theorem \ref{lines-vs-sections}) that
for all types of $X_\R$ except $\K\#4\T^2$, $\K\#\T^2\+\SSS^2$, and $\K\+4\SSS^2$, all 
sections
of the mapping $f_\R:  X_\R\to \P^1_\R$ 
that are smooth in the sense of differential topology
are realized by real lines $L_\R\subset X_\R $ up to
ambient isotopies $X_\R\times [0,1]\to X_\R$.
If $X_\R=\K\+4\SSS^2$, then only one
isotopy class
is realized by real lines. An explicit $\Z/2$-valued obstruction for the case $\K\#4\T^2$, and an explicit $\Z$-valued obstruction for the
case $\K\#\T^2\+\SSS^2$, are given in  Theorems  \ref{section-criterion-X} and \ref{section-criterion-X1|1}, respectively.

Under the assumptions of Theorem \ref{vanishing-classes}, we fix a direct sum decomposition of $H_1(X_\R)=H_1(\K\#p\T^2)$ as follows.
First, we choose a real line 
$L_{\R}\subset X_\R=\K^2\#p\T^2\+q\SSS^2$ and a connected fiber $F_\R\subset X_\R$. If $p\ge 1$, then, in addition to the classes $[F_\R]$ (of order 2) and 
$[L_\R]$, the group $H_1(X_\R)$ contains the classes of 
positive
ovals $o_i$, $i=1,\dots, p$ (as we identify $C_\R$ in $Q_\R$ with its lifting in $X_\R$).
Furthermore, for each oval $o_i$ we pick
a real non-singular elliptic fiber intersecting it (see Fig.\, \ref{real-locus}). Such a fiber
has 2 connected components among which we denote by $a_i$ the one
intersecting $L_\R$ and by $b_i$ the other one (see details in Sec.\,\ref{decompH1}, including the
orientation conventions for the classes involved).
\begin{figure}[h!]
\caption{}\label{real-locus}
\includegraphics[height=2.2cm]{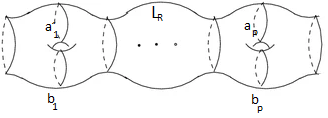}
\end{figure}
The  classes $[F_\R],b_1,o_1,\dots,b_p,o_p,[L_\R]$ form a basis giving
a direct sum decomposition 
\be\label{decomposition}
 H_1(X_\R)=
 \Z/2\oplus\Big\lbrack\bigoplus_{i=1}^p(\Z\oplus\Z)\Big\rbrack\oplus\Z.
 \ee
The class of any real line has a unique
coordinate expression of the form
$$\kk [F_\R]+\sum_{i=1}^p(m_ib_i+\k_i o_i)+[L_\R],\quad \text{ where }\quad \kk\in\Z/2, \ m_i\in\Z,\  \k_i\in\{0,1\},$$
with respect to this basis.

\begin{theorem}\label{real-action-matrix} Let $X$ satisfy
the assumption $\A$, 
$X_\R=\Kl\#p\T^2\+q\SSS^2$
and  $g\in \MW_\R$ send a real line $L$ to 
a real line $L'$.
Then the action of $g$ in $H_1(X_\R)$
is described by the following matrix with respect to the  basis $[F_\R],b_1,o_1,\dots,b_p,o_p,[L_\R]:$
\vskip1mm
\boxed{
\begin{tabular}{c|cc|c|cc|c}
$[1]$&$[\k_1]$&$[m_1]$&\dots&$[\k_p]$&$[m_p]$&$\kk$\\ 
\hline
$0$&$(-1)^{\k_1}$&$-2m_1$&\dots&$0$&$0$&$m_1$\\
$0$&$0$&$(-1)^{\k_1}$&\dots&$0$&$0$&$\k_1$\\
\hline
\dots&\dots&\dots&\dots&\dots&\dots&\dots\\
\hline
$0$&$0$&$0$&\dots&$(-1)^{\k_p}$&$-2m_p$&$m_p$\\
$0$&$0$&$0$&\dots&$0$&$(-1)^{\k_p}$&$\k_p$\\
\hline
$0$&$0$&$0$&\dots&$0$&$0$&$1$
\end{tabular}}
\newline
Here, $\kk\in\Z/2,\ m_i\in\Z,\ \k_i\in\{0,1\}$ are the coefficients in the
coordinate expression
$$
[L'_\R]-[L_\R]=\kk [F_\R]+\sum_{i=1}^p (m_ib_i+\k_io_i) \in H_1(X_\R),
$$
and the notation $[a]$ used in the first row stands for $a\,\operatorname{mod}\,2$.
\end{theorem}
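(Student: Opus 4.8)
The plan is to compute the linear automorphism $g_*$ of $H_1(X_\R)$ one basis vector at a time, exploiting that the action factors through the image $\Phi(g)\in\Maps(X_\R)$, which by construction is a fiber-preserving map acting as a group-translation on each non-singular real fiber (so in particular it preserves the elliptic pencil $f_\R$). Two of the columns are then immediate. Since $\Phi(g)$ carries each fiber to itself, it fixes the fiber class $[F_\R]$; as $[F_\R]$ is $2$-torsion the possible orientation reversal is invisible, which gives the $[F_\R]$-column $([1],0,\dots,0)^{\mathrm t}$. Since $g$ sends the section $L$ to the section $L'$, it sends the curve $L_\R$ to $L'_\R$, so $g_*[L_\R]=[L'_\R]$; by the very definition of the coordinates this reads $g_*[L_\R]=[L_\R]+\kk[F_\R]+\sum_i(m_ib_i+\k_io_i)$, which is the last column. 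It remains to treat $g_*(b_i)$ and $g_*(o_i)$, and to check that these depend only on $[L'_\R]-[L_\R]$ (equivalently only on $\Phi(g)$), not on any further data of $g$.

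For the $b_i$-column I would first record the local homological relation over the arc $I_i\subset\P^1_\R$ on which the real fibers acquire a second component. With the orientation conventions fixed in Section \ref{decompH1}, the two fiber circles $a_i$ (meeting $L_\R$) and $b_i$ merge into a connected fiber at the endpoints of $I_i$, whence $[a_i]+[b_i]=[F_\R]$, i.e. $[a_i]=[F_\R]-b_i$. The translation $\Phi(g)$ preserves every non-singular fiber and either keeps the unordered pair $\{a_i,b_i\}$ or interchanges the two components, according to whether $L'_\R$ meets the same component of the chosen fiber as $L_\R$ does; by definition this alternative is exactly the parity $\k_i$. Hence $g_*(b_i)=b_i$ when $\k_i=0$ and $g_*(b_i)=[a_i]=[F_\R]-b_i$ when $\k_i=1$, which combine into the single formula $g_*(b_i)=[\k_i][F_\R]+(-1)^{\k_i}b_i$, yielding the $b_i$-column.

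The $o_i$-column is the genuinely delicate point and the step I expect to be the main obstacle. The key geometric input is that the oval $o_i$ projects $2$-to-$1$ onto $I_i$: a generatrix of $Q_\R$ in the ``shadow'' of the oval meets it in two points, so $o_i$ folds over the arc $I_i$ rather than covering the base once as a section does. Tracking $\Phi(g)$ along $o_i$, the fiberwise shift accumulated over the two passages is therefore \emph{twice} the shift picked up by $L$ across $I_i$, namely $2m_i$ in the $b_i$-direction (with the sign dictated by the chosen orientations), giving the term $-2m_ib_i$; the component interchange governed by $\k_i$ reverses the orientation of $o_i$, producing $(-1)^{\k_i}o_i$; and at the fold points, where the fiber is connected and the accumulated shift is recorded in $[F_\R]$ rather than in $b_i$, one gets a parity correction $[m_i][F_\R]$. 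Assembling these gives $g_*(o_i)=[m_i][F_\R]-2m_ib_i+(-1)^{\k_i}o_i$. The hard part is precisely the rigorous bookkeeping here: controlling orientations through the merging of the two fiber components, justifying the factor $2$ from the fold, and pinning down the two $2$-torsion corrections $[\k_i][F_\R]$ and $[m_i][F_\R]$. Once the local winding of $\Phi(g)$ over $I_i$ is identified with the shift-vector coordinate $m_i$ and the swap with $\k_i$, the remaining entries and the block-upper-triangular shape of the matrix follow, and the fact that the whole matrix depends only on $[L'_\R]-[L_\R]$ is a direct reflection of $\Phi(g)$ being determined by the shift, in agreement with the computation of $\Im(\Phi)$ in Theorem \ref{MW-homomorphism}.
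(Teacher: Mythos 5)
Your overall strategy coincides with the paper's: factor the action through $\Phi(g)\in\Maps(X_\R)$, use the locality of the fiberwise diffeomorphism to get the block-triangular shape, and compute column by column. Your treatment of the $[F_\R]$- and $[L_\R]$-columns is correct and is exactly what the paper does. The $b_i$-column is stated correctly, but even there two points you pass off as automatic are not: the claim that the components of the fiber $a_i\cup b_i$ are swapped precisely when $\k_i=1$ is not ``by definition'' --- it follows from the orientation/intersection conventions (\ref{local-indices}), e.g.\ from $a_i\circ[L_\R]=1$, $a_i\circ o_j=-\delta_{ij}$, $a_i\circ b_j=a_i\circ[F_\R]=0$, whence $a_i\circ[L'_\R]=1-\k_i$; and you must rule out $g_*(b_i)=-[a_i]$: since $[a_i]=[F_\R]-b_i$ and $[F_\R]$ is $2$-torsion, $-[a_i]=[F_\R]+b_i\ne[F_\R]-b_i$, so the sign is a genuine issue, settled in the paper inside the computation of $M_\D$ in Lemma \ref{block-matrix-action}.

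The genuine gap is the $o_i$-column, which is the real content of the theorem, and you say yourself that ``the rigorous bookkeeping'' there is missing. What you offer --- the oval folds over $I_i$, hence picks up ``twice the shift,'' plus an unexplained correction $[m_i][F_\R]$ --- is a heuristic reverse-engineered from the answer, not an argument: it does not define the ``shift picked up by $L$ across $I_i$'' as an integer, does not show that this integer is the coordinate $m_i$ of $[L'_\R]-[L_\R]$, and does not decide between $-2m_ib_i+[m_i][F_\R]$ and, say, $+2m_ib_i$ or a correction $[\k_i][F_\R]$. The paper closes exactly this hole with two ingredients you would need to reconstruct. First, Proposition \ref{g-action} (resting on Proposition \ref{maps-spherical} and Corollary \ref{handle-group}): every class in $\Maps(X_\R)$ has a \emph{unique} normal form $\bar\D_1^{\k_1}\cdots\bar\D_p^{\k_p}t_{c_1}^{n_1}\cdots t_{c_p}^{n_p}s_1^{m_1}\cdots s_p^{m_p}$, and the exponents are pinned down by the coordinates of $[g(L)_\R]-[L_\R]$; this is the precise statement behind your phrases ``local winding $=m_i$'' and ``swap $=\k_i$,'' and without it your identification is circular. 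Second, the Dehn-twist computation of Lemma \ref{block-matrix-action} and Corollary \ref{matrices-Fbo}: since $s_i=t_{a_i}t_{b_i}$ and $o_i$ meets $a_i$ and $b_i$ once each with opposite signs, the Picard--Lefschetz formula together with $a_i+b_i=[F_\R]$ gives $s_i(o_i)=o_i+b_i-a_i=o_i+2b_i-[F_\R]$, which is what simultaneously produces the $\mp 2m_ib_i$ term and the torsion term $[m_i][F_\R]$ after taking powers and conjugating the sign conventions by $\bar\D_i^{\k_i}$. Your ``factor $2$ from the fold'' is a fine intuition for this formula, but as written the proposal asserts the decisive matrix entries rather than proving them.
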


\begin{theorem}\label{MW-sum-general}
Assume that $X$ satisfies assumption $\A$ and $X_\R=\Kl\#p\T^2\+q\SSS^2$. Let $L, L', L''\subset X$ 
be real lines,  $g$ be an element of $\MW_\R$ that sends $L$ to $L'$, and
 $L',L''$ have  coordinate expressions
$$
\begin{aligned}
[L'_\R]-[L_\R]=\kk_1 [F_\R]+\sum_{j=1}^p(m_{1j}b_j+\k_{1j}o_j)\\
[L''_\R] -[L_\R]=\kk_2 [F_\R]+\sum_{j=1}^p(m_{2j}b_j+\k_{2j}o_j)
\end{aligned}\quad
\kk_{ij}\in\Z/2,\ m_{ij}\in\Z,\ \k_{ij}\in\{0,1\}.
$$
Then  the class $[g(L'')_\R]\in H_1(X_\R)$ of 
the line  $g(L'')$ has a coordinate expression
$$
[g(L'')_\R]- [L_\R]=\kk [F_\R]+\sum_{j=1}^p(m_jb_j+\k_j o_i) 
$$
 where $ \kk=\kk_1+\kk_2+\sum_{j=1}^p(\k_{1j}m_{2j}+\k_{2j}m_{1j})\ {\rm{mod}\,2}$
and
$$
\begin{bmatrix}
(-1)^{\k_{1j}} &m_{1j}\\
0&(-1)^{\k_{1j}}
\end{bmatrix}
\begin{bmatrix}
(-1)^{\k_{2j}} &m_{2j}\\
0&(-1)^{\k_{2j}}
\end{bmatrix}
=\begin{bmatrix}
(-1)^{\k_j} &m_j\\
0&(-1)^{\k_j}
\end{bmatrix}.\ 
$$
\end{theorem}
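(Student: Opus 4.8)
The plan is to reduce the statement to a single application of the homological action matrix established in Theorem~\ref{real-action-matrix}. Since $g\in\MW_\R$ acts on $X_\R$ as a fiber-preserving diffeomorphism carrying each real section to its translate by $g$, it sends the real line $L''$ onto the real line $g(L'')$; restricting to real points and passing to $H_1$ gives $[g(L'')_\R]=g_*[L''_\R]$, where $g_*$ is the endomorphism of $H_1(X_\R)$ induced by $g$. By Theorem~\ref{real-action-matrix} this endomorphism is exactly the matrix displayed there, computed from the coordinates $\kk_1,m_{1j},\k_{1j}$ of $[L'_\R]-[L_\R]$. Thus it suffices to apply that matrix to $[L''_\R]=[L_\R]+\kk_2[F_\R]+\sum_j(m_{2j}b_j+\k_{2j}o_j)$ and to strip off the resulting $[L_\R]$.

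First I would read the relevant columns off Theorem~\ref{real-action-matrix}, obtaining
\begin{gather*}
g_*[F_\R]=[F_\R],\qquad g_*b_j=[\k_{1j}][F_\R]+(-1)^{\k_{1j}}b_j,\\
g_*o_j=[m_{1j}][F_\R]-2m_{1j}b_j+(-1)^{\k_{1j}}o_j,
\end{gather*}
together with $g_*[L_\R]=[L'_\R]=[L_\R]+\kk_1[F_\R]+\sum_j(m_{1j}b_j+\k_{1j}o_j)$. Substituting these into $g_*[L''_\R]$ and collecting the coefficients of $b_j$ and $o_j$ separately, the key simplification is the identity $1-2\k=(-1)^{\k}$, valid for $\k\in\{0,1\}$: it turns the $b_j$-coefficient $m_{1j}+(-1)^{\k_{1j}}m_{2j}-2\k_{2j}m_{1j}$ into $(-1)^{\k_{2j}}m_{1j}+(-1)^{\k_{1j}}m_{2j}$, and the $o_j$-coefficient $\k_{1j}+(-1)^{\k_{1j}}\k_{2j}$ into an element of $\{0,1\}$ congruent to $\k_{1j}+\k_{2j}$ modulo $2$. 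These are precisely the off-diagonal and diagonal entries of the product of the two upper-triangular $2\times2$ matrices in the statement, which identifies $m_j$ and $\k_j$ as claimed.

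It then remains to handle the $[F_\R]$-coefficient, which lives in $\Z/2$ because $[F_\R]$ has order two. Collecting the $\Z/2$-valued contributions $\kk_1$ (from $g_*[L_\R]$), $\kk_2$ (from $g_*[F_\R]$), the reductions $[\k_{1j}]$ weighted by $m_{2j}$ (from $g_*b_j$), and the reductions $[m_{1j}]$ weighted by $\k_{2j}$ (from $g_*o_j$) yields $\kk_1+\kk_2+\sum_j(m_{2j}\k_{1j}+\k_{2j}m_{1j})$ modulo $2$, the stated value of $\kk$. Since the only source of $[L_\R]$ is $g_*[L_\R]$, subtracting $[L_\R]$ removes it and leaves the asserted expression. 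The one point demanding care is this last step: the interaction between the integral off-diagonal entries $-2m_{1j}$ and the $\Z/2$-valued $[F_\R]$-row is exactly what produces the quadratic cross-term $\sum_j(\k_{1j}m_{2j}+\k_{2j}m_{1j})$, so the $\MW_\R$-action is a genuinely nonlinear (crossed) cocycle rather than an honest homomorphism into $H_1(X_\R)$, and the bookkeeping of the mod-$2$ reductions must be kept consistent throughout.
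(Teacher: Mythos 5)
Your proposal is correct and takes essentially the same route as the paper: the paper's proof is exactly the multiplication of the matrix from Theorem~\ref{real-action-matrix} by the coordinate column of $[L''_\R]$, simplified by the observation $(-1)^{\k_{1j}}m_{2j}-2m_{1j}\k_{2j}+m_{1j}=(-1)^{\k_{1j}}m_{2j}+(-1)^{\k_{2j}}m_{1j}$, which is your identity $1-2\k=(-1)^{\k}$. Your handling of the $[F_\R]$-row and the $o_j$-coefficients agrees with this computation, so there is nothing to add.
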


\subsection{Plan of the paper} We start Section \ref{prelim} by recalling the deformation classifications of
sextic curves on a quadric cone, del Pezzo surfaces of degree 1, and real rational elliptic surfaces satisfying assumption $\A$.
We also remind
 a lattice arithmetic description of lines, 
and apply it to introduce the notion of oval- and bridge-classes and to determine their mutual intersections. In Section \ref{arithmetic}, 
we develop a certain mod 2 arithmetic of roots, and based on it, introduce our principal tool for enumerating the positive tritangents, an {\it oval-bridge decomposition}. By a systematic use of this tool we not only prove Theorem \ref{5types} but moreover supply the enumeration of positive tritangents with  information 
on their position with respect to the ovals. It is this information that we use in Section \ref{topology} for giving an explicit
description of isotopy types of positive tritangents and that of isotopy types of real lines on real rational elliptic surfaces satisfying assumption $\A$, see 
Propositions \ref{isotopies-hyperbolic-case}, \ref{prop-1-1},
Theorem \ref{answer-codes},
and Tab. \ref{pics120}. 
In Section \ref{sect5}, we introduce and evaluate the groups $\Maps(X_\R)$ and $\MW_\R(X)$. Section \ref{sect6} is devoted to the proof of Theorem
\ref{MW-homomorphism} and a lattice description of $\Ker\Phi$, see Theorem \ref{details-on-MW-homomorphism}. 
Theorems \ref{line-number}, \ref{vanishing-classes} are proved in 
Section \ref{SectProofs}, while Section \ref{realizability} is devoted to proving Theorems
\ref{section-criterion-X}, and \ref{section-criterion-X1|1}. In Section \ref{sect8},
we perform a matrix description of the action of $\Maps(X_\R)$ in $H_1(X_\R)$ and apply it to proving Theorems \ref{real-action-matrix} and \ref{MW-sum-general}.

In the concluding remarks, we discuss a few related topics.
 We start with Proposition \ref{Eichler-Siegel}, which describes the $\MW$-action in $H_2(X)$  
in the complex setting.
Being an analogue of our 
Theorem \ref{real-action-matrix}, it demonstrates, 
however, a significant difference. Namely, the $\MW$-action on $H_2(X)$ restricts to identity on $K^\perp/K$, while the action of $\MW_\R$ is not identical on $K_\R^\perp/K_\R$
(although is identity modulo 2). In Section \ref{obstruction}, we give a coordinate expression for a $\Z/2$-obstruction for realizability of classes 
in $H_1(X_\R)$ by real lines. In Section \ref{decomposable-quintics}, we give an application of our count of tritangents to
a count of real conics tangent to a pair of real lines and a real cubic. In  Section \ref{5types-theta}, we discuss a relation between 5 types of tritangents and $\theta$-characteristics. In Section \ref{nonrational}, we address a question on non-rational real elliptic surfaces.
Finally, in Section \ref{puzzle10}, we indicate a puzzling persistence  phenomenon in counting real vanishing cycles on del Pezzo surfaces of various degrees.

\subsection{Notation and conventions}\label{conventions}
For complex algebraic varieties, we denote by the same letter the variety itself and its complex point set. 
If a complex variety $Z$ is defined over $\R$, then $\conj_Z :Z\to Z$ denotes the complex conjugation
and $Z_\R$ the {\it real locus}, $Z_\R=\operatorname {Fix} \conj_Z$.
 The same convention is applied to $\conj$-invariant subsets $V\subset Z$ (complex algebraic cycles, etc.).

Recall that according to our definition of lines, a line $L$ on a relatively minimal, without multiple fibers, rational elliptic surface $f: X\to \P^1$ 
(in particular, on any surface satisfying assumption $\A$) is the same as a holomorphic section of $f$.
We prefer to use the term ''line'', because the term "section" (as a rule, "smooth section")
will be reserved for
sections of the mapping
$f_\R: X_\R\to \P^1_\R$ that are smooth in the sense of differential topology.

Blowing down a line $L\subset X$ yields a nonsingular del Pezzo surface $Y$ of degree $K_Y^2=1$, which we call {\it subordinate}
to $X$. Conversely, by blowing up the basepoint of
 the pencil 
$\vert\!-\!K_Y\vert$
we obtain a relatively minimal, without multiple fibers, rational elliptic surface.

 This establishes a canonical correspondence between pairs $(X,L)$ and del Pezzo surfaces $Y$ as above. Under this correspondence,
 the linear system $\vert\!\!-\!\!K_X\vert$  and the map $f : X\to  \P^1$
 turn into, respectively, a proper transform of the linear system $\vert\!\!-\!\!K_Y\vert$  and a proper transform of the map $f_Y : Y \dashrightarrow \P^1$.

The anti-bicanonical linear system $\vert\!\!-\!\!2K_Y\vert$
gives rise to a standard model of $Y$ as a double covering $\pi : Y\to Q$ of a quadratic cone $Q\subset \P^3$ branched at the vertex $v\in Q$ and along a transversal intersection $C$ of $Q$ with a cubic surface.
This establishes a canonical correspondence between surfaces $Y$ and pairs $(Q,C)$.
Under this correspondence, the linear system $\vert\!-\!K_Y\vert$  and the map $f_Y : Y \dashrightarrow \P^1$ turn into, respectively,  a pull-back of the system of
generatrices of $Q$ and a pull-back of the projection map $f_Q : Q \dashrightarrow \P^1$ from $v$. The deck transformation of the covering $\pi : Y\to Q$
is called {\it Bertini involution} and denoted by $\beta$.

For a compact
differentiable 2-manifold $S$, we denote by $\Map(S)$ 
the mapping class group of  diffeomorphisms of $S$ 
that fix its boundary $\del S$ pointwise
and that are orientation-preserving, if $S$ is orientable.
One of our main tasks is the study of the image and kernel of the representation of the real Mordell-Weil group $\MW_\R(X)$ of real elliptic surfaces $X$ in $\Map(X_\R)$,
and, respectively, the study of the embedding of the real lines, $L_\R\subset X_\R$, up to {\it ambient} (not necessarily fiberwise) {\it isotopies} in $X_\R$.
For that, we introduce a topological analog of $\MW_\R(X)$ as a subgroup $\Maps(X_\R)$ of the mapping class group $\Map(X_\R)$
formed by isotopy classes of fiber-preserving diffeomorphisms $X_\R\to X_\R$ acting by group-shifts in each non-singular real fiber.

\subsection{Acknowledgements} 
An essential part of this work was accomplished during our Research in Residence visits at  the Centre International de Rencontres Math\'ematiques in Luminy in 2022-2023.
It took its final shape during our stay  at the Max-Planck Institute for Mathematics in Bonn in summer 2024. We thank both institutions for their hospitality and excellent working conditions.

We would also like
to thank the anonymous referees of this
paper for a number of remarks and suggestions that helped us to improve the presentation.

\section{Preliminaries}\label{prelim}

\subsection{Drawing figures on the cone $Q_\R$}
\label{swept}
On figures, we think of the quadratic cone $Q_\R\subset \P^3_\R$ as a vertically directed cylinder in an affine chart $\R^3\subset \P^3_\R$ (placing the  vertex $\v$ of $Q$ at infinity),
pick a real generatrix $\FFF\subset Q$, and then stretch
$Q_\R\sm \FFF_\R$ on a real plane $\R^2$. In particular,
this 
allows us to make "flat" sketches of the sextic $C_\R\subset Q_\R$ ({\it cf.} Fig. \ref{0type-examples}). 
We assume that this development of $Q_\R$ agrees
with the projection map $f_Q : Q \dashrightarrow \P^1$ in such a way that 
the map $f_Q$ reads in coordinates as $(x,y)\mapsto x$.
We suppose also that $\FFF$ does not intersect the ovals, so that they can be numerated consecutively $o_1,\dots,o_r$ with respect to the positive direction of axis $x$.

\subsection{Real loci of $C$, $Y$ and $X$}\label{real-loci} To fix
a correspondence between real sextics
$C$  on a  real quadratic cone $Q$ with a fixed orientation of real  generatrices, real del Pezzo surfaces
$\pi :Y\to Q$ of degree $1$, and real elliptic surfaces $f: X\to \P^1$ with a fixed real line,
we use the following convention. 

A real elliptic surface $X$ satisfying the assumption $\A$ and equipped
with a marked real line is identified with a real del Pezzo surface $Y$ blown up at the fixed point of the anti-canonical pencil $-K_Y$. 
Next, like in Introduction, we assume that  the real structure $\conj_Y : Y\to Y$ covers the standard complex conjugation involution $\conj_Q : Q\to Q$  
and $\pi(Y_\R)=Q_\R^+$.  Accordingly, we equip the generatrices
of $Q_\R$ with an orientation that is coherent with passing at the vertex $v$
from $Q_\R^+$ to $Q_\R^-=\operatorname{Cl} (Q_\R\sm Q_\R^+)$. In the opposite direction, a real sextic $C$ and an orientation of the  generatrices
of $Q_\R$ determine uniquely the half $Q_\R^+$ of $Q_\R$.

The three classifications stated below are well known (see \cite{DIK}[A3.6.1, 17.3] for the first two, while the third one
is a straightforward consequence of the second).

\begin{theorem}\label{deform-sextics}
There exist 11 deformation classes of non-singular real sextics $C\subset Q\sm\{\v\}$ on a  real quadratic cone $Q$ with a fixed orientation of the  generatices of $Q_\R$.
Each of the deformation classes is determined by the isotopy class of the embedding $C_\R \subset Q_\R\sm \{\v\}$. 
These isotopy classes have the following 11 codes:
$$ \la p\,|\,0\ra \ \text{ with }\ \ 0\le p\le4,\qquad  \la1\,|\,1\ra,
\qquad\la\,|\,|\,|\,\ra,
\qquad\la 0\,|\, q\ra \ \text{ with }\ \ 1\le q\le4. \qquad\qed$$
\end{theorem}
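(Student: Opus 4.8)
The plan is to establish the three assertions of the theorem separately: that deformation‑equivalent sextics carry the same code, that no code outside the list of eleven can occur, and that each of the eleven is realized. The equivalence ``deformation class $=$ isotopy type'' splits into two implications of very unequal difficulty, and I would treat them apart, noting first that the extra datum of an orientation of the generatrices fixes the half $Q^+_\R$ and hence the sign of every oval; this is exactly what separates, \eg, $\la2\,|\,0\ra$ from $\la0\,|\,2\ra$.

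The easy implication, ``deformation $\Rightarrow$ isotopy'', is topological. Since $C=Q\cap S$ for a cubic surface $S$, the smooth real sextics missing $\v$ and transversal to $Q$ form the complement of a real discriminant hypersurface in the projective space of real cubic forms on $Q$, and its connected components are by definition the deformation classes. Along a path inside one component the family $(Q_\R,C_\R)$ is a locally trivial fibration by Thom--Ehresmann, so the endpoint pairs are ambient isotopic and carry the same code; thus the code is a well‑defined deformation invariant.

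For the topological upper bound I would work in the cylinder model of $Q_\R\sm\{\v\}$, in which each component of $C_\R$ is a circle that is either null‑homotopic (an oval) or isotopic to the core (a $J$‑component embracing $\v$). Because $\ell\cap C=\ell\cap S$ consists of $3$ points for every generatrix $\ell\subset Q$, a generic real generatrix meets $C_\R$ in an \emph{odd} number of real points (conjugate intersection points cancel mod $2$), and this number is at most $3$. As ovals contribute an even and $J$‑components an odd count to this intersection, the number of $J$‑components is odd and $\le 3$, hence $1$ or $3$. If it is $3$, a generatrix already uses all three available points on the $J$‑components, so no oval can occur and the code is $\la|||\ra$. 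If it is $1$, the other components are ovals, at most $4$ of them by the same count; this matches $p+q\le 4$, and is consistent with the bound $g(C)=4$ (the branch sextic is the canonical curve of a genus‑$4$ curve, a $(2,3)$‑complete intersection, so Harnack gives at most $5$ real components).

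The remaining points are the genuinely hard ones and I expect them to be the main obstacle: (i) that among one‑$J$‑component configurations only $\la p\,|\,0\ra$, $\la0\,|\,q\ra$ and $\la1\,|\,1\ra$ actually occur, \ie\ positive and negative ovals coexist solely in $\la1\,|\,1\ra$; and (ii) the converse implication ``isotopy $\Rightarrow$ deformation'', that each admissible code is a \emph{single} deformation class. Both are rigid‑isotopy statements beyond the elementary topology above, and the route I would follow is the one underlying \cite{DIK}: pass from $C$ to the subordinate real del Pezzo surface $Y$ of degree $1$ (the double cover of $Q$ branched along $C+\v$), and then to its real lattice data. Via a global Torelli theorem the deformation type of $(Y,\conj_Y)$ is governed by the equivariant isomorphism type of $\conj_*$ on $H_2(Y)$, equivalently by the isometry class of the eigenlattice $\L=\ker(1+\conj_*)\cap K_Y^\perp$ recorded in Tab.\,\ref{C-Y-X-correspondence}. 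I would enumerate the admissible involutions on this lattice, read off from each both the topology of $Y_\R$ (hence of $C_\R\subset Q_\R$) and a realizing curve obtained by smoothing a suitable reducible sextic à la Brusotti, and verify that the arithmetic list has exactly eleven entries. The realization step then yields the lower bound of eleven classes, while the Torelli/lattice step yields simultaneously the upper bound and the injectivity of the code; the whole difficulty is thus concentrated in this lattice‑theoretic Torelli argument, which is precisely the input borrowed from \cite{DIK}.
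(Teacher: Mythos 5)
Your proposal follows essentially the same route as the paper: the paper offers no independent proof of this theorem at all, declaring the classification well known and citing \cite{DIK} (A3.6.1, 17.3), which is precisely where you send both of the genuinely hard points (the exclusion of the mixed codes $\la2\,|\,1\ra$, $\la1\,|\,2\ra$, $\la2\,|\,2\ra$, $\la3\,|\,1\ra$, $\la1\,|\,3\ra$ and the ``one code $=$ one deformation class'' statement), so your lattice/Torelli core coincides with the paper's citation, merely supplemented by the elementary Thom--Ehresmann and B\'ezout preliminaries. One slip in those preliminaries: the generatrix count does \emph{not} bound the number of ovals by $4$ --- it only forces the ovals to have disjoint, non-nested projections to $\P^1_\R$, which is compatible with arbitrarily many ovals --- so the bound $p+q\le 4$ rests entirely on Harnack's inequality for the genus-$4$ curve, which you do invoke in the same sentence, leaving the conclusion intact.
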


\begin{theorem}
There exist 11 deformation classes of real del Pezzo surfaces $Y$ of degree 1. These classes are distinguished by the topological types
of $Y_\R$, which are listed in the second row of Tab.\,\ref{C-Y-X-correspondence}.
\qed\end{theorem}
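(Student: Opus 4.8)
The plan is to reduce the statement to the already-established deformation classification of real sextics on the cone (Theorem \ref{deform-sextics}) by means of the standard double-plane model, and then to read off the homeomorphism type of $Y_\R$ code by code.

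First I would fix the correspondence. By the conventions of Section \ref{conventions}, a real del Pezzo surface $Y$ of degree $1$ is the same datum as its anti-bicanonical double covering $\pi\colon Y\to Q$ branched at the vertex $\v$ and along the sextic $C=Q\cap\{\text{cubic}\}$, together with the rule $\pi(Y_\R)=Q_\R^+$ that singles out a half of $Q_\R$. Both this model and its inverse are natural in families, so a deformation of $Y$ amounts to a deformation of the pair $(Q,C)$ through which the chosen half varies continuously; by Section \ref{real-loci} this is equivalent to a real sextic together with an orientation of the generatrices of $Q_\R$. Theorem \ref{deform-sextics} lists exactly $11$ such objects, whence $11$ deformation classes of $Y$.

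Next I would compute $Y_\R$ from the real branched covering $\pi_\R\colon Y_\R\to Q_\R^+$. Here $Q_\R$ is a sphere, $\v$ is a topologically smooth point of it at which, in suitable local real coordinates, $\pi_\R$ is the quotient map $\R^2\to\R^2/\{\pm1\}$, and $Q_\R^+$ is the union of the regions of $Q_\R\sm C_\R$ on which the cubic is nonnegative; in particular $\v\in Q_\R^+$. Viewing $\pi_\R$ as the quotient by the Bertini involution $\beta$, whose fixed locus is $C_\R\sqcup\{\tilde\v\}$ with $\tilde\v=\pi_\R^{-1}(\v)$, the Euler-characteristic formula for involutions gives
\[
\chi(Y_\R)=2\chi(Q_\R^+)-\chi(C_\R)-1=2\chi(Q_\R^+)-1,
\]
since $C_\R$ is a disjoint union of circles. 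For each of the $11$ codes the configuration of $C_\R$ presents $Q_\R^+$ as a planar surface — a disk carrying $\v$, with one extra boundary circle for every positive oval, a disjoint disk for every negative oval, and the $Q_\R^+$-bands cut out between consecutive $J$-components — and the displayed formula already reproduces the Euler characteristics of the surfaces in the $Y_\R$-row of Tab.\,\ref{C-Y-X-correspondence}.

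The step I expect to be the real obstacle is to upgrade this from Euler characteristic to the full diffeomorphism type, i.e. to fix the number of components together with the orientability and genus of each. One input is clean and decisive: because $\v$ is an \emph{isolated} branch point, the component of $Y_\R$ lying over the region that contains $\v$ has the odd Euler characteristic $2\chi(R_\v)-1$, hence is non-orientable, which is the source of the ubiquitous $\Rp2$-summand (a disk carrying $\v$ gives $\chi=1$, hence exactly $\Rp2$). The remaining contributions are local: a negative oval bounds a disk disjoint from $\v$ and lifts to a sphere; a positive oval removes a disk from the $\v$-region and its branch circle attaches an orientable handle, so $p$ positive ovals turn $\Rp2$ into $\Rp2\#p\,\T^2$, matching $\chi=1-2p$. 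The genuinely delicate point is the orientability of the even-Euler-characteristic components, i.e. the torus-versus-Klein alternative for a $Q_\R^+$-band (as occurs in the code $\la|||\ra$): both candidates have $\chi=0$, and distinguishing them requires analysing the monodromy of the real covering along the core of the band. I would trace this monodromy back to the branch structure — the core of such a band separates $\v$ from the rest of $Q_\R$ — and check that it is nontrivial, so that the cover is a Klein bottle rather than a torus; establishing this monodromy claim carefully is the crux of the whole argument. Assembling the four contributions yields, for every code, the component list in the $Y_\R$-row of Tab.\,\ref{C-Y-X-correspondence}. Finally, inspecting that list shows the $11$ types are pairwise distinct — they are separated by their numbers and types of components and by their Euler characteristics — so the topological type of $Y_\R$ distinguishes the deformation classes, as asserted.
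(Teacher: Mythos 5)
Most of your argument is sound, and it is worth noting that the paper itself gives no proof of this statement --- it is quoted as well known from \cite{DIK} --- so your proposal is necessarily an independent argument. The reduction to Theorem \ref{deform-sextics}, the formula $\chi(Y_\R)=2\chi(Q_\R^+)-1$, and the slick observation that the component over the region containing $\v$ has odd Euler characteristic (hence is non-orientable, hence is determined by $\chi$ alone) correctly settle every code except $\la\,|\,|\,|\,\ra$. But for that code there is a genuine gap, and it sits exactly where you flag it. The problem is not merely that you postpone the monodromy check: the model you set up cannot decide it. In your model ($Q_\R$ a sphere, $\v$ a smooth point, $Y_\R\to Q_\R^+$ a double cover branched along $C_\R$ and at $\v$), the cover is determined by a monodromy homomorphism $\pi_1(\operatorname{int}Q_\R^+\sm\{\v\})\to\Z/2$; near each boundary circle the cover is either the two-sheeted fold or the twisted model in which the branch circle lifts to the core of a M\"obius band, according to the monodromy of a parallel loop. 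The only constraint your model imposes is a parity constraint: the component containing $\v$ must have an odd number of M\"obius-type boundary circles, and every other component an even number. For the band of $\la\,|\,|\,|\,\ra$ (two boundary circles, no vertex) both admissible options are consistent: zero M\"obius circles gives $\T^2$, two give $\Kl$. So no argument of the form ``the core separates $\v$ from the rest'' can extract the answer; the needed information has been erased by your opening simplification, which is in fact false --- $Q_\R$ is not a sphere but a sphere with two points identified, $\v$ being a pinch point with two local sheets of which exactly one lies in $Q_\R^+$, and this is precisely the feature that forces the twist.

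To close the gap you need an input that sees the actual covering. One clean option, in the spirit of the paper's B\'ezout arguments: arrange the core of the band to be the real locus $H_\R$ of a real hyperplane section $H$ of $Q$ disjoint from $C_\R$ and from $\v$. The conic $H\cong\P^1$ is conjugation-invariant and is cut by $H_\R$ into two conjugate discs; the six points of $H\cap C$ are non-real, hence form three conjugate pairs, so each disc contains exactly three of them. A disc bounded by $H_\R$ therefore meets the branch curve in an odd number of points, so the double cover restricted to $H_\R$ is connected, i.e.\ the monodromy along the core is nontrivial; hence the band lifts to $\Kl$ rather than $\T^2$ (and, as a byproduct, every $J$-component is a M\"obius-type branch circle, re-proving your non-orientability claims without the Euler-characteristic trick). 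Alternatively, compute in the weighted model $Y=\{w^2=f_6(x,y,z)\}\subset\P(1,1,2,3)$, where the real identification $(x,y,z,w)\sim(-x,-y,z,-w)$ exhibits the same twist directly. Without such an argument your table entry for $\la\,|\,|\,|\,\ra$, and with it the list asserted in the theorem, is not established.
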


\begin{theorem}\label{11-X-classes}
Under deformations preserving the assumption $\A$, the  real rational
elliptic surfaces $X$ satisfying the assumption $\A$ form 11  deformation classes.
Each of the deformation classes is determined by the topological type of $X_\R$.
These topological types are listed in the third row of Tab.\,\ref{C-Y-X-correspondence}.
\qed\end{theorem}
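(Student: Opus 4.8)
The plan is to deduce the statement from the classification of real del Pezzo surfaces of degree $1$ (the theorem immediately preceding) by means of the canonical correspondence $(X,L)\leftrightarrow Y$ recorded in Section \ref{conventions}, reading off the topological types of $X_\R$ from those of $Y_\R$ through the analysis of a single real blow-up.

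First I would set up the deformation dictionary. By the conventions of Section \ref{conventions}, blowing down a real line $L\subset X$ produces a nonsingular real del Pezzo surface $Y$ of degree $1$, and conversely blowing up the base point of the anticanonical pencil $|-K_Y|$ reconstructs $(X,L)$; this is a bijection between isomorphism classes of pairs $(X,L)$ and of surfaces $Y$. I would check that it respects real deformations: a real deformation of $Y$ through nonsingular del Pezzo surfaces of degree $1$ moves the real base point of $|-K_Y|$ smoothly, so its blow-up yields a real deformation of $(X,L)$ through surfaces satisfying $\A$, and the blow-down is its inverse. One also verifies that the blow-up of any nonsingular del Pezzo of degree $1$ really satisfies $\A$: the elliptic fibration is relatively minimal without multiple fibers by construction; every anticanonical member of $Y$ is irreducible because $-K_Y$ is ample with $(-K_Y)^2=1$, so the fibers are reduced irreducible; and the exceptional curve of the blow-up is itself a real line, so the set of real lines is nonempty.

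Next I would compute the passage from $Y_\R$ to $X_\R$. The base point of $|-K_Y|$ is the ramification point $\pi^{-1}(v)$ over the vertex of $Q$; it is real and lies on the non-orientable component of $Y_\R$, namely the one carrying the $\Rp2$ summand (the component mapping to a neighborhood of $v$ in $Q_\R^+$). Blowing up a real point replaces a disc neighborhood by a Möbius band, i.e.\ forms the connected sum of that component with $\Rp2$; since $\Rp2\#\Rp2=\K$, each type in the second row of Tab.\,\ref{C-Y-X-correspondence} turns into the corresponding type of the third row. This identifies the eleven possible $X_\R$, and they are pairwise non-homeomorphic, being distinguished by the non-orientable genus of their components and by the number of spherical components.

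Finally I would close the counting. The preceding theorem provides exactly eleven deformation classes of $Y$; through the dictionary the induced map on deformation classes $[Y]\mapsto[\text{blow-up of }Y]$ is well defined, and it is surjective because every $X$ satisfying $\A$ carries a real line and hence blows down to some $Y$. Thus there are at most eleven classes of $X$. On the other hand the eleven $X_\R$ just computed are all realized and pairwise non-homeomorphic, while the topological type of $X_\R$ is constant along a deformation through $\A$-surfaces (Ehresmann's theorem applied to the smooth family of real loci). Hence there are at least eleven classes, so exactly eleven, and the topological type of $X_\R$ separates them. The point requiring the most care is the real-locus computation under blow-up, namely locating the base point on the $\Rp2$ component and confirming the substitution $\Rp2\mapsto\K$ for every entry; the deformation bijection and the final count are then formal.
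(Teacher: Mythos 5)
Your proof is correct and takes essentially the same route as the paper: the paper states this theorem with a \qed precisely because it regards it as ``a straightforward consequence'' of the classification of real del Pezzo surfaces of degree 1 via the correspondence $(X,L)\leftrightarrow Y$, and your argument is exactly that consequence written out (blow-up at the real base point $\pi^{-1}(v)$ of $|-K_Y|$, turning the relevant $\Rp2$-type component into a Klein-bottle-type one, together with deformation invariance of $X_\R$ and the counting). Your care in locating the base point on the component of $Y_\R$ covering a neighborhood of the vertex $v$ in $Q_\R^+$ is the right way to settle the one genuinely delicate case, $\la\,|\,|\,|\,\ra$, where both components of $Y_\R=\Rp2\+\Kl$ are non-orientable and the wrong choice would give $\Rp2\+(\Kl\#\Rp2)$ instead of $\Kl\+\Kl$.
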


\subsection{
Lines and  positive tritangents via roots of $E_8$}\label{L-to-l}

As is known, the orthogonal complement of $K_Y$ in $H_2(Y)$ is $K_Y^\perp=E_8$. 
On the other hand, the adjunction formula implies
$L\cdot K_Y=-1$ for any line $L\subset Y$.
The following fact is also well known (see \cite[Theorem 2.1.1]{TwoKinds} and references therein).

\begin{proposition}\label{delPezzo-line-root-correspondence}
Assume that $Y$ is a real del Pezzo surface of degree 1
with the canonical divisor class $K_Y$. Then:
\begin{enumerate}
\item Every homology class $h\in H_2(Y)$ with $h^2=-1$, $h\cdot K_Y=-1$ 
is realized by a line $L\subset Y$.
This establishes a one-to-one correspondence between
the set of lines in $Y$ and the set 
$\{h\in H_2(Y)\,|\,h^2=h\cdot K_Y=-1\}$. 
\item For every root $e\in E_8$ there exists a unique line $L_e$ that realizes the homology class $-K_Y-e$.
This establishes a one-to-one correspondence between the set of lines in $Y$ and the set of roots in $E_8$.
\item If $Y$ is real, then a line $L_e$ is real if and only if $e\in\L=K_Y^\perp\cap\ker(1+\conj_*)$.
\end{enumerate}
\qed\end{proposition}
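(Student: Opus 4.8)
The plan is to take the three assertions in turn, reducing (1) and (2) to Riemann--Roch together with intersection-number bookkeeping on the lattice $H_2(Y)$, and reserving the real care for (3). Throughout I would use the presentation of $Y$ as $\P^2_\R$ blown up at eight points, so that $H_2(Y)=\la h\ra\oplus\la e_1\ra\oplus\dots\oplus\la e_8\ra$ with $h^2=1$, $e_i^2=-1$, the $h$ and $e_i$ pairwise orthogonal, and $K_Y=-3h+\sum e_i$; the excerpt already grants $K_Y^\perp=E_8$ and $L\cdot K_Y=-1$, which I will use freely.

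For (1), the forward direction is immediate: a line is a smooth rational $(-1)$-curve, so $[L]^2=-1$, and the adjunction formula $2g-2=[L]^2+[L]\cdot K_Y$ with $g=0$ recovers $[L]\cdot K_Y=-1$. For the converse I would first produce an effective representative of a class $h$ with $h^2=h\cdot K_Y=-1$: Riemann--Roch gives $\chi(\O_Y(D))=\chi(\O_Y)+\tfrac12(D^2-D\cdot K_Y)=1$, while Serre duality forces $h^2(D)=h^0(K_Y-D)=0$ since $(K_Y-D)\cdot(-K_Y)=-2<0$ and $-K_Y$ is ample; hence $h^0(D)\ge1$. Ampleness of $-K_Y$ also gives $C\cdot K_Y\le-1$ for every irreducible curve $C$, so pairing an effective representative $\sum a_iC_i$ with $K_Y$ leaves room for exactly one component, of multiplicity one and with $C\cdot K_Y=-1$; then $C^2=-1$ and adjunction make $C$ a line. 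Injectivity and uniqueness reduce to the standard fact that an irreducible curve of negative self-intersection is the only effective divisor in its class.

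Assertion (2) is then pure arithmetic: $e\mapsto -K_Y-e$ sends a root $e\in K_Y^\perp$ to a class $h$ with $h\cdot K_Y=-K_Y^2=-1$ and $h^2=K_Y^2+e^2=1-2=-1$, and inversely $h\mapsto e=-K_Y-h$ returns $e\cdot K_Y=0$ and $e^2=-2$; this is a bijection between the roots of $E_8$ and the classes described in (1), which combined with (1) matches roots with lines.

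The main obstacle is (3), where the sign in $\ker(1+\conj_*)$ (rather than $\ker(1-\conj_*)$) must be accounted for by carefully tracking orientations. I would use two facts: $\conj$ preserves the orientation of the real four-manifold $Y$ (complex conjugation on $\C^2$ has positive Jacobian), hence preserves the intersection form; but on a complex curve, of real dimension two, $\conj$ reverses orientation, so $\conj_*[C]=-[\overline C]$ for every complex curve $C$ taken with its complex orientation. Applied to a real anticanonical divisor this yields $\conj_*K_Y=-K_Y$, so $\conj_*$ preserves $K_Y^\perp=E_8$; applied to $L_e$ it gives $[\overline{L_e}]=-\conj_*[L_e]=-K_Y+\conj_* e$, whence $\overline{L_e}=L_{-\conj_* e}$. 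Therefore $L_e$ is real precisely when $-\conj_* e=e$, i.e. $(1+\conj_*)e=0$, which together with $e\in K_Y^\perp$ is exactly $e\in\L$. Getting this orientation accounting right --- in particular the passage to $\conj_*K_Y=-K_Y$ --- is the one genuinely delicate point, since it is the orientation reversal on curves that produces the $\ker(1+\conj_*)$ rather than $\ker(1-\conj_*)$.
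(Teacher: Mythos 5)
Your proposal is correct. Note that the paper itself gives no proof of this proposition: it is stated as ``well known'' with a reference to \cite[Theorem 2.1.1]{TwoKinds}, so there is no internal argument to compare against; your write-up supplies the standard proof that such a citation points to. Concretely: part (1) via Riemann--Roch, Serre duality, and ampleness of $-K_Y$ (forcing an effective representative to be a single reduced irreducible component of arithmetic genus $0$, hence a line, and negative self-intersection giving uniqueness of the effective representative); part (2) by the lattice computation $h=-K_Y-e$, using $K_Y^2=1$ and $e\in K_Y^\perp$; part (3) by the orientation bookkeeping $\conj_*[C]=-[\conj(C)]$ for complex curves, which applied to a real member of the (real) pencil $\vert-K_Y\vert$ gives $\conj_*K_Y=-K_Y$ and hence $\conj(L_e)=L_{-\conj_*e}$, so that reality of $L_e$ is exactly $e\in\ker(1+\conj_*)\cap K_Y^\perp=\L$. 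All the steps check: the numerical inputs $\chi(\O_Y)=1$, $(K_Y-D)\cdot(-K_Y)=-2<0$, and $C\cdot K_Y\le -1$ for irreducible $C$ are right, and your sign analysis in (3) correctly produces $1+\conj_*$ rather than $1-\conj_*$. Two points you use tacitly and could make explicit: the identification $\Pic(Y)\cong H_2(Y)$ that lets you feed a homology class into Riemann--Roch (valid since $Y$ is rational, so $H^1(\O_Y)=H^2(\O_Y)=0$), and the existence of a $\conj$-invariant anticanonical curve, which holds because $\vert-K_Y\vert$ is a pencil defined over $\R$ and a real $\P^1$ has real points. Neither is a gap, just worth a sentence each.
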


Since the Bertini involution acts on $K_Y^\perp\subset H_2(Y)$ as multiplication by $(-1)$, we have also an analogous correspondence
for positive tritangents.

\begin{proposition}\label{one-to-one}
For each root $e\in E_8$, the Bertini involution interchanges the lines
$$
L_e=-K_Y-e, \quad L_{-e}=-K_Y+e,
$$
while the projection
$\pi: Y\to Q$ maps them to a tritangent. When $Y$ and $L_{\pm e}$ are real, $\pi(L_{\pm e})$ is a 
positive tritangent.
Conversely, each tritangent (resp. 
positive tritangent) is covered by a pair of lines (resp. real lines), which are permuted by the Bertini involution.
This gives a one-to-one correspondence between the set of pairs of opposite roots $\{\pm e\}\subset E_8$ {\rm (}resp., the set of pairs of opposite roots 
$\{\pm e\}\subset \L${\rm )} and the set of tritangents {\rm (}resp., the set of
positive tritangents{\rm )}.
\qed\end{proposition}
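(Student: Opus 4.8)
The plan is to deduce everything from Proposition \ref{delPezzo-line-root-correspondence} together with the fact, recalled just above, that $\beta$ acts on $K_Y^\perp=E_8$ as $-\id$. \textbf{First} I would settle the action of the Bertini involution. Since $\beta$ is an automorphism of $Y$ it fixes $K_Y$, while $\beta_*=-\id$ on $E_8$; hence
\[
\beta_*[L_e]=\beta_*(-K_Y-e)=-K_Y+e=[L_{-e}],
\]
and the uniqueness in Proposition \ref{delPezzo-line-root-correspondence}(2) forces $\beta(L_e)=L_{-e}$. As $\beta$ is the deck transformation of $\pi$, the two lines share the image $l:=\pi(L_e)=\pi(L_{-e})$. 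To see that $l$ is a hyperplane section I would compute its degree via $\pi^*H=-2K_Y$: because $\beta(L_e)\ne L_e$ the map $\pi|_{L_e}$ is birational onto its image, so $\deg l=L_e\cdot\pi^*H=-2\,K_Y\cdot L_e=2$. Thus $l$ is an irreducible conic, hence planar, i.e.\ a hyperplane section of $Q$; being an irreducible conic it cannot pass through $\v$ (a hyperplane through the vertex cuts $Q$ in a pair of generatrices), so it avoids the branch point $\v$.

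\textbf{Next} I would identify tritangency with the splitting of the double cover. Restricting $\pi$ over $l\cong\P^1$ yields a double cover branched exactly along $l\cap C$; since $l\in|H|$ and $C\in|3H|$ with $H^2=\deg Q=2$, the branch divisor has degree $l\cdot C=3H^2=6$, i.e.\ six points counted with multiplicity. Such a cover is reducible precisely when this divisor is even, i.e.\ when $l\cap C=2D$ for an effective $D$ of degree $3$ — exactly the condition that $l$ be tritangent to $C$ (three tangency points counted with multiplicity). Here $\pi^{-1}(l)=L_e\cup L_{-e}$ is reducible, so $l$ is a tritangent. If $Y$ and $L_{\pm e}$ are real, then $l$ is real and $l_\R\subset\pi(Y_\R)=Q_\R^+$, so $l$ is a positive tritangent by definition.

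\textbf{For the converse} I would run the same equivalence backwards. Starting from a tritangent $l$, reducibility of the branched cover gives $\pi^{-1}(l)=L'\cup L''$ with $\beta(L')=L''$. Writing $[L']=a(-K_Y)+v$ with $v\in E_8$, the relations $[L']+[L'']=\pi^*[l]=-2K_Y$ and $\beta_*[L']=[L'']$ force $a=1$, so $[L']=-K_Y+v$ and $[L'']=-K_Y-v$. Since $L'\cong\P^1$ (a smooth section of $\pi$ over $l$), adjunction gives $(L')^2=-2-K_Y\cdot L'=-1$, whence $v^2=-2$: thus $v$ is a root and $L'=L_{-v}$, $L''=L_{v}$. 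Combined with the bijection lines $\leftrightarrow$ roots of Proposition \ref{delPezzo-line-root-correspondence}, this shows the assignments $\{\pm e\}\mapsto l$ and $l\mapsto\{\pm e\}$ are mutually inverse, giving the asserted bijection between pairs of opposite roots in $E_8$ and tritangents. The real statement then follows by restricting to $\{\pm e\}\subset\L$ through part (3) of that proposition: $e\in\L$ iff both $L_{\pm e}$ are real iff the positive tritangent $l$ is real.

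\textbf{The main obstacle} I expect is the geometric heart of the middle step — the clean equivalence ``$\pi^{-1}(l)$ reducible $\Leftrightarrow$ branch divisor even $\Leftrightarrow$ $l$ tritangent'' — together with the adjunction/genus bookkeeping in the converse and the precise matching of the multiplicities in $l\cap C$ with the paper's definition of tritangent (tangency points counted with multiplicity). By contrast, the lattice-theoretic input (the $\beta$-action on $E_8$ and the class computations) is entirely formal.
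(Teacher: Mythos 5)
Your treatment of the complex statement is correct, and it actually supplies a proof where the paper offers none (the paper states this proposition with a \qed, as an immediate consequence of Proposition \ref{delPezzo-line-root-correspondence} and of the fact that the Bertini involution acts as $-\id$ on $K_Y^\perp$). The computation $\beta(L_e)=L_{-e}$ via uniqueness of lines in a homology class, the degree count $\pi^*H=-2K_Y$ showing that $\pi(L_e)$ is an irreducible conic, hence a hyperplane section avoiding the vertex, the equivalence ``$\pi^{-1}(l)$ reducible $\Leftrightarrow$ $l\cap C$ even $\Leftrightarrow$ $l$ tritangent'' over $l\cong\P^1$, and the adjunction argument recovering a root $v$ with $\pi^{-1}(l)=L_v\cup L_{-v}$ are all sound, and together they do give the asserted bijection over $\C$.

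The gap is in the real refinement, precisely in the sentence where you dispose of it. The chain ``$e\in\L$ iff both $L_{\pm e}$ are real iff the positive tritangent $l$ is real'' is not a consequence of Proposition \ref{delPezzo-line-root-correspondence}(3): that item only relates reality of the line $L_e$ to $e\in\L$. What the converse actually requires is that a \emph{positive} tritangent is covered by a pair of \emph{real} lines, and this needs an argument: $\conj$ preserves $\pi^{-1}(l)=L_v\cup L_{-v}$ and a priori either fixes each component (both lines real, $\conj_*v=-v$) or swaps them (neither line real; the class computation then forces $\conj_*v=+v$). Both cases genuinely occur. For instance, for $C_\R$ of type $\la0\,|\,4\ra$, i.e.\ $Y_\R=\Rp2\+4\SSS^2$, one has $\L=0$, so $\conj_*=+\id$ on $E_8$; then \emph{all} $120$ tritangents are real, yet none is positive and none is covered by real lines --- each is covered by a conjugate-swapped pair, and its real locus lies in $Q_\R^-$ (consistently with the zero column of Tab.~\ref{tab1}). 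So positivity is exactly what must rule out the swapped case, and your proof never uses it. The missing step is short: the divisor $D$ with $l\cap C=2D$ is $\conj$-invariant of odd degree $3$, hence has a real point, so $l_\R\sm C_\R\ne\varnothing$; a point $x\in l_\R\sm C_\R\subset Q_\R^+$ lies in the open image of $Y_\R$, so both points of the fiber $\pi^{-1}(x)$ are real; since each of the two components contains exactly one of these points and $\conj$ fixes both, $\conj$ preserves each component, i.e.\ both lines are real. (A variant of the same uniqueness-of-preimage argument --- the unique preimage on $L_e$ of a $\conj$-fixed point of $l$ is $\conj$-fixed --- is also what justifies your unproved assertion $l_\R\subset\pi(Y_\R)$ in the forward direction.)
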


Note that each real tritangent $\Tri$ of $C\subset Q$, like any real hyperplane section of $Q$ not passing through the vertex $\v\in Q$, 
divides $Q_\R$ into 2 half-cones. 
The half-cone which contains the germ of $Q^+_\R$ at $v\in Q$ will be denoted by $\hat \Tri$.

\subsection{Positivity of intersection for totally real $\mathbf{conj}$-anti-invariant 2-cycles}\label{positivity}
By an {\it anti-invariant 2-cycle}
in a nonsingular complex surface $Y$ with a real structure $\conj_Y: Y\to Y$ we mean 
an embedded orientable smooth 2-submanifold $Z\subset Y$ such that $Z=\conj_Y Z$ and
$\conj_Y|_Z$ is orientation-reversing. We say that $Z$ is {\it totally real}, if the tangent space $T_pZ$ is not complex
(equivalently, if $T_pY=T_pZ+ J T_pZ$ where $J$ stands for the multiplication by  $\sqrt{-1}$) for each $p\in Z$.

If $Z$ is a totally real anti-invariant 2-cycle $Z$ and $p\in Z_\R$,
then there exists a real basis $v,w\in T_pY_\R$
such that $v$ and $Jw$ is a basis of
$T_pZ$. Moreover, such vectors $v$ and $w$ are unique up to rescaling.
 The local orientation of $Y_\R$ at $p$ given by $v\wedge w$ and
 the local orientation of $Z$ given by $v\wedge Jw$ are said to be
 {\it coherent}.
 
 A smooth arc $\gamma\subset Y$ is called {\it $\conj$-anti-invariant} (resp., {\it $\conj$-invariant}) if  $\conj\vert_\gamma(\gamma)=\gamma$ and $\conj\vert_\gamma$ is reversing orientation
 (resp., if $\conj\vert_\gamma =\id$).
 
\begin{proposition}\label{cycle-cycle-intersection}
Assume that $p\in Y_\R$ is a point of transversal intersection of totally real conj-anti-invariant 2-cycles $Z_1$ and $Z_2$.
Choose some local orientation of $Y_\R$ at $p\in Y_\R$ and 
coherent with it local orientations of $Z_1$ and $Z_2$.
 Assume that there exists a smooth real algebraic curve $C\subset Y$ which contains $p$ and locally at $p$ intersects 
 $Z_1$ along a smooth $\conj$-invariant arc, and 
$Z_2$ along a smooth $\conj$-anti-invariant one.
Then  the local intersection index of these cycles, $\ind_p(Z_1,Z_2)$, is equal to 1.
\end{proposition}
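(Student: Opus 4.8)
The plan is to reduce the statement to a single linear-algebra computation in $T_pY$, in which the hypothesis on the curve $C$ pins down exactly enough of the tangent directions to force the sign. Throughout I compute $\ind_p$ with respect to the complex orientation of $Y$, so that $\ind_p(Z_1,Z_2)$ is the sign of the real $4$-frame $(v_1, Jw_1, v_2, Jw_2)$ relative to a complex-oriented frame. Here, by the structure of totally real anti-invariant cycles recalled just above, I write $T_pZ_i=\langle v_i, Jw_i\rangle$ with $v_i,w_i\in T_pY_\R$ forming a real basis of $T_pY_\R$, and the coherent orientation of $Z_i$ is $v_i\wedge Jw_i$. Note that $T_pZ_i\cap T_pY_\R=\langle v_i\rangle$ and $T_pZ_i\cap J\,T_pY_\R=\langle Jw_i\rangle$, since $v_i$ is real, $Jw_i$ is purely imaginary, and $T_pY_\R\cap J\,T_pY_\R=0$.

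First I would analyse the curve $C$. As $C$ is a smooth real algebraic curve and $p\in C_\R=C\cap Y_\R$, the tangent $T_pC$ is a $d\conj$-invariant complex line; let $e\in T_pY_\R$ span the real line $T_pC\cap T_pY_\R$, so that $T_pC=\langle e, Je\rangle$ and $T_pC\cap J\,T_pY_\R=\langle Je\rangle$. The arc $C\cap Z_1$ is $\conj$-invariant, hence lies in $Y_\R$, so its tangent at $p$ belongs to $T_pC\cap T_pZ_1\cap T_pY_\R=T_pC\cap\langle v_1\rangle$; thus $v_1$ is proportional to $e$. Dually, for the $\conj$-anti-invariant arc $C\cap Z_2$ the differential $d\conj$ acts as $-1$ on its tangent line, which therefore lies in $J\,T_pY_\R$; being also in $T_pC\cap T_pZ_2=T_pC\cap\langle Jw_2\rangle$, it is proportional to $Je$, i.e.\ $w_2$ is proportional to $e$. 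This asymmetry $-$ the \emph{real} direction $v_1\parallel e$ coming from the invariant arc, and the \emph{imaginary} direction $Jw_2\parallel Je$ from the anti-invariant arc $-$ is the entire geometric input of $C$.

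Next I would fix a real basis $e,f$ of $T_pY_\R$, giving the complex-oriented frame $(e,Je,f,Jf)$ of $T_pY$, and write $v_1=\lambda e$, $w_1=ae+bf$, $v_2=ce+df$, $w_2=\mu e$. A direct expansion of $v_1\wedge Jw_1\wedge v_2\wedge Jw_2$ collapses (terms with a repeated $e$ or $Je$ vanish) to $-\lambda\mu bd\,(e\wedge Je\wedge f\wedge Jf)$, so $\ind_p(Z_1,Z_2)=\operatorname{sign}(-\lambda\mu bd)$, where transversality forces all four scalars to be nonzero. Finally, coherence of both orientations with one chosen orientation of $Y_\R$ means that $v_1\wedge w_1=\lambda b\,(e\wedge f)$ and $v_2\wedge w_2=-\mu d\,(e\wedge f)$ represent the \emph{same} orientation, so $\lambda b$ and $-\mu d$ share a sign; since $-\lambda\mu bd=(\lambda b)(-\mu d)$ is then positive, $\ind_p(Z_1,Z_2)=1$.

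The wedge expansion and the orientation bookkeeping are routine; the only genuine subtlety, and the point I would take most care over, is the distinct treatment of the two arcs $-$ verifying that the $\conj$-invariant arc forces a real tangent direction while the $\conj$-anti-invariant arc forces a purely imaginary one $-$ so that the two coherence conditions combine into the single positivity statement $(\lambda b)(-\mu d)>0$ rather than into an indeterminate sign. I would also double-check the one-dimensionality of the tangent-line intersections used above and the claim that $d\conj$ acts by $-1$ on the tangent of an anti-invariant arc (from $\conj(\alpha(t))=\alpha(-t)$, whence $d\conj(\alpha'(0))=-\alpha'(0)$).
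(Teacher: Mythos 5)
Your proof is correct and takes essentially the same route as the paper's: both reduce to a wedge-product computation in $T_pY$, use the $\conj$-invariant arc to force $v_1$ to be the real tangent direction of $C$ and the $\conj$-anti-invariant arc to force $Jw_2$ to be its imaginary direction (hence $w_2\parallel v_1$), and then invoke coherence of the orientations to pin down the sign. The only difference is bookkeeping: you keep general coefficients in a basis $(e,f)$ adapted to $C$, whereas the paper normalizes $v_2=w_1+\lambda v_1$ and $w_2=-v_1$ before computing the same wedge product.
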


\begin{proof} Let $v_1, w_1$ be a pair of vectors providing coherent orientations, $v_1\wedge w_1$ of $T_pY_\R$ and 
$v_1\wedge Jw_1$ of $T_p Z_1$.
For a similar pair $v_2,w_2$ for $Z_2$, 
transversality of $Z_2$ with $Z_1$ implies $v_2= w_1 + \lambda v_1$, $\lambda\in\R$. From the conditions imposed on $C$ we have $Jv_1\in T_pZ_2$, which together
with coherence of the orientations implies $w_2= -v_1$.
Now, the result follows from $v_1\wedge Jw_1\wedge(w_1+\lambda v_1)\wedge 
J(-v_1)=v_1\wedge Jv_1\wedge
w_1\wedge Jw_1$.
\end{proof}

\subsection{Oval and bridge classes}\label{oval-bridge-classes}
Let $C_0\subset Q$ be a 6-nodal sextic which splits into 3 real hyperplane sections.
 Select once and for all the 5 perturbations constructed as is shown on Fig. \ref{perturbations}.
 This yields non-singular real sextics, $C_\e\subset Q$, of types  
$\la p\,|\,q\ra$ with $p>0$,
 which we call {\it smart}.

\begin{figure}[h!]
\caption{Construction of smart sextics}
\label{perturbations}
\hskip17mm$\la 4\,|\,0\ra$\hskip18mm $\la 3\,|\,0\ra$\hskip17mm $\la 2\,|\,0\ra$\newline
\includegraphics[height=3.8cm]{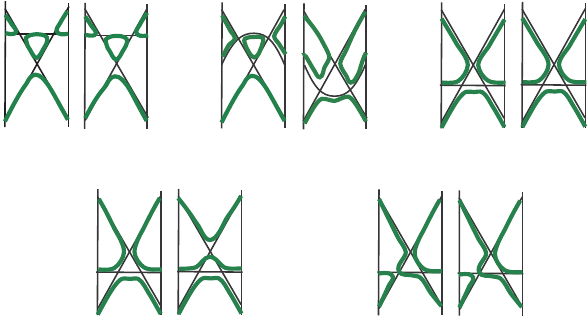}
\\
\hskip4mm $\la 1\,|\,0\ra$\hskip25mm $\la 1\,|\,1\ra$
\caption*{\footnotesize The cone $Q_\R$ is depicted as a pair of bands with 
the left-to-left and right-to-right
identification of sides in each pair.}\end{figure}

By passing to the double covering
we get a small real perturbation $Y_\e\to Q$ of a 6-nodal 
surface $Y_0$. 
Each of the 6 nodes in the case of types $\la p\,|\,0\ra$, $p=4,3,2$, 
and 5 nodes in the cases $\la 1\,|\,0\ra$ and $\la 1\,|\,1\ra$ (see Fig.\,\ref{perturbations}),
provides a $\conj$-anti-invariant
totally real vanishing cycle $B\subset Y_\e$ (well-defined up to isotopy preserving $\conj$-anti-invariance and total reality)
called a {\it bridge-cycle}. Its class in $\L\subset H_2(X)$
is denoted also by $B$ and called a {\it bridge-class}.

On the other hand, each of the $p$ positive ovals of $C_\e$ bounds a disc $D\subset Q^-_\R$, whose pull-back to $Y_\e$ is a 2-sphere which
represents a totally real $\conj$-anti-invariant cycle called an {\it oval-cycle} and denoted by $O$.
It realizes a class in $\L\subset H_2(Y_\e)$ (also denoted by $O$) called an {\it oval-class}.

Note that the real loci, $B_\R$ and $O_\R$, represent in $H_1(Y_{\e\R};\Z/2)$ the image of the bridge-class $B$ and
the oval-class $O$ under the Viro homomorphism (see \cite[Section 2.2]{TwoKinds}) 
$$\bhv : H^-_2(Y_\e)
\to H_1(Y_{\e\R};\Z/2),\quad H^-_2(Y_\e)=\ker(1+\conj_*)\subset  H_2(Y_{\e}).
$$

By construction, each bridge-class is incident
to two connected components of $C_{\e\R}$, which may coincide. When the positive ovals
of $C_\e$ are  numerated consecutively, 
the oval-class corresponding to the $i$-th oval is denoted by $O_i$,
a bridge-class  incident to the J-component and $O_i$ is denoted by $B_i$,
and a bridge-class incident to $O_i$ and $O_j$,
$j=i+1$, by $B_{ij}$.

\begin{figure}[h!]
\caption{Oval- and bridge-classes in the root-graph of $\L$}\label{oval-bridge-graphs}
\hbox{
$\L=E_8$\hskip3mm
\xymatrix@1@=15pt{
*+[Fo]{O_1}\ar@{-}[r]&B_{12}\ar@{-}[r]&*+[Fo]{O_2} \ar@{-}[r]&B_{23}\ar@{-}[r]&
*+[Fo]{O_3} \ar@{-}[r] \ar@{-}[d] &B_{34} \ar@{-}[r]&*+[Fo]{O_4}\\
&&&&{B_3} 
}
\hskip14mm
\xymatrix@1@=15pt{
{}\ar@{-}[r]&*+[Fo]{O_1}\ar@{-}[r] \ar@{-}[d]&*+[Fo]{O_2} \ar@{-}[d]\\
&*+[Fo]{O_4} \ar@{-}[r]&*+[Fo]{O_3}\ar@{-}[r]&{}
}}
\vskip3mm
\hbox{
$\L=E_7$\hskip3mm\xymatrix@1@=15pt{
B_{1} \ar@{-}[r]&*+[Fo]{O_1} \ar@{-}[r]&B_{12}\ar@{-}[r]& *+[Fo]{O_2} \ar@{-}[r] \ar@{-}[d] &B_{23} \ar@{-}[r]&*+[Fo]{O_3}\\
&&& B_2
}
\hskip25mm
\xymatrix@1@=15pt{
{}\ar@{-}[r]&*+[Fo]{O_1}\ar@{-}[r] \ar@{-}[dr]&*+[Fo]{O_2} \ar@{-}[d]\ar@{-}[r]&{}\\
&{} \ar@{-}[r]&*+[Fo]{O_3}
}}
\vskip3mm
\hbox{
$\L=D_6$\hskip3mm
\xymatrix@1@=15pt{
B_{1} \ar@{-}[r]&*+[Fo]{O_1} \ar@{-}[r]&B_{12}\ar@{-}[r]& *+[Fo]{O_2} \ar@{-}[r] \ar@{-}[d] &B_{2}\\
&&& B_2'
}
\hskip34mm
\raisebox{4mm}{\xymatrix@1@=15pt{
&&&&\\
{}\ar@{-}[r]&*+[Fo]{O_1} \ar@{-}[r] 
\ar@{-} `u[r] `[r]
\ar@{-}[d]&*+[Fo]{O_2} \ar@{-}[d]\ar@{-}[r] 
\ar@{-} `u[l]
&{}\\
&&
}}}

\vskip3mm
\hbox{
$\L=D_4+A_1$\hskip14mm
\xymatrix@1@=15pt{
B_{1} \ar@{-}[r]&*+[Fo]{O_1} \ar@{-}[r] \ar@{-}[d] &B_{1}'&B_{11}\\
& B_1''
}
\hskip24mm
\xymatrix@1@=15pt{
&&\raisebox{-2pt}{\text{---}}{}
&&\\
{}&\ar@{-}[r]&*+[Fo]{O_1}\ar@{-}[r]
\ar@{-} `l[u]
`[r] 
 \ar@{-} `r[u] `[l]
 \ar@{-}[dl]\ar@{-}[dr]&{}
 \\
&&&
}}

\vskip3mm
\hbox{
$\L=D_4$\hskip22mm
\xymatrix@1@=15pt{
B_{1} \ar@{-}[r]&*+[Fo]{O_1} \ar@{-}[r] \ar@{-}[d] &B_{1}'\\
& B_1''
}
\hskip34mm
\xymatrix@1@=15pt{
{}&\ar@{-}[r]&*+[Fo]{O_1}\ar@{-}[r]
\ar@{-}[dl]\ar@{-}[dr]&{}\\
&&&
}}
\end{figure}

Fig.\,\ref{oval-bridge-graphs} shows the incidence relations between the bridge- and oval-classes.
In the rightmost column
the oval-classes $O_i\in\L$ are depicted as circles and 
bridge-classes as line segments which either join two ovals, or join an oval with a J-component of $C_\R$ and depicted as
pendant line segments attached to ovals.
 
 In the middle
 column we present the graphs, where oval- and bridge-cycles are taken as vertices, while edges
 show the incidences between these cycles. More precisely, we indicate only a part of edges, to obtain Coxeter's graph of the lattice $\L$.
In the last three rows representing  $\L=D_6$, $D_4$, and $D_4+A_1$,
there are several pending bridges incident to $O_i$ and
we use beyond $B_i$ also notation $B_i'$, $B_i''$
(without a particular rule, just to distinguish).
If $\L=D_4+A_1$ there exists also a bridge-cycle $B_{11}$ double incident to $O_1$. 
It represents a separate vertex corresponding to $A_1$ in Coxeter's graph of $\L$
(see Proposition \ref{graph-adjacency}).

\subsection{Orientation of oval- and bridge-cycles}\label{CD-graphs} 
There exists a natural way to orient  oval- and bridge-cycles.
It is determined after fixing
a real generatrix $\FFF\subset Q$ as in Section \ref{swept}
(so that it does not intersect the ovals) 
and an orientation of the real part $Y^0_\R$ of
$Y^0=\pi^{-1}(Q\sm \FFF)$. 
The orientation of the oval- and bridge-cycles contained in $Y^0$ are chosen
coherently with the orientation of $Y_\R^0$ in the sense of Section \ref{positivity}.

\begin{proposition}\label{graph-adjacency}
Let us orient as specified above the bridge- and oval-cycles in the middle column of Fig. \,\ref{oval-bridge-graphs} that are  different from $B_{11}$
and choose any orientation for $B_{11}$.
Then their pairwise intersection indices 
 are $+1$ for cycles representing adjacent vertices and $0$ otherwise.
\end{proposition}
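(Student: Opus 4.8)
The plan is to exploit the bipartite structure of the graphs in the middle column of Fig.~\ref{oval-bridge-graphs}: in every case the vertices split into oval-cycles and bridge-cycles, and every edge joins an oval to a bridge. Thus the pairwise indices among distinct cycles fall into three cases. First, two distinct oval-cycles $O_i,O_j$ are disjoint, since the positive ovals bound disjoint discs $D_i,D_j\subset Q^-_\R$, so that $O_i=\pi^{-1}(D_i)$ and $O_j=\pi^{-1}(D_j)$ are disjoint and $O_i\cdot O_j=0$. Second, two distinct bridge-cycles are supported near two distinct nodes of the degeneration $Y_0$ (see Fig.~\ref{perturbations}); choosing the vanishing cycles inside disjoint neighbourhoods of these nodes makes them disjoint, so their index is $0$. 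In particular the cycle $B_{11}$ appearing for $\L=D_4+A_1$ sits at its own node, disjoint from every other cycle, which is precisely why its orientation is irrelevant and it forms an isolated vertex. Consequently the only possibly nonzero off-diagonal indices are those between an oval-cycle and a bridge-cycle.

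Next I would analyse an oval-bridge pair $(O_i,B)$ geometrically to match the incidences recorded in Fig.~\ref{oval-bridge-graphs}. If $B$ is not incident to $O_i$, its supporting node lies away from the disc $D_i$, so $B$ and $O_i=\pi^{-1}(D_i)$ admit disjoint representatives and $O_i\cdot B=0$. If $B$ is incident to $O_i$, the node lies on the oval $o_i$; I would work in the standard local model of the branched double cover over a node of the branch sextic, where $Y_0$ acquires an ordinary double point whose resolution produces the vanishing sphere $B$. In this model one checks that $B$ meets $O_i$ at a single point $p$, transversally, and that $p$ lies on the lifted branch circle $\pi^{-1}(o_i)\subset O_i\cap Y_\R$.

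It then remains to compute the local index at $p$ and to fix its sign. Here I would apply Proposition~\ref{cycle-cycle-intersection} with the real algebraic curve taken to be the branch sextic $C$: along $O_i$ it runs through $\pi^{-1}(o_i)$, which is pointwise fixed by $\conj$ and hence a $\conj$-invariant arc, while along $B$ it runs through a $\conj$-anti-invariant arc, as dictated by the real local model of the node. Proposition~\ref{cycle-cycle-intersection}, applied with $Z_1=O_i$ and $Z_2=B$, then yields $\ind_p(O_i,B)=1$ for the coherent orientations. Finally, fixing once and for all an orientation of $Y^0_\R$ and choosing the generatrix $\FFF$ to avoid the ovals (as in Section~\ref{swept}) and all the nodes, all incidence points lie in $Y^0_\R$; the orientations of the cycles prescribed in Section~\ref{CD-graphs} then restrict at each such $p$ to the coherent orientations of Proposition~\ref{cycle-cycle-intersection}, so every incident oval-bridge index equals $+1$ uniformly.

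The main obstacle is the local analysis at an incident node: one must verify in the real local model that the oval-cycle and the bridge-cycle meet in exactly one transversal point and that, through this point, the branch sextic meets the oval-cycle along a $\conj$-invariant arc and the bridge-cycle along a $\conj$-anti-invariant one, so that the hypotheses of Proposition~\ref{cycle-cycle-intersection} are genuinely met. Once this local picture is secured, the bipartite reduction and the coherence of the orientations make the global conclusion immediate.
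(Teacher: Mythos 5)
Your handling of every pair except those involving $B_{11}$ is essentially the paper's argument, fleshed out correctly: distinct oval-cycles and distinct bridge-cycles admit disjoint representatives, non-incident oval--bridge pairs are disjoint, and for a singly incident pair $(O_i,B)$ one applies Proposition \ref{cycle-cycle-intersection} with the branch sextic as the auxiliary real curve (it meets $O_i$ along the $\conj$-invariant lifted oval and $B$ along a $\conj$-anti-invariant arc through the node), the coherent orientations being globally consistent because all these cycles lie in $Y^0=\pi^{-1}(Q\sm\FFF)$ and inherit their orientation from the fixed orientation of $Y^0_\R$.

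The genuine gap is your treatment of $B_{11}$. Your claim that it ``sits at its own node, disjoint from every other cycle'' is false, and this is precisely the one case the paper's proof must argue separately. By construction (Section \ref{oval-bridge-classes}), $B_{11}$ is \emph{double incident} to $O_1$: its node joins two arcs of the \emph{same} oval $o_1$, so in the local model of the smoothed node the vanishing sphere $B_{11}$ meets $O_1=\pi^{-1}(D_1)$ transversally in \emph{two} points, one near each branch, and it cannot be pushed off $O_1$. The orthogonality $B_{11}\cdot O_1=0$ is therefore not disjointness but cancellation: the bridge joining $o_1$ to itself winds around the cone, so its real circle crosses the fiber $\pi^{-1}(\FFF_\R)$ no matter how $\FFF$ is chosen (this, not disjointness, is why no coherent orientation of $B_{11}$ exists and why the statement lets you orient it arbitrarily). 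Since $Y_\R=\Rp2\#\T^2$ is non-orientable with $w_1$ dual to a fiber, transporting the coherence condition along $B_{11,\R}$ from one intersection point to the other flips it across the cut; hence Proposition \ref{cycle-cycle-intersection} gives local indices $+1$ and $-1$, summing to $0$. Note that if your single-point analysis were applied naively at each of the two points with one fixed orientation of $B_{11}$, it would output $\pm 2$; detecting the coherence flip is the essential content of the paper's proof in this case, and it is missing from yours.
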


\begin{proof}
This positivity property is a direct consequence of Proposition \ref{cycle-cycle-intersection} when the bridge-cycle does not intersect $\pi^{-1}(\FFF_\R)$.

The only case to consider in addition is when $C_\R$ is of type $\la 1\,|\,0\ra$, since the bridge-cycle $B_{11}$
representing a single vertex on the graph is intersected by $\pi^{-1}(\FFF_\R)$.
This cycle is depicted by a loop in the rightmost column of Fig.\,\ref{oval-bridge-graphs} and contrary to all other chosen cycles, 
its intersection points with $O_1$ have intersection indices of opposite sign, as it follows from Proposition \ref{cycle-cycle-intersection}. 
Therefore, this bridge-class is orthogonal to $O_1$.
\end{proof}

\subsection{Lower and upper ovals}\label{lower-upper-section}
Assume that $C_\R$ has type $\la p\,|\,0\ra$, $1\le p\le4$, and consider
the ovals $o_i=O_{i\R}$, $i=1,\dots, p$,
with consecutive numeration.
If $p=4$ we suppose that $o_1$ and $o_3$ have bridges to the $J$-component and
call $o_1, o_3$ the {\it lower} and $o_2,o_4$ the {\it upper ovals}.

\begin{proposition}\label{lower-upper}
The distinction between lower and upper ovals in the case $p=4$ is well defined.
\end{proposition}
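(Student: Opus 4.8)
The plan is to reduce the claim to a single combinatorial fact about the smart model: that the two ovals carrying a bridge to the $J$-component occupy \emph{opposite} (non-adjacent) positions in the cyclic order of the four ovals. First I would fix the consecutive numbering as in Section \ref{swept}. The four ovals $o_1,o_2,o_3,o_4$ have pairwise disjoint projections $f_Q(o_i)\subset\P^1_\R$ — indeed a generatrix meets $C_\R$ in at most three points while the $J$-component, embracing $\v$, contributes at least one such point over every generatrix, so two ovals (which would contribute four points) cannot lie over a common $x$-value. Hence the cyclic order of the $o_i$ on $\P^1_\R=S^1$ is intrinsic, and the only residual freedom is the choice of the cut $\FFF$, which alters the numbering by a cyclic rotation.

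Next I would record the oval-bridge structure of the smart sextic of type $\la4|0\ra$ displayed in Fig.\,\ref{oval-bridge-graphs} ($\L=E_8$): the ovals are joined into a single $4$-cycle by the bridges $B_{12},B_{23},B_{34}$ and the closing bridge $B_{41}$ between consecutive ovals, while bridges to the $J$-component are attached at exactly the two diagonal ovals $O_1$ and $O_3$. To promote this from a picture to an invariant statement I would invoke Proposition \ref{graph-adjacency}: incidence of an oval-cycle with a bridge-cycle is detected by their intersection index being $+1$, so both the property ``joined by a bridge to a neighbouring oval'' and the property ``carrying a bridge to $J$'' are recorded in the intersection form on $\L=E_8$, and are therefore stable under deformation within the class. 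By the deformation classification of type $\la4|0\ra$ sextics (Theorem \ref{deform-sextics}) this configuration is carried to an arbitrary $C_\R$ of this type independently of the transporting isotopy.

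Granting the non-adjacency, the well-definedness is then formal. The two $J$-bridge ovals are opposite in the $4$-cycle, hence sit at positions of equal parity under every consecutive numbering; rotating $\FFF$ so that $o_1$ is one of them automatically makes $o_3$ the other, while $o_2,o_4$ are the two remaining (``upper'') ovals. The only residual choices — which of the two $J$-bridge ovals is labelled $o_1$, and which admissible cut $\FFF$ is used — merely interchange $o_1\leftrightarrow o_3$ and $o_2\leftrightarrow o_4$, and so leave the partition $\{\mathrm{lower}\}=\{o_1,o_3\}$, $\{\mathrm{upper}\}=\{o_2,o_4\}$ unchanged. This gives the well-definedness asserted in the proposition.

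The main obstacle I anticipate is precisely the alternation used in the second paragraph: that the $J$-bridges are attached to an opposite, rather than to an adjacent, pair of ovals; everything else is the bookkeeping of a cyclic relabelling. I expect to settle it by an explicit reading of the smart model in Fig.\,\ref{perturbations}, supplemented if necessary by an intrinsic description of ``lower'' as the ovals lying on the vertex side of the $J$-section along the generatrices — a condition manifestly invariant under isotopy and under the choice of $\FFF$ — together with the verification, via Proposition \ref{graph-adjacency}, that this homological incidence pattern is the diagonal one.
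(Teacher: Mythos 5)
Your reduction to the smart model and your observation that the cyclic order of the ovals is intrinsic (no generatrix can meet two ovals, by B\'ezout on a line against the cubic) are both sound, and your existence half --- that the two diagonal ovals $o_1,o_3$ carry bridges to the $J$-component, stably under deformation --- does follow from the construction of Section \ref{oval-bridge-classes} together with deformation invariance of vanishing classes. But well-definedness hinges on a converse assertion that you never prove: that $o_2$ and $o_4$ carry \emph{no} bridge to the $J$-component. If they did (via some nodal degeneration other than the six built into the smart model), then the rotated consecutive numeration would exhibit $\{o_2,o_4\}$ as an equally legitimate ``lower'' pair and the partition would be ambiguous. Your mechanism cannot rule this out: Proposition \ref{graph-adjacency} and the intersection form of $\L=E_8$ only record the incidences among the classes you have already constructed; they do not determine which roots of $E_8$ are realized by actual nodal degenerations. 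Lattice-theoretically nothing forbids a bridge from $o_2$ to $J$ --- roots $e\in E_8$ with $e\cdot O_2=\pm1$ and $e\cdot O_i=0$ for $i\neq 2$ do exist --- what fails is their geometric realizability, and this is precisely the delicate phenomenon emphasized in Section \ref{puzzle10}: only $10$ of the $120$ pairs of roots of $E_8$ are real vanishing classes. Reading the figure can tell you which bridges were constructed, never that no others exist.

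This non-existence is exactly where the paper's proof does its real work: one traces a hyperplane section through $o_1$, $o_3$ and $o_2$ (respectively $o_4$); it already meets $C$ in six points, and it separates $o_4$ (respectively $o_2$) from the $J$-component, so by B\'ezout no nodal degeneration merging that oval with the $J$-component is possible. Your proposed fallback --- characterizing the lower ovals ``intrinsically'' as those lying on the vertex side of the $J$-component along the generatrices --- cannot substitute for this: all four ovals lie on the same side of the $J$-component (the side containing the germ of $Q_\R^+$ at $v$), no generatrix meets two of them, and the four ovals are mutually indistinguishable under fiberwise isotopy of the pair $(Q_\R,C_\R)$; the lower/upper distinction is an algebro-geometric invariant of the deformation class, not a topological one. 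So a B\'ezout-type separation argument (or an equivalent geometric obstruction) is indispensable, and without it your proof is incomplete.
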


\begin{proof}
Since existence of a bridge is a property preserved by deformation, it is sufficient to check such an uniqueness for the sextic $C_0$
constructed in Section \ref{oval-bridge-classes}. There, we have already observed that under appropriate numeration the ovals $o_1$ and $o_3$
do have bridges to the $J$-component. Now, it remains to trace a hyperplane intersecting $o_1, o_3$, and $o_2$ (respectively, $o_4$) and to observe
that such a hyperplane separates $o_4$ (respectively, $o_2$) from the $J$-component, so that by B\'ezout no nodal degeneration connecting $o_4$ 
(respectively, $o_2$) with the $J$-component is possible.
\end{proof}

\subsection{Intersecting oval-classes by lines}\label{oval-line-intersection} Let a real line $L\subset Y$ be transversal to an oval-cycle $O\subset Y$ 
at a point $q$, or equivalently let the positive tritangent $\Tri=\pi(L)$ meet the positive oval $O_\R\subset Q_\R$ at the point $p=\pi(q)$ with simple tangency.

Our aim is to evaluate the intersection
index $\ind_q(L,O)$, where $O$ is oriented coherently with a chosen local orientation of $Y_\R$ along $O_\R$ as described in Section \ref{positivity}.
Note that there is unique up to rescaling a nonzero real vector field $w$ tangent to $Y_\R$ along $O_\R$ such that $Jw$ together with a nonzero real vector
field $v$ of vectors tangent to $O_\R$ generates the tangent spaces of $O$ along $O_\R$. In particular, due to transversality between $L$ and $O$ the vector
$w(q)$ can not be tangent to $L$. 
We can choose the field $v$ so that $v\wedge w$ defines the chosen local orientation of $Y_\R$.
Then $v\wedge Jw$ gives an orientation of
the oval-cycle $O$ coherent with that of $Y_\R$.

When drawing a piece of $Y_\R$ (as on Fig.\,\ref{lines&ovals}) we imagine it in a form of two sheets, 
permuted by Bertini involution, and choose as the front sheet the one whose orientation coincides with the right-hand (positive)
orientation.

\begin{lemma}\label{cycle-line-intersection} 
Let $L_\R$ be directed at the point $q$ by vector $av+bw$, $a,b\in\R$.
Then $\ind_q(L,O)$ is $-1$ if $ab>0$ and $1$ if $ab<0$.
\end{lemma}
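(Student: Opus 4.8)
The plan is to reduce the computation of $\ind_q(L,O)$ to a local linear-algebra calculation in the tangent space $T_qY$, mirroring the method already used in the proof of Proposition \ref{cycle-cycle-intersection}. Recall that $O$ carries the orientation determined by $v\wedge Jw$, where $v,w$ are the real vector fields fixed in Section \ref{oval-line-intersection}: $v$ is tangent to $O_\R$, $w$ is the (unique up to rescaling) real field with $Jw$ also tangent to $O$, and $v\wedge w$ defines the chosen local orientation of $Y_\R$. The line $L$ is a complex curve, so its tangent plane $T_qL$ is a complex line in $T_qY$; if $L_\R$ is directed at $q$ by $u=av+bw$, then $T_qL$ is spanned over $\R$ by $u$ and $Ju=aJv+bJw$. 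Thus the intersection index is the sign of the determinant of the four real vectors $v, Jw, u, Ju$ (the first two spanning $T_qO$, the last two spanning $T_qL$) relative to a fixed orientation of $T_qY$.

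First I would fix the reference orientation of $T_qY$ to be the one coming from its complex structure, namely $v\wedge Jv\wedge w\wedge Jw$ (this is the standard complex orientation once $v,w$ is a real frame for $Y_\R$, since $Y_\R$ is totally real). Then I would expand
\[
v\wedge Jw\wedge(av+bw)\wedge(aJv+bJw)
\]
by multilinearity, discarding any term containing a repeated factor. The only surviving term is the one picking $bw$ from the third slot and $aJv$ from the fourth, giving $ab\,(v\wedge Jw\wedge w\wedge Jv)$. Reordering this wedge to the reference form $v\wedge Jv\wedge w\wedge Jw$ produces a sign from the permutation of the four entries, which is the crux of the bookkeeping.

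The key step, and the one where sign errors are easiest, is counting this permutation parity correctly: moving $Jv$ and $w$ into place in $v\wedge Jw\wedge w\wedge Jv$ to reach $v\wedge Jv\wedge w\wedge Jw$. I expect the transposition count to yield an odd permutation, so that the coefficient is $-ab$ times the reference orientation; the sign of $\ind_q(L,O)$ is then $-\operatorname{sign}(ab)$, which is exactly the claimed $-1$ for $ab>0$ and $+1$ for $ab<0$. I would present this parity computation explicitly and carefully, since it is the heart of the lemma, and then conclude by transversality (guaranteed because $w(q)\notin T_qL$, as noted in the setup, forcing $ab\ne 0$ whenever $u$ has a nonzero $w$-component and the intersection is genuinely transverse). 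The main obstacle is thus purely orientational: making sure the reference orientation of $T_qY$ and the coherent orientation of $O$ are pinned down consistently with Section \ref{positivity} so that the final sign matches the stated convention in Fig.\,\ref{lines&ovals}.
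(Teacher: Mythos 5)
Your proposal is correct and is essentially the paper's own proof: the paper establishes the lemma by exactly the wedge identity $v\wedge Jw\wedge (av+bw)\wedge (aJv+bJw)=-ab\,v\wedge Jv\wedge w\wedge Jw$, which is precisely the expansion and odd-permutation sign count you carry out (your surviving term $ab\,v\wedge Jw\wedge w\wedge Jv$ and the single transposition giving the factor $-1$ are both right). Your added remarks on fixing the complex orientation $v\wedge Jv\wedge w\wedge Jw$ and on transversality forcing $ab\ne 0$ are consistent with the paper's conventions in Sections \ref{positivity} and \ref{oval-line-intersection}.
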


\begin{proof} It follows from $v\wedge Jw\wedge (av+bw)\wedge (aJv+bJw)=- ab\,v\wedge Jv\wedge w\wedge Jw$.
\end{proof}

\begin{figure}[h!]
\caption{Detecting the intersection index of lines with oval-cycles
}
\label{lines&ovals}
\hbox{
\vtop{\hsize=50mm}{
\includegraphics[height=3.2cm]{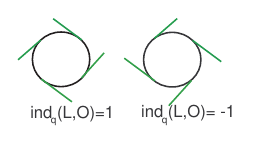}
 \hskip-60mm{\footnotesize Here,  the lines are shown  on the front sheet.}
}
\hskip-1mm
\vtop{\hsize=60mm}{\includegraphics[height=3cm]{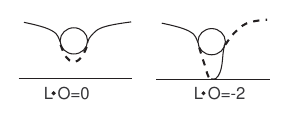}}
}
\end{figure}

The following two corollaries are straightforward consequences of Lemma \ref{cycle-line-intersection}.

\begin{cor}\label{tire-bouchon} Under the assumptions of Lemma \ref{cycle-line-intersection}
{\rm (}and the above surface-drawing convention{\rm )},
the intersection index $\ind_q(L,O)$ depends on the direction of $L_\R$
at $q$ as it is indicated on Fig. \ref{lines&ovals}.
\qed
\end{cor}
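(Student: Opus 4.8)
The statement to prove is Corollary~\ref{tire-bouchon}, which asserts that the local intersection index $\ind_q(L,O)$ depends on the direction of $L_\R$ at $q$ exactly as tabulated in Fig.~\ref{lines&ovals}. Since this is presented as a ``straightforward consequence'' of Lemma~\ref{cycle-line-intersection}, the plan is essentially to translate the algebraic sign condition $ab \gtrless 0$ from the lemma into the geometric/pictorial data recorded on the figure, and verify consistency with the surface-drawing convention.

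The plan is as follows. First I would recall the setup of Lemma~\ref{cycle-line-intersection}: along the oval $O_\R$ we have the canonical (up to rescaling) normal field $w$ tangent to $Y_\R$ with $Jw \in T_pO$, and a field $v$ tangent to $O_\R$ normalized so that $v \wedge w$ gives the chosen local orientation of $Y_\R$ (the ``front sheet'' orientation under the drawing convention of Section~\ref{oval-line-intersection}). With $L_\R$ directed by $av + bw$ at $q$, the lemma gives $\ind_q(L,O) = -\operatorname{sign}(ab)$. The substance of the corollary is then purely a reading-off: I would fix the picture conventions so that $v$ points along the oval in the depicted positive direction and $w$ points from the oval into the front sheet, and then observe that the sign of $a$ records on which side of $w$ (i.e.\ which tangential direction along $O_\R$) the line departs, while the sign of $b$ records whether $L_\R$ crosses into the front or the back sheet at $q$.

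Concretely, I would enumerate the four combinations of signs of $a$ and $b$, matching each to one of the four pictured directions of $L_\R$ relative to the oval and the front/back sheet on Fig.~\ref{lines&ovals}, and read the index as $+1$ when $ab<0$ and $-1$ when $ab>0$ per the lemma. Transversality guarantees $w(q)$ is not tangent to $L$, so $b \neq 0$, and the tangency of $\Tri$ with $O_\R$ being simple ensures $a \neq 0$ as well; hence each pictured direction falls unambiguously into one of the two sign regimes and no degenerate case arises. The correspondence is then completed by checking that reversing the direction of $L_\R$ (replacing $av+bw$ by $-(av+bw)$) leaves $\operatorname{sign}(ab)$ unchanged, consistent with the index being independent of the orientation of the line.

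The only mild obstacle is bookkeeping of conventions rather than any genuine mathematical difficulty: one must be careful that the front-sheet orientation convention of Section~\ref{oval-line-intersection}, the orientation of $O$ coherent with $Y_\R$, and the depicted direction of $v$ along $o$ are all mutually consistent, so that the signs of $a$ and $b$ are correctly attached to the two visible geometric features (the side along the oval and the sheet the line enters). Once these are pinned down, the four cases read directly off Fig.~\ref{lines&ovals} and the corollary follows immediately from Lemma~\ref{cycle-line-intersection}.
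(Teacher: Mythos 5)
Your proposal is correct and takes exactly the route the paper intends: the corollary is stated with a \qed as a ``straightforward consequence'' of Lemma \ref{cycle-line-intersection}, and your enumeration of the sign combinations of $a$ and $b$, together with the observation that only $\operatorname{sign}(ab)$ matters (so reversing the direction of $L_\R$ changes nothing), is precisely that reading-off against the drawing conventions. One small bookkeeping slip that does not affect the argument: transversality of $L$ and $O$ forces \emph{both} $a\neq 0$ and $b\neq 0$, but your attributions are crossed --- the condition that $w(q)$ is not tangent to $L$ is equivalent to $a\neq 0$ (not $b\neq 0$), while the transversality of $L_\R$ to $O_\R$ inside $Y_\R$ (the upstairs counterpart of the simple tangency of $\Tri$ with the oval) is what gives $b\neq 0$.
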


\begin{cor}\label{types2-3}
Let $L\subset Y$ be a real line that covers
a positive tritangent $\Tri$, and let $O\subset Y$ be
an oval-cycle.
Assume that $\Tri_\R$ meets the oval $O_\R\subset Q_\R$ at a pair of points, with simple tangency.
Then $L\cdot O=\pm2$ if these tangency points belong to the arcs of the oval separated by
the generatrix of $Q_\R$
traced through the tangency point with the  $J$-component, and $L\cdot O=0$ otherwise  {\rm (}see Fig. \ref{lines&ovals}{\rm )}.
\qed\end{cor}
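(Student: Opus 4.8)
Here is how I would approach Corollary~\ref{types2-3}.

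First I would localize the intersection to the real locus. Since the oval-cycle is the pull-back $O=\pi^{-1}(\overline D)$ of the closed disc $\overline D\subset Q^-_\R$ bounded by $O_\R$, while $\pi(L)=\Tri$ with $\Tri_\R\subset Q^+_\R$, I get $L\cap O\subset\pi^{-1}(\Tri\cap\overline D)$. As $\Tri$ is a real conic, $\Tri\cap\overline D=\Tri_\R\cap\overline D$, and because $D\subset Q^-_\R$ is disjoint from $Q^+_\R$ while $\partial D=O_\R$, this equals $\Tri_\R\cap O_\R=\{p_1,p_2\}$, the two tangency points. Each $p_i$ lies on the branch curve $C$, hence has a single real preimage $q_i\in Y_\R$, so $L\cap O=\{q_1,q_2\}$. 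In the branched-double-cover normal form $(\sigma,\nu)\mapsto(\sigma,\nu^2)$ near $O_\R=\{\nu=0\}$ (with $Q^+_\R=\{\nu^2\ge0\}$), a simple tangency $\Tri_\R=\{\nu^2=c(\sigma-\sigma_i)^2\}$, $c>0$, lifts to the smooth branch $\nu=\pm\sqrt c\,(\sigma-\sigma_i)$; thus $L_\R$ meets $O_\R$ transversally at each $q_i$, both intersections are transverse, and $L\cdot O=\ind_{q_1}(L,O)+\ind_{q_2}(L,O)$ with each term $\pm1$. This already confines $L\cdot O$ to $\{0,\pm2\}$; the whole point is to decide which.

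Next I would compute each $\ind_{q_i}$ by Lemma~\ref{cycle-line-intersection}, equivalently by the corkscrew rule of Corollary~\ref{tire-bouchon} read on Fig.~\ref{lines&ovals}. Writing the direction of $L_\R$ at $q_i$ (oriented by increasing $f_Q$) as $a_iv+b_iw$ in the coherent frame of Section~\ref{positivity}, one has $\ind_{q_i}(L,O)=-\operatorname{sign}(a_ib_i)$. The sign of $a_i$ is the sign of the oriented oval-tangent $v$ against the travel direction, and it records which of the two arcs of $O_\R$ cut out by the extreme generatrices $f_Q=x_{\min},x_{\max}$ carries $p_i$. The sign of $b_i$ records the side, relative to the global coorientation $w$, into which $L_\R$ passes through $O_\R$, i.e.\ which of the two sheets of the two-sheeted model of Sections~\ref{oval-line-intersection}--\ref{CD-graphs} it enters at $q_i$.

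The heart of the matter is the comparison of these two local data. Since $f_Q$ restricts to an isomorphism $\Tri\cong\P^1$ and $\pi|_L\colon L\to\Tri$ is an isomorphism, the loop $L_\R$ is traversed monotonically in $f_Q$ and meets $C_\R$ only at $q_1,q_2$ and at the lift $q_0$ of the remaining ($J$-component) tangency. Each passage through $C_\R$ interchanges the two sheets, so the sheet from which $L_\R$ reaches the oval at $q_2$ is reversed, relative to $q_1$, exactly when an odd number of the other crossings lie between them along $L_\R$ — that is, exactly when $q_0$ does, i.e.\ exactly when the generatrix $f_Q=x_0$ through the $J$-tangency point separates $p_1$ from $p_2$. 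Tracking the arc-sign $a_i$ in parallel, I would show that the change of sheet and the change of arc are synchronized, so that $\operatorname{sign}(a_1b_1)=\operatorname{sign}(a_2b_2)$ precisely in the separated case; then $\ind_{q_1}(L,O)=\ind_{q_2}(L,O)$ and $L\cdot O=\pm2$, while in the non-separated case the indices are opposite and $L\cdot O=0$.

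I expect this last synchronization to be the main obstacle. Because $Y_\R$ is non-orientable, the front/back labelling of sheets is only locally defined, so comparing $b_1$ with $b_2$ across the $J$-tangency cannot be done by naive transport and must be made canonical. The safest route is to transport the frame along $\Tri_\R$ and exploit the Bertini symmetry $\beta$, which swaps the two sheets and (since $\beta$ acts as $-1$ on $K_Y^\perp\supset\L$) sends $L$ to the opposite line with $(\beta L)\cdot O=-\,L\cdot O$; this rigidifies all the signs and lets one verify that the arc-change and the sheet-parity indeed occur together. Once this bookkeeping is in place, the stated dichotomy reads off directly from the two configurations displayed in Fig.~\ref{lines&ovals}.
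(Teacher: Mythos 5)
Your overall route is exactly the one the paper intends: Corollary~\ref{types2-3} is stated with \qed\ as a direct consequence of Lemma~\ref{cycle-line-intersection}, i.e.\ one localizes $L\cdot O$ to the two lifted tangency points $q_1,q_2$, evaluates each local index by the rule $\ind_{q_i}(L,O)=-\operatorname{sign}(a_ib_i)$, and compares the two signs by the sheet bookkeeping of Section~\ref{oval-line-intersection} and Fig.~\ref{lines&ovals}. Your preliminary steps are correct and are a faithful (indeed more detailed) expansion of that omitted argument: $L\cap O=\{q_1,q_2\}$ with both intersections transverse, so $L\cdot O\in\{0,\pm2\}$; $f_Q$ and $\pi|_L$ identify $L_\R$ with a section circle meeting $C_\R$ only at $q_0,q_1,q_2$ (B\'ezout), so sheet switches occur only there.

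The gap is in the one step that carries the entire content, the comparison of $\operatorname{sign}(a_1b_1)$ with $\operatorname{sign}(a_2b_2)$. Your claimed mechanism --- ``the change of sheet and the change of arc are synchronized'' --- is false and, taken literally, proves the wrong statement. Since $\Tri$ is a section of $f_Q$ and meets $C$ only at the three tangency points, the graph of $\Tri_\R$ over the interval $(x_1,x_2)$ between the two oval tangencies cannot cross the oval, hence stays on one side of it; therefore $p_1$ and $p_2$ always lie on the \emph{same} arc cut out by the extreme generatrices $f_Q=x_{\min},x_{\max}$, and $\operatorname{sign}(a_1a_2)=+1$ unconditionally. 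The dichotomy is carried entirely by $\operatorname{sign}(b_1b_2)$, which equals $+1$ exactly when the number of branch crossings strictly between $q_1$ and $q_2$ along $L_\R$ is odd, i.e.\ when $q_0$ lies between them --- note also that your sheet-parity sentence is ambiguous on precisely this point: comparing incoming sheet to incoming sheet, the parity is off by the switch at $q_1$ itself, and only the outgoing-at-$q_1$ versus incoming-at-$q_2$ comparison gives your count. If arc-sign and sheet-sign flipped together, as you propose to show, the two local indices would agree in both configurations and you would obtain $L\cdot O=\pm2$ always, contradicting the $L\cdot O=0$ alternative of the corollary (and, e.g., the count of $T_0$ versus $T_0^*$ tritangents in Proposition~\ref{separate-23}). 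Finally, the non-orientability worry and the Bertini rigidification are unnecessary: the double cover $Y_\R\to Q^+_\R$ is trivial over an annular neighborhood of the oval (two annuli glued along the fold circle, since the oval-cycle is a sphere) and over the simply connected region swept by the arc of $L_\R$ between the tangencies, so the coherent field $w$ and the front/back labels transport canonically --- this is exactly what the drawing convention of Sections~\ref{CD-graphs} and~\ref{oval-line-intersection} already provides.
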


\section{Arithmetic of real lines on del Pezzo surfaces}\label{arithmetic}

\subsection{Modulo 2 arithmetic of roots}\label{mod2arithmetic}
In this subsection we 
start with considering 
an arbitrary even negative definite lattice, which we denote by $\L$, 
and put $V=\L/2\L$. For any $e\in \L$, we denote by $[e]$ its image in $V$ under the quotient map.

\begin{lemma}\label{roots-mod2}
For any $e_1,e_2\in\L$, if $[e_1]=[e_2]$  and $e_1^2=e_2^2=-2$ then $e_2=\pm e_1$.
\end{lemma}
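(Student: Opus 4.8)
The plan is to convert the two hypotheses into constraints on the single integer $e_1\cdot e_2$ and to show that they force $e_1\cdot e_2=\pm2$, from which the conclusion is immediate. First I would record the expansion
\[
(e_1\pm e_2)^2=e_1^2\pm 2\,e_1\cdot e_2+e_2^2=-4\pm 2\,e_1\cdot e_2,
\]
valid for both sign choices since $e_1^2=e_2^2=-2$. Because $\L$ is negative definite, both $(e_1+e_2)^2\le 0$ and $(e_1-e_2)^2\le 0$, which immediately yield the two-sided bound $-2\le e_1\cdot e_2\le 2$. So at this stage $e_1\cdot e_2\in\{-2,-1,0,1,2\}$, and I must still eliminate the middle values $-1,0,1$.

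This is where the hypothesis $[e_1]=[e_2]$ enters. It means $e_1-e_2=2v$ for some $v\in\L$, whence $(e_1-e_2)^2=4v^2$. Since $\L$ is \emph{even}, $v^2$ is an even integer, so $(e_1-e_2)^2\equiv 0 \pmod 8$. Comparing with the expansion above, $-4-2\,e_1\cdot e_2\equiv 0\pmod 8$, that is $e_1\cdot e_2\equiv 2\pmod 4$. Intersecting this congruence class with the range $\{-2,\dots,2\}$ leaves exactly $e_1\cdot e_2=2$ and $e_1\cdot e_2=-2$.

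To finish, I would treat the two surviving cases using negative definiteness once more. If $e_1\cdot e_2=2$, then $(e_1+e_2)^2=-4+4=0$, so $e_1+e_2=0$ and $e_2=-e_1$; if $e_1\cdot e_2=-2$, then $(e_1-e_2)^2=-4+4=0$, so $e_2=e_1$. In either case $e_2=\pm e_1$, as required. The step I expect to be the crux is not any single computation but the \emph{interplay} of the two structural hypotheses: negative definiteness alone only bounds $|e_1\cdot e_2|\le 2$ and permits the spurious values $-1,0,1$, while the mod-$2\L$ hypothesis contributes precisely the parity/divisibility congruence $e_1\cdot e_2\equiv 2\pmod 4$ needed to discard them. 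I would therefore present the proof so that it is transparent that \emph{both} the evenness and the definiteness of $\L$ are indispensable.
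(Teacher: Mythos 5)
Your proof is correct, and at bottom it runs on the same two hypotheses in the same way as the paper's, but the packaging is genuinely different. The paper considers $v=\frac{e_1-e_2}2$, which lies in $\L$ because $[e_1]=[e_2]$, bounds its length by the triangle inequality, $|v|\le\frac{|e_1|+|e_2|}2=\sqrt2$, and then uses evenness to conclude that either $v=0$ or the triangle inequality is an equality, in which case $e_1$ and $-e_2$ are collinear and hence equal up to sign (having the same length). You instead work with the integer $e_1\cdot e_2$: negative definiteness applied to $(e_1\pm e_2)^2$ gives $|e_1\cdot e_2|\le 2$, evenness of $v^2$ gives the congruence $e_1\cdot e_2\equiv 2\ ({\rm mod}\ 4)$, and the two surviving values force $(e_1+e_2)^2=0$ or $(e_1-e_2)^2=0$, hence $e_2=\mp e_1$ by definiteness. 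The content is identical --- both arguments live on the identity $(e_1-e_2)^2=4v^2$ with $v\in\L$ even --- but your version has a small advantage: it never invokes the equality case of the triangle inequality, which strictly speaking requires passing to $\L\otimes\R$, arguing proportionality, and then recovering $e_2=\pm e_1$ from equality of lengths (a step the paper leaves implicit); you replace all of that by the cleaner observation that a vector of square zero in a definite lattice vanishes. The paper's proof is shorter and more geometric; yours is more self-contained and makes explicit exactly where evenness and definiteness each enter.
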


\begin{proof}
By triangle inequality  $\vert \frac{e_1-e_2}2\vert \le\frac{|e_1|+|e_2|}2=\sqrt2$. So, 
since $v=\frac{e_1-e_2}2$ belongs to $\L$ and the lattice $\L$ is even,
in the case of 
$v\ne0$ 
this inequality should be identity and thus, $e_2$ is collinear with $e_1$.
\end{proof}

Reducing the lattice product modulo 2 we obtain a $\Z/2$-valued bilinear form
$V\times V\to\Z/2$ and denote by $R$ its radical $R=\{v\in V\,|\,v \cdot V=0\}$.
We consider also a $\Z/2$-valued quadratic form 
$$
\q_0: V\to \Z/2, \q_0([v])=\frac{v^2}2\! \!\!\mod2
$$
associated with this bilinear form.
In $R$ we introduce another $\Z/2$-valued bilinear form  $b:R\times R\to \Z/2, b([v_1],[v_2])=\frac{v_1\cdot v_2}2\!\!\!\mod 2,$
and an associated with it $\Z/4$-valued quadratic form 
$$
\q: R\to\Z/4, \q([v])=\frac{v^2}2\! \!\!\mod4.
$$
Then, we put
$$
V_i=\q_0^{-1}(i), i\in \Z/2, \quad\text{and}\quad R_i=\q^{-1}(i), i\in \Z/4.
$$
In the same time, we let $\L^*=\{ x\in \L\otimes \Q: x\cdot L\subset\Z\} $ and consider the {\it discriminant group} $\DD(\L)=\L^*/\L$ of $\L$.

The following two lemmas are well known and straightforward from definitions.
\begin{lemma}\label{radical}
If the group
$\DD(\L)$ 
is 2-periodic, then the map
$$\DD(\L)=\L^*/\L\to V= \L/2\L,\quad x+\L\mapsto 2(x+\L)\in \L/2\L$$
is a well-defined monomorphism whose image is $R$.
 The quadratic form $\q$ in $V$ is identified with the discriminant form in $\DD(\L)$.
In particular, $\q$ is given by a matrix $[-1]$ for $\L=A_1$, $\q=[1]$ for $\L=E_7$, and 
$\begin{bmatrix}
2 & 1 \\
1 & -k 
\end{bmatrix}$
for $\L=D_{2k}$.
\qed\end{lemma}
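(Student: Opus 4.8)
The plan is to check the four assertions in turn, simply by unwinding the definitions; 2-periodicity of $\DD(\L)$ is used only to give the map meaning. First, $2\cdot\DD(\L)=0$ says precisely that $2x\in\L$ for every $x\in\L^*$, so $x+\L\mapsto 2x+2\L$ does land in $V=\L/2\L$; replacing $x$ by $x+\ell$ with $\ell\in\L$ changes $2x$ by $2\ell\in 2\L$, so the map $\phi\colon\DD(\L)\to V$ is well defined, and it is additive by inspection. Injectivity is immediate: if $2x\in 2\L$ then $x\in\L$ since $\L$ is torsion-free, whence $\Ker\phi=0$.

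To identify $\Im\phi$ with the radical $R$ I argue by the two inclusions. For $\Im\phi\subseteq R$, membership $x\in\L^*$ gives $x\cdot\ell\in\Z$ for all $\ell\in\L$, hence $2x\cdot\ell\in 2\Z$, i.e. $[2x]$ pairs trivially with all of $V$ in the mod-$2$ form. Conversely, if $v\in\L$ has $[v]\in R$, then $v\cdot\ell\in 2\Z$ for all $\ell$, so $\tfrac12 v\in\L^*$ and $\phi(\tfrac12 v+\L)=v+2\L=[v]$; thus $R\subseteq\Im\phi$, and $\phi$ is an isomorphism $\DD(\L)\xrightarrow{\sim}R$.

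For the statement on forms I would compute $\q$ along $\phi$. Writing $v=2x$, one gets $\q([2x])=\tfrac12(2x)^2\equiv 2x^2\pmod 4$, whereas the discriminant quadratic form of $\L$ is $x\mapsto x^2\bmod 2\Z$. Since $\DD(\L)$ is 2-periodic, $(2x)^2\in 2\Z$ forces $x^2\in\tfrac12\Z$, and the multiplication-by-$2$ isomorphism $\tfrac12\Z/2\Z\xrightarrow{\sim}\Z/4$ carries $x^2\bmod 2\Z$ to $2x^2\bmod 4$; this is exactly the claimed identification of $\q\circ\phi$ with the discriminant form. The same computation with the diagonal removed matches the bilinear form $b$ on $R$ with the discriminant bilinear form on $\DD(\L)$, so $\phi$ is an isometry of discriminant forms.

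The three explicit Gram matrices then follow by picking generators of $\DD(\L)$ and evaluating (or by quoting the standard discriminant-form tables). For $A_1=\langle e\rangle$ with $e^2=-2$, the class of $\tfrac12 e$ generates $\DD\cong\Z/2$ and $\q([e])=\tfrac12 e^2=-1$, giving $[-1]$; for $E_7$ the nontrivial element of $\DD\cong\Z/2$ has square $-\tfrac32\equiv\tfrac12\pmod{2\Z}$, hence $\q=1$; for $D_{2k}$ the discriminant group is $(\Z/2)^2$, so 2-periodic, and the $\phi$-images $[(2,0,\dots,0)]$ and $[(1,\dots,1)]$ of the standard dual generators have self-products $2$ and $-k$ and mutual product $1$, producing $\begin{bmatrix}2&1\\1&-k\end{bmatrix}$. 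I anticipate no genuine obstacle here; the only point requiring care throughout is the bookkeeping of the factor-of-$2$ normalizations, together with the sign conventions forced by negative definiteness, so that the $\Z/4$-valued $\q$ lines up correctly with the $\Q/2\Z$-valued discriminant form.
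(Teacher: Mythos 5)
Your proof is correct and takes essentially the approach the paper intends: the paper states this lemma as well known and straightforward from definitions, offering no proof of its own, and your argument is precisely that definitional verification. All the key points check out — well-definedness and injectivity from 2-periodicity, the two inclusions identifying the image with $R$, the multiplication-by-2 identification $\tfrac12\Z/2\Z\cong\Z/4$ matching $\q$ with the $\Q/2\Z$-valued discriminant form, and the three Gram-matrix computations for $A_1$, $E_7$, $D_{2k}$ in the negative definite sign convention.
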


\begin{lemma}\label{V1-R1}
For any $e\in\L$, $e^2=-2$, we have $[e]\in V_1\sm R_{1}$.
\qed\end{lemma}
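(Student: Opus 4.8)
The plan is to prove Lemma \ref{V1-R1}, which asserts that for any root $e\in\L$ (i.e.\ $e^2=-2$) the class $[e]\in V=\L/2\L$ lies in $V_1\setminus R_1$, that is, $\q_0([e])=1$ while $[e]\notin R$ (so that $[e]$ is not even in the domain of $\q$). First I would dispose of the assertion $[e]\in V_1$: this is immediate from the definition of $\q_0$, since $\q_0([e])=\frac{e^2}{2}\bmod 2=\frac{-2}{2}\bmod 2=-1\bmod 2=1$. So $[e]\in V_1=\q_0^{-1}(1)$ requires no work beyond unwinding the definition.

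The substantive part is to show $[e]\notin R_1$, and in fact that $[e]\notin R$ at all, where $R$ is the radical of the induced bilinear form $V\times V\to\Z/2$. Equivalently, I must produce some $x\in\L$ with $e\cdot x$ odd, so that $[e]\cdot[x]\ne 0$ in $\Z/2$. The natural candidate is $x=e$ itself: but $e\cdot e=-2$ is even, so this witness fails and one cannot conclude $[e]\notin R$ by self-pairing. Hence the argument must exploit the geometry of roots in an even lattice more carefully. The standard fact I would invoke is that in any root lattice (or more generally, using that $\L$ is even negative definite and $e$ is a root), there exists a root $f$ with $e\cdot f=\pm 1$, or more generally a vector pairing oddly with $e$; unimodularity-type or primitivity arguments guarantee that a root is not divisible by $2$ in a way that would place it in the radical.

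The hard part, and the main obstacle, will be establishing the existence of a witness $x$ with $e\cdot x$ odd in full generality for an arbitrary even negative definite $\L$, since \emph{a priori} not every such lattice is a root lattice. The cleanest route is to argue by contradiction: suppose $[e]\in R$, meaning $e\cdot x$ is even for \emph{all} $x\in\L$. Then $\tfrac12 e\in\L^*$, so $e$ represents a $2$-torsion element of the discriminant group $\DD(\L)=\L^*/\L$ via $\tfrac12 e\mapsto \tfrac12 e+\L$; under the identification of Lemma \ref{radical} this would force $\q([e])$ to be defined, and one would compute $\q([e])=\frac{e^2}{2}\bmod 4=-1\bmod 4=3$, placing $[e]\in R_3$ rather than $R_1$. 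More directly, since $e$ is a root it is primitive (a root cannot be twice a lattice vector, else $e^2$ would be divisible by $8$, contradicting $e^2=-2$), and primitivity together with nondegeneracy of the form over $\Q$ yields some $x\in\L$ with $e\cdot x=1$; reducing mod $2$ gives $[e]\cdot[x]=1\ne0$, so $[e]\notin R$. Combining this with $\q_0([e])=1$ completes the proof that $[e]\in V_1\setminus R_1$. I would present the primitivity argument as the core step, since it is the place where the specific value $e^2=-2$ (and not merely parity) is decisive.
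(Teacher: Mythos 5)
Your proposal misstates what has to be proved and then anchors the argument on that false strengthening. The lemma asserts $[e]\in V_1\sm R_1$, i.e.\ $\q_0([e])=1$ and $[e]\notin R_1$; it does \emph{not} assert $[e]\notin R$. The distinction is essential, because a root can perfectly well lie in the radical: take $\L=A_1$, or any of the paper's lattices containing an $A_1$ orthogonal summand ($\L=D_4+ A_1$ and $\L=(4-q)A_1$ in Tab.\,\ref{lattices}). A root $e$ generating such a summand satisfies $e\cdot x\in2\Z$ for every $x\in\L$, hence $[e]\in R$; for $\L=(4-q)A_1$ one even has $R=V$, so \emph{every} root lies in the radical. (The paper itself uses this phenomenon: the bridge-class $B_{11}$ in Proposition \ref{graph-adjacency} is exactly such a root, orthogonal to everything modulo $2$.) For the same reason your designated ``core step'' is wrong: primitivity of $e$ together with nondegeneracy of the form over $\Q$ only produces an element of the dual lattice $\L^*$ pairing to $1$ with $e$; it produces an element of $\L$ itself only when $\L$ is unimodular, which most of the lattices in Tab.\,\ref{lattices} are not. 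So in the cases above there simply is no $x\in\L$ with $e\cdot x$ odd, and that route cannot be repaired.

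Fortunately, the argument you relegate to a side remark is correct and is all that is needed — it is what the paper means by ``straightforward from definitions.'' First, $[e]\in V_1$ since $\q_0([e])=\frac{e^2}2\bmod 2=1$. Second, either $[e]\notin R$, in which case $[e]\notin R_1$ because $R_1=\q^{-1}(1)\subset R$; or $[e]\in R$, in which case $\q([e])$ is defined and equals $\frac{e^2}2\bmod 4=3$, so $[e]\in R_3$ and again $[e]\notin R_1$. Either way $[e]\in V_1\sm R_1$. Rewrite the proof with this two-line case distinction as the whole content, and delete both the claim ``$[e]\notin R$'' and the primitivity discussion.
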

 
A kind of opposite property, stated in the next proposition, does not hold for arbitrary even lattice $\L$ (for example, does not hold for $\L=n A_1$ with $n\ge5$)
but it holds for each of the lattices we need. 

 \begin{table}[h!] 
\caption{
Lattices $\L=\L(Y)$ versus the type of $C$}\label{lattices}
\boxed{
\begin{tabular}{c|c|c|c|c|c|c|c}
$C_\R$&$\la 4\,|\,0\ra $&$\la 3\,|\,0\ra $&$\la 2\,|\,0\ra $&$\la 1\,|\,0\ra $&$\la1\,|\,1\ra$&$\la\,|\,|\,|\,\ra$&
$\la0\,|\,q\ra,0\le q\le 4$\\
\hline
$\L$&$E_8$&$E_7$&$D_6$&$D_4+A_1$&$D_4$&$D_4$&$(4-q)A_1$\\
\hline
$|V|=2^{\rk\L}$&256&128&64&32&16&16&$2^{4-q}$\\
$|R|=2^{\rk R}$&1&2&4&8&4&4&$2^{4-q}$\\
\hline
$|V_1|$&120&64&32&16&12&12&$2^{3-q}$\\
$|R_1|$&0&1&2&3&0&0&$\binom{4-q}3$\\
\hline
$|V_1\sm R_1|$&120&63&30&13&12&12&$2^{3-q}-\binom{4-q}3=4-q$
\end{tabular}}
\end{table}

 \begin{proposition}\label{lift-to-root}
Any element of $V_1\sm R_1$ is realized by some root $e\in \L$ as soon as $\L$ is one of the lattices
$\L$ from Tab.\,\ref{lattices}.
\end{proposition}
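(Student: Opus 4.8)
The plan is to deduce the proposition from a cardinality count, showing that reduction modulo $2\L$ maps the roots of $\L$ \emph{onto} $V_1\sm R_1$. Consider the map $\phi$ sending a pair of opposite roots $\{\pm e\}\subset\L$ to the class $[e]\in V$. It is well defined because $[e]=[-e]$ always holds in $V=\L/2\L$, and by Lemma \ref{V1-R1} its values lie in $V_1\sm R_1$. Thus $\phi$ is a map from the set of root pairs of $\L$ into $V_1\sm R_1$, and the whole point is to show that it is surjective.

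First I would check that $\phi$ is injective. If $[e_1]=[e_2]$ for two roots $e_1,e_2$, then, both having square $-2$, Lemma \ref{roots-mod2} forces $e_2=\pm e_1$, so the two root pairs coincide. Equivalently, reduction on the $2N$ roots of $\L$ is exactly $2$-to-$1$ onto its image, so there are $N$ root pairs and the image of $\phi$ has exactly $N$ elements, where $2N$ denotes the number of roots of $\L$.

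Next I would compare $N$ with $|V_1\sm R_1|$. The number of roots is classical: $240$ for $E_8$, $126$ for $E_7$, $2n(n-1)$ for $D_n$, and $2$ for each $A_1$ summand; hence $N=120,63,30,13,12,12$ and $4-q$ for the successive lattices of Table \ref{lattices} (e.g. $D_4+A_1$ gives $12+1=13$). On the other hand the bottom row of that table records $|V_1\sm R_1|=|V_1|-|R_1|$, and column by column these two numbers agree. Since $\phi$ is an injection between finite sets of the same cardinality, it is a bijection; in particular it is onto, which is exactly the assertion that every element of $V_1\sm R_1$ equals $[e]$ for some root $e$.

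I expect the main obstacle to be the numerical input of the previous step, that is, justifying the entries $|V_1|$ and $|R_1|$ of Table \ref{lattices}, rather than the soft part of the argument. Here $|V_1|$ is the number of anisotropic vectors of the $\Z/2$-valued quadratic form $\q_0$ on $V$, which one determines from the rank of its associated bilinear form together with its Arf invariant, while $|R_1|$ is read off from the explicit discriminant form $\q$ on $R$ furnished by Lemma \ref{radical}. It is precisely this numerical coincidence $N=|V_1\sm R_1|$ that singles out the listed lattices: already for $\L=nA_1$ with $n\ge5$ one finds $N=n<|V_1\sm R_1|$, so $\phi$ cannot be surjective and the statement breaks down.
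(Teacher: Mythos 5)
Your proposal is correct and follows essentially the same route as the paper: the paper's proof likewise combines injectivity of root-pair reduction (Lemma \ref{roots-mod2}), the containment of its image in $V_1\sm R_1$ (Lemma \ref{V1-R1}), and the cardinality coincidence between half the number of roots and $|V_1\sm R_1|$, the latter computed in Lemma \ref{downstairs} exactly as you describe (Arf-invariant counts for $|V_1|$ and the discriminant form of Lemma \ref{radical} for $|R_1|$). Even your closing observation about $\L=nA_1$, $n\ge5$, mirrors the remark the paper makes just before stating the proposition.
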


 \begin{lemma}\label{downstairs} For lattices $\L=\L(Y)$ associated with real sextics $C\subset Q$, the following holds:
 \begin{enumerate}
 \item The cardinalities $|V_1|$, $|R_1|$, and
 $|V_1\sm R_1|$
 depend on the type of a sextic $C$ as it is indicated in Tab.\,\ref{lattices}.
\item In the cases $\la r\,|\,0\ra $ and $\la0\,|\,r\ra$, $0\le r\le 3$
we have $|R_1|+|R_3|=|R_0|+|R_2|=\frac12|R|=2^{3-r}$.
\end{enumerate}
 \end{lemma}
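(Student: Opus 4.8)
The plan is to reduce both assertions to an explicit evaluation of the discriminant form on $R$ together with a short analysis of the $\Z/2$-quadratic form $\q_0$ on $V$. The two ``size'' rows are immediate: $|V|=2^{\rk\L}$ by the definition of $V=\L/2\L$, and, since $\DD(\L)$ is $2$-periodic for each $\L$ in Tab.\,\ref{lattices}, Lemma \ref{radical} identifies $R$ with $\DD(\L)$ and $\q$ with the discriminant form, so that $|R|=|\DD(\L)|=2^{\rk R}$. Everything else is the distribution of the values of $\q$ on $R$ and of $\q_0$ on $V$.

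First I would compute the values of $\q$ on $R$ from the explicit generators supplied by Lemma \ref{radical} ($\q=[-1]$ on $A_1$, $[1]$ on $E_7$, $\begin{bmatrix}2&1\\1&-k\end{bmatrix}$ on $D_{2k}$, extended by orthogonality to $D_4\oplus A_1$ and to $(4-q)A_1$). Enumerating the at most eight elements of $R$ and reading off $\q\bmod 4$ yields $|R_1|=0,1,2,3,0$ for $E_8,E_7,D_6,D_4\oplus A_1,D_4$ and $|R_1|=\binom{4-q}3$ for $(4-q)A_1$ (the weight-three vectors), which is the $|R_1|$-row. This same table of values proves part (2). Indeed, the polarization of $\q_0$ is the reduced form $b_V$ whose radical is $R$, so $\q_0|_R=\q\bmod 2$ is a \emph{homomorphism} $R\to\Z/2$; and in the cases $\la r\,|\,0\ra$, $\la 0\,|\,r\ra$ with $0\le r\le3$ (that is, $\L=E_7,D_6,D_4\oplus A_1$, or $mA_1$ with $m=4-r\ge1$) the computed values exhibit an element of $R$ on which $\q$ is odd. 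Hence $\q_0|_R$ is onto, its kernel has index $2$, and since $R_1\cup R_3=\{\q\text{ odd}\}$ and $R_0\cup R_2=\{\q\text{ even}\}$ are precisely its two cosets, we get $|R_1|+|R_3|=|R_0|+|R_2|=\tfrac12|R|=2^{3-r}$.

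It remains to count $V_1=\q_0^{-1}(1)$, and I would split on whether $\q_0|_R$ vanishes. If $\q_0|_R\ne0$ — exactly the lattices of part (2) — choose $u\in R$ with $\q_0(u)=1$; then $\q_0(v+u)=\q_0(v)+\q_0(u)+b_V(v,u)=\q_0(v)+1$, so $v\mapsto v+u$ is a fixed-point-free involution interchanging $V_0$ and $V_1$, whence $|V_1|=2^{\rk\L-1}$, giving $64,32,16$ and $2^{3-q}$. If $\q_0|_R=0$ (the cases $E_8$, where $R=0$, and $D_4$), then $\q_0$ descends to a nondegenerate $\Z/2$-quadratic form on $V/R$ of even dimension $2m$, and $|V_1|=|R|(2^{2m-1}-\varepsilon 2^{m-1})$ with $\varepsilon=\pm1$ according to whether its Arf invariant is $0$ or $1$. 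For $D_4$ ($m=1$, $|R|=4$) the only possibilities are $4$ and $12$, and the $12$ pairs of roots inject into $V_1$ by Lemmas \ref{roots-mod2} and \ref{V1-R1}, forcing $|V_1|=12$ (equivalently, computing $\bar\q_0$ from the Coxeter graph gives the minus-type form $x^2+xy+y^2$). For $E_8$ ($m=4$, $R=0$) the possibilities are $120$ and $136$; here the root bound only gives $\ge120$, so I would pin down the plus type ($\varepsilon=+1$) via the classical decomposition $256=1+120+135$ of $\L/2\L$ according to the minimal coset value $-v^2\in\{0,2,4\}$ (every coset of $2E_8$ has a representative with $-v^2\le4$), which places the $120$ root-classes in $V_1$ and the remaining $1+135$ classes in $V_0$; equivalently, this is the statement that the reduction of the $E_8$-form is of plus type, underlying $W(E_8)/\{\pm1\}\cong O^+_8(\Z/2)$.

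Finally, $R_1\subseteq V_1$, so the last row follows by subtraction, $|V_1\setminus R_1|=|V_1|-|R_1|$, giving $120,63,30,13,12,12$ and $2^{3-q}-\binom{4-q}3=4-q$. The only step that is not pure bookkeeping is fixing the Arf invariant of $\q_0$ for $E_8$ — the expected main obstacle — since the $D_4$ case is already forced by the root count; the coset-value decomposition above (or the $O^+_8(\Z/2)$ identification) is how I would settle it.
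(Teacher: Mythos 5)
Your proposal is correct and follows essentially the same route as the paper: both reduce everything to the explicit discriminant forms of Lemma \ref{radical} for computing the $|R_i|$ (hence part (2)), and count $|V_1|$ by the standard theory of $\Z/2$-quadratic forms, splitting into the cases $\q_0|_R\ne 0$ (half-and-half rule, which you prove via the involution $v\mapsto v+u$) and $\q_0|_R=0$ (descend to $V/R$ and use the Arf-invariant count). The only substantive variation is how the Arf invariants are pinned down in the latter case --- you force $D_4$ by the root count of Lemmas \ref{roots-mod2} and \ref{V1-R1} and settle $E_8$ by the classical $1+120+135$ coset decomposition (plus type), whereas the paper computes Arf invariants from explicit symplectic bases; both are valid, and your observation that the root bound alone cannot exclude the minus type for $E_8$ is accurate.
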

 
 \begin{proof} To count $|V_1|$ we apply the rule saying that a quadratic function on a  non-degenerate
 quadratic space (case $R=0$) takes value one $2^{g-1}(2^g+(-1)^{\Arf+1})$ times where $\Arf$ is its $\Arf$-invariant and $g$ its symplectic rank (half dimension of the underlying vector space), and that in the case $R\ne 0$ a quadratic function takes value 1 the same number of times as value 0, if the function does not vanish on $R$. For computing $\Arf$-invariants we use explicit symplectic bases.
 To check vanishing/nonvanishing of $\q_0$ on $R$ we use the congruence $\q_0\vert_R=q\!\!\!\mod 2$ and Lemma \ref{radical}.
 The computations of $R_i$ are also straightforward 
 from Lemma \ref{radical} and give, for $i=0,1,2,3$, the following values of $|R_i|$:
 \[\begin{aligned}
&|R_i|=\begin{cases} \binom{4-r}i+\binom{4-r}{i+4}, \text{ for types }\ \la r\,|\,0\ra \\
\binom{4-r}{4-i}+\binom{4-r}{8-i}, \, \text{ for types }\ \la 0\,|\,r\ra\\
\end{cases}\\
 &|R_0|=1,\ R_2=3,\ R_1=R_3=0 \hskip2mm \text{ for type } \la 1\,|\,1\ra
\hskip45mm \qedhere
\end{aligned}
\]
 \end{proof}

\begin{proof}[Proof of Proposition \ref{lift-to-root}]
Since the half of the number of roots in $\L$ is equal to the number 
of elements 
in $V_1\sm R_1$ found in Lemma \ref{downstairs} and shown in Tab.\, \ref{lattices}, the result stated
follows from Lemmas \ref{roots-mod2} and \ref{V1-R1}.
\end{proof}

Now, when we consider real sextics $C\subset Q$, we can, following Proposition \ref{lift-to-root} and Proposition \ref{one-to-one}, associate with each $v\in V_1\sm R_1$  a unique positive tritangent $\Tri_v=\pi(L_e)$ where $e$ is a (unique up to sign) root in $\L$ with $[e]=v$.

\begin{lemma}\label{intersection-criterion}
Consider a positive tritangent $\Tri_v$,
$v\in V_1\sm R_1$,
and one of the geometric vanishing classes, an oval-cycle $O$
or a bridge-cycle $B$.
\begin{enumerate}\item
$v\cdot [O]=1$ 
if and only if $\Tri_v$ has odd tangency with the oval.
\item 
$v\cdot [B] =1$ 
if and only if $\Tri_v$ separates the components of $C_\R$ incident to the bridge.
\end{enumerate}
\end{lemma}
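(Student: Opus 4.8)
The plan is to reduce both statements to the local intersection‐index computations already established in Section \ref{positivity} and Lemma \ref{cycle-line-intersection}, using the mod 2 reduction of the lattice pairing as the bookkeeping device. Recall that for the line $L_e$ realizing the class $-K_Y-e$, the tritangent $\Tri_v=\pi(L_e)$ corresponds to the root $e$ with $[e]=v$. The key observation is that the \emph{real} intersection number $L_e\cdot O$ (respectively $L_e\cdot B$) computed over $\Z$ reduces mod 2 to the pairing $v\cdot[O]$ (respectively $v\cdot[B]$) in $V=\L/2\L$, because $[O],[B]$ are precisely the mod 2 reductions of the oval- and bridge-classes, and the intersection form on $H_2(Y)$ descends to the bilinear form on $V$. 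So the whole problem is to show that the \emph{parity} of $L_e\cdot O$ (resp. $L_e\cdot B$) detects odd tangency (resp. separation of incident components).

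For part (1), I would argue as follows. Each transversal point of $L$ with the oval-cycle $O$ over $Y_\R$ projects to a simple tangency point of $\Tri_v$ with the oval $O_\R$, and conversely each such simple tangency lifts to a single transversal intersection point upstairs (a tangency of even order $2k$ would lift to $k$ such points, contributing evenly). By Lemma \ref{cycle-line-intersection} and Corollary \ref{tire-bouchon}, each transversal intersection contributes $\pm1$ to $L\cdot O$. Hence $L\cdot O \equiv (\text{number of transversal intersection points over } Y_\R) \pmod 2$, and this number has the same parity as the number of tangency points of $\Tri_v$ with $O_\R$ counted with multiplicity. Therefore $v\cdot[O]=L\cdot O\bmod 2$ equals $1$ exactly when the total tangency multiplicity is odd, which is the assertion. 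The subtle point I would need to check carefully is that tangencies of higher even order do not contribute an odd correction; this is handled by the local model, where a tangency of contact order $m$ lifts to $m$ points upstairs whose signed contribution is controlled by Corollary \ref{types2-3} (the case $m=2$) and its obvious generalization, giving an even contribution when $m$ is even.

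For part (2), the reasoning is parallel but uses the \emph{invariant} versus \emph{anti-invariant} arc dichotomy of Proposition \ref{cycle-cycle-intersection}. A bridge-cycle $B$ is a vanishing cycle connecting two (possibly equal) components of $C_\R$, and the tritangent $\Tri_v$ meets $C_\R$ at its three tangency points. The intersections of $L_e$ with $B$ upstairs occur where $\Tri_v$ crosses the arc of $Q_\R$ swept by the bridge, i.e. where $\Tri_v$ separates the two components of $C_\R$ incident to $B$. Using the positivity result of Proposition \ref{cycle-cycle-intersection} (each such transversal crossing contributes index $+1$), one gets $L\cdot B$ equal to the number of separating crossings, so $v\cdot[B]=L\cdot B\bmod 2=1$ precisely when $\Tri_v$ separates the incident components of $C_\R$. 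The hypotheses of Proposition \ref{cycle-cycle-intersection} are met because along $C$ the line $L$ meets $B$ along a $\conj$-invariant arc while the complementary cycle is $\conj$-anti-invariant, exactly the configuration prepared in Section \ref{oval-bridge-classes}.

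The main obstacle I anticipate is the careful parity accounting at tangencies of contact order higher than $2$ in part (1), and, in part (2), verifying that every transversal intersection of $L_e$ with the bridge-cycle over $Y_\R$ corresponds bijectively to a separating crossing of $\Tri_v$ with no cancellation — in other words, that all local indices carry the same sign, which is exactly what the positivity in Proposition \ref{cycle-cycle-intersection} guarantees. Once the sign-coherence is in hand, both equivalences follow by reducing the integral intersection number modulo $2$ and matching it against the geometric count, so the heart of the argument is the local-index bookkeeping rather than any new global input.
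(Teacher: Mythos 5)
Your opening reduction coincides with the paper's: since $O,B\in\L\subset K_Y^\perp$ and $L_e=-K_Y-e$, one gets $v\cdot[O]=L_e\cdot O\bmod 2$ (and likewise for $B$). The paper then finishes in one stroke by passing to the real locus via the Viro homomorphism, $L_e\cdot O\equiv L_{e\R}\cdot O_\R \pmod 2$, an identity that is insensitive both to tangency multiplicities and to possible non-real intersection points. You instead try to evaluate the integer $L_e\cdot O$ by listing intersection points, and this is where your argument breaks. Your treatment of non-simple tangencies is wrong: in the paper's convention a tangency point of contact order $2k$ counts as $k$ tangency points, and its local contribution to $L_e\cdot O$ has parity $k$ (local model: $C=\{y=0\}$, $\Tri=\{y=x^{2k}\}$, covering $w^2=y$; the lift $\{w=x^k\}$ meets the oval-cycle $\{(x,w): x\in\R,\ w\in i\R\}$ with local index of parity $k$). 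Your parenthetical claim that a tangency of even order $2k$ lifts to $k$ points ``contributing evenly'' is false for odd $k$, and if it were true the lemma itself would fail: a tritangent with a single contact-order-$6$ tangency on an oval has odd tangency with that oval, yet your accounting would give $v\cdot[O]=0$. Your later sentence (contact order $m$ lifts to $m$ points, even contribution for even $m$) contradicts the earlier one, and Corollary \ref{types2-3} cannot serve as its ``case $m=2$'': that corollary concerns two distinct simple tangency points on the same oval, not one tangency of higher order. So exactly the point you flag as subtle is mishandled.

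In part (2) the appeal to Proposition \ref{cycle-cycle-intersection} is out of scope: that proposition computes indices of two \emph{totally real} $\conj$-anti-invariant $2$-cycles, whereas $L_e$ is a complex algebraic curve, whose tangent planes are complex lines and hence never totally real; the line-versus-cycle statement is Lemma \ref{cycle-line-intersection}. Moreover, sign-coherence (``no cancellation''), which you present as the heart of the matter, is irrelevant for a mod $2$ identity. What does need an argument, and is missing, is control of the intersection points of $L_e$ with $B$ away from $Y_\R$: for the oval-cycle such points cannot occur ($O$ lies over $\operatorname{Cl}(Q_\R\sm Q_\R^+)$ while $\Tri_\R\subset Q_\R^+$, so $L_e\cap O$ sits over the tangency points), but the bridge-cycle $B$ does not lie over $Q_\R$, so non-real intersections are possible. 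They occur in $\conj$-conjugate pairs with equal local indices ($\conj$ preserves the orientation of $Y$ and reverses the orientations of both $L_e$ and $B$), hence drop out mod $2$ --- and this packaged fact is precisely the Viro-homomorphism identity $L_{e\R}\cdot B_\R=L_e\cdot B\bmod 2$ that the paper invokes. With that identity, your remaining geometric equivalences (odd intersection of $L_{e\R}$ with the lifted oval $\Leftrightarrow$ odd tangency; odd intersection with $B_\R$ $\Leftrightarrow$ separation of the incident components) do close the proof; without it, both parts of your argument have a hole.
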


\begin{proof} By definition, the tritangent $\Tri_v$ is covered by a line $L_e$ where $e$ is a root in $\L$ with residue $[e]=v$.
Note also that an oval of $C$ has
an odd tangency with $\Tri_v$
if and only if  in $Y_\R$ the oval has an odd intersection with $L_{e\R}$.
Since in $Y_\R$ the oval represents the image $O_\R\in H_1(Y_\R;\Z/2)$ of $O\in H_2(Y)$ by Viro homomorphism,
we have
$v\cdot [O]=L_e\cdot O\!\!\mod 2=L_{e\R}\cdot O_\R$, which gives the claim (1). 
Analogously, a separation of two components of $C_\R$,
incident to a given bridge-class $B$, by $\Tri_v$ is equivalent to an odd intersection of $L_{e\R}$ with $B_\R\in H_1(Y_\R;\Z/2)$, which gives 
the claim (2), since $L_{e\R}\cdot B_\R=e\cdot B\mod 2=v\cdot [B].$
\end{proof}

\subsection{Oval/bridge classes decomposition}\label{decomposition}
Here, we develop an approach for describing the real lines on a del Pezzo surface via reduction 
modulo 2 of the geometric root bases formed by oval- and bridge-classes that are specified, for a smart sextic $C_\e$, 
in Section \ref{oval-bridge-classes}.  The crucial role here is 
played by a direct sum decomposition $V=V^o+V^b$,
where $V^o$ is generated by residues of the vanishing oval-classes 
and $V^b$ by the residues of the bridge-classes that are shown at the rightmost column of Fig. \ref{oval-bridge-graphs} (we used here notation "$+$", since sign "$\oplus$", 
is reserved for 
orthogonal direct sums,
while $V^o$ and $V^b$ are isotropic).
Verification and principal properties of this decomposition are discussed in the next proposition.

\begin{proposition}\label{natural-decomposition}
Assume that $C_\e$ is a smart real sextic of type $\la p|q\ra$ where either $p=q=1$ or $q=0, 1\le p\le 4$.
Then:
\begin{enumerate}
\item $V=V^o+V^b$ is a direct sum decomposition.
\item $\dim V^o=p$ and the residues $[O_1],\dots,[O_p]\in V$ of the oval-classes form a basis of $V^o$.
\item $\dim V^b=4-q$ and the residues of
the bridge-classes that take part of the Coxeter-Dynkin diagrams in Fig. \ref{oval-bridge-graphs}
form a basis of $V^b$.
\item The radical $R\subset V$ has dimension $4-p-q$ and is contained in $V^b$.
\item Subspaces $V^o$ and $V^b$ are isotropic with respect to the $\Z/2$-pairing in $V$ inherited from $\L$,
and the induced pairing $V^o\times (V^{b}/R)$ is non-degenerate.
\end{enumerate}
\end{proposition}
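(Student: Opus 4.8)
The plan is to derive all five assertions from a single structural input: the oval-classes together with the bridge-classes drawn in the middle column of Fig.\,\ref{oval-bridge-graphs} constitute a system of simple roots of $\L$, hence a $\Z$-basis of $\L$ whose reduction modulo $2$ is an $\mathbb{F}_2$-basis of $V$.

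To justify this, note that by Proposition \ref{graph-adjacency} these cycles are roots ($e^2=-2$) whose pairwise intersection indices equal $+1$ along the edges of the Coxeter--Dynkin graph of $\L$ and $0$ otherwise; thus their Gram matrix is $-\mathrm{Cart}(\L)$, where $\mathrm{Cart}(\L)$ is the (possibly reducible) Cartan matrix read off Tab.\,\ref{lattices}. Since $\det\mathrm{Cart}(\L)\ne0$, these roots are $\Q$-independent, and the full-rank sublattice $\L'$ they generate satisfies $|\DD(\L')|=|\det\mathrm{Cart}(\L)|=|\DD(\L)|$; equal discriminant then forces $\L'=\L$. Reducing modulo $2$, the $p$ oval-residues span $V^o$ and the $(4-q)$ diagram-bridge residues span a subspace $W\subseteq V^b$, which already yields assertion (2) and the linear independence in (3).

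I would next record isotropy. Distinct oval- and bridge-cycles are disjoint---they are localized at distinct nodes of the degeneration, and two bridges sharing an oval approach it at different tangency points---while each has self-intersection $-2\equiv0\pmod2$; hence $V^o$ and $V^b$ are isotropic. In the simple-root basis the mod $2$ Gram form therefore takes the block anti-diagonal shape $\left(\begin{smallmatrix}0&P\\P^{\top}&0\end{smallmatrix}\right)$, with $P$ the oval\,$\times$\,diagram-bridge incidence matrix (its entries computed by Lemma \ref{intersection-criterion}). By Lemma \ref{downstairs} (see Tab.\,\ref{lattices}) the radical $R$ has dimension $4-p-q$, so the form has rank $\dim V-\dim R=(p+4-q)-(4-p-q)=2p$; since a block anti-diagonal form has rank $2\,\rk P$, this forces $\rk P=p$, i.e.\ $P$ has trivial left kernel.

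It remains to promote $W$ to all of $V^b$ and harvest the rest. Any bridge-class $B'$ not appearing in the diagram is disjoint from every diagram bridge, so $[B']$ is orthogonal to $W$; writing $[B']=\sum_i a_i[O_i]+w$ with $w\in W$ and using isotropy of $W$, orthogonality to $W$ yields $\sum_i a_i[O_i]\perp W$, whence $a=0$ by $\rk P=p$ and thus $[B']\in W$. Therefore $V^b=W$, which together with $V^o\oplus W=V$ establishes the direct sum (1) and $\dim V^b=4-q$ (3). For the block form the vanishing of the left kernel of $P$ identifies $R$ with the right kernel of $P$; hence $R\subset V^b$ with $\dim R=(4-q)-p=4-p-q$ (4), and the descended pairing $V^o\times(V^b/R)$ is non-degenerate (5). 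The only step that is not pure bookkeeping is the rank identity $\rk P=p$: I obtain it not by inspecting the five incidence matrices one by one, but by feeding the precomputed value $\dim R=4-p-q$ from Lemma \ref{downstairs} into the block-antidiagonal rank formula, and it is precisely this input that makes the argument uniform across all the lattices of Tab.\,\ref{lattices}.
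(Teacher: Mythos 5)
Your proof is correct, and it follows the same skeleton as the paper's own argument --- the Coxeter--Dynkin classes give a basis of $\L$ (hence of $V$ after reduction), isotropy of $V^o$ and $V^b$, and non-degeneracy of the oval--bridge pairing --- but you reach the crucial non-degeneracy by a genuinely different route. The paper establishes it directly, by inspecting the diagrams of Fig.\,\ref{oval-bridge-graphs}: for every nonzero collection of oval-classes there is a diagram bridge with odd incidence to it (this is exactly your statement that $P$ has trivial left kernel), and claims (4) and (5) are then read off. You instead import $\dim R=4-p-q$ from the lattice arithmetic of Section \ref{mod2arithmetic} (Lemma \ref{radical} and Tab.\,\ref{lattices}) and extract $\rk P=p$ from the rank formula for a block anti-diagonal form; this reverses the direction of inference (from the paper's inspection one could re-derive $\dim R$, whereas you consume it as input), which is legitimate since the radical computation there is independent of the proposition, and it has the merit of treating the five lattices uniformly. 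In two places you are in fact more careful than the printed proof: you justify the basis claim by the discriminant comparison forcing $\L'=\L$, and --- more substantially --- you explicitly prove that the residues of the bridge-classes \emph{not} occurring in the diagrams (e.g.\ $B_1$ and the bridge joining $o_4$ to $o_1$ in the $E_8$ case) lie in the span $W$ of the diagram bridges; this step is genuinely needed for claims (1) and (3), because $V^b$ is by definition spanned by \emph{all} bridges of the rightmost column of Fig.\,\ref{oval-bridge-graphs}, and the paper passes over it in silence. One small slip: the incidence entries of $P$ are supplied by Proposition \ref{graph-adjacency}, not by Lemma \ref{intersection-criterion}, which concerns intersections of tritangents with oval- and bridge-classes.
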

\begin{proof} The classes involved
in the Coxeter-Dynkin diagrams in Fig. \ref{oval-bridge-graphs}
form a basis of $\L$. This implies claims (1), (2), and (3). 
Since oval- and bridge-classes alternates in the Coxeter-Dynking graphs, the subspaces $V^o$ and $V^b$ are isotropic.
To prove claims (4) and (5), it is sufficient to notice that for any collection of oval-classes there exists a bridge-class in
the Coxeter-Dynkin diagrams which has odd number of incidences with the chosen collection of oval-classes. 
\end{proof}

According to Proposition \ref{natural-decomposition}, the spaces  $V^o$ and $V^b$ have specific bases
formed respectively
by the residues $\o_i$ of oval-classes $[O_i]$, $i=1,\dots,p$ and by the residues of $4-q$ bridge-classes which are indicated on Fig.\,\ref{oval-bridge-graphs}
and which we will denote $\bb_1,\dots, \bb_{4-q}$ (with random enumeration).
These bases of $V^o$ and $V^b$ will be called {\it geometric bases}.

Any vector $v\in V$ is decomposed as $v=v^o+v^b$ in accord with the direct sum decomposition  $V=V^o+ V^b$.
By o-length $|v|_o$ and b-length $|v|_b$ of $v$
we mean the number of non-zero coordinates 
of $v^o$ and $v^b$ in the geometric bases $\o_1,\dots, \o_p, \bb_1,\dots, \bb_{4-q}$ fixed above.

\begin{lemma}\label{q-calculation}
For any $v\in V$, $\q_0(v)=|v|_o+|v|_b+v^o\cdot v^b\mod2$.
\end{lemma}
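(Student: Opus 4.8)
The plan is to use that $\q_0$ is a quadratic refinement of the inherited $\Z/2$-pairing on $V$, combined with the isotropy of $V^o$ and $V^b$ recorded in Proposition \ref{natural-decomposition}(5) and the fact that the oval- and bridge-classes are roots.

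First I would note the polarization identity. Because $\L$ is even, $\q_0([x])=\tfrac{x^2}2\bmod2$ is well defined, and expanding $(x+y)^2=x^2+2\,x\cdot y+y^2$ and dividing by $2$ gives
\[
\q_0(a+b)=\q_0(a)+\q_0(b)+a\cdot b ,\qquad a,b\in V,
\]
where $\cdot$ denotes the $\Z/2$-pairing. Iterating over a finite sum yields
\[
\q_0\Big(\textstyle\sum_i a_i\Big)=\sum_i\q_0(a_i)+\sum_{i<j}a_i\cdot a_j .
\]

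Next I would evaluate $\q_0$ on each summand of $v=v^o+v^b$. Write $v^o=\sum_{i\in S}\o_i$ and $v^b=\sum_{j\in T}\bb_j$, where $S,T$ are the supports, so that $|S|=|v|_o$ and $|T|=|v|_b$ by the very definition of the geometric bases. Since $V^o$ is isotropic, every cross term $\o_i\cdot\o_j$ with $i\ne j$ vanishes, leaving $\q_0(v^o)=\sum_{i\in S}\q_0(\o_i)$. Each oval-class $O_i$ is a vanishing cycle, hence a root with $O_i^2=-2$, so $\q_0(\o_i)=1$ by Lemma \ref{V1-R1}; therefore $\q_0(v^o)=|v|_o\bmod2$. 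The identical argument, using the isotropy of $V^b$ and that the bridge-classes $\bb_j$ are likewise $(-2)$-roots, gives $\q_0(v^b)=|v|_b\bmod2$.

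Finally I would apply the polarization identity once more:
\[
\q_0(v)=\q_0(v^o+v^b)=\q_0(v^o)+\q_0(v^b)+v^o\cdot v^b=|v|_o+|v|_b+v^o\cdot v^b\ \bmod 2,
\]
which is the asserted formula. I anticipate no real obstacle; the only points that need care rather than ingenuity are invoking the evenness of $\L$ so that $\q_0$ and the refinement identity make sense, and justifying $\q_0(\o_i)=\q_0(\bb_j)=1$, which rests on the oval- and bridge-cycles being genuine roots (self-intersection $-2$) as constructed in Section \ref{oval-bridge-classes}.
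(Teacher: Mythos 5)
Your proof is correct and follows essentially the same route as the paper's: the paper likewise observes that $\q_0$ equals $1$ on each geometric basis element (they are $(-2)$-roots), that isotropy of $V^o$ and $V^b$ from Proposition \ref{natural-decomposition}(5) makes $\q_0$ restrict linearly to these subspaces, and then applies quadraticity to $v=v^o+v^b$ to pick up the cross term $v^o\cdot v^b$. Your write-up merely makes the polarization identity and the vanishing of cross terms explicit, which the paper leaves implicit.
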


\begin{proof}
Since  $\q_0$ takes value $1$ on each of the basic elements $\o_1,\dots, \o_p, \bb_1,\dots, \bb_{4-q}$,
the relations $\q_0(v^o)=|v|_o$ and $\q_0(v^b)=|v|_b$
follow from the linearity of the restrictions $\q_0|_{V^o}$ and $\q_0|_{V^b}$ ({\it cf.} Proposition \ref{natural-decomposition}(5)).
Applying quadraticity of $\q_0$ to $v=v^o+v^b$, we obtain the required relation.
\end{proof}

\subsection{Internal and tangent ovals with respect to a tritangent}\label{Sin-Stan}
With each positive tritangent $\Tri$
we associate two index sets $S_{in}, S_{tan}\subset\{1,\dots,p\}$.
Namely, $i\in S_{in}$ if and only if the oval 
with number $i$ is contained in 
$\hat \Tri$ (defined in Subsection \ref{L-to-l}), 
and $i\in S_{tan}$ if and only
if $\Tri$ has odd tangency with this oval.

Consider also a {\it boundary homomorphism} $\delta: V^b\to V^o$ 
that sends a basic class $\bb_j\in V^b$ to the sum of the residues $\o_i=[O_i]$
of those oval-classes $O_i\in \L\subset H_2(Y)$ that are incident to the bridge underlying the class $\bb_j$. More precisely, we put, by definition, 
\be\label{delta-f}
\delta(\bb_j)=\sum_{i=1,\dots,p} (\o_i\cdot \bb_j)\o_i.
\ee
Note, that 
\be\label{ker-delta}
\ker\delta=R
\ee
as it follows immediately from Proposition \ref{natural-decomposition}.

\begin{proposition}\label{oval-bridge-components}
Assume that $C_\e$ is a smart real sextic of type $\la p|0\ra$ with $p\ge 1$ and $l_v, v\in V_1\sm R_1$, is a positive tritangent
with the associated index sets $S_{in}, S_{tan}\subset\{1,\dots,p\}$. Then:

\begin{enumerate}
\item
$v^o=\sum_{i\in S_{in}}\o_i$ and in particular 
$|v|_o=|S_{in}|$.
\item
$\delta v^b=\sum_{i\in S_{tan}}\o_i$ and in particular
$\o_i$-class in $V$ is a summand in $\delta v^b$ if and only if
 $\o_i$ has odd tangency with $\Tri_v$.
\item
$v^o\cdot v^b=|S_{in}\cap S_{tan}|\mod2$.
\end{enumerate}
\end{proposition}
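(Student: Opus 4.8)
The plan is to read off the two components of $v=v^o+v^b$ with respect to the decomposition $V=V^o+V^b$ by pairing $v$ against the geometric bridge-classes and against the oval-classes, converting each $\Z/2$-pairing into geometric information through the two detection criteria of Lemma \ref{intersection-criterion}. Throughout I would use freely that $V^o$ and $V^b$ are isotropic and that the pairing $V^o\times(V^b/R)$ is non-degenerate (Proposition \ref{natural-decomposition}(5)), together with the incidence values $\o_i\cdot[B]\in\Z/2$ supplied by Proposition \ref{graph-adjacency}. I would establish (2) first, since it is the most transparent, then (1), and finally deduce (3) from the other two.

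For (2), I start from the definition $\delta\bb_j=\sum_i(\o_i\cdot\bb_j)\o_i$, which gives by linearity $\delta v^b=\sum_i(\o_i\cdot v^b)\,\o_i$. Since $V^o$ is isotropic, $\o_i\cdot v^o=0$, hence $\o_i\cdot v^b=\o_i\cdot v=[O_i]\cdot v$. By Lemma \ref{intersection-criterion}(1), $[O_i]\cdot v=1$ exactly when $\Tri_v$ has odd tangency with the $i$-th oval, that is, exactly when $i\in S_{tan}$. Thus $\delta v^b=\sum_{i\in S_{tan}}\o_i$, which is claim (2).

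For (1), I write $v^o=\sum_i a_i\o_i$ and recover the coefficients $a_i$ by pairing with the geometric bridge-classes. Since $V^b$ is isotropic, $v\cdot[B]=v^o\cdot[B]=\sum_i a_i(\o_i\cdot[B])$ for every bridge $[B]$, and by Proposition \ref{graph-adjacency} the numbers $\o_i\cdot[B]$ are just the incidences of $B$ with the ovals. Feeding in Lemma \ref{intersection-criterion}(2): for an oval--oval bridge $B_{ij}$ this gives $a_i+a_j=v\cdot[B_{ij}]$, which equals $1$ exactly when $\Tri_v$ separates $o_i$ from $o_j$, i.e. when exactly one of them lies in $\hat\Tri$, so $a_i+a_j=[i\in S_{in}]+[j\in S_{in}]$; for a pendant ($J$-)bridge $B$ attached to $o_i$ it gives $a_i=v\cdot[B]$, which equals $1$ exactly when $\Tri_v$ separates $o_i$ from the $J$-component. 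Because in each Coxeter--Dynkin graph of Fig.\,\ref{oval-bridge-graphs} the bridges connect all the ovals and at least one pendant bridge is present, this linear system over $\Z/2$ pins down every $a_i$, and the conclusion $a_i=[i\in S_{in}]$ follows once one knows that the $J$-component lies in the half-cone complementary to $\hat\Tri$, so that separation of $o_i$ from the $J$-component is equivalent to $o_i\subset\hat\Tri$. Verifying this last geometric input is the main obstacle: it should rest on the facts that $\hat\Tri$ carries the germ of $Q^+_\R$ at the vertex, that the $J$-component embraces the vertex, and that every positive tritangent is tangent to the $J$-component, so that the $J$-component remains on a single side of $\Tri_v$; locating that side relative to $\hat\Tri$ is exactly the delicate point that the conventions fixing $Q^+_\R$ and the orientation of generatrices in Section \ref{real-loci} are designed to settle. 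Granting it, $v^o=\sum_{i\in S_{in}}\o_i$ and $|v|_o=|S_{in}|$.

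Finally, (3) follows by combining the two computations. Using $v^o=\sum_{i\in S_{in}}\o_i$ from (1) and $\o_i\cdot v^b=\o_i\cdot v=[i\in S_{tan}]$ from the argument for (2), I obtain $v^o\cdot v^b=\sum_{i\in S_{in}}(\o_i\cdot v^b)=\sum_{i\in S_{in}}[i\in S_{tan}]=|S_{in}\cap S_{tan}|\bmod 2$. In summary, the only genuinely geometric ingredient in the whole proposition is the position of the $J$-component relative to $\hat\Tri$ used in (1); everything else is formal bilinear algebra in $V$ driven by the two detection criteria of Lemma \ref{intersection-criterion}.
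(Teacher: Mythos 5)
Your argument is correct and coincides in substance with the paper's own proof: both recover $v^o$ from its pairings with the geometric bridge-classes via the non-degeneracy of $V^o\times(V^b/R)$ in Proposition \ref{natural-decomposition}(5) together with Lemma \ref{intersection-criterion}(2), compute $\delta v^b=\sum_i(\o_i\cdot v)\o_i$ via linearity of $\delta$ and Lemma \ref{intersection-criterion}(1), and get (3) by combining the two; moreover, the geometric input you flag as ``granted'' --- that the $J$-component lies outside $\hat\Tri_v$, so that separation of $o_i$ from $J$ is equivalent to $o_i\subset\hat\Tri_v$ --- is used exactly as silently in the paper, which restates Lemma \ref{intersection-criterion}(2) as ``incident to one and only one oval lying in $\hat\Tri_{v\R}$''. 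The only flaw is in your parenthetical justification of that input: it is not true that every positive tritangent is tangent to the $J$-component (a tritangent of type $T_3$ is disjoint from it); the correct reason the $J$-component stays on one side of $\Tri_{v\R}$ is that the three tangencies already exhaust the intersection number $\Tri_v\cdot C=6$, so $\Tri_{v\R}$ has no transversal crossings with $C_\R$ at all.
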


\begin{proof} Due to non-degeneracy of the pairing $V^o\times (V^b/R)\to\Z/2$ (see Proposition \ref{natural-decomposition}),
the component $v^o$ of $v$ is determined by the intersection indices $v^o\cdot\bb_j= v\cdot\bb_j$ with 
$1\le j\le 4$.
On the other hand, by Lemma \ref{intersection-criterion}, $v\cdot\bb_j\in \Z/2$ does not vanish if and only if $j$-th
bridge-class 
is incident to one and only one oval lying
in $\hat\Tri_{v\R}$. Since the same non-vanishing property holds for  $(\sum_{i\in S_{in}}\o_i)\cdot \bb_j$, we obtain the claim (1). 

Due to (\ref{delta-f}) and linearity of $\delta$, we have $\delta v^b=\sum_{i=1,\dots,p} (\o_i\cdot  v)\o_i$. Thus, to get claim (2) there remains
to notice that, due to Lemma \ref{intersection-criterion}, $o_i\cdot v=1$ if and only if $\Tri_v$ is tangent to 
$i$-th oval.

Finally, we deduce from claim (1) and Lemma \ref{intersection-criterion} that $v^o\cdot v^b=\sum_{i\in S_{in}} \o_i\cdot v$ counts the number of $i\in S_{in}$ for which
$i$-th oval
is tangent to $\Tri_v$, that is the number of elements in $S_{in}\cap S_{tan}$.
\end{proof}

\subsection{Pairs $(S_{in},S_{tan})$ for sextics of type $\la4\,|\,0\ra$} Here we use numeration of ovals fixed in Sect. 
\ref{lower-upper-section} which is distinguishing lower and upper ovals (see Prop.  \ref{lower-upper}).

\begin{lemma}\label{M-case-relation}
If a smart sexic $C=C_\e$ is of type $\la4\,|\,0\ra$, then, for any positive tritangent $\Tri_v$
$$|v|_b=|S_{tan}\cap\{1,3\}|\mod2.$$
\end{lemma}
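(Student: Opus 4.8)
The plan is to read the congruence directly off the oval--bridge decomposition for $\L=E_8$, feeding the boundary homomorphism $\delta$ into Proposition~\ref{oval-bridge-components}. First I would fix the geometric bridge basis $\bb_1,\dots,\bb_4$ of $V^b$ consisting of the residues of $B_{12},B_{23},B_{34},B_3$ read off the $E_8$ row of Fig.~\ref{oval-bridge-graphs}, and write the bridge component of $v$ as $v^b=\sum_j\alpha_j\bb_j$ with $\alpha_j\in\Z/2$. By the definition of b-length, and since each coordinate is $0$ or $1$, this immediately gives $|v|_b\equiv\sum_j\alpha_j\pmod 2$.

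Next I would rewrite the right-hand side using $\delta$. By Proposition~\ref{oval-bridge-components}(2) we have $\delta v^b=\sum_{i\in S_{tan}}\o_i$, whereas by the definition (\ref{delta-f}) of $\delta$ the coefficient of $\o_i$ in $\delta v^b$ is $\sum_j\alpha_j(\o_i\cdot\bb_j)\bmod 2$. Therefore $i\in S_{tan}$ precisely when $\sum_j\alpha_j(\o_i\cdot\bb_j)\equiv 1$, and summing over the two lower ovals $i\in\{1,3\}$ (a labelling that is legitimate by Proposition~\ref{lower-upper}) yields
\[
|S_{tan}\cap\{1,3\}|\equiv\sum_j\alpha_j\Big(\sum_{i\in\{1,3\}}\o_i\cdot\bb_j\Big)\pmod 2 .
\]

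The one genuinely geometric ingredient, and the only place where the configuration $\la4\,|\,0\ra$ is used, is that every basis bridge is incident to exactly one lower oval: along the chain $O_1\!-\!B_{12}\!-\!O_2\!-\!B_{23}\!-\!O_3\!-\!B_{34}\!-\!O_4$ the lower ovals $O_1,O_3$ sit in alternating positions, so $B_{12}$ touches $O_1$, both $B_{23}$ and $B_{34}$ touch $O_3$, and the pendant bridge $B_3$ is attached to $O_3$, while no basis bridge touches both $O_1$ and $O_3$. Hence $\sum_{i\in\{1,3\}}\o_i\cdot\bb_j=1$ in $\Z/2$ for each $j$, the inner sum above collapses to $1$, and the displayed expression becomes $\sum_j\alpha_j\equiv|v|_b$, as required. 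I do not anticipate a real obstacle: the argument reduces to this finite incidence check against Fig.~\ref{oval-bridge-graphs}, the only subtlety being to keep track of the fact that the chosen basis employs the pendant bridge at $O_3$ (not the one at $O_1$), which is exactly what singles out the index set $\{1,3\}$.
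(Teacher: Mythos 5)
Your proof is correct and follows essentially the same route as the paper: both arguments rest on the single geometric fact that each basis bridge of the $E_8$ diagram is incident to exactly one of the lower ovals $o_1,o_3$, combined with the mod~2 pairing computation in $V=V^o+V^b$. The only cosmetic difference is that you pass through the boundary homomorphism $\delta$ and Proposition~\ref{oval-bridge-components}(2), whereas the paper pairs $v$ with $\o_1+\o_3$ and invokes Lemma~\ref{intersection-criterion} directly --- the same content, since Proposition~\ref{oval-bridge-components}(2) is itself deduced from that lemma.
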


\begin{proof}
Each bridge is incident either to $o_1$ or to $o_3$ (but not to both). Therefore, 
$|v|_b$ has the same parity as 
$v\cdot \o_1 + v\cdot \o_3$, which, due to Lemme \ref{intersection-criterion}, has the same parity as
the total number
of tangencies of $\Tri_v$ with $o_1$ and $o_3$.
\end{proof}

\begin{lemma}\label{arithm}
For any sextic $C\subset Q$ of type $\la4\,|\,0\ra$ and any positive tritangent $\Tri$,
$$|S_{in}\sm S_{tan}|+|S_{tan}\cap\{1,3\}|\qquad \text{ is odd}.$$

Conversely, for any pair of sets $S_1,S_2 \subset\{1,2,3,4\}$ with odd sum
$|S_{1}\sm S_{2}|+|S_{2}\cap\{1,3\}|$,
there exists a positive tritangent $\Tri$ for which $S_1=S_{in}$ and $S_2=S_{tan}$.
\end{lemma}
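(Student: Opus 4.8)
The plan is to translate the combinatorial statement into the mod 2 arithmetic developed in Section \ref{arithmetic} and to exploit the fact that for type $\la4\,|\,0\ra$ the lattice $\L$ is $E_8$, for which the radical $R$ vanishes. The forward direction should follow by combining Lemma \ref{q-calculation} with Lemma \ref{M-case-relation} and Proposition \ref{oval-bridge-components}. Concretely, every positive tritangent corresponds to some $v\in V_1\sm R_1$, and since $R=0$ here we have $V_1\sm R_1=V_1=\q_0^{-1}(1)$, so $\q_0(v)=1$. By Lemma \ref{q-calculation}, $\q_0(v)=|v|_o+|v|_b+v^o\cdot v^b\bmod 2$. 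I would then substitute the geometric interpretations: by Proposition \ref{oval-bridge-components}(1), $|v|_o=|S_{in}|$; by Proposition \ref{oval-bridge-components}(3), $v^o\cdot v^b=|S_{in}\cap S_{tan}|\bmod 2$; and by Lemma \ref{M-case-relation}, $|v|_b=|S_{tan}\cap\{1,3\}|\bmod 2$. Assembling these gives
$$
1=|S_{in}|+|S_{tan}\cap\{1,3\}|+|S_{in}\cap S_{tan}|\pmod 2.
$$
The remaining algebra is to observe that $|S_{in}|+|S_{in}\cap S_{tan}|=|S_{in}\sm S_{tan}|+2|S_{in}\cap S_{tan}|\equiv |S_{in}\sm S_{tan}|\pmod2$, which yields exactly $|S_{in}\sm S_{tan}|+|S_{tan}\cap\{1,3\}|$ odd, as claimed.

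For the converse, the strategy is a counting/bijection argument rather than a direct geometric construction. Given a pair $(S_1,S_2)$ with the stated parity condition, I want to produce $v\in V_1$ with $S_{in}=S_1$ and $S_{tan}=S_2$, and then invoke Proposition \ref{lift-to-root} (valid for $\L=E_8$) to lift $v$ to a genuine root, hence to a positive tritangent $\Tri_v$ via Proposition \ref{one-to-one}. The key point is that the two data $S_{in}$ and $S_{tan}$ determine $v$ completely: by Proposition \ref{oval-bridge-components}(1) the component $v^o=\sum_{i\in S_1}\o_i$ is forced, and by Proposition \ref{oval-bridge-components}(2) the condition $\delta v^b=\sum_{i\in S_2}\o_i$ constrains $v^b$ up to the kernel of $\delta$, which is $R$ by (\ref{ker-delta}); since $R=0$ in the $E_8$ case, $v^b$ is also uniquely determined, provided $\sum_{i\in S_2}\o_i$ lies in the image of $\delta$. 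Because $\delta:V^b\to V^o$ has trivial kernel and both spaces are $4$-dimensional, $\delta$ is an isomorphism, so every target is attained and $v^b$ exists and is unique.

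It then remains to check that this uniquely determined $v=v^o+v^b$ actually lands in $V_1$, i.e.\ that $\q_0(v)=1$; this is precisely where the parity hypothesis on $(S_1,S_2)$ is consumed, running the forward computation in reverse via Lemmas \ref{q-calculation} and \ref{M-case-relation}. Once $\q_0(v)=1$ and $R=0$ force $v\in V_1\sm R_1$, Proposition \ref{lift-to-root} produces the root and the associated tritangent, and Proposition \ref{oval-bridge-components} guarantees that its invariants recover $(S_1,S_2)$. The main obstacle I anticipate is bookkeeping in the converse: one must verify that $\delta$ is genuinely bijective here and that $S_{tan}$ (defined via odd tangency, hence via the $\o_i\cdot v$ pairing) is correctly recovered from $\delta v^b$ for \emph{all four} ovals simultaneously, including the interaction encoded by Lemma \ref{M-case-relation} between $|v|_b$ and the tangencies with the lower ovals $o_1,o_3$. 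The geometric realizability is otherwise automatic from the established root/tritangent correspondence, so no delicate algebro-geometric construction is required beyond this arithmetic matching.
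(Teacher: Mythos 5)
Your proposal is correct and follows essentially the same route as the paper's own proof: the forward direction is the identical computation $\q_0(v)=|S_{in}|+|S_{tan}\cap\{1,3\}|+|S_{in}\cap S_{tan}|$ assembled from Lemma \ref{q-calculation}, Proposition \ref{oval-bridge-components}, and Lemma \ref{M-case-relation}, and the converse is the same construction $v^o=\sum_{i\in S_1}\o_i$, $v^b=\delta^{-1}\bigl(\sum_{i\in S_2}\o_i\bigr)$ using that $\delta$ is an isomorphism (trivial kernel $R=0$ plus equal dimensions), followed by Propositions \ref{lift-to-root} and \ref{one-to-one}. The only detail you omit is the paper's opening reduction, via deformation invariance of the number and types of positive tritangents, to a \emph{smart} sextic $C_\e$, which is what licenses applying Proposition \ref{oval-bridge-components} and Lemma \ref{M-case-relation} (both stated only for smart sextics) to an arbitrary $C$ of type $\la4\,|\,0\ra$.
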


\begin{proof}
The number and type of positive tritangents are preserved under deformation of $C$. So, 
it is enough to prove the statement for
a smart sextic $C=C_\e$ of type $\la4\,|\,0\ra$.

For any tritangent $\Tri_v$, $v\in V_1\sm R_1$, we have $\q_0(v)=1$, while
Proposition \ref{oval-bridge-components} with Lemmas \ref{M-case-relation} and \ref{q-calculation}
imply that 
\be\label{q0-formula}
\begin{aligned}
\q_0(v)=&\q_0(v^o)+\q_0(v^b)+v^o\cdot v^b=|S_{in}|+|S_{tan}\cap\{1,3\}|+|S_{in}\cap S_{tan}|=\\
&|S_{in}\sm S_{tan}|+|S_{tan}\cap\{1,3\}|\mod2.
\end{aligned}
\ee

To prove the converse statement, we put $v=v^o+v^b$, $v^o=\sum_{i\in S_1}\o_i$
and $v^b=\delta^{-1}(\sum_{i\in S_{2}}\o_i)$, where the inverse map $\delta^{-1}$ is well defined, since 
in the case of type $\la4\,|\,0\ra$ the homomorphism $\delta:V^b\to V^o$ is an isomorphism,
as it follows from $\ker\delta=R$ (see (\ref{ker-delta})) and $R=0$, $\dim V^b=\dim V^o$ (see Proposition \ref{natural-decomposition}).
With such a choice, we have $q(v^0)=|S_{1}|\!\!\mod 2$, while due to $\sum_{i\in S_{2}}\o_i=\delta(v^\delta)= \sum_i(\o_i\cdot v^b)\o_i$
(see (\ref{delta-f})) we get
$\q(v^b)=(v^b, \o_1+\o_3)=|S_{2}\cap\{1,3\}|$ and $v^o\cdot v^b=\sum_{i\in S_1}\cdot v^b= |S_{1}\cap S_{2}|$. Therefore,
$\q(v)= |S_{1}|+|S_{2}\cap\{1,3\}|+|S_{1}\cap S_{2}|=|S_{1}\sm S_{2}|+|S_{2}\cap\{1,3\}|=1$. Propositions \ref{lift-to-root} and \ref{one-to-one},
now, imply existence of a tritangent $\Tri_v$.  Proposition  \ref{oval-bridge-components} shows finally that 
$S_{in}=S_1$ and $S_{tan}=S_2$ for this $\Tri_v$.
\end{proof}

\begin{proposition}\label{15-8}
Assume that 
$C$ is a sextic of type $\la4\,|\,0\ra$. Then:
\begin{enumerate}
\item A subset $S\subset\{1,2,3,4\}$ can be realized as $S_{tan}$
of a positive tritangent $\Tri$ if and only if $S\ne\{1,2,3,4\}$.
\item For each of the 15  subsets 
$S_2\varsubsetneq\{1,2,3,4\}$ 
there  are precisely $8$ subsets 
$S_1\subset\{1,2,3,4\}$ for which
there exists a positive tritangent $\Tri$ with $S_{in}=S_1, S_{tan}=S_2$,
and this positive tritangent is uniquely determined by  $(S_1,S_2)$.
\end{enumerate}
\end{proposition}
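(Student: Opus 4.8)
The plan is to derive both claims directly from the parity constraint of Lemma~\ref{arithm}, combined with the special features of type $\la4\,|\,0\ra$: here $\L=E_8$, so $R=0$, and by Proposition~\ref{natural-decomposition} together with (\ref{ker-delta}) the boundary homomorphism $\delta:V^b\to V^o$ is an isomorphism (both spaces are $4$-dimensional and $\ker\delta=R=0$). I expect no genuinely new difficulty here: the substantive work has already been packaged into Lemma~\ref{arithm} and into the invertibility of $\delta$, and what remains is bookkeeping.

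For claim (1), I would read off both directions from Lemma~\ref{arithm}. The forward half says every positive tritangent satisfies ``$|S_{in}\sm S_{tan}|+|S_{tan}\cap\{1,3\}|$ is odd''; if $S_{tan}=\{1,2,3,4\}$ this quantity equals $0+2=2$, which is even, so $\{1,2,3,4\}$ is never realized as $S_{tan}$. For the other direction, a subset $S$ is realizable as $S_{tan}$ exactly when some $S_1$ makes $|S_1\sm S|+|S\cap\{1,3\}|$ odd. The second summand is fixed once $S$ is fixed, while $|S_1\sm S|$ can be given either parity as soon as the complement $\{1,2,3,4\}\sm S$ is nonempty --- one just toggles membership of a single element outside $S$. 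Thus every $S\ne\{1,2,3,4\}$ is realizable, and this isolates the unique excluded subset.

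For the count in claim (2), fix $S_2\subsetneq\{1,2,3,4\}$ and enumerate the $S_1$ with $|S_1\sm S_2|$ of the prescribed parity. Writing $S_1=(S_1\cap S_2)\sqcup(S_1\sm S_2)$, the intersection part ranges freely over the $2^{|S_2|}$ subsets of $S_2$, while $S_1\sm S_2$ ranges over the subsets of $\{1,2,3,4\}\sm S_2$ of fixed parity. Since $4-|S_2|\ge1$, exactly half of the $2^{4-|S_2|}$ such subsets have each parity, giving $2^{3-|S_2|}$ choices; the total is $2^{|S_2|}\cdot 2^{3-|S_2|}=8$, independently of $|S_2|$. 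By the converse half of Lemma~\ref{arithm} each such pair $(S_1,S_2)$ really is realized by a positive tritangent.

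Finally, for uniqueness I would invoke Proposition~\ref{oval-bridge-components}: if $\Tri_v$ has $S_{in}=S_1$ and $S_{tan}=S_2$, then $v^o=\sum_{i\in S_1}\o_i$ is determined by $S_1$, and $\delta v^b=\sum_{i\in S_2}\o_i$ together with the invertibility of $\delta$ forces $v^b=\delta^{-1}\big(\sum_{i\in S_2}\o_i\big)$. Hence $v=v^o+v^b\in V_1\sm R_1$ is completely determined by $(S_1,S_2)$, and by Propositions~\ref{lift-to-root} and~\ref{one-to-one} the associated positive tritangent $\Tri_v$ is unique. The only point that genuinely requires care is the boundary case $S=\{1,2,3,4\}$, where the parity obstruction fails; this is precisely the source of the ``$15$'' in the statement. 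As a consistency check, $15\times8=120=|V_1|$ recovers the total number of positive tritangents for $\L=E_8$.
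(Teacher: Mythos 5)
Your proof is correct, and for claim (1) and the count of $8$ in claim (2) it follows essentially the paper's own route: both are read off from Lemma \ref{arithm} by the same elementary parity bookkeeping (the paper compresses your subset-splitting count into ``it is also trivial that\dots''). Where you genuinely diverge is the uniqueness assertion in (2). The paper proves it by a cardinality match: the $15\times 8=120$ admissible pairs are all realized by the converse half of Lemma \ref{arithm}, and $120$ is the known total number of positive tritangents for type $\la4\,|\,0\ra$ (cited from Russo; equivalently, the number of pairs of opposite roots of $E_8$ via Proposition \ref{one-to-one}), so the surjection from tritangents onto admissible pairs is forced to be a bijection. You instead prove injectivity directly: by Proposition \ref{oval-bridge-components} the pair $(S_{in},S_{tan})$ recovers $v^o$ and $\delta v^b$, and since $\ker\delta=R=0$ for $\L=E_8$ it recovers $v$ itself, hence the tritangent. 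This is precisely the mechanism the paper deploys for the types $\la p\,|\,0\ra$ with $p\le3$ in Proposition \ref{tan-in-p}, specialized to the case $R=0$; its advantage is that it needs no external count of tritangents (which you relegate, appropriately, to a consistency check), making the argument self-contained within the oval-bridge formalism. One small caveat: Proposition \ref{oval-bridge-components} is stated for smart sextics, so your uniqueness step should open with the same deformation-invariance reduction that begins the proof of Lemma \ref{arithm} (the assignment of $(S_{in},S_{tan})$ to positive tritangents is preserved under real deformation of $C$); this is a one-line addition and does not affect the substance.
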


\begin{proof}
It is trivial to observe that for $S_2\subset\{1,2,3,4\}$
there exists $S_1\subset\{1,2,3,4\}$ with odd
$|S_{1}\sm S_{2}|+|S_{2}\cap\{1,3\}|$ if and only if
$S_2\ne\{1,2,3,4\}$, which together with
Lemma \ref{M-case-relation} implies (1).
It is also trivial that for $S_2\varsubsetneq\{1,2,3,4\}$ 
there exists precisely 8 such subsets $S_1$.
Totally it gives $15\times 8=120$
pairs $(S_1,S_2)$ matching the 
known number of
tritangents (see, e.g., 
\cite[Corollary 5.3]{Russo}). 
Thus, each pair $(S_1,S_2)$ may be represented by a unique
positive tritangent. 
\end{proof}

The following Corollary interprets
the results of Lemma \ref{arithm} and Proposition \ref{15-8} in terms of types $T_i$, $T_0^*$.

\begin{cor}\label{Pairs-Mtype} 
For every
sextic $C$ of type $\la4\,|\,0\ra$,
a pair $(S_1,S_2)$ of subsets $S_1, S_2\subset\{1,\dots,4\}$ 
is realized as
 $(S_{in},S_{tan})$ 
 of some positive tritangent 
 if and only if $|S_1|\le3$ and $S_2\sm S_1$
satisfies the  criteria for $|S_{in}\sm S_{tan}|$ pointed in 
the table below.
\qed
\end{cor}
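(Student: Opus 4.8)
The plan is to obtain the corollary as a bookkeeping translation of Lemma \ref{arithm} and Proposition \ref{15-8}, reorganized according to the five types. The whole arithmetic content is already isolated by Lemma \ref{arithm}: together with its converse it says that a pair $(S_1,S_2)$ is realized as $(S_{in},S_{tan})$ of some positive tritangent precisely when $|S_1\sm S_2|+|S_2\cap\{1,3\}|$ is odd, and Proposition \ref{15-8}(2) guarantees that the realizing tritangent is then unique. Thus the only thing to be recast is the single parity relation
\[
|S_{in}\sm S_{tan}|\equiv 1+|S_{tan}\cap\{1,3\}|\pmod 2 .
\]

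First I would pin down the admissible cardinalities. A positive tritangent has exactly three tangency points with $C_\R$, distributed among the $J$-component and the ovals, so at most three ovals can carry a tangency point; hence $\tau=|S_{tan}|\le 3$, i.e. $S_{tan}\neq\{1,2,3,4\}$. This matches Proposition \ref{15-8}(1) and explains the absence of a type $T_4$; note also that for $S_{tan}=\{1,2,3,4\}$ one has $|S_{in}\sm S_{tan}|=0$ while $|S_{tan}\cap\{1,3\}|=2$, so the parity relation fails automatically. Each admissible pair then falls into a type according to $\tau=|S_{tan}|$: one gets $T_\tau$ for $1\le\tau\le3$ and the stratum $\tau=0$.

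Second I would read the parity relation off type by type. Fixing $\tau$ and the position of $S_{tan}$ relative to the lower ovals $\{1,3\}$ of Section \ref{lower-upper-section} determines the required parity of $|S_{in}\sm S_{tan}|$ through the displayed congruence; listing the outcomes as $S_{tan}\sm S_{in}$ and $S_{in}\cap S_{tan}$ run over the allowed configurations produces exactly the criteria recorded in the table, and Proposition \ref{15-8}(2) turns each admissible pair into a single tritangent so that the resulting counts agree with Tab.\,\ref{tab1}.

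The one point that is not a formal consequence of the congruence is the splitting of the $\tau=0$ stratum into $T_0$ and $T_0^*$: here $S_{tan}=\emptyset$ carries no information about the even tangencies, so the pair $(S_{in},S_{tan})$ cannot by itself distinguish the two. I would supply the missing datum from Corollary \ref{types2-3}, which computes $L\cdot O\in\{0,\pm2\}$ and thereby detects whether the two tangency points to a single oval are separated by the generatrix through the $J$-tangency point; combining this dichotomy with the count $8=4+4$ of $\tau=0$ tritangents from Proposition \ref{15-8} fixes the $T_0$ and $T_0^*$ rows. I expect this last refinement, rather than any individual parity computation, to be the main obstacle, since it requires importing intersection-index information that is invisible at the level of the index sets $S_{in},S_{tan}$.
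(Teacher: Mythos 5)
Your route is the same as the paper's: the corollary carries no separate proof there --- it is stated (with a \qed) precisely as the reorganization of Lemma \ref{arithm} and Proposition \ref{15-8} that you describe, and the single substantive point, the separation of $T_0$ from $T_0^*$, is supplied only later by Proposition \ref{separate-23} inside the proof of Theorem \ref{5types}. Your parity bookkeeping and your reading of the cardinality condition are correct; in particular you are right to interpret the bound as $|S_{tan}|\le 3$, since $|S_{in}|=4$ does occur (e.g.\ a tritangent undertangent to $o_2$ and underpassing $o_1,o_3,o_4$ is realizable by Lemma \ref{arithm}). However, your claim that the congruence produces ``exactly the criteria recorded in the table'' would fail if executed literally: Lemma \ref{arithm} gives the type-independent relation $|S_{in}\sm S_{tan}|\equiv |S_{tan}\cap\{1,3\}|+1 \pmod 2$, hence the same formula for $T_2$ as for $T_1$, whereas the printed $T_2$ entry reads $|S_{tan}\cap\{1,3\}|\bmod 2$. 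The printed entry is a typo --- every $T_2$ code in Tab.\,\ref{pics120} satisfies the $+1$ version --- and a careful execution of your step should detect and correct this rather than assert agreement.

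The genuine gap is in the step you yourself flagged, and your counting argument does not close it. Corollary \ref{types2-3} is only a dictionary: separated tangencies correspond to $L\cdot O=\pm2$, non-separated (or absent) ones to $L\cdot O=0$. To decide which of the eight $\tau=0$ tritangents are $T_0^*$ one must compute the integers $L\cdot O_i$ from the residue data, and neither the dictionary nor the count $8=4+4$ does this. What is needed (and is exactly the content of Proposition \ref{separate-23}) is: for $S_{in}=\{i\}$, $S_{tan}=\varnothing$, Proposition \ref{oval-bridge-components} gives $v=\o_i$, so by Lemma \ref{roots-mod2} the lifting root is $e=\pm O_i$, whence $L_e\cdot O_i=-e\cdot O_i=\pm 2$ and the tritangent is $T_0^*$; for $|S_{in}|=3$ one must check that the lifting root is orthogonal to all oval-classes (the paper exhibits the four roots explicitly; alternatively, negative definiteness gives $|e\cdot O_l|\le 2$ with equality only for $e=\pm O_l$, i.e.\ only when $|S_{in}|=1$, and $e\cdot O_l$ is even since $l\notin S_{tan}$, forcing $e\cdot O_l=0$), whence the tritangent is $T_0$. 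Knowing that each stratum has four members does not say which stratum consists of the $T_0^*$ tritangents, and importing the count of $T_0^*$ tritangents from Tab.\,\ref{tab1} would be circular, because Theorem \ref{5types} is proved by means of this very separation.
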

\hskip-5mm\scalebox{0.82}{
\boxed{
\begin{tabular}{c|c|c|c|c|c}
&$T_0$&$T_0^*$&$T_1$&$T_2$&$T_3$\\
\hline
$|S_{in}\sm S_{tan}|=$
&$3$&$1$&
$|S_{tan}\cap\{1,3\}|+1\ \rm{mod}\,2$&
$|S_{tan}\cap\{1,3\}|\ \rm{mod}\,2$&
$\begin{cases}0,&\!\text{if } \{2,4\}\subset S_{tan} \\ 
1,&\!\text{if }   \{1,3\}\subset S_{tan} 
\end{cases}$\\
\end{tabular}}}

\subsection{Pairs $(S_{in},S_{tan})$ for sextics of type $\la p\,|\,0\ra$ with $p\le 3$}

\begin{proposition}\label{tan-in-p}
Assume that $C$ is a sextic of type $\la p\,|\,0\ra$, $0\le p\le3$. Then:
\begin{enumerate}
\item For any pair of subsets $S_1,S_2\subset \{1,\dots,p\}$, except $S_1=S_2=\varnothing$ for $p\in\{2,3\}$,
there exists a positive tritangent with $S_1=S_{in}$ and $S_2=S_{tan}$.
\item If  $S_1=S_2=\varnothing$, then there exist precisely $2^{3-p}-(4-p)$ \rm{(}that is four for $p=0$,  one for $p=1$, and zero for $p\in\{2,3\}$\rm{)} such realizations.
 \end{enumerate}

Any other pair 
$(S_1,S_2)$  is realized by precisely $2^{3-p}$ positive tritangents.
\end{proposition}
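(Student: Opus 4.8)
The plan is to reduce the statement to a counting problem in the mod~2 lattice arithmetic developed in Section~\ref{mod2arithmetic}, exactly as was done for the type $\la4\,|\,0\ra$ case. First I would fix a smart sextic $C_\e$ of type $\la p\,|\,0\ra$ and invoke deformation invariance of the number and type of positive tritangents, so that it suffices to count realizable pairs $(S_1,S_2)$ for $C_\e$. By Propositions~\ref{lift-to-root} and~\ref{one-to-one}, positive tritangents correspond bijectively to elements $v\in V_1\sm R_1$, and by Proposition~\ref{oval-bridge-components} the data $(S_{in},S_{tan})$ is recovered from $v=v^o+v^b$ \emph{via} $v^o=\sum_{i\in S_{in}}\o_i$ and $\delta v^b=\sum_{i\in S_{tan}}\o_i$. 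Thus realizability of a pair $(S_1,S_2)$ amounts to: (a) existence of $v^b\in V^b$ with $\delta v^b=\sum_{i\in S_2}\o_i$, and (b) the constraint $\q_0(v)=1$.

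For part~(a), the key point is that $\delta\colon V^b\to V^o$ has $\ker\delta=R$ (see~(\ref{ker-delta})) with $\dim R=4-p$ (Proposition~\ref{natural-decomposition}(4) with $q=0$). Since $\delta$ is onto $V^o$ (by non-degeneracy of the pairing $V^o\times(V^b/R)$, Proposition~\ref{natural-decomposition}(5)), for \emph{any} target $\sum_{i\in S_2}\o_i$ there are exactly $|R|=2^{4-p}$ preimages $v^b$. So the counting splits: for each admissible $(S_1,S_2)$ I would set $v^o=\sum_{i\in S_1}\o_i$, vary $v^b$ over the $2^{4-p}$ preimages, and then impose the parity condition $\q_0(v)=1$. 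Using Lemma~\ref{q-calculation}, $\q_0(v)=|v|_o+|v|_b+v^o\cdot v^b$, where $|v|_o=|S_1|$ is fixed, $v^o\cdot v^b=|S_1\cap S_2|$ is fixed by Proposition~\ref{oval-bridge-components}(3), but $|v|_b$ (the b-length) varies across the $2^{4-p}$ preimages. The count of realizing tritangents is therefore the number of preimages $v^b$ of $\sum_{i\in S_2}\o_i$ with $|v|_b$ of the parity prescribed by $\q_0(v)=1$.

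The main obstacle will be showing that among the $2^{4-p}$ preimages, exactly half (i.e.\ $2^{3-p}$) have the correct parity of $|v|_b$, except in the single degenerate case $S_1=S_2=\varnothing$. Concretely, the preimages of a fixed target differ by elements of $R\subset V^b$, so I would analyze how $|v|_b\bmod 2$ behaves on the coset $v^b+R$. The function $v^b\mapsto|v|_b\bmod2$ is the linear functional ``sum of b-coordinates'', and its restriction to $R$ is either identically zero or a nontrivial linear functional. If it is nontrivial on $R$, the parity is balanced and we get exactly $\tfrac12|R|=2^{3-p}$ realizations, yielding the generic count. The degenerate case is precisely when $v^o=0$ and the target of $\delta$ is $0$: here $v^b$ ranges over $R$ itself, $\q_0(v)=|v|_b\bmod2=\q(v^b)$ by Lemma~\ref{radical}, and we must count elements of $R_1$ (equivalently $R_1\cup R_3$, since the sign ambiguity identifies $\pm v$); by Lemma~\ref{downstairs}(2) this gives $|R_1|+|R_3|=2^{3-p}$ minus the elements forced out, landing on $2^{3-p}-(4-p)$ after removing the non-root elements of $R$ as in the computation $|V_1\sm R_1|=4-q$ in Tab.~\ref{lattices}.

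Finally, I would assemble the exceptional enumeration: the only pair losing realizations is $S_1=S_2=\varnothing$, and it loses exactly the $(4-p)$ elements of $R_1$ that fail to lift to roots (this is where Lemma~\ref{V1-R1} and the discrepancy between $R_1$ and genuine roots enters, matching the bottom row of Tab.~\ref{lattices}). For $p\in\{2,3\}$ this forces zero realizations of the empty pair, consistent with $2^{3-p}-(4-p)$. The cleanest presentation is to state the parity-balancing lemma on $R$ once and then read off all three cases. I expect the genuinely delicate step to be verifying that the linear functional $|\cdot|_b\bmod2$ is nontrivial on $R$ whenever the target is nonzero or $S_1\ne\varnothing$, since that is what simultaneously produces the uniform generic count $2^{3-p}$ and isolates the single exceptional pair.
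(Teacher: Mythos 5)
Your proposal follows essentially the same route as the paper's proof: deformation invariance plus a smart sextic, the bijection between positive tritangents and elements of $V_1\sm R_1$, recovery of $(S_{in},S_{tan})$ from $v=v^o+v^b$ via Proposition \ref{oval-bridge-components}, and a half-and-half parity count over the $R$-coset $\delta^{-1}(\sum_{i\in S_2}\o_i)$. The numbers you arrive at are correct, but two links in your chain need repair. First, your realizability criterion (a)+(b) is incomplete: by Lemma \ref{V1-R1} and Proposition \ref{lift-to-root} the correct condition is $v\in V_1\sm R_1$, not merely $\q_0(v)=1$. Your generic count of $2^{3-p}$ is right only because of a fact you never state (and which the paper states explicitly): if $S_1\ne\varnothing$ then $v^o\ne0$ forces $v\notin R$, while if $S_2\ne\varnothing$ then $v^b$ lies in an $R$-coset different from $R=\ker\delta$, so again $v\notin R$; hence such $v$ automatically avoid $R_1$. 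Without this observation, condition (b) alone overcounts --- exactly as it would on the coset $R$ itself.

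Second, the step you defer as ``genuinely delicate'' --- nontriviality of the functional $|\cdot|_b\bmod2$ on $R$ --- is unconditional, not contingent on ``the target being nonzero or $S_1\ne\varnothing$''. Since $|r|_b\bmod2=\q_0(r)$ for $r\in R\subset V^b$ (Lemma \ref{q-calculation}) and $\q_0$ is linear on the radical, nontriviality amounts to $R_1\cup R_3\ne\varnothing$, which holds for all four lattices $E_7$, $D_6$, $D_4+A_1$, $4A_1$ because $|R_1|=4-p\ge1$ (Tab.\,\ref{lattices}, via Lemma \ref{radical}); it fails precisely for $\L=D_4$, which is why type $\la1\,|\,1\ra$ is handled in a separate proposition. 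Consequently the parity is balanced on \emph{every} coset, including $R$: the pair $(\varnothing,\varnothing)$ is exceptional not because balance fails, as your plan suggests, but because the elements of $R_1$ must then be excised, since they do not lift to roots. Relatedly, in the degenerate case the elements to be counted are those of $R_3$, not ``$R_1$ (equivalently $R_1\cup R_3$)'': a root $e$ with $[e]\in R$ has $\q([e])=e^2/2\equiv3\bmod4$, and there is no sign identification between $R_1$ and $R_3$ because $[e]=[-e]$ already in $V$. Your arithmetic $(|R_1|+|R_3|)-|R_1|=2^{3-p}-(4-p)$ recovers the correct answer, but the justification as written is garbled; fixing these three points turns your outline into the paper's proof.
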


\begin{proof}
Once more we refer to invariance of positive tritangents under deformation of $C$, pick a smart sextic $C_\e$ of type 
$\la p\,|\,0\ra$, and prove the statement for $C=C_\e$.

According to Proposition \ref{oval-bridge-components}, for every positive tritangent $\Tri_v$ with given $S_{in}=S_1, S_{tan}=S_2$ we should have
$v=v^o+v^b$ with $v^o=\sum_{i\in S_1}\o_i$ and $v^b\in\delta^{-1}(\sum_{i\in S_2}\o_i)$. Thus, the component $v^o$ is determined uniquely,
while $v^b$ varies in a given $R$-coset and thus can be chosen in $|\ker\delta|=|R|=2^{4-p}$ ways (see Proposition \ref{natural-decomposition}). In the opposite direction, according to 
Lemma \ref{V1-R1} and Proposition \ref{lift-to-root}, $v=v^o+v^b$ does correspond to a positive tritangent if and only if
$v\in V_1-R_1$, and such a tritangent is unique, if exists.

Since $\q(v)= \q(v^a)+\q(v^b) + v^a\cdot v^b = |S_{1}|+\q(v^b)+|S_{1}\cap S_{2}| =
\q(v^b)+|S_1-S_2| \mod 2 $ ({\it cf.} Proof of Proposition \ref{15-8}), to achieve
$v\in V_1$ we need to achieve $q(v^b)=1+|S_1-S_2| \mod 2$. Now, note that, 
for $\L=E_7,D_6, D_4+A_1, 4A_1$ corresponding to $p=3,2,1,0$ (see Tab.\,\ref{C-Y-X-correspondence}),
precisely a half of elements $v^b$ of $R$ has
$\q(v^b)=0$ and a half has $\q(v^b)=1$, which follows from linearity of $\q$ on $R$  and existence 
in $R$ of elements with $\q=1$.
This proves that the number of tritangents representing $(S_{in}=S_1,S_{tan}=S_2)$ is $\frac12|R|= 2^{3-p}$ as soon as
$S_1\ne\varnothing$ or $S_2\ne\varnothing$. Indeed, if $S_1\ne\varnothing$ then $v^o\ne 0$ and thus $v\notin R_1$, while if 
$S_2\ne\varnothing$ then $v^b$ belongs to a $R$-coset distinct from $R=\ker\delta$, and thus never belongs to $R_1$. 

If both $S_1$ and $S_2$ are empty, then
$v^o=0$ and $v^b\in R$.
In cases $p\in\{2,3\}$, we have $\L= D_6$ and $\L=E_7$. Since for these lattices $R\cap V_1\subset R_1$,
Proposition \ref{lift-to-root} implies that no positive tritangent exists in these cases. In cases $p=0,1$, we have $\L=4A_1$
and
$\L=D_4+A_1$, where $(R\cap V_1)\sm R_1$ is nonempty and consists of, respectively, 4 and 1 elements.
\end{proof}

\subsection{Pairs $(S_{in},S_{tan})$ for sextics of type $\la1\,|\,1\ra$}
\begin{proposition}\label{tan-in-1-1}
Assume that 
$C$ is a sextic of type $\la 1\,|\,1\ra$. 
Then, for 
a pair of subsets $S_{1},S_{2}\subset \{1\}$ there exists a positive tritangent with $S_{in}=S_1$ and $S_{tan}=S_2$
if and only if $(S_1,S_2)\ne (\varnothing,\varnothing)$.
Each of the remaining 3 pairs $(S_1,S_2)\ne (\varnothing,\varnothing)$
is realized precisely by 4 positive tritangents.
\end{proposition}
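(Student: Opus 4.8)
The plan is to mimic the structure of the proof of Proposition~\ref{tan-in-p} but to handle the type $\la1\,|\,1\ra$ explicitly, since here the lattice is $\L=D_4$, which differs from the $\la p\,|\,0\ra$ series in that one of the ``ovals'' bounding a disc lies in the opposite half $Q_\R^-$ (it is a \emph{negative} oval), so there is a single positive oval ($p=1$) and $q=1$. First I would invoke invariance of the count of positive tritangents under deformation and pass to a smart sextic $C_\e$ of type $\la1\,|\,1\ra$, for which the oval/bridge data is the $D_4$ case recorded in Fig.~\ref{oval-bridge-graphs}. By Proposition~\ref{natural-decomposition} applied with $p=1$, $q=1$, we have $\dim V^o=1$ (basis $\o_1=[O_1]$), $\dim V^b=4-q=3$, and the radical $R\subset V^b$ has dimension $4-p-q=2$, with $\ker\delta=R$ via \eqref{ker-delta}.

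Next I would set up the realizability equation exactly as before. For a candidate pair $(S_1,S_2)\subset\{1\}$ one wants a tritangent $\Tri_v$ with $v=v^o+v^b$, $v^o=\sum_{i\in S_1}\o_i$, and $v^b\in\delta^{-1}\bigl(\sum_{i\in S_2}\o_i\bigr)$; by Lemma~\ref{V1-R1} and Proposition~\ref{lift-to-root} such $v$ corresponds to a (unique, if it exists) positive tritangent precisely when $v\in V_1\sm R_1$. Since $v^o$ is forced and $v^b$ ranges over a fixed $R$-coset of size $|R|=|\ker\delta|=2^{4-1-1}=4$, I would count how many of these four choices land in $V_1\sm R_1$. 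Using $\q_0(v)=|v|_o+|v|_b+v^o\cdot v^b$ from Lemma~\ref{q-calculation} (equivalently the parity computation $\q_0(v)=\q_0(v^b)+|S_1\sm S_2|\bmod 2$ as in the proof of Proposition~\ref{15-8}), the condition $v\in V_1$ reduces to $\q_0(v^b)=1+|S_1\sm S_2|\bmod 2$, and the extra condition $v\notin R_1$ is automatic when $v^o\ne0$ (i.e.\ $S_1\ne\varnothing$) or when $v^b$ lies in an $R$-coset distinct from $R$ itself (i.e.\ $S_2\ne\varnothing$).

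The decisive input is the distribution of $\q$ on the radical for $\L=D_4$, which I would extract from Lemma~\ref{downstairs}: in the $\la1\,|\,1\ra$ row one has $|R_0|=1$, $|R_2|=3$, $|R_1|=|R_3|=0$, so $\q$ takes only the values $0$ and $2$ on $R$ and hence $\q_0|_R\equiv0$; there is no element of $R$ with $\q_0=1$. This is the crucial difference from the $\la p\,|\,0\ra$ cases, where $\q$ was not identically even on $R$. Consequently, for $(S_1,S_2)\ne(\varnothing,\varnothing)$ the target coset is genuinely off the radical or $v^o\ne0$, so all four elements of the relevant coset automatically satisfy $v\notin R_1$ and exactly those with the correct value of $\q_0(v^b)$ qualify; since $\q_0|_{V^b}$ is \emph{linear} (Proposition~\ref{natural-decomposition}(5)) and nonconstant, I expect all four coset representatives to share the same residue and thus to give four genuine tritangents, yielding the claimed count of $4$ for each of the three nontrivial pairs. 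For $S_1=S_2=\varnothing$ one has $v^o=0$ and $v^b\in R$, so $v\in R\cap V_1\subset R_1$ by the vanishing $|R_1|=0$, forcing $v\in R_1$ and hence no positive tritangent, which gives the excluded case.

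The step I expect to be the main obstacle is verifying carefully that for each nonempty choice of $(S_1,S_2)$ all four elements of the pertinent $R$-coset actually satisfy the quadratic condition simultaneously (rather than splitting two-and-two as in the $\la p\,|\,0\ra$ proof), and confirming that none of them slips into $R_1$. This hinges precisely on $\q_0|_R\equiv0$ for $D_4$: because the four translates $v^b+r$, $r\in R$, all have the same $\q_0$-value (as $\q_0(v^b+r)=\q_0(v^b)+\q_0(r)+b(v^b,r)$ and the cross term vanishes on the radical), the realizability condition is either satisfied by all four or by none. I would check directly that for $(S_1,S_2)\ne(\varnothing,\varnothing)$ the condition $\q_0(v^b)=1+|S_1\sm S_2|\bmod2$ is met, confirming the uniform count of $4$ and completing the proof; a sanity check against the totals in Tab.~\ref{tab1} (the $\la1\,|\,1\ra$ column) and against $|V_1\sm R_1|=12$ from Tab.~\ref{lattices} provides an independent confirmation, since $3\times4=12$.
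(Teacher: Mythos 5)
Your proposal is correct and follows essentially the same route as the paper, which proves Proposition~\ref{tan-in-1-1} by the same reduction used for Proposition~\ref{tan-in-p}: for $\L=D_4$ the radical $R$ has dimension $2$ with $\q_0|_R\equiv 0$, so the pair $(\varnothing,\varnothing)$ (i.e.\ $v^o=0$, $v^b\in R$) is excluded, while for each of the three other pairs $\q_0$ is constant on the relevant $R$-coset and equal to $1$, giving exactly $|R|=4$ tritangents per pair. Your additional observations (the all-or-nothing behaviour of $\q_0$ on $R$-cosets and the check $3\times 4=12=|V_1\sm R_1|$) simply make explicit what the paper's terser proof asserts.
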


\begin{proof}
The proof is analogous to that of Proposition \ref{tan-in-p}. Here, $\L=D_4$, the radical $R$ is or dimension 2, and
$\q_0$ is identically zero on $R$, so that $V_1=\varnothing$.
 The emptyness of $V_1$ exclude the case $v^o=0, v^b\in R, \q(v)=1$ (which is equivalent to $(S_1,S_2)=(\varnothing,\varnothing)$).
In its turn, from $\dim R=2$ and $\q_0\vert_{R}=0$ it follows that in each of the cases $(S_1,S_2)\ne (\varnothing,\varnothing)$ there are precisely 4 choices
of $v^b$ in the R-coset $\delta^{-1}(\sum_{i\in S_{tan}}\o_i)$ for which $\q_0(v)=1$.
\end{proof}

\subsection{Proof of Theorem \ref{5types}}

\subsubsection{The case of $C$ of type $\la 4\,|\,0\ra$}
By definition, for the type $T_k$, $0\le k\le3$ and $T_0^*$ the cardinality $\vert S_{tan}\vert $ is $k$ and $0$ respectively.
Thus, applying Proposition \ref{15-8} we conclude that the number of tritangents is
\begin{itemize}
\item $\binom40\times8=8$ for the types $T_0$ and $T_0^*$ counted together,
\item $\binom4k\times 8$ for the type $T_k$, that is 32, 48 and 32 for 
$k=1,2,3$ respectively,
\end{itemize}
where $\binom4k$ indicates a choice of a subset $S_{tan}\subset \{1,\dots,4\}$.

To finish the proof we separate the types $T_0$ and $T_0^*$ by means of the following criterium.
\begin{proposition}\label{separate-23}
The 4 cases with $|S_{in}|=3, S_{tan}=\varnothing$ and 4 cases with  $|S_{in}|=1, S_{tan}=\varnothing$ represent
the tritangents $\Tri$ of type $T_0$ and $T_0^*$, respectively. 
\end{proposition}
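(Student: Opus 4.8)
The plan is to work, exactly as in the preceding lemmas, with a smart sextic $C_\e$ of type $\la 4\,|\,0\ra$, so that $\L=E_8$ and, by Tab.\,\ref{lattices}, the radical is trivial, $R=0$. First I would record what the hypothesis $S_{tan}=\varnothing$ imposes on the residue $v=[e]\in V$ of the root $e$ underlying a positive tritangent $\Tri_v$. Since $\delta v^b=\sum_{i\in S_{tan}}\o_i=0$ and $\ker\delta=R=0$ (see (\ref{ker-delta}) and Proposition \ref{natural-decomposition}), we get $v^b=0$, hence $v=v^o=\sum_{i\in S_{in}}\o_i$ by Proposition \ref{oval-bridge-components}. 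As $v\in V_1$, Lemma \ref{q-calculation} gives $1=\q_0(v)=|S_{in}|\bmod 2$, so $|S_{in}|$ is odd. Thus each of the eight tritangents with $S_{tan}=\varnothing$ has $|S_{in}|\in\{1,3\}$, and by Proposition \ref{15-8} they split into the four with $|S_{in}|=1$ and the four with $|S_{in}|=3$.

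The decisive point is that the distinction between $T_0$ and $T_0^*$ is \emph{not} visible in $V=\L/2\L$: by Corollary \ref{types2-3} it is governed by the integral intersection number $L\cdot O_m$, which equals $0$ in the $T_0$ case and $\pm2$ in the $T_0^*$ case, whereas the oval--bridge arithmetic only records $L\cdot O_m\bmod 2$. So the core of the argument is to recover this integral datum. For that I would use $L=L_e=-K_Y-e$ (Proposition \ref{one-to-one}) together with $O_m\cdot K_Y=0$, which yields $L\cdot O_m=-e\cdot O_m$, and then bound $e\cdot O_m$ by Cauchy--Schwarz in the negative definite lattice $\L$: since $e^2=O_m^2=-2$ one has $|e\cdot O_m|\le 2$, with equality only if $e=\pm O_m$.

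In the case $|S_{in}|=1$, say $S_{in}=\{i\}$, we have $v=\o_i=[O_i]$, so by Lemma \ref{roots-mod2} the realizing root is $e=\pm O_i$ and $L\cdot O_i=\mp O_i^2=\pm2$; Corollary \ref{types2-3} then says the two tangency points on $o_i$ are separated by the generatrix through the $J$-tangency, so $\Tri$ is of type $T_0^*$. In the case $|S_{in}|=3$ the residue $v=\o_i+\o_j+\o_k$ differs from every single $\o_m$, hence $e\ne\pm O_m$ and so $|e\cdot O_m|<2$ for all $m$; on the other hand, because the oval-classes are mutually orthogonal roots (so $\o_a\cdot\o_b\equiv 0$ in $\Z/2$), we have $e\cdot O_m\equiv v\cdot\o_m=0\bmod 2$. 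Being even and of absolute value strictly below $2$, each $e\cdot O_m$ must vanish, so $L\cdot O_m=0$ for every oval; by Corollary \ref{types2-3} no oval meets $\Tri$ in two points separated by the $J$-generatrix, and since $\tau=0$ this forces $\Tri$ to be of type $T_0$.

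The step I expect to be the main obstacle is precisely this passage from the mod $2$ data to the integral value of $L\cdot O_m$: the entire enumeration machinery lives in $V=\L/2\L$ and is blind to the $T_0/T_0^*$ dichotomy, so one is compelled to re-enter the integral lattice $\L$. The Cauchy--Schwarz bound combined with the parity $e\cdot O_m\equiv 0$ is what closes this gap, turning an \emph{a priori} $\Z$-valued invariant into a forced $0$. The remaining ingredients --- that $S_{tan}=\varnothing$ forces $v^b=0$, and that $|S_{in}|$ is odd --- are routine consequences of $R=0$ and Lemma \ref{q-calculation}, and the final $4+4$ bookkeeping is immediate from Proposition \ref{15-8}.
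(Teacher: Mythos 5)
Your proof is correct, and while its overall skeleton (identify $v=v^o$ via Proposition \ref{oval-bridge-components}, lift to a root $e$, compute the integral intersection $L_e\cdot O_m=-e\cdot O_m$, and conclude via Corollary \ref{types2-3}) coincides with the paper's, your treatment of the $|S_{in}|=3$ case is genuinely different. For $|S_{in}|=1$ the two arguments are identical: $e=\pm O_i$ by Lemma \ref{roots-mod2}, hence $|L_e\cdot O_i|=|O_i^2|=2$ and the tritangent is of type $T_0^*$. For $|S_{in}|=3$, however, the paper exhibits the four lifting roots explicitly as linear combinations of the geometric basis in the $E_8$ Coxeter diagram and verifies $e\cdot O_m=0$ by direct computation, whereas you derive the same vanishing abstractly: $e\ne\pm O_m$ because the residues differ, so Cauchy--Schwarz in the negative definite lattice gives $|e\cdot O_m|\le 1$, while mutual orthogonality of the oval-classes forces $e\cdot O_m\equiv v\cdot\o_m=0\ (\mathrm{mod}\ 2)$, and an even integer of absolute value at most $1$ must vanish. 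Your route buys independence from explicit $E_8$ bookkeeping (and would transfer verbatim to the lower-rank lattices of Tab.\,\ref{lattices}), at the cost of losing the concrete data the paper's explicit roots carry and exploits elsewhere (e.g.\ which ovals lie above $\Tri_v$, read off from the parities of the encircled coefficients in Tab.\,\ref{roots}). Your preliminary step --- that $S_{tan}=\varnothing$ together with $R=0$ forces $v^b=0$ and $|S_{in}|$ odd --- is a correct re-derivation of what the paper already records in Lemma \ref{arithm} and Proposition \ref{15-8}.
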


\begin{proof}
According to Proposition \ref{oval-bridge-components},  a positive tritangent $l_v$ has $S_{in}=\{i\}, S_{tan}=\varnothing$, if an only if $v=\o_i$.
In such a case, it is the vanishing oval-class $O_i$ that lifts $v$ to $\L=E_8$. Thus, by Proposition \ref{one-to-one} $|L_v\cdot O_i|=|(-K\pm O_i)\cdot O_i|=
|O_i^2|=2$, and by Corollary \ref{types2-3}
 $L_v$ should have two tangency points with oval $O_i$ separated by tangency with the $J$-component.
 
 Similarly, a positive tritangent $l_v$ has $S_{in}=\{i,j,k\}, S_{tan}=\varnothing$ if and only if $v=\o_i+\o_j+\o_k$. The corresponding roots
 $e\in E_8$ are given by the following linear combinations of the basic geometric vanishing classes
 indicated in  Fig. \ref{oval-bridge-graphs}
(to point the position of the oval-classes, we encircle their multiplicities).
\newline
$\begin{matrix}
\circled{0}\,0\,\circled{1}\,2\,\circled{3}\,2\,\circled{1}\\
\phantom{00a)}2
\end{matrix}$
\hskip8mm
$\begin{matrix}
\circled{1}\,2\,\circled{3}\,4\,\circled{4}\,2\,\circled{1}\\
\phantom{00a)}2
\end{matrix}$
\hskip8mm
$\begin{matrix}
\circled{1}\,2\,\circled{2}\,2\,\circled{3}\,2\,\circled{1}\\
\phantom{00a)}2
\end{matrix}$
\hskip8mm
$\begin{matrix}
\circled{1}\,2\,\circled{3}\,4\,\circled{4}\,2\,\circled{1}\\
\phantom{00a)}2
\end{matrix}$
\newline
For each of these 4 roots $e$, the product $e\cdot O_i$ with each of the oval-classes $O_i$ vanishes.

\end{proof}

\subsubsection{The case of $C$ of type $\la p\,|\,0\ra$, $0\le p\le 3$}\label{372}
In the case of tritangents of types $T_k$, $k=1,2,3$, 
Proposition  \ref{tan-in-p} gives $8\binom{p}k$, 
tritangents, where $8$ appears as the product of $2^{3-p}$ with the number $2^p$ of subsets $S_{in}$.
\begin{table}[h!]
\caption{}\label{roots}
\boxed{
\begin{tabular}{c|c|c|c|c|c}
p&$\L$&root&root&root&root\\
\hline
3&$E_7$&
$\begin{matrix}
1\,\circled{1}\,1\,\circled{1}\,0\,\circled{0}\\
\phantom{a)}1
\end{matrix}$
&
$\begin{matrix}
0\,\circled{1}\,2\,\circled{3}\,2\,\circled{1}\\
\phantom{a)}2
\end{matrix}$
&
$\begin{matrix}
1\,\circled{2}\,3\,\circled{3}\,2\,\circled{1}\\
\phantom{a)}1
\end{matrix}$
&
$\begin{matrix}
1\,\circled{1}\,1\,\circled{2}\,2\,\circled{1}\\
\phantom{a)}1
\end{matrix}$
\\
\hline
2&$D_6$&
$\begin{matrix}
1\,\circled{1}\,1\,\circled{1}\,0\\
\phantom{aaa)}1
\end{matrix}$
&
$\begin{matrix}
1\,\circled{1}\,1\,\circled{1}\,1\\
\phantom{aaa)}0
\end{matrix}$
&
$\begin{matrix}
0\,\circled{0}\,0\,\circled{1}\,1\\
\phantom{aaa)}1
\end{matrix}$
&
$\begin{matrix}
0\,\circled{1}\,2\,\circled{2}\,1\\
\phantom{aaa)}1
\end{matrix}$
\\
\hline
1&$D_4+A_1$&
$\begin{matrix}
1\,\circled{1}\,0\hskip5mm0\\
\hskip-7mm1
\end{matrix}$
&
$\begin{matrix}
1\,\circled{1}\,1\hskip5mm0\\
\hskip-7mm0
\end{matrix}$
&
$\begin{matrix}
0\,\circled{1}\,1\hskip5mm0\\
\hskip-7mm1
\end{matrix}$
&
$\begin{matrix}
0\,\circled{0}\,0\hskip5mm 1\\
\hskip-7mm0
\end{matrix}$
\\  
\hline
0&$4A_1$&
1\hskip4mm0\hskip4mm0\hskip4mm0&
0\hskip4mm1\hskip4mm0\hskip4mm0&
0\hskip4mm0\hskip4mm1\hskip4mm0&
0\hskip4mm0\hskip4mm0\hskip4mm1
\end{tabular}}
\end{table}
If we count together tritangents of types $T_0$ and $T_0^*$,  Proposition  \ref{tan-in-p} gives
$2^{3-p}\cdot 2^p - (4-p)= 4+p$ tritangents,
among which $p$
corresponding to $|S_{in}|=1$ represent case $T_0^*$ 
as it follows from by Corollary \ref{types2-3} as above,
and the remaining $4$ tritangents represent $T_0$-case.
The corresponding $4$ roots are described in Tab.\,\ref{roots}.
In Tab.\,\ref{roots},
the 4 positive roots $e\in\L$ representing the 4 tritangents $\Tri_v$, $v=[e]$ of type $T_0$ for $C$ of type $\la p\,|\,0\ra$.
For each of these roots $e$ we have $e\cdot O_i=0$ for each (encircled) oval-root.
An oval $o_i$ lies above $\Tri_v$ if the corresponding (encircled) coefficient is odd.

\subsubsection{The case of $C$ of type $\la 1\,|\,1\ra$}\label{tritangents-to-1,1}
By Proposition \ref{tan-in-1-1} the type $T_1$
is represented by $8\times\binom11=8$ tritangents,
among which 4 correspond to $S_{in}=\varnothing$ and 4 to $S_{in}=\{1\}$,
while the types $T_0$ and $T_0^*$ together are represented by $4$ tritangents corresponding to $S_{tan}=\varnothing$ and $S_{in}=\{1\}$.
\begin{table}[h!] 
\caption{}\label{roots1-1}
\boxed{
\begin{tabular}{cccc|c|c|c}
root&$e$\ \ \  in&$\L=D_4$&&
type of $\Tri_{[e]}$&$(|S_{in}|,|S_{tan}|)$&\# roots\\
\hline
$\begin{matrix}
0\,\circled{1}\,0\\
\hskip1mm0
\end{matrix}$
&&&&
$T_0^*$&(1,0)&1\\
\hline
$\begin{matrix}
1\,\circled{1}\,1\\
\hskip1mm0
\end{matrix}$
&
$\begin{matrix}
1\,\circled{1}\,0\\
\hskip1mm1
\end{matrix}$
&
$\begin{matrix}
0\,\circled{1}\,1\\
\hskip1mm1
\end{matrix}$
&&$T_0$&(1,0)&3\\
\hline
$\begin{matrix}
1\,\circled{1}\,0\\
\hskip1mm0
\end{matrix}$
&
$\begin{matrix}
0\,\circled{1}\,1\\
\hskip1mm0
\end{matrix}$
&
$\begin{matrix}
0\,\circled{1}\,0\\
\hskip1mm1
\end{matrix}$
&
$\begin{matrix}
1\,\circled{1}\,1\\
\hskip1mm1
\end{matrix}$
&
$T_1$&(1,1)&4\\
\hline
$\begin{matrix}
1\,\circled{0}\,0\\
\hskip1mm0
\end{matrix}$
&
$\begin{matrix}
0\,\circled{0}\,1\\
\hskip1mm0
\end{matrix}$
&
$\begin{matrix}
0\,\circled{0}\,0\\
\hskip1mm1
\end{matrix}$
&
$\begin{matrix}
1\,\circled{2}\,1\\
\hskip1mm1
\end{matrix}$
&
$T_1$&(0,1)&4\\
\end{tabular}}
\end{table}
Among the latter four, only one represents type $T_0^*$, since there is only one oval above the J-component
(which follows again from Corollary \ref{types2-3} applied in a similar way).
The roots indicating the corresponding pairs $(S_{in},S_{tan})$
are shown in Tab.\,\ref{roots1-1}.

The types $T_3$ and $T_2$ are not represented by tritangents since $\binom13=\binom12=0$.

\subsubsection{The case of $C$ of type $\la |\,|\,|\ra$}
Absence of ovals implies that all tritangents are of type $T_0$.
Their number is the half of the number of roots in $\L=D_4$, that is $12$.

\subsubsection{The case of type $\la0\,|\,q\ra$, $q\ge1$}
In this case, $\L=qA_1$. Such a lattice has precisely $q$ pairs of opposite roots.
So, according to Proposition \ref{one-to-one}, in this case we have precisely $q$ positive tritangents.
Since all ovals of $C_\R$ bound disc-components of $Q_\R^+$, all these tritangents are of type $T_0$.

\section{Descriptive topology of positive tritangents to sextics and
 of real lines on del Pezzo surfaces}\label{topology}

As before, we consider a real nonsingular sextic $C\subset Q$ and
a real del Pezzo surface $Y$ obtained as the double covering $\pi : Y\to Q$ branched along $C$ and at the vertex $\v\in Q$.
Our goal here is to describe the isotopy types of positive real tritangents to $C_\R\subset Q_\R$,
which provides an isotopy classification of real lines on $Y_\R$.
Throughout this section (except for Propostition \ref{elimin-zigzag}), for simplicity of presentation, we assume that $C_\R$ has only simple tangencies with
the generatrices of $Q_\R$. This additional assumption does not change the isotopy classification of real tritangents and that of real lines. Indeed, both
the set of real tritangents and the set of real lines vary bijectively and continuously under real deformations of $C$ and $Y$, while this assumption can always be achieved through a suitable small real variation of $C$.

\subsection{Removable pairs of tangencies}\label{removable-sec}
It will be convenient to consider a more flexible, topological, version of tritangents in $Q_\R^+$ and 
lines on $Y_\R$.
Namely, by a {\it  loose section} we will mean a smoothly embedded circle $r\subset Q_\R^+$ that:
\begin{itemize}\item
meets each real 
generatrix
of $Q_\R$ transversely at one point, 
\item
has intersection $r\cap C_\R$ at one or three simple tangency points.
\end{itemize}
In its turn, by a {\it pseudo-line}  we will mean a smoothly embedded circle $R\subset Y_\R$ which
meets the real locus $F_\R$
of each real anti-canonical effective divisor $F\in \vert -K\vert$ transversely at one point (such
divisors $F$ are nothing but pullbacks of the 
generatrices
of $Q$).

\begin{lemma}\label{lifting}
For any  loose section $r\subset Q^+_\R$ its pull-back $\pi^{-1}(r)\subset Y_\R$ splits into a union $R\cup R'$ 
of  pseudo-lines  $R'=\beta(R)$. These pseudo-lines intersect each other transversally over the tangency points of $r$,
and both $\pi|_R$ and $\pi|_{R'}$ are diffeomorphisms.  
\qed\end{lemma}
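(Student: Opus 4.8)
The plan is to analyze the double cover $\pi : Y \to Q$ locally near the real locus, distinguishing the generic behavior of $\pi$ away from the branch curve $C_\R$ from its behavior at the tangency points of $r$ with $C_\R$. First I would recall the structure of the real locus: away from $C_\R$ the map $\pi_\R : Y_\R \to Q_\R^+$ is a double covering (since over the interior of $Q_\R^+$ each point has two real preimages, interchanged by the Bertini involution $\beta$), whereas over $C_\R$ the two sheets come together along the branch locus. The anti-canonical divisors $F$ pulling back the generatrices of $Q$ are precisely the fibers that $R$ must meet transversally once, so controlling the intersection $R \cap F_\R$ reduces to understanding how $r$ meets each generatrix.

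The key steps, in order, are as follows. First I would establish that over the open half $\operatorname{int}(Q_\R^+)$ the restriction $\pi_\R$ is a genuine trivial double cover with deck transformation $\beta$, so that $\pi^{-1}(r \smallsetminus C_\R)$ splits into two disjoint arcs interchanged by $\beta$. The loose section $r$ is a circle meeting $C_\R$ only at the one or three tangency points; away from these points $r$ lies in $\operatorname{int}(Q_\R^+)$ and its preimage is two disjoint open arcs. Second, I would examine the local model at a simple tangency point $p \in r \cap C_\R$: in suitable real coordinates the branch curve is $\{y=0\}$ and $r$ is tangent to it, say $r = \{y = x^2\}$, while the double cover is $w^2 = y$; lifting $r$ gives $w^2 = x^2$, i.e. $w = \pm x$, two smooth branches crossing transversally at the origin. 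This shows the two arcs of $\pi^{-1}(r)$ do not stay disjoint but cross transversally at each tangency point, and that each individual branch passes smoothly through. Third, I would glue these local pictures: the two sheets over $\operatorname{int}(Q_\R^+)$ get connected through the tangency points, and because a simple tangency produces a transversal crossing (not a branch of the lifted curve), the preimage $\pi^{-1}(r)$ is an immersed curve whose only singularities are the transversal self-intersections over the tangency points. Fourth, I would verify that this immersed curve decomposes as $R \cup R' = R \cup \beta(R)$ with each of $R, R'$ a smoothly embedded circle: tracking a single branch through each crossing (choosing the branch $w = +x$ consistently, which is exactly choosing one of the two sheets and following it smoothly through the local model $w = \pm x$) yields a smooth embedded circle, and $\beta$ sends it to the complementary choice. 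Finally, I would check the defining property of a pseudo-line: since $r$ meets each generatrix of $Q_\R$ transversally at exactly one point and $\pi$ restricted to $R$ is a bijection onto $r$, the circle $R$ meets each $F_\R = \pi^{-1}(\text{generatrix})$ transversally at one point, and $\pi|_R$, $\pi|_{R'}$ are diffeomorphisms onto $r$.

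I expect the main obstacle to be the fourth step, namely showing cleanly that the immersed preimage curve splits globally into two \emph{embedded} circles rather than, say, a single immersed circle with self-intersections, and that the splitting is compatible with $\beta$. The local analysis guarantees only that near each crossing there are two smooth branches; the global claim requires arguing that following one sheet consistently closes up into a circle. The cleanest way to do this is to use that $\pi_\R$ is an honest double cover over $\operatorname{int}(Q_\R^+)$ with free $\Z/2$-action by $\beta$, so $\pi^{-1}(r \smallsetminus C_\R)$ has exactly two components over each component of $r \smallsetminus C_\R$; then I would show that the local model $w = \pm x$ at each tangency connects these arcs so that following the branch $w=+x$ (equivalently staying on a fixed sheet) never switches to $\beta$-translate, whence the union of all the ``$+$'' branches closes into one embedded circle $R$ and the ``$-$'' branches into $R' = \beta(R)$. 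The transversality of the two circles over the tangency points and the diffeomorphism property of $\pi|_R$ then follow immediately from the local model and from the single-point transversal intersection of $r$ with each generatrix.
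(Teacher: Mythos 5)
Your overall scheme (the local model $w^2=y$ at the tangencies, a covering-space argument over the arcs of $r\sm C_\R$, then gluing) is the natural one, and you correctly single out the crux: why the preimage closes up into \emph{two embedded} circles rather than one immersed circle. But your resolution of that crux rests on two assertions that are both false, and your conclusion survives only because the two errors cancel. First, following the smooth branch $w=+x$ through a tangency is \emph{not} the same as staying on a fixed sheet: in the model $w^2=y$, $r=\{y=x^2\}$, the branch $w=x$ lies in the sheet $\{w<0\}$ for $x<0$ and in the sheet $\{w>0\}$ for $x>0$, so the smooth continuation through \emph{every} tangency point switches sheets (the curve that stays on one sheet is the broken curve $w=|x|$). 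Second, $\pi_\R$ is not a trivial double cover over $\operatorname{int}(Q^+_\R)$: it is branched at the vertex $v$, which is an interior point of $Q^+_\R$ (along a generatrix one passes at $v$ from $Q^+_\R$ to $Q^-_\R$, so exactly one nappe of the cone lies in $Q^+_\R$), and the monodromy around any loose section is nontrivial. Indeed, since $r$ meets every generatrix exactly once, $r$ encircles $v$ once; mod $2$ its class in $Q^+_\R\sm\{v\}$ is that of a small circle around the branch point $v$ plus classes of loops around ovals, and the latter have trivial monodromy because they bound discs in $Q_\R\sm\{v\}$. Equivalently, the real square-root line bundle defining the cover restricts to a M\"obius bundle over $r$.

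The lemma is true exactly because these two correct facts combine: going once around $r$, a smooth lift switches sheets once at each of the $1$ or $3$ tangency points and once more on account of the covering monodromy, an even total, so it closes up after a single turn; this produces two embedded circles interchanged by $\beta$ and crossing transversally over the tangencies. Had you carried out your fourth step literally --- correcting the local claim (the switch does occur) while keeping your global claim (trivial cover) --- you would have concluded that $\pi^{-1}(r)$ is a \emph{single} immersed circle, i.e.\ that the lemma is false; conversely, with a hypothetical loose section having an even number of tangencies the true monodromy would force exactly that failure, which is why the definition requires one or three tangency points. So the missing ingredient is precisely this global parity input, which no argument confined to neighborhoods of the tangency points can supply. (The paper states the lemma without proof, so there is no written argument to compare with; a quick sanity check of the parity phenomenon is the case $Y_\R=\Rp2$, where $Q^+_\R$ is a disc branched at its center $v$ and along its boundary, and the two lifts of a loose section are two projective lines in $\Rp2$ meeting at one point.)
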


In what follows we call a {\it Bertini-pair} each pair of pseudo-lines $R, R'$ like in Lemma \ref{lifting}.

The $J$-component is said to have a {\it zigzag}
over an interval $[a,b]\subset \P^1_\R$
if, first, $a,b$ are critical values
of the projection $f_Q\vert_J: J\to\P^1_\R$ 
and, second, for intermediate points $a<t<b$,
the 
preimages $f_Q\vert_J^{-1}(t)$ are  3-point subsets of $J$, as is shown on
Fig.\,\ref{zigzag-move}.
Respectively, we say that a real del Pezzo surface $Y$ contains a zigzag, if there exists a zigzag on the $J$-component of the associated sextic $C_\R\subset Q_\R$.

\begin{proposition}\label{elimin-zigzag}
Each of the 11 deformation classes of non-singular real sextics (resp., non-singular real del Pezzo surfaces) contains representatives without zigzags and non simple tangency to
the generatrices of $Q$ (resp., without zigzags and cuspidal fibers).
\end{proposition}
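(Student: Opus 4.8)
The plan is to reduce everything to the sextic side and then separate the two requirements---simple tangencies and absence of zigzags---treating the first by genericity and the second by an explicit cancellation move. First I would observe that both properties are local and transfer across the double covering $\pi : Y\to Q$: a tangency of $C_\R$ with a generatrix is simple exactly when the corresponding anti-canonical fiber of $Y$ is nodal rather than cuspidal (simple contact produces a branch point of order two on the generatrix, hence a node, while an inflectional contact collapses three branch points and produces a cusp), and a zigzag of the $J$-component lifts to a zigzag on $Y_\R$. Hence it suffices to produce, in each of the $11$ deformation classes, a non-singular real sextic with only simple tangencies to the generatrices and with a monotone $J$-component; the del Pezzo statement then follows by passing to double covers. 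By Theorem \ref{deform-sextics} each deformation class is determined by the isotopy type of $C_\R\subset Q_\R\sm\{\v\}$ and is stable under small real deformations, so I am free to deform $C$ within its class while searching for a good representative.

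For the simple-tangency requirement I would argue by transversality. The tangency points of $C_\R$ with the generatrices are precisely the critical points of the projection $f_Q\vert_{C_\R}:C_\R\to\P^1_\R$, and such a tangency is simple exactly when the critical point is a nondegenerate fold. The real sextics for which $f_Q\vert_{C_\R}$ is a Morse map with distinct critical values form an open dense subset of each deformation class, since the restriction of $|\mathcal O_Q(3)|$ to the jets of $C$ along a generatrix is surjective away from $\v$. An arbitrarily small deformation therefore makes all tangencies simple without changing the isotopy type, hence without leaving the deformation class.

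Next I would eliminate zigzags one at a time. Assume all tangencies are simple and suppose the $J$-component carries a zigzag over $[a,b]$; it is bounded by two consecutive fold points of $f_Q\vert_J$ of opposite co-orientation, lying over the critical values $a$ and $b$. The cancellation move of Fig.\,\ref{zigzag-move} is a real deformation that brings these two folds together: as $a\to b$ the middle branch shrinks, the two folds collide into a single cusp of $f_Q\vert_J$ (a cuspidal fiber of $Y$ appears for that single instant), and then annihilate, leaving $J$ monotone over that interval and the two vanishing intersection points complex conjugate. Throughout this deformation $C$ itself stays non-singular---only the projection $f_Q\vert_C$ degenerates, not $C$---and the move, being supported in a small neighbourhood disjoint from $\v$ and from the other components of $C_\R$, preserves the isotopy type of $C_\R\subset Q_\R\sm\{\v\}$ and hence the deformation class. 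Each move lowers the number of zigzags by one, so after finitely many steps, followed by a final small perturbation to restore simple tangencies, I obtain the desired representative.

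The hard part will be the realizability of the cancellation move by an honest path of non-singular real sextics (equivalently, of non-singular real del Pezzo surfaces), rather than by a merely topological isotopy. Concretely, one must check that the local fold birth--death model can be induced by a path of real cubic sections of $Q$ keeping $C=Q\cap\{\text{cubic}\}$ transverse. This is where I would again invoke surjectivity of the evaluation map from real cubics onto the jets of $C$ at the two fold points: it provides enough parameters to realize the standard local cancellation while keeping all other intersections transverse, so that non-singularity is maintained at every stage except the isolated instant carrying the cuspidal fiber. Controlling that the support of the move stays away from the ovals and the vertex is what guarantees that the global isotopy type, and therefore the deformation class recorded in Theorem \ref{deform-sextics}, is unchanged.
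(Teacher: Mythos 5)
Your route is genuinely different from the paper's, and its central step has a real gap. The paper does not ``repair'' an arbitrary representative at all: it simply exhibits, in each deformation class, an explicit representative for which both properties hold by construction --- the smart sextics of Section \ref{oval-bridge-classes} (small perturbations of a union of three real plane sections, Fig.~\ref{perturbations}) and, for the class $\la1\,|\,1\ra$, a small perturbation of $B\cup H$, where $B$ is a real quartic on $Q$ with two ovals and $H$ a real plane section separating them. By the deformation classification (Theorem \ref{deform-sextics}) exhibiting one such representative per isotopy type is all that is needed. Your plan --- make the projection Morse by genericity, then kill zigzags one at a time by a fold birth--death move performed inside the class of non-singular real sextics --- could in principle be completed, but as written its key step is unjustified.

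The gap: you assert that the cancellation move is ``supported in a small neighbourhood disjoint from $\v$ and from the other components of $C_\R$.'' Nothing in your argument rules out the a priori possible configuration in which an oval of $C_\R$ (or another piece of the curve) lies inside one of the two regions swept by the move, i.e.\ the regions bounded by consecutive branches of the zigzag over $[a,b]$; in that case no cancellation supported near the zigzag exists --- the $J$-component would have to cross the oval, leaving the class of non-singular curves and changing the isotopy type --- and the induction stalls, since repositioning that oval by a deformation would require exactly the control over the projection that you are trying to establish. What saves the argument is a B\'ezout count that you never make: $C$ is cut on $Q$ by a cubic surface, so every generatrix meets $C$ in exactly $3$ points; over any $t\in(a,b)$ these are already exhausted by the three zigzag branches of $J$, hence $f_Q^{-1}([a,b])\cap C_\R$ is precisely the zigzag and the swept region is automatically clear of the rest of the curve. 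This observation is indispensable for your proof and is missing. Secondarily, the realizability of the birth--death transition by an honest path of non-singular sextics --- which you yourself call the hard part --- is only gestured at: the surjectivity of the evaluation of cubic sections onto $2$-jets at the relevant points, and the resulting lifting of the local model with global non-singularity preserved, is stated rather than proved. Finally, your ``final small perturbation to restore simple tangencies'' must be chosen with care, not merely generically: a residual inflectional tangency perturbs either to no critical point or to a new zigzag (as in $x=y^3\mapsto x=y^3\mp\e y$), so this last step, as formulated, can undo what the induction achieved.
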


\begin{proof} For the deformation classes different from $\la1\,|\, 1\ra$,
 such representatives are provided by smart sextics, see Section \ref{oval-bridge-classes}.
For $\la1\,|\, 1\ra$, 
it is sufficient to pick a real quartic $B_\R\subset Q_\R$ with 2 ovals (say, an intersection of $Q_\R$ with a thin cylinder)
and to consider
a small real perturbation of $B\cup H$, where $H\subset Q$ is a real plane section with $H_\R$ separating the ovals of $B_\R$ in $Q_\R$.
\end{proof}

A {\it strong isotopy} of loose sections, $r_t$, is defined as an isotopy formed by loose sections which 
moves the tangency point set $r_t\cap C_\R$ by an isotopy on $C_\R$.
By a {\it fiberwise isotopy} of pseudo-lines, $R_t$, we mean an isotopy in $Y_\R$ formed by pseudo-lines.

Two loose sections, $r_0$ and $r_1$, are said to be {\it ambient isotopic} if there exists
a continuous family of diffeomorphisms $\phi_t : Q_\R^+\to Q_\R^+$, $0\le t\le 1$, with $\phi_0=\id$ and $\phi_1(r_0)=r_1$.
Such isotopies allow to perform {\it zigzag moves} of loose sections like the one shown on Fig.\,\ref{zigzag-move}.
The following version of Lemma \ref{lifting} for families is also straightforward.

\begin{figure}[h!]\caption{Zigzag move}\label{zigzag-move}
\includegraphics[height=2.5cm]{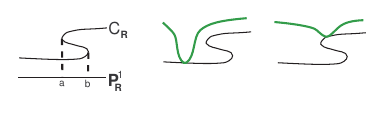}
\end{figure}

\begin{lemma}\label{lift-isotopy}
Ambient isotopies of a loose section $r$, as well as its strong isotopies,  are lifted to isotopies of each of the pseudo-lines $R, R'$ 
in the Bertini-pair
arising as pull-back of $r$. \qed
\end{lemma}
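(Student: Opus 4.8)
The statement for a single loose section is Lemma~\ref{lifting}; what remains is to propagate it through a one-parameter family, and the plan is to reduce both assertions to one fact: an ambient isotopy of $Q_\R^+$ preserving the branch locus of $\pi$ lifts to an ambient isotopy of $Y_\R$ that commutes with $\beta$. I would treat the ambient case first, where a family $\phi_t\colon Q_\R^+\to Q_\R^+$ with $\phi_0=\id$ and $\phi_1(r_0)=r_1$ is given. Being a self-diffeomorphism of $Q_\R^+$, each $\phi_t$ carries the branch locus (the part of $C_\R$ bounding $Q_\R^+$, together with the vertex $\v$) to itself. Over the complement of the branch locus $\pi$ is an unramified double covering, classified by a monodromy homomorphism $\rho$ valued in $\Z/2$; since $\Hom(\,\cdot\,,\Z/2)$ is discrete (and abelian, so the ambiguity of basepoint is immaterial) and $t\mapsto\rho\circ\phi_{t*}$ is continuous with value $\rho$ at $t=0$, every $\phi_t$ preserves $\rho$ and hence lifts. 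Normalizing by $\Phi_0=\id$ and invoking the homotopy lifting property of the branched double covering, I obtain a continuous family $\Phi_t\colon Y_\R\to Y_\R$ with $\pi\Phi_t=\phi_t\pi$; as the two lifts of $\phi_t$ differ by $\beta$ and the choice between them is locally constant in $t$, the normalization forces $\Phi_t\beta=\beta\Phi_t$. Setting $R_t=\Phi_t(R_0)$ and $R_t'=\Phi_t(R_0')=\beta(R_t)$ yields the two required isotopies, and since $R_1\cup R_1'=\Phi_1(\pi^{-1}(r_0))=\pi^{-1}(\phi_1(r_0))=\pi^{-1}(r_1)$, the pair $\{R_1,R_1'\}$ is exactly the Bertini-pair of $r_1$.

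For a strong isotopy the surface $Q_\R^+$ and the curve $C_\R$ stay fixed while $r_t$ moves with its tangency set $r_t\cap C_\R$ sliding along $C_\R$. I would first apply the relative isotopy extension theorem to extend $r_t$ to an ambient isotopy $\phi_t$ of $Q_\R^+$ that additionally preserves $C_\R$ and realizes the prescribed motion of the tangency points, which is possible precisely because a strong isotopy keeps every tangency on $C_\R$. One is then back in the situation just treated, and the same $\Phi_t$ provides the lift. Here each intermediate image $\phi_t(r_0)=r_t$ is again a loose section, so by Lemma~\ref{lifting} each $R_t=\Phi_t(R_0)$ is an embedded pseudo-line, and the lift is an honest fiberwise isotopy. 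More directly, one can bypass the extension theorem by noting that the splitting $\pi^{-1}(r_t)=R_t\cup R_t'$ varies smoothly in $t$ and that a continuous choice of which sheet is $R_t$ is fixed by path-lifting a single non-tangency basepoint of $r_t$ through the unramified part of $\pi$.

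The one point demanding care, and the main obstacle, is the behaviour of the lift over the branch locus. Above each tangency point the sheets $R_t$ and $R_t'$ cross transversally, so a priori an isotopy might interchange them; but because we follow a path in $t$ issuing from $t=0$, the labelling is determined by continuity and no interchange can occur. That $\Phi_t$ remains smooth across the fold over $C_\R$ and across $\v$ is a standard feature of the smooth structure on the branched double cover, which guarantees that each $R_t$ is a genuinely embedded circle and that the families $R_t,R_t'$ are bona fide isotopies.
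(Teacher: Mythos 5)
Your argument is correct, and it is essentially the intended one: the paper gives no proof of Lemma \ref{lift-isotopy} at all (it is stated with a q.e.d.\ sign, introduced as a ``straightforward'' family version of Lemma \ref{lifting}), and the covering-space verification you supply---monodromy invariance along a family of diffeomorphisms issuing from the identity, deck-equivariant lifting, smoothness of the lift across the fold over $C_\R$ and over the vertex---is exactly what is being left to the reader. The one step you assert rather than justify, namely that each $\phi_t$ preserves the branch locus $C_\R\cup\{v\}$, does hold: $C_\R$ is preserved as the boundary of $Q_\R^+$, and the vertex $v$ is the unique cone (non-smooth) point of $Q_\R^+$, hence fixed by every self-diffeomorphism; with this observed, both the ambient and the strong case go through as you describe.
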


For loose sections $r$ having several tangency points 
with the same connected component $\gamma$ 
of $C_\R$, we define also a {\it simplification move}.
Namely, such a move can be performed if a pair of points $a,b\in r\cap\gamma$
is {\it removable},
which means that there exists a topological disc $D\subset Q^+_\R$ 
whose boundary is formed by two arcs, $D\cap r$ and $D\cap\gamma$, connecting $a$ and $b$
without passing through other tangency points.
Then, a {\it simplification move of $r$ guided by $D$ and supported near $\delta=D\cap r$} slightly pushes
the arc $\delta=D\cap r$ out of $D$ 
and preserves $r$ unchanged outside a small neighborhood of this arc.
As a result, we obtain
a loose section $\til r\subset Q_R^+\sm D$,
whose number of tangencies with $C_\R$ is dropped by 2.

\begin{figure}[h!]\caption{
Removable and not removable pairs of tangent points}\label{double-figure}
\includegraphics[height=0.9cm]{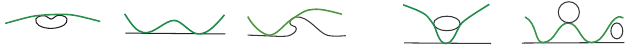}
\newline
\phantom{AAAAAAA}removable\hskip50mm not removable
\end{figure}

\begin{lemma}\label{removable} 
Assume that a positive real tritangent $\Tri$ to a real non-singular sextic $C\subset Q$ of type different from $T_0^*$
has more than one tangency point with a connected component of $C$. 
Then at least one pair of these tangency points is removable.
\end{lemma}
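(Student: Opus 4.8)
The plan is to argue topologically, by an innermost-bigon analysis inside the half-cone cut off by the tritangent. Since $\Tri$ has exactly three tangency points with $C$ (all simple, by the standing assumption of this section), the component $\gamma$ carries either two or three of them. Regarding $\Tri_\R$ as a circle embedded in $Q_\R$, it meets $\gamma$ only tangentially, so $\gamma$ cannot cross $\Tri_\R$ and therefore lies entirely in the closure of one of the two half-cones bounded by $\Tri_\R$; call it $H$, with $\partial H=\Tri_\R$. The tangency points cut $\Tri_\R$ and $\gamma$ into arcs, and any arc of $\gamma$ together with a facing arc of $\Tri_\R$ cobounds a disc in $H$. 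By definition, a pair of tangency points on $\gamma$ is removable precisely when such a cobounding disc can be chosen inside $Q_\R^+$ and free of further tangency points; so the task is to produce one such empty bigon in $Q_\R^+$.

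To produce it, I would take a tangency-free arc $\delta\subset\Tri_\R$ joining two tangency points of $\gamma$ (when $\gamma$ carries three tangencies, any two that are consecutive along $\Tri_\R$ work; when it carries two, the arc missing the third tangency works), together with the arc of $\gamma$ facing $\delta$. Near each tangency the tritangent touches $\gamma$ from the $Q_\R^+$-side, which is the positivity encoded in Sections \ref{positivity} and \ref{oval-line-intersection} and Proposition \ref{cycle-cycle-intersection}; hence the thin sliver between $\delta$ and the facing $\gamma$-arc lies in $Q_\R^+$, and choosing it innermost makes it free of other tangency points. This already settles the case in which $\gamma$ is a $J$-component, the case of three tangencies on $\gamma$, and those pairs of tangencies whose sliver no other component of $C_\R$ intrudes upon.

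The only way this construction can fail is that some other component of $C_\R$ intrudes into every candidate sliver, and I would show this can happen only through the $J$-component. Indeed, the $J$-component is the unique component embracing the vertex $\v$, hence meeting every generatrix, so that a section-arc spanning the intervening generatrices must run alongside it; a contractible oval, by contrast, can always be circumvented by passing to the complementary arc of $\Tri_\R$ or to an innermost sub-bigon. By Corollary \ref{types2-3} the obstructing situation is exactly that in which the two tangencies on an oval are separated by the generatrix through the $J$-tangency point, equivalently $L\cdot O=\pm 2$; combined with the absence of odd oval-tangencies ($\tau=0$) this is precisely the definition of type $T_0^*$.

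The main obstacle is this last step: proving that no configuration other than $T_0^*$ blocks all candidate bigons, i.e.\ that an oval meeting $\Tri$ in two points with $L\cdot O=\pm2$ forces the remaining tangency onto the $J$-component (so that $\tau=0$ and the type is $T_0^*$). This is where the contractibility of ovals must be played against the vertex-embracing $J$-component, and where it is cleanest to reduce, by the deformation-invariance used throughout this section, to the smart sextics of Section \ref{oval-bridge-classes} and to read off the tangency patterns from the explicit roots recorded in Tab.\,\ref{roots} and Tab.\,\ref{roots1-1}, checking directly that for every type except $T_0^*$ an empty bigon in $Q_\R^+$ survives.
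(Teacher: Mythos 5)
You have identified the right difficulty (components of $C_\R$ intruding into the candidate bigon), but the proposal does not actually resolve it, in either of the two places where it matters. First, when $\gamma$ is an oval, local positivity near the tangencies only puts a neighborhood of the bigon's corners in $Q_\R^+$; it says nothing about global containment, and neither of your escape routes excludes an intruding oval $o'$: such an $o'$ is disjoint from both $\Tri_\R$ and $\gamma$, so it carries no tangency points and there is no ``innermost sub-bigon'' of the kind the removability definition requires, while the complementary arc of $\Tri_\R$ bounds (together with the complementary arc of $\gamma$) a region containing the rest of $C_\R$ and the vertex, hence never a disc in $Q_\R^+$. What actually excludes $o'$ is B\'ezout applied to generatrices --- a generatrix meets the sextic $C$ in at most $3$ real points, whereas a generatrix through $o'$ would meet $o'$ twice, $\gamma$ twice, and the $J$-component at least once --- and this count, which the paper's proof is built on, never appears in your argument. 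The same count is also what shows that a third tangency lying on the arc of $\Tri_\R$ under $\gamma$ must be a $J$-tangency (an intruding oval there would again give at least $5$ points), i.e.\ that the only blocking configuration is $T_0^*$; Corollary \ref{types2-3} cannot deliver this conclusion for you, because it presupposes that a $J$-tangency exists.

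Second, and more seriously, your claim that the sliver argument ``already settles the case in which $\gamma$ is a $J$-component'' is false, and this is where the generatrix count is powerless: an oval inside a bigon bounded by a $J$-arc and a $\Tri_\R$-arc meets each generatrix twice while $J$ meets it once, totalling $3$, which B\'ezout allows. Such configurations really occur --- for $C_\R$ of type $\la 4\,|\,0\ra$ a tritangent of type $T_0$ has all three tangencies on $J$ and underpasses three ovals, which a priori could sit one in each of the three bigons. Proving that at least one bigon stays empty is exactly the content of the second and third cases of the paper's proof, obtained by tracing an auxiliary plane section through the hypothetical intruding ovals and contradicting B\'ezout for two plane sections of $Q$ (at most $2$ intersection points); nothing in your proposal replaces this step. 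Your fallback --- deformation invariance plus Tables \ref{roots} and \ref{roots1-1} --- is not available either: those tables record only the homological data $(S_{in},S_{tan})$, not isotopy positions of tritangents, and the deformation invariance of such positions is precisely what Lemma \ref{removable}, through Proposition \ref{stretching} and Lemma \ref{codes-determine}, is later used to establish, so that route is circular.
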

\begin{proof} By B\'ezout, every real 
generatrix
of $Q$ intersects $\Tri_\R$ at
a unique real point and meets each oval of $C$ in at most 2 real points. Therefore, if $a$ and $b$ are two consecutive points of tangency of $\Tri_\R$
with an oval $o$, we consider that arc $a b$ of $\Tri_\R$ which does not contain the third tangency point. If $\Tri$ is not of type $T_0^*$, the real 
generatrices
of $Q$ passing through the points of this arc trace on $o$ two arcs. One of them has $a, b$ as extremities and forms together with $ab\subset \Tri_\R$ a circle bounding 
in $Q^+$ a disc formed by intervals of the above real 
generatrices
(see the leftmost sketch of Fig.\,\ref{422-figure}). This proves the statement in the case of tangencies with an oval.

Next, assume that $\Tri$ has 3 
tangency points with the J-component: $a, b$, and $c$. Then, $\Tri_\R\cup J$ form 3 topological circles and, if neither of them bounds a disc in $Q^+$,  inside each of these circles there is an oval. Now intersecting $l$ with a plane section
$h\subset Q$ intersecting each of these
3 ovals we observe at least $6>2$ intersection points (see at the center of Fig.\,\ref{422-figure}), which is in contradiction with the B\'ezout theorem. 

Finally, assume that $a$ and $b$ are 2 tangency points of $\Tri$ with the $J$-component, and $c$ is a tangency point of $\Tri$ with an oval $o$.
Then, $\Tri_\R\cup J$ form 2 topological circles. One of them contains $c$. If the other circle does not bound a disc in $Q^+$, then inside it there is an oval, $o'$.
Now, intersecting $\Tri$ with a plane section $h\subset Q$ passing through the points $b$, $c$ and any point on  the oval $o'$,
we obtain a contradiction with the Bezout theorem
applied to $h\cap \Tri$ and $h\cap C$. Namely, to avoid the third intersection point with $\Tri$ besides $b$ and $c$, the section $h$ must
pass below the  point $a$ as is shown on the rightmost sketch of Fig.\,\ref{422-figure}.
This leads to $>2$ intersection points with the $J$-component and thus, $>6$ with $C$.
\end{proof}

\begin{figure}[h!]\caption{To the proof of Lemma \ref{removable}}\label{422-figure}
\includegraphics[height=2.7cm]{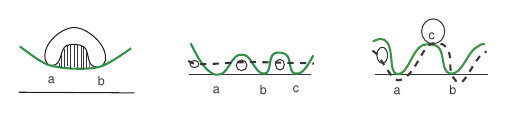}
\end{figure}

We say that a loose section (and in particular, a tritangent) is {\it simple}
 if 
either
it is tangent to each connected component of $C_\R$ not more than once
or it is of type $T_0^*$.

\begin{proposition}\label{stretching}
Every positive tritangent
is either
simple itself, or can be made simple 
tritangent
by a simplification move.

Each of the pseudo-lines in the
Bertini-pair  that covers a loose section obtained by a simplification move 
is isotopic to a line in the
Bertini-pair that covers the initial tritangent.
If for the disc $D$ that guides the simplification move, the arc $D\cap\gamma$ is a part of an oval, or a part of a $J$-component
with no zigzag in $D\cap\gamma$, the isotopy can be made fiberwise.

\end{proposition}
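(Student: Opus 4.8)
The plan is to prove the three assertions of Proposition \ref{stretching} in sequence, leaning on the Bézout-type analysis already carried out in Lemma \ref{removable} and on the lifting machinery of Lemmas \ref{lifting} and \ref{lift-isotopy}.

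First I would dispose of the \emph{existence of a simplification to a simple tritangent}. By definition, a tritangent $\Tri$ fails to be simple only if it is not of type $T_0^*$ and has more than one tangency point with some connected component $\gamma$ of $C_\R$. In that situation Lemma \ref{removable} guarantees a removable pair $a,b\in\Tri_\R\cap\gamma$, bounded by a disc $D\subset Q^+_\R$ whose boundary consists of an arc of $\Tri_\R$ and an arc of $\gamma$. Performing the simplification move guided by $D$ drops the number of tangency points by $2$, as explained in Section \ref{removable-sec}. Since the total number of tangency points of a positive tritangent is at most $3$, a single such move already produces a loose section $\til r$ tangent to each component at most once (the only cases with more than one tangency to a single component are the three-point configurations, and after removing a pair one tangency point remains); hence $\til r$ is simple. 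Thus every positive tritangent is either simple or becomes simple after one simplification move.

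Next I would establish the \emph{isotopy between the covering pseudo-lines}. A simplification move, being supported in a small neighborhood of the arc $\delta=D\cap\Tri_\R$ and pushing $\delta$ slightly out of $D$, is realized by an \emph{ambient isotopy} of $Q^+_\R$ that is the identity outside that neighborhood; alternatively it may be described as an isotopy of loose sections sweeping out the disc $D$. In either description Lemma \ref{lift-isotopy} applies: ambient isotopies (and strong isotopies) of a loose section lift to isotopies of each pseudo-line $R,R'$ in the Bertini-pair covering it. Consequently the pseudo-line covering $\til r$ is isotopic in $Y_\R$ to the corresponding pseudo-line covering $\Tri$, which gives the second assertion.

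Finally comes the \emph{fiberwise refinement}, which I expect to be the main obstacle. The issue is that the isotopy produced above need not respect the fibration $f_\R$, because the move sweeps the arc $\delta$ across $D$ and the generatrices foliating $D$ may be crossed more than once during the sweep. The plan here is to arrange $D$ so that the arc $D\cap\gamma$ meets each real generatrix at most once: this is automatic when $D\cap\gamma$ lies on an oval (by Bézout an oval meets a generatrix in at most two points, and the geometry of the disc built from generatrix intervals in the proof of Lemma \ref{removable} shows the relevant arc is monotone), and it holds for a $J$-component precisely when there is no zigzag inside $D\cap\gamma$, since a zigzag is exactly the local configuration in which the projection $f_Q|_J$ fails to be monotone (Fig.\,\ref{zigzag-move}). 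Under this monotonicity hypothesis the disc $D$ is fibered by subarcs of generatrices without folds, so the push of $\delta$ out of $D$ can be performed fiber by fiber, keeping the moving loose section transverse to every generatrix throughout. The lifted isotopy of pseudo-lines then preserves the fibers $F_\R=\pi^{-1}(\text{generatrix})$, i.e.\ it is fiberwise in the sense required, completing the proof.
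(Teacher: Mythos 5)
Your first and third assertions track the paper's own proof: existence of the simplification is exactly Lemma \ref{removable}, and the fiberwise refinement comes, as in the paper, from the observation that the guiding disc $D$ is sliced into intervals by the generatrices when $D\cap\gamma$ lies on an oval or on a zigzag-free $J$-arc. The genuine gap is in your middle step, which is in fact the heart of the proposition. The simplification move cannot be ``realized by an ambient isotopy of $Q^+_\R$'': any diffeomorphism of $Q^+_\R$ carries the boundary $C_\R$ to itself and therefore preserves the number of points where a loose section meets $C_\R$; since the move drops this number from $3$ to $1$, no ambient isotopy in the paper's sense produces it. Nor is the sweep across $D$ a strong isotopy, because the tangency set does not move by an isotopy on $C_\R$ --- it jumps from three points to one. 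So Lemma \ref{lift-isotopy}, which covers only ambient and strong isotopies of loose sections, simply does not apply, and your second paragraph is left without a valid lifting mechanism.

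Worse, the naive lift genuinely fails. In the local model $C=\{y=0\}$ with covering $w^2=y$, pushing a simple tangency $y=x^2$ directly off to $y=x^2+t$ pulls back to $w^2=x^2+t$: for $t>0$ the branches are $w=\pm\sqrt{x^2+t}$, and as $t\to0^+$ they converge to the cornered curves $w=\pm|x|$, not to the smooth pseudo-lines $w=\pm x$. In other words, removing a tangency by a direct push reconnects (``swaps'') the strands of the two pseudo-lines over that point, so the family of pseudo-lines is discontinuous and one does not obtain an isotopy. This is precisely why the paper organizes the proof as a two-stage family $r_t$, $t\in[0,2]$: first the two removable tangency points are merged along $\gamma$ into a single double tangency (locally $w^2=(x^2-u)^2$, with smooth branches $w=\pm(x^2-u)$), and only then is the double tangency pushed off $C_\R$ (locally $w^2=x^4+s$, with smooth branches $w=\pm\sqrt{x^4+s}$). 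Throughout this family each pseudo-line moves through smooth embedded circles --- at $t=1$ the two members of the Bertini-pair are tangent to one another, which is harmless for each individual isotopy (Fig.~\ref{simplification-fig}) --- and this is what actually yields the second assertion. Without this merge-first device, or an independent argument that the strand-swapped curve is isotopic to the original pseudo-line, your proof of the isotopy claim (and hence also of its fiberwise refinement) does not go through.
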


\begin{proof}
The first part follows directly from Lemma \ref{removable}.
For the second part, consider a loose section $r_2$ obtained by  a simplification move
of $r_0$
and note that,
due to a disk $D$ guiding
the move,
$r_2$ can be obtained by a continuous family $r_t$, $t\in[0,2]$, such that:
\begin{itemize}
\item it performs a 
an isotopy for $t\in[0,1)$
so that
the removable tangency points $a,b\in r_0\cap \gamma$
move towards each other along $\gamma$
and merge into 
a double tangency point of $r_1\cap\gamma$;
\item while for
$t\in[1,2]$, 
it performs shifting of this double tangency from $\gamma$ to obtain $r_2$.
\end{itemize}
If $\gamma$ is an oval, or 
a $J$-component with the arc $D\cap\gamma$ not containing zigzag,
then the disc $D$ is sliced in intervals by the 
generatrices
of $Q$ 
(see the leftmost sketch on Fig.\,\ref{422-figure}), and by this reason in such a case the above isotopies can be made fiberwise.

For
every $t\in [0,2]$, the pull-back $\pi^{-1}(r_t)\subset Y_\R$ splits into a Bertini-pair of pseudo-lines  $R_t$ and $R_t'$
(see Lemma \ref{lift-isotopy}).
Each of these two families of pseudo-lines forms
an isotopy 
(at moment $t=1$ these pseudo-lines are just tangent to each other, see Fig. \ref{simplification-fig}).
\end{proof}
\begin{figure}[h!]\caption{
A family $r_t$ connecting a tritangent $r_0$ with its simplification $r_2$ (upper row) and the covering isotopy of Bertini-pairs
(lower row)}\label{simplification-fig}
\includegraphics[height=2.5cm]{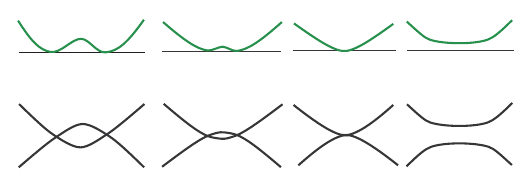}
\end{figure}

In what follows
by
a {\it simplified tritangent} we mean a tritangent itself if it is simple, or a loose section obtained from the tritangent by a simplification move.

\subsection{The simplest case: Sextics $C$ of type $\la 0\,|\,q\ra$}\label{q-0}
Absence of positive ovals implies that 
all positive tritangents in this case are of type $T_0$.
By Theorem \ref{5types}, 
their number is $4-q$, and
in particular, there are no positive tritangents if $q=4$ and
no real lines on the corresponding $Y_\R=\Rp2\+4\SSS^2$.

If $q\le 3$, each 
positive tritangent is isotopic to the $J$-component, and
each real line on the corresponding $Y_\R=\Rp2\+q\SSS^2$ is isotopic to the (unique) lift of the $J$-component to $Y_\R$, and, in particular, all 
real lines are isotopic to each other.
If the $J$-component contains no zigzag,
then the isotopies between the real lines can be performed fiberwise,
while the tritangents becomes strongly isotopic after simplification moves (see Proposition \ref{stretching}).

\subsection{Sextics $C$ of type $\la |||\ra$} 
In this case $C_\R$ has three $J$-components and
$Q_\R^+$ has two connected components: a disc containing the vertex of $Q$ and a band,
which are covered in $Y_\R$ by $\Rp2$ and $\Kl$, respectively. 
The components of $C_\R$ will be
denoted by $J_1$, $J_2$, $J_3$
so that $J_1$ bounds the disc-component of $Q_\R^+$,
while $J_2$ and $J_3$ bound the band-component
and $J_2$ lies between $J_1$ and $J_3$ on $Q_\R$.

For the same reason as in the previous case, all positive tritangents are
of type $T_0$,
each of the tritangents is isotopic either to $J_1$, or to $J_2$, or to $J_3$,
and
each real line on $X_\R=\Rp2\+\K$ is isotopic to the lift of 
a corresponding $J$-component.

\begin{lemma}\label{4-J-bridges}
There exist 4 geometric bridge-classes $B_1,\dots, B_4$
between components $J_2$ and $J_3$, and any 3 of these four classes 
together with the class $B_0=-\frac12(B_1+\dots+B_4)$
form a root 
basis of the $D_4$-lattice $\L$, wherein $B_0$
represents the central vertex of the $D_4$-graph and the 3 other chosen classes the pendant vertices.
\end{lemma}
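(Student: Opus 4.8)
The plan is to separate the argument into a geometric existence step, which I expect to be the main obstacle, and a short lattice-theoretic deduction, which is essentially forced.

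First I would produce the four bridge-classes. Recall that $J_2$ and $J_3$ are the two boundary curves of the band-component of $Q_\R^+$ and that each of them meets every real generatrix of $Q$ exactly once. The idea is to construct a real nodal degeneration of a type-$\la|||\ra$ sextic in which $J_2$ and $J_3$ are brought together transversally over four distinct generatrices, producing four real nodes over four distinct points of $\P^1_\R$, and which smooths back to $\la|||\ra$ by separating the two arcs at each node. Exactly as for the smart sextics of Section~\ref{oval-bridge-classes}, each node yields a totally real $\conj$-anti-invariant vanishing cycle $B_i\subset Y$, $i=1,\dots,4$, whose class in $\L=D_4$ is a root ($B_i^2=-2$); since these cycles are supported over pairwise disjoint fibers, they may be taken disjoint, so $B_i\cdot B_j=0$ for $i\ne j$ (\emph{cf.}\ Proposition~\ref{cycle-cycle-intersection}). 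The substantial point, which I would verify on the developed cone $Q_\R\sm\FFF_\R$, is that such a four-nodal configuration actually occurs within the deformation class and that all four vanishing classes are carried by the band (Klein-bottle) part of $Y_\R$, so that none of them is absorbed by the $\Rp2$ lying over the disc bounded by $J_1$; this is where the count ``four, all between $J_2$ and $J_3$'' comes from and is the hard part.

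Granting this, the rest is arithmetic in $D_4$. The four mutually orthogonal roots span a sublattice $\langle B_1,\dots,B_4\rangle\cong 4A_1$, and comparing discriminant groups gives $[\L:\langle B_i\rangle]^2=|\DD(4A_1)|/|\DD(D_4)|=16/4=4$, so the index is $2$. Let $g\in\L\sm\langle B_i\rangle$ represent the nontrivial coset. Then $2g\in\langle B_i\rangle$, so modulo $\langle B_i\rangle$ we may write $g\equiv w:=\tfrac12\sum_i c_iB_i$ with $c_i\in\{0,1\}$ not all zero, and $w=g-(\text{element of }\langle B_i\rangle)\in\L$. Its norm is $w^2=-\tfrac12\,|\{i:c_i=1\}|$, and since $\L$ is even this forces $|\{i:c_i=1\}|\in 4\Z$, hence, being nonzero, equal to $4$. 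Therefore $g\equiv\tfrac12\sum_iB_i$, which shows that $B_0:=-\tfrac12(B_1+\dots+B_4)$ lies in $\L$, that $B_0^2=-\tfrac14\sum_iB_i^2=-2$ is a root, and that $B_0\cdot B_j=-\tfrac12B_j^2=1$ for every $j$.

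Finally I would read off the $D_4$-basis. Fix any three of the classes, say $B_1,B_2,B_3$. From $-2B_0=B_1+\dots+B_4$ we get $B_4\in\langle B_0,B_1,B_2,B_3\rangle$, so this quadruple generates the same lattice as $\{B_0,B_1,\dots,B_4\}$; the latter strictly contains the index-$2$ sublattice $\langle B_i\rangle$ (because $B_0\notin\langle B_i\rangle$) and hence equals $\L$. As a check, the Gram matrix of $(B_0,B_1,B_2,B_3)$ has diagonal $-2$, entries $1$ in the first row and column, and $0$ among $B_1,B_2,B_3$; its determinant is $4=|\DD(D_4)|$, confirming that the quadruple is a basis of $\L$. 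Since all four vectors are roots with $B_0\cdot B_i=1$ and $B_i\cdot B_j=0$, this is precisely the Coxeter--Dynkin graph of $D_4$ with $B_0$ at the central vertex and $B_1,B_2,B_3$ at the pendant vertices. Because the whole construction is symmetric in $B_1,\dots,B_4$, the same conclusion holds for any choice of three of the four classes, which is the assertion of the lemma.
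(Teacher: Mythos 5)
Your arithmetic half is correct and, in fact, more detailed than the paper's: the index computation $[\L:\la B_1,\dots,B_4\ra]=2$ from the discriminants of $4A_1$ and $D_4$, the evenness argument forcing the nontrivial coset to be represented by $\pm\tfrac12(B_1+\dots+B_4)$, and the Gram-matrix check are exactly what the paper compresses into the single sentence that ``divisibility of their sum by 2 follows from comparison of the discriminants of $D_4$ and $4A_1$.''

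The genuine gap is the existence step, and it is not merely that you deferred it: the configuration you propose to verify cannot exist. You ask for a real nodal sextic $C_0\subset Q$, a limit of type $\la\,|\,|\,|\,\ra$ sextics, with four real nodes, all of them collisions of $J_2$ with $J_3$, while $J_1$ stays smooth and disjoint. Every irreducible component of $C_0$ avoids the vertex, hence meets each generatrix in $\deg/2$ points (a hyperplane through the vertex cuts $Q$ in two generatrices), hence has even degree; so $C_0$ is an irreducible sextic, or a conic plus a quartic, or three conics. An irreducible sextic with four nodes has geometric genus $4-4=0$, so its real locus, apart from solitary points, is the connected image of the circle $\P^1_\R$ of its normalization; your four nodes have real branches and so lies in this image, and the limit of $J_1$ (a smooth circle, not a solitary point) would have to lie in it as well while staying disjoint from the nodal part --- contradicting connectedness. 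If $C_0$ is a conic plus a quartic, the conic must be the limit of $J_1$ (its intersections with the quartic being imaginary), so all four nodes lie on the quartic; but a quartic section of $Q$ has arithmetic genus $1$, hence carries at most one node if irreducible and exactly two (the intersection of its two conic components) if reducible. If $C_0$ is three conics, the conic limiting to $J_1$ stays real-disjoint only if it meets the other two at imaginary points, and those two conics intersect in just two points --- again not four. So no degeneration brings $J_2$ and $J_3$ together at four points while keeping $J_1$ intact; the ``hard part'' you set aside is false as formulated, not just unproved.

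This is precisely why the paper's existence argument takes a different route: it degenerates the whole curve to three real hyperplane sections, accepting all six nodes (so that $J_1$ participates in the collisions too), and takes the perturbation of Fig.\,\ref{III-fig}, under which each $J_i$ is assembled from arcs of different conics; four of the six vanishing cycles then join $J_2$ to $J_3$. Once the four cycles are produced this way, your disjointness remark (they lie over pairwise distinct fibers, so $B_i\cdot B_j=0$) and your lattice argument finish the proof verbatim.
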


\begin{figure}[h!]\caption{A sextic $C$ of type $\la |||\ra$ with 4 geometric bridge-classes}\label{III-fig}
\includegraphics[height=1.7cm]{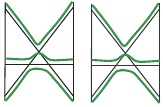}\hskip10mm
\includegraphics[height=1.8cm]{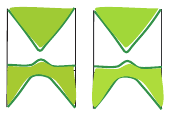}
\end{figure}

\begin{proof} For existence of 4 bridge-classes, see Fig. \ref{III-fig}, where the sextic is obtained by a small real perturbation of 3 real hyperplane sections. 
A divisibility of their sum by 2 follows from comparison of the discriminants of $D_4$ and 
$4A_1$.
\end{proof}

\begin{proposition}\label{12=3x4}  If $C\subset Q$ is of type $\la |||\ra$, then, for each of the components $J_i$, $i=1,2,3$,
there exist precisely 4 positive tritangents having odd tangency with it.
\end{proposition}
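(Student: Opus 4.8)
The plan is to translate the tangency-parity condition into the mod $2$ arithmetic of the $D_4$-lattice $\L$ established in Section \ref{arithmetic}, exactly as was done for ovals in the proofs of Theorem \ref{5types}. First I would fix, as in Lemma \ref{4-J-bridges}, the four geometric bridge-classes $B_1,\dots,B_4$ joining $J_2$ and $J_3$, together with $B_0=-\tfrac12(B_1+\dots+B_4)$, so that $B_0,B_1,B_2,B_3$ form a root basis of $\L=D_4$ with $B_0$ central. By Proposition \ref{one-to-one}, positive tritangents correspond bijectively to pairs $\{\pm e\}$ of roots in $\L$, equivalently (by Lemma \ref{roots-mod2} and Proposition \ref{lift-to-root}) to the $12$ classes $v\in V_1\sm R_1\subset V=\L/2\L$. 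The key dictionary is Lemma \ref{intersection-criterion}(2): for a class $[B]$ of a bridge-cycle joining two components of $C_\R$, one has $v\cdot[B]=1$ if and only if the tritangent $\Tri_v$ separates those two components. I would first record the analogous separation criterion for the $J$-components, namely that $\Tri_v$ has \emph{odd tangency} with a given $J_i$ if and only if $v$ pairs nontrivially with the appropriate class.

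The main step is to pin down, for each $i\in\{1,2,3\}$, the class $\eta_i\in V$ whose nonvanishing pairing $v\cdot\eta_i=1$ detects odd tangency of $\Tri_v$ with $J_i$. For $J_2$ and $J_3$ this is immediate: the bridge-classes $B_1,\dots,B_4$ are each incident to both $J_2$ and $J_3$, so an argument parallel to Lemma \ref{intersection-criterion} identifies these detectors; since $\sum B_k=-2B_0\equiv 0$, the three resulting detector classes $\eta_1,\eta_2,\eta_3$ satisfy the relation $\eta_1+\eta_2+\eta_3=0$ in $V$ (reflecting that a tritangent has three tangency points total, so the parities of tangency with the three $J$-components sum to an even number). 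I would verify this relation directly from the incidence data of Fig. \ref{III-fig}, and check that each $\eta_i$ is a nonzero element of $V$.

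Once the three detectors $\eta_i$ are identified, the count reduces to pure linear algebra over $\Z/2$: for fixed $i$, the tritangents having odd tangency with $J_i$ are exactly the $v\in V_1\sm R_1$ with $v\cdot\eta_i=1$. The hyperplane $\{v:v\cdot\eta_i=1\}$ is an affine coset, and I would intersect it with $V_1\sm R_1$ using the enumeration already tabulated in Tab. \ref{lattices} for $\L=D_4$ (where $|V|=16$, $|R|=4$, $|V_1|=12$, $|R_1|=0$, so $V_1\sm R_1=V_1$ has $12$ elements). Because $\eta_i\notin R$ (it pairs nontrivially with some class), the linear functional $v\mapsto v\cdot\eta_i$ is nonconstant on $V$, hence splits $V$ evenly; the content of the proposition is that it also splits $V_1$ evenly, giving $12/2=6$ — but the claim is $4$, so the splitting must be uneven, and I expect the precise count to come from evaluating $\q_0$ together with the functional $v\cdot\eta_i$ on the four cosets of $R$.

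The hard part will be this last numerology: verifying that among the $12$ classes in $V_1$, exactly $4$ lie in each affine hyperplane $\{v\cdot\eta_i=1\}$. This is not forced by dimension count alone and depends on how $\q_0$ interacts with $\eta_i$ on the radical cosets. I would carry it out by choosing an explicit symplectic basis of $V$ adapted to the decomposition $V=V^o+V^b$ of Proposition \ref{natural-decomposition} (here with no positive ovals, so the structure is governed entirely by the bridge side), expressing each $\eta_i$ and the form $\q_0$ in coordinates, and then enumerating. As a consistency check, the three counts should be mutually compatible via the relation $\eta_1+\eta_2+\eta_3=0$, and summing appropriate inclusion–exclusion contributions should recover the total of $12$ tritangents of type $T_0$ found in Theorem \ref{5types} for type $\la|||\ra$.
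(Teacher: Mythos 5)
Your reduction to mod~2 root arithmetic fails at its foundation: the linear ``detector classes'' $\eta_i$ you postulate do not exist. Lemma \ref{intersection-criterion} detects \emph{tangency} parity only for ovals, via oval-classes, which exist precisely because an oval bounds a disc and so defines a vanishing class in $\L$; a $J$-component bounds no disc in $Q_\R^\pm$ and defines no class in $\L$. Part (2) of that lemma --- the only part available for type $\la|||\ra$ --- detects \emph{separation}, not tangency: every band tritangent pairs to $1$ with every bridge class $[B_j]$ (they all separate $J_2$ from $J_3$), so pairing with bridges cannot tell which of $J_2,J_3$ carries the odd tangency. Moreover, your relation $\eta_1+\eta_2+\eta_3=0$ rests on the assertion that the three tangency parities sum to an even number, which is false: a real tritangent has $1$ or $3$ real tangency points (the others come in conjugate pairs), so the parities sum to $1$. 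If linear detectors existed, the functional $v\mapsto v\cdot(\eta_1+\eta_2+\eta_3)$ would therefore equal $1$ on all twelve classes of $V_1\sm R_1$, which is impossible: $[B_1]$, $[B_0]$ and $[B_1]+[B_0]=[B_1+B_0]$ are all residues of roots of $D_4$, hence all lie in $V_1\sm R_1$, and no linear functional takes value $1$ on all three. In fact, writing everything in the basis $[B_1],[B_2],[B_3],[B_0]$ of $V$, one checks that every functional that is nonzero modulo $R$ takes the value $1$ on exactly $8$ of the twelve elements of $V_1$, never on $4$; so the very statement you want to prove rules out linear detectors, and only \emph{affine} detectors of the form $1+v\cdot\eta_i$ can exist --- identifying them is exactly the geometric input your proposal does not supply.

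This is also why the step you defer as ``the hard part'' is not merely hard but, in your framework, vacuous: there is nothing to enumerate until the detectors are pinned down. The paper settles the count by an entirely different device. It first splits the $12$ tritangents into the $4$ with residues $[B_j]$, lying in the disc component (hence odd tangency with $J_1$), and the $8$ with residues of $\frac12(\pm B_1\pm\dots\pm B_4)$, lying in the band --- this much is the separation criterion, close to your setup. But the $4/4$ split of the $8$ band tritangents between $J_2$ and $J_3$ is obtained from two external results: positive tritangents tangent to $J_1,J_3$ are hyperbolic while those tangent to $J_2$ are elliptic (Theorem 4.2.2 of \cite{TwoKinds}), and the number of hyperbolic minus elliptic ones equals $4$ (Theorem 1.1.2 of \cite{Combined}); then $(4+h)-(8-h)=4$ forces $h=4$. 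If you insist on an argument internal to the lattice picture, the missing ingredient would be homological rather than linear-algebraic: the tangency parity with $J_i$ equals the $\Z/2$-intersection of $[L_{e\R}]$ with the lift of $J_i$ in $H_1(Y_\R;\Z/2)$, and since the Viro homomorphism factors through $V/R$, this parity is constant on $R$-cosets; the $8$ band residues form exactly two $R$-cosets of $4$ elements, whose line classes differ by the nonzero fiber class of the Klein-bottle component, so the two cosets have opposite parity. Some ingredient of this kind --- or the paper's signed count --- is indispensable, and its absence is a genuine gap in your proof.
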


\begin{proof} By Theorem \ref{5types}, the total number of positive tritangents is 12. Among them there are 4 corresponding to the geometric bridge-classes $B_1, \dots, B_4$ and 8 to the 8 pairs of opposite roots $\frac12(\pm B_1\pm\dots\pm B_4)$. The tritangents $\pi(L_{B_i})$ ($i=1,\dots,4$) are contained in the
disc-component, while the tritangents $\pi(L_e)$ with $e=\frac12(\pm B_1\pm\dots\pm B_4)$ belong to the band-component, as it follows
from $L_{B_i}\cdot B_j= - B_i\cdot B_j=0\! \mod 2$ 
and $L_e\cdot B_j= - e\cdot B_j=\mp\frac12B_j^2=1\!\mod 2$, for every $1\le i,j\le4$ (cf. Proposition \ref{graph-adjacency}).

To conclude, we notice that according to Theorem 4.2.2 in \cite{TwoKinds} the positive tritangents tangent to $J_1$ and $J_3$ are hyperbolic, while those tangent to $J_2$ are elliptic, and that according to Theorem 1.1.2 in \cite{Combined} the number of hyperbolic tritangents minus the number of elliptic is equal to 4.
\end{proof}

\begin{proposition}\label{isotopies-hyperbolic-case}
If $C\subset Q$ is of type $\la |||\ra$, then the 12 positive tritangents split in 3 groups by 4 tritangents isotopic to the same $J$-component. For each of the 3 groups, all the 8 real lines in
the 4 covering Bertini-pairs
are fiberwise isotopic to each other, while the 4 tritangents themselves becomes strongly isotopic after simplification moves.
\end{proposition}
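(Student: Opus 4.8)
The plan is to prove Proposition \ref{isotopies-hyperbolic-case} by combining the enumeration already obtained in Proposition \ref{12=3x4} with the simplification and lifting machinery of Section \ref{removable-sec}. The statement has two halves: a \emph{grouping} statement (the 12 tritangents fall into three groups of 4, each group isotopic to one $J$-component) and a \emph{rigidity} statement (within each group the covering real lines are mutually fiberwise isotopic, and the tritangents themselves become strongly isotopic after simplification moves). I would begin by disposing of the grouping statement, which is essentially immediate: since all tritangents are of type $T_0$ and carry no ovals, each simplified tritangent is a loose section tangent to a single $J$-component $J_i$ at an odd (hence, being a tritangent, triple or single) number of points, so it is isotopic to $J_i$ in $Q_\R^+$. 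Proposition \ref{12=3x4} tells us exactly $4$ tritangents have odd tangency with each $J_i$, $i=1,2,3$, which partitions the $12$ into the asserted three groups of $4$.

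The core of the proof is the rigidity statement, and here the main tool is Proposition \ref{stretching} together with Lemma \ref{lift-isotopy}. First I would fix a group, say the four tritangents isotopic to $J_i$, and argue that after simplification moves each becomes a loose section whose tangency locus with $C_\R$ is a single point (or a triple-tangency concentrated on $J_i$), so that all four simplified tritangents are ambiently isotopic to one another in $Q_\R^+$ through loose sections isotopic to $J_i$. Because type $T_0$ excludes the $T_0^*$ configuration, Lemma \ref{removable} guarantees that every non-simple tritangent in the group admits a removable pair of tangencies, so the simplification moves are available. Then, invoking the fiberwise clause of Proposition \ref{stretching}—valid precisely because the guiding disc $D$ meets $C_\R$ along a $J$-component arc which, by Proposition \ref{elimin-zigzag}, we may assume contains no zigzag—each simplification move lifts to a \emph{fiberwise} isotopy of the covering Bertini-pairs. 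Chaining these with the fiberwise lifts (Lemma \ref{lift-isotopy}) of the ambient isotopies between simplified tritangents yields fiberwise isotopies between all eight real lines of the group.

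The step I expect to be the main obstacle is ensuring that the isotopies relating \emph{distinct} simplified tritangents in the same group can be taken fiberwise, and more subtly, that they do not inadvertently permute the two sheets of the Bertini-pair. The difficulty is that an ambient isotopy of loose sections moving one simplified tritangent to another traces out a family $r_t$ whose lift $\pi^{-1}(r_t)=R_t\cup R_t'$ could, a priori, swap $R$ and $R'$ under the Bertini involution $\beta$; one must check that the isotopy stays inside $Q_\R^+$ and that the monodromy on the double cover is trivial, so that each sheet is carried to a single well-defined isotopy class. I would handle this by keeping the isotopy transverse to the generatrices throughout (so that it remains through loose sections) and by noting that, since $J_i$ lifts to a single pseudo-line component in the connected piece of $Y_\R$ covering the relevant component of $Q_\R^+$, the choice of sheet is consistent along the whole family. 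Once this sheet-coherence is verified, the remaining claim—that the four tritangents of a group become \emph{strongly} isotopic after simplification—follows from the definition of strong isotopy, since the simplified tritangents carry their tangency points on the common component $J_i$ and the ambient isotopy moving one to another can be arranged to move the tangency point set along $J_i$, which is exactly the requirement in the definition of a strong isotopy.
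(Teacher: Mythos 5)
Your grouping argument (via Proposition~\ref{12=3x4}) and your appeal to Proposition~\ref{stretching}, Lemma~\ref{lift-isotopy} and Lemma~\ref{codes-determine} follow the paper's route, but your justification of the no-zigzag hypothesis is a genuine logical gap. Proposition~\ref{elimin-zigzag} only produces zigzag-free \emph{representatives} of each deformation class; to apply it to the given sextic $C$ you would first need to know that the conclusion (fiberwise isotopy classes of the covering lines, strong isotopy classes of simplified tritangents) is a deformation invariant, and you prove no such invariance. The paper's proof rests instead on the stronger elementary fact that a sextic of type $\la\,|\,|\,|\,\ra$ can \emph{never} have a zigzag: $C$ is cut on $Q$ by a cubic, so every real generatrix meets $C$ in exactly three points counted with multiplicity, while each of the three $J$-components, embracing the vertex, meets each real generatrix in an odd number of points; hence each $J_i$ meets each generatrix exactly once, $f_Q\vert_{J_i}$ is a diffeomorphism onto $\P^1_\R$, and there are no critical points at all. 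That is what ``absence of zigzags on sextics of type $\la\,|\,|\,|\,\ra$'' means there, and it requires no choice of representative.

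The more serious gap is your treatment of the Bertini pairs, which is aimed in the wrong direction. The proposition asserts that \emph{all eight} lines of a group --- including the two lines $R$ and $R'=\beta(R)$ lying over one and the same tritangent --- are mutually fiberwise isotopic. Your plan is to rule out sheet-swapping (``the choice of sheet is consistent along the whole family''); even if that held, your chains of isotopies would only show that the eight lines fall into at most two fiberwise isotopy classes, four per ``coherent sheet'', and nothing in your argument ever connects $R$ with $R'$. In fact, sheet-swapping is precisely the mechanism that completes the proof, and your coherence claim is false: sliding the unique tangency point of a simplified tritangent once around $J_i$ is a loop of loose sections (a strong self-isotopy, available exactly because there are no zigzags), and its lift ends at the \emph{other} sheet. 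Concretely, in the Klein bottle component fibered over $\P^1_\R$ with fiber coordinate $\theta\in\R/\Z$, monodromy $\theta\mapsto-\theta$, and the lift of $J_i$ given by the section $\theta\equiv 1/2$, the family $\theta_s(t)=1/2+\delta\sin\bigl(\pi(t-s)\bigr)$, $s\in[0,1]$, consists of lifts of loose sections tangent to $J_i$ at the single point over $t=s$, and $\theta_1=-\theta_0=\beta(\theta_0)$. So the two sheets over one tritangent are fiberwise isotopic, and only with this (or an equivalent within-pair argument) does the statement about all eight lines follow; a proof designed to \emph{prevent} sheet permutation cannot reach the stated conclusion.
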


\begin{proof} Existence of fiberwise isotopies for lines, as well as that of strong isotopies for tritangents, follows from absence of zigzags on sextics of type $\la |||\ra$.
\end{proof}

\subsection{Sextics $C$ of type $\la 1\, |\,1\ra$} 
Proposition \ref{stretching} together with
Table \ref{roots1-1}, which lists possible combinations of $(S_{in}, S_{tan})$, and
Theorem \ref{5types},  which provides the number of positive tritangent of each type,
 can be summarized in the  following  description.

\begin{proposition}\label{prop-1-1}
For any nonsingular sextic $C$ of type $\la 1\, |\,1\ra$, up to ambient isotopy in $Q_+$ 
the simplified positive tritangents are as shown on Fig. \ref{4pics}.
\begin{figure}[h!]\caption{}\label{4pics}
\includegraphics[height=1.7cm]{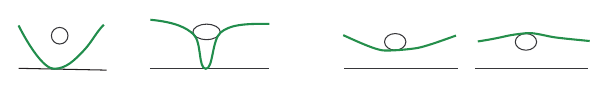}
\newline
Type $T_0$ \hskip14mm Type $T_0^*$ \hskip35mm  Type $T_1$\phantom{AAAAAAaa}
\end{figure}
The leftmost type, $T_0$, is represented by 3 distinct tritagents, the next type, $T_0^*$, by 1, and each of the remaining ones {\rm (}both $T_1${\rm)} by 4.
The
real lines covering the tritangents of the same isotopy type are isotopic. If $C$ has no zigzags,
then the isotopies between 
these lines can be performed fiberwise, while the 
tritangents themselves become strongly isotopic after simplfiication moves.
\qed\end{proposition}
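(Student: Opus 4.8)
The plan is to assemble the statement from three results already in hand: the reduction to simple loose sections given by Proposition~\ref{stretching}, the enumeration of the pairs $(S_{in},S_{tan})$ in Proposition~\ref{tan-in-1-1} and Table~\ref{roots1-1}, and the type-count of Theorem~\ref{5types}. First I would pin down the shape of $Q_\R^+$ in the $\la1\,|\,1\ra$ case. The single negative oval bounds a disc component $D'$ of $Q^+_\R$, so $Q^+_\R=M\sqcup D'$; the main component $M$ is an annulus whose two boundary circles are the $J$-component and the positive oval $o_1$, and it contains the germ of $Q^+_\R$ at the vertex $\v$ as an interior point (a quick Euler-characteristic check against $Y_\R=\Rp2\#\T^2\+\SSS^2$ confirms this, since the branched double of such an annulus has $\chi=-1$). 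As $D'$ meets no generatrix in a full fibre, every loose section lies in $M$ and in particular never touches the negative oval; this is why only the positive oval can carry odd tangency and why types $T_2,T_3$ are absent here.

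Next I would classify the simplified tritangents inside $M$. View $M$ as fibred over $\P^1_\R$ by the generatrix segments: over the $x$-range of $o_1$ the fibre splits into two intervals, one lying between the $J$-component and $o_1$ and one between $o_1$ and $\v$, and it is a single interval elsewhere. A simplified positive tritangent is a section of this fibration, so away from its tangency points it is pinned down, up to isotopy through sections, by the single binary choice of which interval it occupies over $o_1$; by the definition of $\hat\Tri$ this choice is exactly $S_{in}$ (lower branch $\leftrightarrow S_{in}=\{1\}$, upper branch $\leftrightarrow S_{in}=\varnothing$). A section with $S_{tan}=\varnothing$ cannot change branch without acquiring a tangency with $o_1$, so after simplification it is tangent to the $J$-component alone and gives the single class drawn as $T_0$ (necessarily with $S_{in}=\{1\}$, matching Table~\ref{roots1-1}). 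A section with $S_{tan}=\{1\}$ is tangent to $o_1$ once and, according to whether it runs below or above $o_1$, yields the two classes drawn as $T_1$, distinguished by $S_{in}$. Finally the $T_0^*$ picture is the one exceptional non-simple shape, doubly tangent to $o_1$ on the two arcs cut off by the generatrix through the $J$-tangency, which by Corollary~\ref{types2-3} is precisely the configuration with $L\cdot O_1=\pm2$. In each case the residual freedom is that of a section of a bundle with contractible fibres over a circle (rel the branch datum and the prescribed tangencies), hence connected, so each datum determines a single ambient-isotopy class; these are the four of Fig.~\ref{4pics}.

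For the multiplicities I would simply tally the cited counts. Theorem~\ref{5types} records $3$ tritangents of type $T_0$, one of type $T_0^*$, and $8$ of type $T_1$. Proposition~\ref{tan-in-1-1} splits the eight $T_1$'s as $4+4$ by the value of $S_{in}$, i.e. between the two $T_1$ classes; and the four tritangents with $(S_{in},S_{tan})=(\{1\},\varnothing)$ break as $3+1$ into $T_0$ and $T_0^*$ by the separation criterion of Corollary~\ref{types2-3}, since exactly one oval lies above the $J$-component. This reproduces the multiplicities $3,1,4,4$.

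Finally, for the lifted statement about lines I would invoke Lemma~\ref{lifting} to cover each tritangent by a Bertini-pair of pseudo-lines and Lemma~\ref{lift-isotopy} to lift the ambient isotopies found above to isotopies of the covering lines, so that the real lines over tritangents of one class are isotopic. When $C$ has no zigzags the discs guiding the simplification moves are sliced into intervals by the generatrices, and Proposition~\ref{stretching} then upgrades these isotopies to fibrewise ones and makes the simplified tritangents strongly isotopic. \emph{The main obstacle} is the rigidity claim of the second paragraph: showing that the pair (branch around $o_1$, tangency pattern) genuinely determines the ambient-isotopy class, and in particular that the two $T_1$ pictures are separated by the $S_{in}$-invariant while the doubly tangent $T_0^*$ shape does not degenerate into a $T_0$ one. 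Everything else is bookkeeping over the cited results.
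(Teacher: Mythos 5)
Your route is the paper's own: the proof in the text is precisely the combination of Proposition~\ref{stretching} (simplification and lifting), Proposition~\ref{tan-in-1-1}/Table~\ref{roots1-1} (the realizable pairs $(S_{in},S_{tan})$ with multiplicities), and Theorem~\ref{5types} (the counts $3,1,4,4$), and your tally agrees with those sources, including the identification of the $T_0$ class as the one with $S_{in}=\{1\}$ and the isolation of $T_0^*$ via Corollary~\ref{types2-3}. Your annulus-fibration argument is a self-contained substitute for the rigidity statement that the paper only formalizes afterwards (Lemma~\ref{codes-determine}); note, however, that it tacitly assumes the $J$-component and $o_1$ have no extra critical points over $\P^1_\R$ — for a sextic with zigzags the fibre of $M$ is not ``a single interval elsewhere'', so you must either first deform $C$ to a zigzag-free representative (Proposition~\ref{elimin-zigzag}) and argue invariance, or allow zigzag moves as the paper does.

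The genuine gap is in your last paragraph, about lines. Lemma~\ref{lift-isotopy} lifts an ambient isotopy from a tritangent $r_0$ to a tritangent $r_1$ to isotopies carrying each pseudo-line over $r_0$ to \emph{some} pseudo-line over $r_1$; this matches lines up in pairs but never connects the two members $R$ and $R'=\beta(R)$ of a single Bertini pair, which cover one and the same tritangent and therefore must also be shown isotopic for the claim ``the real lines covering the tritangents of the same isotopy type are isotopic''. This identification is not formal. For instance, for an overtangent $T_1$ tritangent corresponding to the root $e=B_1$, the two covering lines are $L_{\pm B_1}$, and by Lemma~\ref{MW-basic} their Mordell--Weil transformations map under $\Phi$ to $s_1^{\pm1}$; by Corollary~\ref{matrices-Fbo} their real loci then realize the \emph{distinct} classes $[L_\R]+[F_\R]\mp b_1$ in $H_1(X_\R;\Z)$. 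So the two mates are not isotopic in $X_\R$ at all (even disregarding orientations), and an isotopy between them exists only in $Y_\R$, where it must sweep across the base point of $\vert-K_Y\vert$ — something no lift of an isotopy of loose sections can do. A complete proof must supply this extra argument (for $T_0$ and $T_0^*$ one can, e.g., slide the $J$-tangency once around the $J$-component, whose lift to $Y_\R$ is one-sided, so that the two sheets get exchanged; the simplified $T_1$'s have no $J$-tangency and need separate care). To be fair, the paper's one-line proof is equally silent on this point, but it is exactly the part of the proposition that does not follow from the three ingredients you (and the paper) cite.
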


\subsection{Encoding of the isotopy types}\label{encoding}
If a simplified tritangent,
$r\subset Q_\R^+$, goes below (resp., above)
a positive oval  without  tangency,
we say that $r$ {\it underpasses} (resp., {\it overpasses}) this oval, 
and use the symbol \upp \  (resp., \bott ) to encode such mutual position.
If $r$ goes below (resp. above)  an oval with one simple tangency
we use the symbol \uptan \ (resp., \bottann) and say  that $r$ is an {\it undertangent} (resp., an {\it overtangent}).
When we wish to underline that both, undertangent and overtangent, positions are realizable,
we put the {\it ambivalent symbol}\, \upbottan.
In the case of a tritangent of type $T_0^*$ 
we introduce an additional symbol
 {\VV} for the fragment with two tangencies to an oval and an intermediate tangency to the J-component (see Section \ref{onside}).

Note that the fiber
$f_Q^{-1}(t)\subset Q_\R$, $t\in \P^1_\R$,
containing a J-tangency point cannot
intersect an overpassed or overtangent oval of $C_\R$.
 It follows that $t$ belongs to the complement
$I_r^\circ\subset \P^1_\R$ of the projection of the union of such ovals 
with the set of undertangent tangency points.
We let $I_r\subset I_r^\circ$ be obtained by removing from $I_r^\circ$ the projection of the undertangent ovals
and put $J_r=f_Q^{-1}(I_r)\cap J$.

\begin{lemma}\label{moving-to-I}
Assume that $r$ is a  simple loose section with a $J$-tangency point. Then:
\begin{enumerate}\item
A strong isotopy of  $r$ does not change the sets $I_r$ and $J_r$.
\item
If $r$ is not of type $T_0^*$, then it can be moved
by a strong isotopy so that the fiber $f_Q^{-1}(t), t\in \P_\R^1$ that contains the J-tangency point
will not intersect the ovals of $C_\R$.
The connected component of $I_r$ containing the point $t$ obtained,
as well as the component of $J_r$ containing the resulting J-tangency point,
 do not depend on such isotopy.
\end{enumerate}
\end{lemma}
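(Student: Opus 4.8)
The plan is to handle the two assertions separately, exploiting that the entire combinatorial type of a loose section is discrete, hence rigid under strong isotopy. For (1), I would first note that a strong isotopy keeps $r_t$ a loose section for every $t$ and moves the finite set $r_t\cap C_\R$ by an isotopy along the fixed curve $C_\R$. Consequently the number of tangencies of $r_t$ with each connected component of $C_\R$ is constant, and no positive oval can switch between being overpassed/overtangent and underpassed/undertangent: such a switch would force $r_t$ to cross the oval, creating a transverse (non-tangential) intersection with $C_\R$ that is forbidden for loose sections, while the tangency count is already pinned down. Thus the partition of the ovals into overpassed, overtangent, underpassed and undertangent ones is a discrete invariant, constant along the isotopy. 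Since $I_r$ is precisely the complement of the projections of the overpassed, overtangent and undertangent ovals (the undertangent tangency points lying inside the removed undertangent ovals) and $J_r=f_Q^{-1}(I_r)\cap J$, and the ovals and $J$ are fixed, both $I_r$ and $J_r$ are unchanged. This gives (1).

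For (2) the central mechanism is a barrier phenomenon. Because $r$ meets each generatrix transversally in exactly one point, the generatrix through the $J$-tangency point $q_0$ cannot pass through any other tangency point $p$ of $r$ with $C_\R$; otherwise it would meet $r$ both at $q_0$ and at $p$. Hence $f_Q(q_0)$ avoids the projection $f_Q(p)$ of every oval-tangency point, and, by the remark preceding the lemma, it also avoids the projections of the overpassed and overtangent ovals; that is, $f_Q(q_0)\in I_r^\circ$ throughout any strong isotopy. I claim moreover that $f_Q(q_0)$ cannot cross such a value $f_Q(p)$: to enter the projection interval of an undertangent oval $o$ from outside, $f_Q(q_0)$ must first reach an extreme value of $o$, and to pass from one arc of $o$ to the other it would have to cross $f_Q(p)$, which is excluded. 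Therefore $f_Q(q_0)$ stays in a single connected component $K^\circ$ of $I_r^\circ$, and this component is a strong-isotopy invariant.

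Existence in (2) then follows by sliding. I would push $q_0$ along $J$ within $K^\circ$ — a valid strong isotopy, since $f_Q|_J$ is onto $\P^1_\R$ because $J$ embraces the vertex — to a value over which no oval lies. Such a value is reachable inside $K^\circ$ exactly when $q_0$ is not trapped between two tangencies of $r$ to one and the same oval whose separating generatrix is the one through $q_0$, i.e. exactly when $r$ is not of type $T_0^*$: for a simple loose section that is not $T_0^*$ there is at most one tangency per component, the endpoints of $K^\circ$ are barriers $f_Q(p)$ or extremes of overpassed/overtangent ovals, and beyond some oval-extreme $K^\circ$ reaches oval-free territory. In the $T_0^*$ case the two flanking barriers confine $f_Q(q_0)$ to an interval contained in the projection of the oval, so no clean fiber exists, which is why $T_0^*$ is excluded.

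Finally, for the uniqueness in (2): any clean fiber has $f_Q(q_0)\in I_r$ and, in addition, $f_Q(q_0)$ avoids every underpassed oval. Since each barrier value $f_Q(p)$ is a boundary point of $K^\circ$, the part of each undertangent oval's projection that meets $K^\circ$ is adjacent to that boundary, so passing from $I_r^\circ$ to $I_r$ only trims $K^\circ$ at its two extremities, leaving the interior connected; the retained underpassed ovals do not disconnect it either, as their projections are proper closed sub-arcs. Hence all clean positions in $K^\circ$ lie in one and the same component of $I_r$, and correspondingly in a single component of $J_r$; as $K^\circ$ is an isotopy invariant, so is this component, which is the assertion. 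I expect the barrier step — ruling out that $f_Q(q_0)$ slips across an oval-tangency value — to be the main obstacle, together with the extreme-value bookkeeping near the undertangent ovals needed to see that the passage $I_r^\circ\to I_r$ only trims $K^\circ$ at its ends.
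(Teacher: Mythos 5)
Your core mechanism is the right one and is exactly the paper's: since a loose section meets every generatrix transversally in one point, the projection of the $J$-tangency point can never equal the projection of an oval-tangency point, hence (by continuity) can never cross it, so the exit direction from under an undertangent oval is forced; this is precisely the paper's remark that ``the direction of this moving is uniquely defined''. Your treatment of claim (1) and of the well-definedness of the component of $I_r$ is correct and in fact more detailed than the paper's own proof. Two caveats there: your step ``beyond some oval-extreme $K^\circ$ reaches oval-free territory'' silently uses that distinct ovals have pairwise disjoint projections to $\P^1_\R$ --- true by B\'ezout (a generatrix meets the sextic in $3$ points, whereas overlapping projections plus the $J$-component would give at least $5$), but without it the retained underpassed ovals could a priori cover all of $K^\circ\cap I_r$, so this fact should be invoked; also, $I_r^\circ$ itself moves during the isotopy (the undertangent tangency projections move), so ``the component $K^\circ$ of $I_r^\circ$'' is only invariant in the combinatorial sense, which your trimming argument implicitly supplies.

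The genuine gap is in the last clause, ``and correspondingly in a single component of $J_r$''. This rests on the implication that all clean positions lying over one component $K$ of $I_r$ belong to one component of $J_r$, and that implication is false in general: $f_Q|_J$ has degree $1$ but may have folds (zigzags), so $J\cap f_Q^{-1}(K)$ may be disconnected --- one crossing branch together with ``pockets'' whose two ends leave $f_Q^{-1}(K)$ through the fiber over the same endpoint of $K$ --- and clean $J$-tangency positions can sit on different components of $J_r$ over the same $K$. A projection-only argument, which is all that your paragraphs supply, cannot distinguish these. The repair is an additional confinement on $J$ itself: the $J$-tangency point moves continuously on $J$ during a strong isotopy and can never reach a fold point of $J$, because there $J$ is tangent to the generatrix while the loose section, being tangent to $J$ at that point, would then be tangent to the generatrix, contradicting transversality; hence the $J$-tangency point stays on one monotone branch $\gamma$ of $J$ minus its fold points. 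Since $f_Q|_\gamma$ is injective, $\gamma\cap f_Q^{-1}(K)$ is connected, so it lies in a single component of $J_r$, and combined with your $I_r$-argument this yields the stated invariance of the $J_r$-component. (The paper's proof is silent on the $J_r$ statement as well, but as written your inference is invalid rather than merely terse.)
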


\begin{proof}
Claim (1) is straightforward. 
Claim (2) is trivial, if the J-tangency point is not under undertangent oval.
Otherwise, for proving (2),
we just need to move to $I_r$ the J-tangency point,
which 
can be obviously done 
by a strong isotopy,
and to notice that presence of an undertangency point shows that the direction of this moving is uniquely defined.
\end{proof}

For a full description of $r$ up to an ambient isotopy in $Q_+$ we need to enrich the codes introduced above by an
information about the location of J-tangency points (if any). For that, we push the J-tangency points to $I_r$ using Lemma \ref{moving-to-I}
and specify the connected components of $I_r$ containing new positions of J-tangencies.

It turns out 
that such information is required only if $r$ has type $T_2$, while in the other cases the question about J-tangencies does not rise.
For type $T_0$,
it is because the interval $I_r$ is connected.
For type $T_0^*$,
the position of the J-tangency is prescribed by the definition of $T_0^*$.
For type
$T_1$,
 the simplification procedure allows to remove the J-tangencies due to
Lemma \ref{removable}, 
and for type
$T_3$,
there is no J-tangencies at all.

In the case of 
type $T_2$,
we distinguish the component of $I_r$ containing the J-tangency
by means of delimiters $\la$ and $\ra$ that mark the endpoints
of this component. 
Note that 
some number of symbols \upp\,  may be enclosed by the delimiters.

For example,
the code \ \uptan $\lla$ \upp \ \upp \ $\rra$ \uptan\ \ 
refers to $C_\R$ with 4 ovals and a tritangent $r$ underpassing the second and the third ovals and
undertangent the first and the fourth ovals.
The brackets indicate presence of
a J-tangency between the first and the fourth ovals.
As additional examples,
the 4 
 tritangents 
shown on Fig.\,\ref{4pics} can be encoded respectively as\ \ 
 \bott,\  \hskip2mm {\VV},\hskip2mm  \uptan,\, and\hskip2mm  \bottan.

\begin{lemma}\label{codes-determine}
The code of a simple loose section $r$ determines it uniquely up to an ambient isotopy in $Q_+$.
\end{lemma}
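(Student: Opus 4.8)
The plan is to show that the code of a simple loose section $r$ records exactly the data needed to reconstruct $r$ up to ambient isotopy in $Q^+_\R$, namely (i) the sequence of symbols $\upp$, $\bott$, $\uptan$, $\bottann$, $\VV$ recording, oval by oval in the consecutive numeration $o_1,\dots,o_p$ along the $x$-axis, whether $r$ passes below/above and with or without tangency; and (ii) the position of the delimiters $\la\,\ra$ marking the location of a $J$-tangency relative to the ovals. I would treat $Q^+_\R$ as a band swept out by the generatrices (Section \ref{swept}), so that projection $f_Q$ gives a fibration of $Q^+_\R$ over $\P^1_\R$ whose fibers are arcs; a loose section meets each fiber in one point, hence is a graph over the base circle. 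The isotopy classification then becomes a problem about graphs of sections of an interval bundle with prescribed crossing/tangency pattern with $C_\R$.

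First I would fix, once and for all, the arrangement of $C_\R$ in $Q^+_\R$: by Theorem \ref{deform-sextics} the isotopy type of $C_\R\subset Q_\R$ is determined by the code $\la p|q\ra$, so I may work with any convenient representative, and by Proposition \ref{elimin-zigzag} I may assume no zigzags and only simple tangencies to the generatrices. The ovals are ordered along the base and each lies in a definite fiber-interval; the $J$-component, after projecting, is a multisection over the base with the over/under structure of Section \ref{swept}. Next I would argue that two simple loose sections with the same code are isotopic by building the isotopy fiberwise over $\P^1_\R$. Over each base point $t$ where the fiber $f_Q^{-1}(t)$ meets neither an oval nor a tangency point, the fiber is an arc and the single intersection point of $r$ with it can be slid monotonically; the only constraint is that $r$ must stay on the correct side (below or above) of whatever oval-arc or $J$-arc occupies that fiber, and this side is precisely what the symbols $\upp$/$\bott$ dictate. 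Tangency points (the symbols $\uptan$, $\bottann$, and the $\VV$-fragment for type $T_0^*$) are handled by the local normal form of a simple tangency, and a strong isotopy moves them along $C_\R$ as in Lemma \ref{moving-to-I}.

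The main obstacle is the placement of the $J$-tangency point, which is genuinely global: unlike the over/under data, it can be slid along the base and a priori landed in different components of the complement $I_r$. This is exactly where part (2) of Lemma \ref{moving-to-I} does the work. That lemma guarantees that, after pushing the $J$-tangency into $I_r$, the connected component of $I_r$ containing it is a well-defined invariant of the strong isotopy class; my task is then to check the converse, that once this component is fixed by the delimiters $\la\,\ra$ in the code, the $J$-tangency can be brought to any preferred location inside that component and thus normalized. I would verify this by a direct isotopy that drags the $J$-tangency point within its $I_r$-component, keeping $r$ simple throughout, and then invoke the observation (already noted before Lemma \ref{codes-determine}) that among the five types only $T_2$ carries an ambiguous $J$-tangency requiring the delimiters; for $T_0$ the interval $I_r$ is connected, for $T_0^*$ the position is fixed by the definition of the type, for $T_1$ the $J$-tangency is removed by simplification via Lemma \ref{removable}, and $T_3$ has none.

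Finally I would assemble these pieces: given two simple loose sections $r_0,r_1$ with identical codes, I normalize the $J$-tangency of each (when present) to the same point inside the specified $I_r$-component using Lemma \ref{moving-to-I}(2), then construct the ambient isotopy of $Q^+_\R$ fiber by fiber, matching the over/under pattern fiber-interval by fiber-interval and matching tangencies through their local models; patching these local isotopies over a partition of $\P^1_\R$ adapted to the ovals and to $I_r$ yields a global ambient isotopy $\phi_t:Q^+_\R\to Q^+_\R$ with $\phi_0=\id$ and $\phi_1(r_0)=r_1$. I expect the delimiter/$J$-tangency bookkeeping for type $T_2$ to be the only delicate point; everything else reduces to the elementary fact that monotone graphs over an interval with the same relative position to a fixed configuration are isotopic.
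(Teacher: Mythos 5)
Your fiberwise matching argument is sound where it applies, and in the zigzag-free case it amounts to exactly the strong isotopy that the paper's proof invokes; your treatment of the $T_2$ delimiters via Lemma \ref{moving-to-I} is also the intended one. The gap is your opening reduction. Lemma \ref{codes-determine} is a statement about loose sections relative to an \emph{arbitrary fixed} sextic $C$, and both the notion of a loose section and the code are defined in terms of the fibration of $Q_\R$ by generatrices. Theorem \ref{deform-sextics} and Proposition \ref{elimin-zigzag} do give you a zigzag-free curve $C'_\R$ isotopic to $C_\R$ in $Q_\R$, but the ambient isotopy carrying $C_\R$ to $C'_\R$ does not respect the generatrix fibration: it carries neither loose sections to loose sections nor codes to codes, so ``working with a convenient representative'' does not prove the lemma for $C$ itself. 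To make that reduction legitimate you would need a code-preserving bijection between ambient isotopy classes of loose sections for $C$ and for $C'$, which is essentially as hard as the lemma you are proving.

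The omission is not cosmetic, because in the presence of a zigzag the fiberwise (strong) isotopies your construction produces do not suffice. A loose section can never enter the ``pocket'' of a zigzag: being a graph over the base, it would be squeezed onto the fold point of $J$ at the pinching end, where $J$ is tangent to the generatrix while the section is transverse to it, forcing a crossing of $J$. Consequently a $J$-tangency point cannot be slid along $J$ across a zigzag by any isotopy through loose sections, since on the middle branch the section would have to touch $J$ from the pocket side. Yet the code does not record on which side of a zigzag a $J$-tangency sits: $I_r$ and its components are determined by the ovals only. So two loose sections with identical codes can fail to be strongly isotopic, and they are connected only by the genuinely ambient zigzag moves of Fig.\,\ref{zigzag-move}. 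This is precisely the second half of the paper's (two-line) proof: with no zigzags, equal codes give a strong isotopy; zigzag moves then dispose of the general case. Your proposal never invokes zigzag moves, so for sextics whose $J$-component has zigzags the statement remains unproved.
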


\begin{proof}
If the J-component contains no zigzags, then
loose sections with the same code can be connected by a strong isotopy.
To connect loose sections in presence of zigzags it is enough to perform zigzag moves.
\end{proof}

\subsection{Restrictions on the position of $J$-tangencies}\label{J-tangencies}
These restrictions concern the tritangents of type $T_2$, and only in the cases  $\la 4\,|\,0\ra$ and $\la 3\,|\,0\ra$.

\begin{proposition}\label{restriction-tangencies}
Let $\Tri$ be a positive tritangent of type $T_2$ to a real sextic $C$ of type $\la 4\,|\,0\ra$ or $\la 3\,|\,0\ra$.
\begin{enumerate}
\item If $C$ is of type $\la 4\,|\,0\ra$ and the pair of tangent to $\Tri_\R$ ovals include precisely one of the ovals $O_1$ and $O_3$, then
the non-tangent ovals lie both above or both below $\Tri_\R$ while the J-tangency point belongs to that interval of 
$\Tri$
delimited by projections of the tangency points with ovals which contains the projection of non-tangent ovals 
in the 
case ``both above" and does not contain the projection of non-tangent ovals in the 
case `both below".
\item If $C$ is of type $\la 4\,|\,0\ra$ and the two tangent ovals are either $O_1$ and $O_3$ or $O_2$ and $O_4$, then
one of the non-tangent ovals lie above and one lie below $\Tri_\R$, while the J-tangency point belongs to that interval of 
$\Tri$
delimited by projections of the tangency points with ovals which contains the projection of the non-tangent oval lying above $\Tri_\R$.
\item If $C$ is of type $\la 3\,|\,0\ra$, then the J-tangency point belongs to that interval of $\Tri$
delimited by projections of the tangency points with ovals which contains the projection of the non-tangent oval if the latter one lies above $\Tri_\R$,
and does not contain the projection of the non-tangent oval otherwise.
\end{enumerate}
\end{proposition}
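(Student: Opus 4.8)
The plan is to split the statement into two logically independent parts: the ``same side / opposite side'' assertion for the non-tangent ovals, which I expect to be purely arithmetic, and the placement of the single $J$-tangency point, which is the genuinely geometric (B\'ezout) content. First I would record the dictionary, already built into the setup of Section~\ref{oval-bridge-classes}, that the set $S_{in}$ of ovals contained in $\hat\ell$ consists exactly of the overpassed ovals, i.e. those lying on the vertex side of $\ell_\R$. With this, the side on which a non-tangent oval sits is precisely its membership in $S_{in}$. For a type-$T_2$ tritangent on a sextic of type $\la4|0\ra$, Lemma~\ref{arithm} gives the parity relation $|S_{in}\setminus S_{tan}|\equiv 1+|S_{tan}\cap\{1,3\}|\pmod 2$. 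In case (1) we have $|S_{tan}\cap\{1,3\}|=1$, so $|S_{in}\setminus S_{tan}|$ is even; since only two ovals are non-tangent this forces both of them onto the same side of $\ell_\R$. In case (2), $|S_{tan}\cap\{1,3\}|$ is even, so $|S_{in}\setminus S_{tan}|=1$ and the two non-tangent ovals lie on opposite sides. This settles the ``above/below'' clauses for $\la4|0\ra$; for $\la3|0\ra$ there is a single non-tangent oval whose side is unconstrained, so only the location of $x_J:=f_Q(p_J)$ has to be addressed.

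For that location I would pass to a representative without zigzags and with only simple tangencies to the generatrices (Proposition~\ref{elimin-zigzag}); the combinatorial position of the $J$-tangency relative to the ovals is an isotopy invariant of the tritangent and hence constant in the deformation class, so no generality is lost. The B\'ezout facts used in Lemma~\ref{removable} then give the preliminaries: every real generatrix meets $\ell_\R$ once, meets $J$ in an odd number of points, and meets each oval in at most two, so $G\cdot C=3$ forces the oval projections $X_1,\dots,X_p\subset\P^1_\R$ to be pairwise disjoint and forces the generatrix $G_J$ through $p_J$ to meet exactly one oval $O_m$ (twice) besides $p_J$. Hence $x_J\in X_m$; and since all ovals lie strictly below $J$, the two points of $G_J\cap O_m$ lie on the vertex side of $p_J$, so the host oval $O_m$ is overpassed, that is $m\in S_{in}$.

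Feeding in the side information from the first step handles the ``above'' subcases directly: there the non-tangent ovals are \emph{not} in $S_{in}$, so the overpassed host $O_m$ must be one of the tangent ovals. A local inspection of the simple tangency of $\ell$ with $J$ at $p_J$ (the sliver between $\ell_\R$ and $J$ closes up precisely at $p_J$) then shows that $x_J$ lies in the portion of $X_m$ on the prescribed side of the oval-tangency point, which is exactly the arc claimed. The delimiter data in the encoding of Section~\ref{encoding} is recovered in this way.

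The main obstacle is the complementary ``below'' subcase, in which a non-tangent oval $O_c$ is itself overpassed and is therefore an a~priori admissible host for $x_J$: the generatrix count alone does not exclude $x_J\in X_c$, and a second, genuinely geometric, B\'ezout input is needed. The plan is to assume $x_J\in X_c$ and build an auxiliary plane section $h$ through $p_J$ and through points chosen on $O_c$ and on a tangent \emph{lower} oval ($O_1$ or $O_3$), using that a lower oval is joined to $J$ by a bridge while an upper oval is not; cutting $\ell_\R\cup J$ into topological circles as in the proof of Lemma~\ref{removable} and counting $h\cap C$ against the bound $6$ should produce a contradiction, pinning $x_J$ to the arc not containing the non-tangent ovals. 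This is exactly where the distinction between the lower ovals $O_1,O_3$ and the upper ovals $O_2,O_4$ (Proposition~\ref{lower-upper}) becomes essential, and it is the step I expect to require the most care; part~(3) for $\la3|0\ra$ follows from the same scheme with a single non-tangent oval.
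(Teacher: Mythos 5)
Your first step is sound: identifying the side of $\Tri_\R$ on which an oval lies with its membership in $S_{in}$ and invoking Lemma \ref{arithm} does give exactly the ``both on one side'' claim in case (1) and the ``opposite sides'' claim in case (2) (this is the parity recorded in Corollary \ref{Pairs-Mtype}; for $\la3\,|\,0\ra$ one uses Proposition \ref{tan-in-p} instead). This matches how the paper disposes of that half of the statement, whose proof then concentrates entirely on the placement of the $J$-tangency.

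The gap is in that second half, and it is fatal to your scheme. The assertion that $G_J\cdot C=3$ ``forces the generatrix $G_J$ through $p_J$ to meet exactly one oval $O_m$ twice,'' hence that $x_J$ lies in some oval projection $X_m$, is false: the two residual intersection points of $G_J$ with $C$ may form a complex-conjugate pair, so no real intersection with an oval is forced. Indeed, Lemma \ref{moving-to-I}(2) says precisely the opposite of your claim: away from type $T_0^*$, the $J$-tangency can be isotoped so that its fiber meets no oval at all, and it is in that normalized position that the delimiters of Section \ref{encoding} --- i.e.\ the intervals named in the Proposition --- are read. So your ``host oval'' $O_m$ need not exist, and with it collapse both the ``above'' subcases (which you derive from the host oval) and the ``below'' subcases (which you only sketch: ``should produce a contradiction''). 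There is also an orientation slip that would invert the conclusion even if the scheme worked: you place the ovals ``strictly below $J$'' and call the vertex-side ovals overpassed, whereas in the paper's conventions the ovals lie between $J$ and the vertex end and $S_{in}$ consists of the \emph{under}passed ovals; since parts (1)--(3) are not symmetric under exchanging ``above'' and ``below,'' a consistently flipped picture proves the wrong assignment of intervals. The paper's actual proof applies uniformly the device you reserved for the hardest subcase: assuming the $J$-tangency sits in the forbidden interval, it traces an auxiliary real plane section $h$ through three suitably chosen ovals (through the tangency point whenever the oval is tangent to $\Tri$); B\'ezout applied to $h\cap C$ ($\le 6$ points) forces $h$ to pass above the $J$-component, and then $|h\cap\Tri|>2$, contradicting B\'ezout for the two plane sections $h,\Tri$ of $Q$ (Fig.~\ref{byBezout}).
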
 
\begin{proof}
To justify each of the above restrictions on the position of the $J$-tangency points on the $J$-component
we assume the contrary and 
trace an auxiliary plane section 
$h$
that contradicts to the B\'ezout theorem applied to $h\cap\Tri$.  
Typical examples are shown on Fig.\,\ref{byBezout}. 
The upper row shows the correct location of the J-tangency points
in each of the 4 chosen examples.
In the bottom row we demonstrate why another location
 is forbidden.
For that, in each example we present
a section $h$  (the dotted curve) with $|h\cap\Tri|>2$, which contradicts to the B\'ezout theorem.
This section $h$ is chosen to intersect the 3 ovals chosen as indicated on Fig.\,\ref{byBezout}.
If any of these 3 ovals is tangent to $\Tri$, then $h$ is chosen to intersect it at the tangency point.
Note also that 
$h$ 
passes
above the J-component (shown as the bottom line segment), because by the Bézout theorem $h\cap C$
cannot have more than 6 points.
\end{proof}

\begin{figure}[h!]
\caption{
Examples of realizable and not realizable $J$-tangencies}\label{byBezout}
\includegraphics[height=2.8cm]{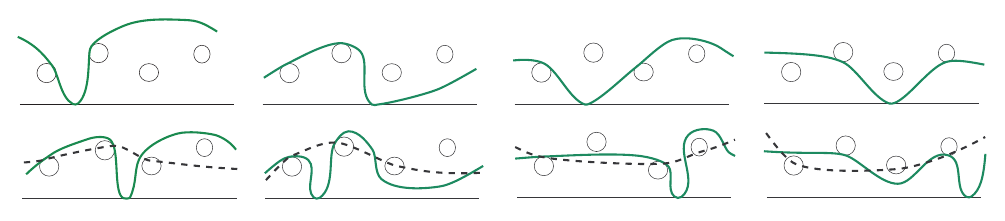}
\end{figure}

\begin{proposition}\label{flipping-oval}
A real sextic has a positive tritangent of type \cod{x}{\uptan}{y} if and only if it has a positive tritangent of type \cod{x}{\bottan}{y}.
\end{proposition}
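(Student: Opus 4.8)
The plan is to translate the passage from an undertangent to an overtangent oval into a single addition of an oval-residue in $V=\L/2\L$, and then to verify by the mod $2$ arithmetic of Section \ref{mod2arithmetic} that this operation again lands in $V_1\sm R_1$, hence is realized by a positive tritangent. Concretely, write $\Tri_v$, $v\in V_1\sm R_1$, for the tritangent of code $\cod{x}{\uptan}{y}$ and let $i$ be the oval bearing the symbol $\uptan$. Reversing the side of a single simple tangency keeps the oval simply tangent (so $i\in S_{tan}$ is unchanged) but flips the membership $i\in S_{in}$, while leaving $S_{in}$ and $S_{tan}$ untouched for every other oval. Since $v^o=\sum_{j\in S_{in}}\o_j$ by Proposition \ref{oval-bridge-components}(1), this flip is exactly $v\mapsto v'=v+\o_i$; and as $\o_i\in V^o$ we have $(v')^b=v^b$, so $\delta(v')^b=\delta v^b$ and $S_{tan}$ is indeed preserved by Proposition \ref{oval-bridge-components}(2).

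First I would check that $v'\in V_1$. Using quadraticity of $\q_0$, together with $\q_0(v)=1$ (as $v\in V_1$), $\q_0(\o_i)=1$ (Lemma \ref{q-calculation}), and $v\cdot\o_i=1$ (Lemma \ref{intersection-criterion}(1), since $\Tri_v$ is tangent to the $i$-th oval), one gets $\q_0(v')=1+1+1=1$. Next I would rule out $v'\in R_1$ by showing $v'\notin R$: were $v'\in R\subset V^b$, its $V^o$-component would vanish, forcing $v^o=\o_i$, i.e. $S_{in}=\{i\}$, and then $\delta(v')^b=\delta v^b=0$ by (\ref{ker-delta}) would give $S_{tan}=\varnothing$, contradicting $i\in S_{tan}$. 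Hence $v'\in V_1\sm R_1$, so by Propositions \ref{lift-to-root} and \ref{one-to-one} it is realized by a root and defines a positive tritangent $\Tri_{v'}$ of the same sextic.

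It then remains to read off the code of (a simplification of) $\Tri_{v'}$. Since $S_{tan}$ coincides with that of $\Tri_v$, and $S_{in}$ agrees with that of $\Tri_v$ away from $i$ and is toggled at $i$, the dictionary of Section \ref{encoding} assigns to every oval other than the $i$-th the same symbol, while the $i$-th oval, still simply tangent but now on the opposite side, receives $\bottan$; thus $\Tri_{v'}$ has code $\cod{x}{\bottan}{y}$. As $v\mapsto v+\o_i$ is an involution, the reverse implication is literally the same computation, which yields the asserted equivalence. The hard part is not the arithmetic but the geometric-to-arithmetic dictionary: I must make sure that ``reversing the side of one simple tangency'' is faithfully recorded by $v\mapsto v+\o_i$, that the possibly non-simple representative $\Tri_{v'}$ simplifies (via Lemma \ref{removable} and Proposition \ref{stretching}) to a tritangent whose $i$-th tangency is genuinely on the new side, and --- in the only delicate case, type $T_2$ --- that the flip is compatible with the prescribed location of the $J$-tangency given by Proposition \ref{restriction-tangencies}.
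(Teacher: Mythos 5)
Your proposal is correct and follows essentially the same route as the paper: the paper's proof is a one-line observation that switching \uptan\ to \bottan\ amounts to adding the oval-class to $v^o$ in the oval/bridge decomposition, exactly your map $v\mapsto v+\o_i$ justified by Proposition \ref{oval-bridge-components}. Your additional verification that $v+\o_i$ again lies in $V_1\sm R_1$ (via quadraticity of $\q_0$ and the exclusion of $R$ using $i\in S_{tan}$) is left implicit in the paper, so your write-up is in fact the more complete one.
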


\begin{proof}
In terms of
homology classes of
 real lines on $Y$ that cover a tritangent, switching from one type to another is equivalent to adding the oval-class $e$ to $v^o$ in the oval/bridge decomposition, as it follows from Proposition \ref{oval-bridge-components}.
\end{proof} 

\subsection{Sextics of type $\la p\,|\,0\ra$}
We say that the code of a sextic of type $\la p\,|\,0\ra$, $p<4$, is a derivative of a code of type $\la 4\,|\,0\ra$, if the first is obtained from the second by dropping 
$4-p$ symbols of types \upp \, and \bott \,.

\begin{theorem}\label{answer-codes}
The 
120 
positive tritangents to a real sextic $C\subset Q$ of type $\la 4\,|\,0\ra$ in their simplified forms {\rm (}as loose sections{\rm )} have the codes listed in Tab. \ref{pics120}. For sextics of type $\la p\,|\,0\ra$ with $p<4$, the codes 
are exactly the derivatives of the above ones.
Respectively, for every real del Pezzo surface $Y$ of degree 1
with $Y_\R=\Rp2\#p\T^2$, $p\le 4$, a bijection between the set of codes of positive tritangets
to a real sextic of type $\la p\,|\,0\ra$ and the set of isotopy types of real lines on $Y$ is given by passing from a code to the isotopy type of pseudo-lines covering the loose sections given by the code.
\end{theorem}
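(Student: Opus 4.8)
The plan is to reduce the statement to the combinatorial enumeration of Section~\ref{arithmetic} and then promote it to an isotopy statement by the simplification and lifting machinery of Section~\ref{removable-sec}. Throughout I would fix a smart representative $C_\e$ (legitimate by deformation invariance, Theorem~\ref{deform-sextics}), so that the oval/bridge decomposition and Proposition~\ref{oval-bridge-components} are at my disposal, and I would use the observation that for a \emph{simple} loose section the code is merely a transcription of the pair $(S_{in},S_{tan})$: the set $S_{tan}$ dictates which ovals carry a tangency symbol (\uptan\ or \bottan) versus a passing symbol (\upp\ or \bott), membership in $S_{in}$ fixes the over/under alternative (whether the oval lies in $\hat\Tri$), and the symbol {\VV} is forced precisely in the $T_0^*$ case by Corollary~\ref{types2-3}.

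First, for type $\la 4\,|\,0\ra$ I would show that the code of a simplified tritangent is completely determined by $(S_{in},S_{tan})$, supplemented, only in the $T_2$ case, by the location of the $J$-tangency. For a tangent oval Proposition~\ref{flipping-oval} guarantees that both over- and under-tangent positions genuinely occur and correspond to toggling $i\in S_{in}$ (adding the oval-class to $v^o$). Since $\L=E_8$ has trivial radical, Proposition~\ref{15-8} gives a bijection between positive tritangents and admissible pairs $(S_{in},S_{tan})$, and Corollary~\ref{Pairs-Mtype} records, type by type, exactly which pairs occur. Running through $T_3,T_2,T_1,T_0,T_0^*$ under these constraints reproduces the $32+48+32+4+4=120$ codes of Table~\ref{pics120}. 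The one extra input is the $T_2$ case: there the $J$-tangency is not free but is pinned down by Proposition~\ref{restriction-tangencies}, and this is precisely what fixes the placement of the delimiters $\la\ \ra$. By Lemma~\ref{codes-determine} distinct codes yield loose sections that are not ambient isotopic in $Q_+$.

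Second, for type $\la p\,|\,0\ra$ with $p<4$ I would degenerate from $\la 4\,|\,0\ra$ by letting the ovals $o_{p+1},\dots,o_4$ shrink and disappear, a move realizable inside the deformation class by Theorem~\ref{deform-sextics}. A simplified tritangent survives this degeneration exactly when it is not tangent to any vanishing oval, i.e.\ when the symbols attached to $o_{p+1},\dots,o_4$ are all of passing type \upp\ or \bott; deleting these $4-p$ symbols is the derivative operation. Proposition~\ref{tan-in-p}, whose arithmetic now involves a nontrivial radical, confirms that the resulting codes are exactly the ones realized for $\la p\,|\,0\ra$ and simultaneously matches the counts of Table~\ref{tab1}.

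Finally, to obtain the bijection with isotopy types of real lines I would combine Proposition~\ref{stretching} with Lemmas~\ref{lifting} and~\ref{lift-isotopy}: each code is realized by a loose section whose Bertini-pair consists of two pseudo-lines, simplification moves preserve their isotopy class in $Y_\R$, and an ambient isotopy of loose sections in $Q_+$ lifts to one of the covering pseudo-lines. Hence the assignment (code $\mapsto$ isotopy class of covering pseudo-line) is well defined, independent of the choice within a Bertini-pair, and onto the isotopy classes of real lines, while equal codes force isotopic lines via Lemma~\ref{codes-determine}. The hard part, which I expect to be the main obstacle, is \emph{injectivity}: two loose sections with different codes must cover non-isotopic lines in $Y_\R$. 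This does not follow formally, since an ambient isotopy in $Y_\R$ need not commute with the Bertini involution $\beta$ and so need not descend to $Q_+=Y_\R/\beta$. The plan is to recover the code from genuine isotopy invariants of the line: reading the mod~$2$ intersections of the pseudo-line with the lifted ovals through the Viro homomorphism (Lemma~\ref{intersection-criterion}, Corollary~\ref{types2-3}) reconstructs $S_{tan}$ and the separation data, while the refined over/under placement and, for $T_2$, the $J$-tangency interval are recovered by tracking the pulled-back pencil $f_Q$ together with Proposition~\ref{restriction-tangencies}. Establishing that these data are indeed invariant under arbitrary ambient isotopy in $Y_\R$ — equivalently, that such isotopies may be taken $\beta$-equivariant so as to descend to $Q_+$ — is the step demanding the most care, and once secured Lemma~\ref{codes-determine} closes the argument.
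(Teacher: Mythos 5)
Your first three paragraphs are, in substance, the paper's own proof: the paper also works with a smart representative, reads the codes off the pairs $(S_{in},S_{tan})$ supplied by Corollary \ref{Pairs-Mtype} (via Proposition \ref{15-8}) and Proposition \ref{tan-in-p}, pins the $J$-tangency in the $T_2$ case with Proposition \ref{restriction-tangencies}, and handles simplification and lifting through Proposition \ref{stretching} and Lemmas \ref{lift-isotopy}, \ref{codes-determine}. (Your shrinking-ovals degeneration for $p<4$ is an extra heuristic the paper does not need, since Proposition \ref{tan-in-p} already treats those types directly.) The divergence is entirely in your last paragraph, and there the proposal has genuine gaps. The injectivity step that you yourself call the main obstacle is never carried out: you describe a plan (recovering the code from invariants, making isotopies $\beta$-equivariant) and stop. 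Be aware that the paper's printed proof is silent on this point as well, so nothing can be borrowed from it; a complete argument cannot stay inside Section \ref{topology}, because Lemma \ref{codes-determine} only controls isotopy downstairs in $Q_\R^+$, and must instead invoke the machinery of Sections \ref{sect5}--\ref{sect8} (Lemma \ref{section-action}, the computation of $\Maps(X_\R)$, and the homology classes of real lines in $H_1$).

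The more serious problem is the clause you assert in passing, that the assignment is ``well defined, independent of the choice within a Bertini-pair''. This is not a formality, and as stated it fails for every code containing a tangency symbol. The two pseudo-lines covering a tritangent are $L_e$ and $L_{-e}$ (Proposition \ref{one-to-one}); under Lemma \ref{section-action} their isotopy classes as sections correspond to $g_e$ and $g_{-e}=g_e^{-1}$ in $\Maps(X_\R)$, which differ unless $g_e^2=1$. Geometrically, the Bertini involution fixes a lifted oval $o_i$ pointwise and reverses local orientation, so the two lifts cross $o_i$ with opposite intersection numbers; hence for types $T_0^*$, $T_1$, $T_2$, $T_3$ the two members of a Bertini pair have distinct classes in $H_1(X_\R;\Z)$. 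For instance, for $e=O_1$ on $X_\R=\Kl\#\T^2\+\SSS^2$, Lemma \ref{MW-basic} gives $g_{O_1}=\D_1s_1^{-2}$, hence $g_{-O_1}=\D_1s_1^{2}$, and Corollary \ref{matrices-Fbo} yields the classes $[L_\R]+[F_\R]+o_1\mp2b_1$; the difference $4b_1$ survives the blow-down to $Y_\R$ (whose only effect on $H_1$ is to kill $[L_\R]$) and survives reversal of orientation, so these two pseudo-lines are not isotopic even in $Y_\R$. Consequently ``the isotopy type of the pseudo-lines covering'' a given code is not a single isotopy type, and a correct treatment must either refine the codes, or read the correspondence at the level of Bertini pairs (isotopy of pairs), or classify real lines up to isotopy combined with the Bertini involution. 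Your proof takes exactly this point for granted, so it does not establish the stated bijection; note that the same scrutiny applies to the paper's own terse proof and to the parallel claim in Proposition \ref{prop-1-1}, which sit uneasily with the computations of Sections \ref{sect5}--\ref{sect8}.
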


\begin{proof} Passage to a simplified form for positive tritangents to $C$, 
and to 
pseudo-lines on $Y$ that cover them, is justified by Proposition \ref{stretching} and  Lemmas \ref{lift-isotopy}, \ref{codes-determine}. Corollary \ref{Pairs-Mtype} and Proposition \ref{tan-in-p}
give us the list of pairs $(S_{in},S_{tan})$
for sextics of type $\la 4\,|\,0\ra$ and those of type $\la p\,|\,0\ra$, $0\le p\le3$, respectively.
Proposition \ref{restriction-tangencies} determines, for sextics of type $\la 4\,|\,0\ra$ and $\la 3\,|\,0\ra$,
the positions of the J-tangency point in the case of type $T_2$.
For other types, the codes do not contain any information on J-tangencies, and the latter is not needed for determining
the isotopy type of the simplifications as it follows from  Lemma \ref{codes-determine}. 
\end{proof}

\subsection{Comments on 
Table \ref{pics120} and 
its derivatives}
For a fixed 
real sextic $C\subset Q$ of type $\la 4\,|\,0\ra$, 
we pick a real generatrix of $Q_\R$ not intersecting the ovals of $C$, and thus, after we orient the base 
$\P^1_\R$ of the cone $Q_\R$, we obtain an ordering of ovals of $C$.
We choose
a reference generatrix and an orientation of $\P^1_\R$ so that the first oval is a lower one.
The symbols of the codes 
appear in the table  in the corresponding order, so that
the leftmost symbol in each code refers to that lower oval.
If a code contains
$k\ge0$ ambivalent symbols \upbottan, it represents $2^k$ different codes obtained by independent replacements of each \upbottan\  by
\uptan\ or \bottan.
Accordingly, such code is counted with multiplicity $2^k$
in the
rightmost column of the table, where the number of tritangents represented in each row is given.
The J-tangency points are determined in accordance with Proposition \ref{restriction-tangencies}
and we indicate their location intervals with delimiters
$\lla\ \rra$. It is done only for the $T_2$-type, since in the other case this location is 
as indicated in Section \ref{encoding}.

When we pass from type $\la 4\,|\,0\ra$ to type $\la p\,|\,0\ra$ with $p<4$, the
number of derivative codes decreases, since the
same code may be obtained by simplification of different codes 
presented in
Table \ref{pics120}.
For example, 
the code \upp\, \upp\, \bott\  can be obtained from
\upp\, \upp\, \upp\, \bott\  by dropping any of its 3 symbols \upp, or by dropping the last symbol
in the code \upp\, \upp\, \bott\, \upp. By contrary, since it is forbidden to drop symbol \upbottan, the given code
 \upp\, \upp\, \bott\  is not a derivative of\ \upbottan \,\upp\, \upp\, \bott\ (and for the same reason, not a derivative
 of any other codes in rows labeled $T_1$, $T_2$ and $T_3$).

 \begin{table}[h!] 
\caption{The codes realizable by 120 positive simplified tritangents 
(the rightmost column shows the number of tritangents per row)}\label{pics120}
\begin{tabular}{c|c|c|c|c|c|c}
$T_0$&\upp\, \upp\, \upp\, \bott&\upp\, \upp\, \bott\, \upp& \upp\, \bott\, \upp\, \upp&\bott\, \upp\, \upp\, \upp&$4$\\
\\
$T_0^*$&\VV\, \bott\, \bott\, \bott &\bott\, \VV\, \bott\, \bott&\bott\, \bott\, \VV\, \bott&\bott\, \bott\, \bott\, \VV&$4$\\ \\
$T_1$&\upbottan \,\upp\, \upp\, \bott&\upbottan\, \upp\, \bott\, \upp&\upbottan\, \bott\, \upp\, \upp&\upbottan\, \bott\, \bott\, \bott&$8$\\ \\
$T_1$&\upp\, \upp\,  \upbottan\, \bott&\upp\, \bott\, \upbottan\, \upp&\bott\, \upp\, \upbottan\, \upp& \bott\, \bott\, \upbottan\, \bott&$8$\\ \\
$T_1$&\upp\, \upbottan\, \bott\, \bott&\bott\, \upbottan\, \bott\, \upp&\bott\, \upbottan\, \bott\, \upp&\upp\, \upbottan\, \upp\, \upp&$8$\\ \\
$T_1$&\upp\, \bott\, \bott\, \upbottan&\bott\, \bott\, \upp\, \upbottan&\bott\, \upp\, \bott\, \upbottan&\upp\, \upp\, \upp\, \upbottan&$8$\\ \\
$T_2$&\upbottan\, \upbottan\,$\lla$\,\upp\, \upp\,$\rra$&\upbottan\,$\lla$\,$\rra$\,\upbottan\,\,\bott\, \bott\, &
\upp\, $\rra$\,\upbottan\, \upbottan\,$\lla$\,\upp&\bott\, \upbottan\,$\lla$\,$\rra$\,\upbottan\, \bott&$16$\\ \\
$T_2$&$\la$\,\upp\, \upp\,$\ra$\,\upbottan\, \upbottan&\bott\, \bott\, \upbottan\,$\la$\,$\ra$\,\upbottan&
\upbottan\,$\lla$\,\upp\, \upp\,$\rra$\,\upbottan& $\rra$\,\upbottan\, \bott\, \bott\, \upbottan\,$\lla$&$16$\\ \\
$T_2$&\upbottan\,$\lla$\,\upp\,$\rra$\,\upbottan\, \bott&\upbottan\, \bott\, \upbottan\,$\lla$\,\upp \,$\rra$&
$\lla$\,\upp\,$\rra$\,\upbottan\, \bott\, \upbottan&\bott\, \upbottan\,$\lla$\,\upp\,$\rra$\,\upbottan&$16$\\ \\
$T_3$&\upbottan\, \upbottan\, \upbottan\, \upp \ & \upbottan\, \upbottan\, \bott\, \upbottan\ &
\upbottan\, \upp\, \upbottan\, \upbottan\  & \bott\, \upbottan\, \upbottan\, \upbottan&$32$\\
\end{tabular}
\end{table}

\section{Preliminaries on rational elliptic surfaces}\label{sect5}
Throughout this section we assume that $f\,: X\to \P^1$ is an elliptic surface satisfying assumption $\A$.

\subsection{Lines on a rational elliptic surface}\label{lines-elliptic-surface}
As is known, the lines in $X$ can be distinguished by their homology classes in $H_2(X)$ as follows.

\begin{proposition}\label{line-root-correspondence}
Let $L\subset X$ be a line.
Then:

$(1)$\ $\la K,L\ra^\perp\subset H_2(X)$ is isomorphic to $E_8$.

$(2)$\ 
There is a natural 1--1 correspondence between the lines in $X$ and elements of $E_8=\la K,L\ra^\perp$ that 
associates with each $v\in E_8$ the line which is uniquely determined
by its 
homology
class
$L_v=L+\frac{v^2}2K+v$.

$(3)$\ 
A  line $L_v$ is real if and only if $v\in\L=E_8\cap\ker(1+\conj_*)$.
\end{proposition}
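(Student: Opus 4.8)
The plan is to transport the statement to the subordinate del Pezzo surface $Y$ through the contraction $\sigma\colon X\to Y$ of the marked line $L$, but to carry out the realization step directly on $X$. First I would record the blow-up relations $K_X=\sigma^*K_Y+[L]$ and the orthogonal splitting $H_2(X)=\sigma^*H_2(Y)\oplus\Z[L]$ with $[L]^2=-1$, from which $K_X^2=0$, $L^2=-1$ and $K_X\cdot L=L^2=-1$. For part $(1)$ the sublattice $\la K,L\ra$ then has Gram matrix $\bigl(\begin{smallmatrix}0&-1\\-1&-1\end{smallmatrix}\bigr)$ of determinant $-1$, hence is unimodular of signature $(1,1)$; since $H_2(X)$ is unimodular of signature $(1,9)$, the complement $\la K,L\ra^\perp$ is unimodular, negative definite, of rank $8$. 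A direct check gives $\sigma^*(K_Y^\perp)\subseteq\la K,L\ra^\perp$ (for $w\in K_Y^\perp$ one has $\sigma^*w\cdot[L]=0$ and $\sigma^*w\cdot K_X=w\cdot K_Y=0$), and as $K_Y^\perp\cong E_8$ is unimodular of the same rank $8$, this inclusion of equal-rank unimodular lattices is forced to be an equality; thus $\la K,L\ra^\perp=\sigma^*(K_Y^\perp)\cong E_8$.

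For the homological half of $(2)$ I would use the splitting $H_2(X)=\la K,L\ra\oplus E_8$ to write a class as $h=\alpha K+\beta L+v$ with $v\in E_8$, and impose $h\cdot K=-1$, $h^2=-1$: the first equation gives $\beta=1$ and the second $\alpha=\tfrac{v^2}2$, so $h=L_v$. This shows $v\mapsto L_v$ is a bijection from $E_8$ onto $\{h:h^2=h\cdot K=-1\}$. Effectivity then follows from Riemann–Roch: $\chi(\mathcal O(L_v))=1+\tfrac12(L_v^2-L_v\cdot K)=1$, while for the nef fiber class $F=-K$ one has $(K-L_v)\cdot F=-1<0$, so $h^2(\mathcal O(L_v))=0$ by Serre duality and hence $h^0(\mathcal O(L_v))\ge1$; thus $L_v$ is represented by an effective divisor $D$.

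The passage from an effective $D$ to an honest line is where Assumption $\A$ enters, and this I expect to be the main point of real content. Since $L_v\cdot F=-L_v\cdot K=1$ and $F$ is nef, $D$ has a single horizontal component, a section $D_0$ (with $D_0^2=-1$ by adjunction), and its vertical part is supported on fibers; because all fibers are reduced and irreducible, that part is $mF$ for some $m\ge0$, so $D=D_0+mF$. Then $-1=D^2=D_0^2+2m=-1+2m$ forces $m=0$, so $D=D_0$ is a line. Uniqueness is immediate, since two distinct irreducible curves in the class $L_v$ would give $D_1\cdot D_2=L_v^2=-1<0$. This finishes $(2)$.

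For $(3)$ the delicate point, which I expect to be the subtlest part of the argument, is the sign of the $\conj_*$–action. As $\conj$ is anti-holomorphic it reverses complex orientations, so $\conj_*[L_v]=-[\conj(L_v)]$ for any line, and in particular $\conj_*$ acts by $-1$ on the class of every real irreducible curve; this yields $\conj_*L=-L$ (the marked line is real) and $\conj_*K=-K$ (writing $K=-[F]$ for a real fiber $F$). By the uniqueness in $(2)$, $L_v$ is real iff $\conj(L_v)=L_v$ iff $[\conj(L_v)]=[L_v]$ iff $(1+\conj_*)[L_v]=0$. Expanding $[L_v]=L+\tfrac{v^2}2K+v$ and using $(1+\conj_*)L=0$ and $(1+\conj_*)K=0$ collapses this to $(1+\conj_*)v=0$, i.e.\ $v\in\ker(1+\conj_*)$; together with $v\in E_8$ this is exactly $v\in\L$. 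Keeping the anti-holomorphic sign straight is essential throughout, since it is precisely this $-1$ that converts the invariance condition $\conj(L_v)=L_v$ into the anti-invariance condition defining $\L$.
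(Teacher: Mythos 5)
Your proposal is correct, and it supplies a genuine proof where the paper essentially gives none: the paper disposes of items (1)--(2) by citing Sch\"utt--Shioda \cite{shioda} and of (3) by the single phrase ``functoriality of the correspondence in (2).'' Your route through the contraction $\sigma\colon X\to Y$ is sound --- the Gram-matrix computation shows $\la K,L\ra$ is unimodular of signature $(1,1)$, so its complement in the unimodular lattice $H_2(X)$ is unimodular of rank $8$, and the equal-rank inclusion $\sigma^*(K_Y^\perp)\subseteq\la K,L\ra^\perp$ of unimodular lattices forces equality with $E_8$ (this neatly sidesteps having to rule out the odd lattice $\Z^8$). Your treatment of (2) is the standard Mordell--Weil-lattice argument that the citation hides, and it has the virtue of making visible exactly where the hypotheses of the section (Assumption $\A$) enter: the Riemann--Roch/Serre-duality step produces an effective divisor in the class $L_v$, and it is precisely the reducedness and irreducibility of all fibers that lets you write the vertical part as $mF$ and kill it by the self-intersection count $-1=-1+2m$; without that hypothesis the class $L_v$ need not be represented by a section. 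Your (3) is in substance the paper's functoriality argument, but with the essential sign bookkeeping done explicitly: since $\conj$ is anti-holomorphic, $\conj_*[C]=-[\conj(C)]$ for complex orientations, whence $\conj_*L=-L$, $\conj_*K=-K$, and the invariance $\conj(L_v)=L_v$ (via uniqueness in (2)) becomes the anti-invariance $v\in\ker(1+\conj_*)$. In short, what the paper buys by citation --- brevity --- your argument repays by showing where each hypothesis is used and why the lattice $\L$ appears with the sign convention it has.
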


\begin{proof} For items (1)--(2) see \cite{shioda}, while (3) follows from functoriality of the correspondence in (2).
\end{proof}

Contraction of a line $L\subset X$ gives a del Pezzo surface $Y=X/L$ of degree $K^2_Y=1$.
The following relation between lines on $Y=X/L$ and  lines on $X$ is straightforward from the definition of lines given in Section \ref{conventions}.

\begin{proposition} Each of the lines in $Y=X/L$ lifts to one and only one line in $X$, which establishes a bijection between the set of lines in $Y$ and the set of lines in $X$ disjoint from $L$. If $X$ and $L$ are real, then this induces a bijection between the set of real lines in $Y$ and the set of real lines in $X$ disjoint from $L$.
\qed\end{proposition}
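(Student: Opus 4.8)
My plan is to realize the correspondence geometrically through the blow-down $\sigma\colon X\to Y$ contracting $L$, reducing everything to the standard behaviour of proper transforms. I write $E=L$ for the exceptional $(-1)$-curve of $\sigma$ and $p=\sigma(L)\in Y$ for its image, the base point of $|{-}K_Y|$, so that $X=\mathrm{Bl}_pY$. For a curve $M\subset Y$ with proper transform $\til M$ and $m=\operatorname{mult}_pM$ I would use the relations $\sigma^*M=\til M+mE$ and $K_X=\sigma^*K_Y+E$ to record
\[
\til M\cdot E=m,\qquad \til M^2=M^2-m^2,\qquad \til M\cdot K_X=M\cdot K_Y+m,
\]
which are the only computations needed.

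The decisive step, and the one place where assumption $\A$ enters, would be to prove that \emph{every} line $M\subset Y$ avoids $p$. I would argue by contradiction: if $p\in M$ then $m=1$ (a line is smooth), so the relations give $\til M^2=-2$ and $\til M\cdot K_X=0$. Since $-K_X=F$ is the fibre class of the relatively minimal, multiple-fibre-free fibration, $\til M\cdot F=0$ means the irreducible curve $\til M$ lies in a fibre; but under assumption $\A$ every fibre is reduced and irreducible with $F^2=0$, so $\til M$ would be that whole fibre and satisfy $\til M^2=0$, contradicting $\til M^2=-2$. Hence $m=0$, and then $\til M$ is disjoint from $L$, isomorphic to $M$ (thus rational and embedded) and satisfies $\til M^2=\til M\cdot K_X=-1$; that is, $\til M$ is a line on $X$ disjoint from $L$, and it is the only line over $M$ because $\sigma$ is an isomorphism over $Y\sm\{p\}$.

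For the reverse direction I would take a line $N\subset X$ with $N\cdot L=0$; since $N$ misses $E$, its image $M=\sigma(N)$ avoids $p$, the restriction $\sigma|_N$ is an isomorphism onto $M$ and $N=\til M$, and the same relations with $m=0$ give $M^2=M\cdot K_Y=-1$ and $M\cong\P^1$, so $M$ is a line on $Y$. The maps $M\mapsto\til M$ and $N\mapsto\sigma(N)$ are mutually inverse, yielding the bijection. As a cross-check I would note that a line $L_v$ on $X$, $v\in\la K,L\ra^\perp\cong E_8$, has $L_v\cdot L=-1-\tfrac{v^2}2$, which vanishes precisely for the roots $v^2=-2$; via Propositions \ref{delPezzo-line-root-correspondence} and \ref{line-root-correspondence} and the pullback identification $K_Y^\perp=\la K_X,L\ra^\perp$ this matches the root description of lines on $Y$.

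Finally, for the real statement I would observe that $\sigma$ is defined over $\R$, with $p$ and $E=L$ real and $\conj_X,\conj_Y$ intertwined by $\sigma$; as the formation of proper transforms commutes with $\conj$, the bijection $M\leftrightarrow\til M$ is $\conj$-equivariant and therefore restricts to $\conj$-invariant (real) lines on either side. I expect the whole difficulty to sit in the middle paragraph: without assumption $\A$ a line on $Y$ could pass through $p$ and lift to a $(-2)$-curve instead of a line, so the hypothesis is exactly what is needed to exclude vertical $(-2)$-curves and keep the correspondence line-to-line.
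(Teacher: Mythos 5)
Your proof is correct. The paper gives no argument for this proposition (it is stated as ``straightforward from the definition of lines'' and closed immediately), so your write-up supplies precisely the missing details and correctly isolates the only nontrivial point: under assumption $\A$ no line on $Y$ passes through the base point $p$ of $\vert\!-\!K_Y\vert$, because its proper transform would be an irreducible curve with square $-2$ and zero intersection with the fiber class, hence a proper component of a fiber --- impossible when all fibers are reduced and irreducible.
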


Note also that the homomorphism $\phi_* : H_2(X)\to H_2(Y)$ induced by the contraction $\phi : X\to Y=X/L$ establishes an isomorphism
between $E_8=\la K,L\ra^\perp$, $K=K_X$, and $E_8= K_Y^\perp\subset H_2(Y)$. Using this canonical isomorphism, we will omit $\phi_*$ and $\phi_*^{-1}$, as 
long
as it does not lead to a confusion.

\subsection{Fibers of a real elliptic fibration}\label{52}
In subsections \ref{52} -- \ref{Di} we restrict ourselves with the connected case $X_\R=\Kl\#p\T^2$, and 
later on
apply the same conventions to the component
$\Kl\#p\T^2$ if $X_\R=\Kl\#p\T^2\+q\SSS^2$.

\begin{lemma}\label{critical}

$(1)$ The  
mapping
$f_\R:X_\R=\Kl\#p\T^2\to \P^1_\R$
 has an even number, $2r\ge 0$, of non-degenerate critical points and
 the same number 
 of 1-nodal singular fibers. The degenerate critical points correspond to cuspidal fibers.

$(2)$
Non-singular fibers $f_\R^{-1}(x)$, $x\in\P^1_\R$, have either 1 or 2 connected components, and 
these numbers alternate as $x$ 
varies passing a
non-degenerate critical value.
Furthermore,
we can cyclically in $\P^1_\R$ enumerate the non-degenerate critical values as $x_1,\dots, x_{2r}$ so that
the  fibers $f_\R^{-1}(x)$ are nonsingular and  have 2 components on the intervals $]x_{2i-1},x_{2i}[$, $1\le i\le r$,
while on the intervals $]x_{2i},x_{2i+1}[$, $1\le i\le r$, they are 1-component and either non-singular or cuspidal. In particular, the number of connected components does not alternate and equal to 1, when 
$x$ varies passing a degenerate critical value.

$(3)$ 
If 
$r=0$, then all fibers are connected.
\end{lemma}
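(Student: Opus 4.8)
The plan is to treat $f_\R$ as a circle-valued Morse-type function on $X_\R$ and to read off every piece of data from the local normal forms of the real singular fibers. First I would identify the critical points of $f_\R$. Since $f\colon X\to\P^1$ is holomorphic, at a real point $x$ the complex differential $df_x$ is either $0$ or surjective, and when it is surjective its restriction to the real tangent space $T_xX_\R$ is again surjective onto $T_{f(x)}\P^1_\R$; hence $x$ is a regular point of $f_\R$. So the critical points of $f_\R$ are exactly the real points where $df$ vanishes, i.e. the real singular points of the singular fibers of $f$. Under assumption $\A$ all fibers are reduced and irreducible, so the only singular fibers are of Kodaira types $I_1$ (one node) and $II$ (one cusp); as $\conj$ preserves the singular locus of a real fiber, the unique node (resp.\ cusp) of a real singular fiber is a real point. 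I would then record the standard real normal forms: a real node is modeled by $f=xy$ or $f=x^2+y^2$, both giving a non-degenerate critical point of $f_\R$ (a saddle, resp.\ a local extremum), whereas a real cusp is modeled by $f=y^2-x^3$, whose only critical point is degenerate. This gives a bijection between non-degenerate critical points and real nodal fibers (one node each, so their numbers coincide) and identifies the degenerate critical points with the cuspidal fibers, which is the critical-point content of (1) and (2).

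Next I would study the function $x\mapsto(\text{number of components of }f_\R^{-1}(x))$ on $\P^1_\R\cong\SSS^1$. Because $X$ carries a real line, i.e.\ a real section, every real fiber is non-empty, and a smooth real fiber, being the real locus of a real elliptic curve, consists of one or two circles. By Ehresmann's theorem this count is locally constant away from the critical values, and the crux is to compute its jumps from the local models above, equivalently from the classical degenerations of a real plane cubic $y^2=\text{(cubic)}$: passing a real node it changes by exactly one (an isolated node is the birth or death of an oval at a local extremum, a crossing node is the merging or splitting of the oval with the one-sided branch at a saddle), while passing a cuspidal value it is unchanged and equal to $1$ on both sides, since the real locus of $y^2=x^3+t$ is a single circle for every real $t$. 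Establishing these two facts precisely—in particular that a saddle always realizes a genuine $1\leftrightarrow 2$ transition for a real elliptic curve, and that a cusp keeps the count at $1$—is the main technical obstacle.

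With the transitions in hand the combinatorics is formal. The count takes values in $\{1,2\}$, its parity flips at each non-degenerate critical value and is preserved at each degenerate one; since the value must return to itself after going once around $\SSS^1$, the number of non-degenerate critical values is even, say $2r$, which proves the evenness in (1). These $2r$ values cut $\SSS^1$ into $2r$ arcs on which the count alternates $2,1,2,1,\dots$, so choosing the cyclic labelling $x_1,\dots,x_{2r}$ so that the $2$-component arcs are the intervals $]x_{2i-1},x_{2i}[$ yields exactly the description in (2), with the cuspidal values necessarily lying in the $1$-component arcs $]x_{2i},x_{2i+1}[$.

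Finally, for (3) I would assume $r=0$, so that there are no non-degenerate critical values and the count is constant on $\SSS^1$. If it were $\equiv 2$, there could be no cuspidal fibers (their fibers are connected) and $f_\R$ would be a smooth $(\SSS^1\sqcup\SSS^1)$-bundle over $\SSS^1$. The real section meets one component of every $2$-component fiber and thereby marks it continuously, so the monodromy cannot interchange the two circles; hence $X_\R$ would be a disjoint union of two circle bundles, contradicting the connectedness of $X_\R=\Kl\#p\T^2$. Therefore the count is $\equiv 1$ and all fibers are connected.
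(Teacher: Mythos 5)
Your proposal is correct, but it takes a genuinely different route from the paper's proof. For the evenness in (1) the paper uses an Euler-characteristic count: real nonsingular and cuspidal fibers contribute $0$ to $\chi(X_\R)$, each real $1$-nodal fiber contributes $\pm1$, and $\chi(\Kl\#p\T^2)=-2p$ is even; you instead deduce evenness from the requirement that the component count return to its initial value after a full turn around $\P^1_\R$, so in your scheme (1) becomes a corollary of the transition analysis in (2) rather than an independent statement. The transition analysis itself is also done differently: the paper argues softly that a neighborhood of a real fiber is orientable (since $w_1(X_\R)$ is dual to the fiber class) and that on an orientable piece a Morse saddle or extremum changes the number of level components by exactly one --- this is precisely how it excludes the one-circle-to-one-circle saddle that can occur on a non-orientable surface, which is the subtlety you rightly flag as the main technical obstacle. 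You resolve the same subtlety by explicit computation in a real Weierstrass family $y^2=x^3+a(t)x+b(t)$; this is legitimate, because assumption $\A$ provides a real section and irreducible fibers, hence a real Weierstrass model, and the classical real-cubic degenerations indeed give a genuine $1\leftrightarrow2$ jump at either type of real node, while near a type II fiber the discriminant $4a^3+27b^2\approx 27\,b'(0)^2t^2$ is positive on both sides, so the count stays equal to $1$ (the paper gets this last point instead from generatrices of the cone meeting the sextic with multiplicity $3$). Part (3) is essentially the paper's argument (connectedness of $X_\R$ plus existence of a section), which you usefully spell out via the section marking one component of each two-component fiber and thereby splitting $X_\R$ into two circle bundles. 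The trade-off: the paper's proof is shorter and purely topological, reusing the $w_1$-duality it needs later anyway; yours is more elementary and self-contained at the level of fibers, and it makes explicit the local models that the paper leaves implicit.
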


\begin{proof}
Since the Euler characteristic of real nonsingular fibers and that of cuspidal fibers  is $0$, while for the singular 1-nodal fibers it is equal to $\pm 1$, the parity of the number of non-degenerate singular fibers follows from the parity of the Euler characteristic
of $X_\R$. Due to assumpation $\A$, all degenerate singular fiber are cuspidal. This implies Claim (1).
The number of connected components of $f_\R^{-1}(x)$ is $\ge 1$,
since the mapping $f_\R $ has a section,
and $\le2=\frac12b_*(\T^2)$ due to Harnack's inequality.
Alternation of the number of connected components when $x$ varies passing a nondegenerate critical value follows from orientability
of a real fiber neighborhood. Thus, to complete proving Claim (2), it remains to notice that cuspidal fibers are arising from generatrices of $Q$ intersecting the sextic with multiplicity 3.
Claim (3) 
holds
due to connectedness of $X_\R$ and existence of 
a section for $f_\R$.
\end{proof}

\begin{lemma}\label{fragments}

$(1)$ The complement $X_\R\sm f_\R^{-1}(x)$ of any connected fiber is a connected orientable surface of genus $p$ with 2 holes.

$(2)$ If the critical values $x_1,\dots,x_{2r}$ are enumerated as in Lemma \ref{critical}, then
there exist precisely $p$ pairs $x_{2i-1},x_{2i}$ with the Morse indices 1 for each value.

$(3)$ For each of the above $p$  pairs $x_{2i-1},x_{2i}$ and every $0<\e<\!\!\!<1$,
the part
$N_i=f_\R^{-1}[x_{2i-1}-\e,x_{2i}+\e]$ of $X_\R$
is a torus with two holes bounded by circles
$c_i=f_\R^{-1}(x_{2i-1}-\e)$ and $d_i=f_\R^{-1}( x_{2i}+\e)$. Each of 
$A_i=f_\R^{-1}[x_{2i}+\e, x_{2i+1}-\e]$
is homeomorphic to a cylinder.
\end{lemma}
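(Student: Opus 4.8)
The plan is to read the topology of $X_\R$ off the $S^1$-valued Morse function $f_\R$, exploiting the alternation of the number of fiber components established in Lemma \ref{critical}. I would cut $\P^1_\R$ into the $2r$ arcs determined by the nondegenerate critical values $x_1,\dots,x_{2r}$: over each ``$2$-component'' arc $]x_{2i-1},x_{2i}[$ sits (the interior of) a piece $N_i$, and over each ``$1$-component'' arc $]x_{2i},x_{2i+1}[$ a piece $A_i$. Since $f_\R$ has no critical values inside a $1$-component arc, $A_i$ is a trivial circle bundle over a closed interval, hence a cylinder; this already gives the last assertion of (3). Everything else follows from two devices applied to these pieces and to the complement $W=X_\R\smallsetminus F_0$: the Morse formula $\chi(X_\R)=\sum_j(-1)^{\lambda_j}$ over the critical points (with indices $\lambda_j$), combined with $\chi(X_\R)=\chi(\Kl\#p\T^2)=-2p$; and a global orientation, which is the delicate ingredient.

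For (1) I would take $F_0$ to be a connected regular fiber. The section $L_\R$ (a real line) is a circle meeting each fiber once, so $|L_\R\cap F_0|=1$; as a separating curve meets every loop an even number of times, this odd intersection forces $F_0$ to be non-separating, whence $W$ is connected. Its boundary consists of the two copies $c,d$ of the connected circle $F_0$, so $W$ has exactly two boundary circles, and $\chi(W)=\chi(X_\R)=-2p$ (cutting along a circle does not change $\chi$). Granting orientability, the classification of surfaces gives $-2p=\chi(W)=2-2g-2$, i.e. genus $g=p$, which is (1).

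The orientability of $W$ (and of each $N_i$) is the main obstacle, precisely because it is what separates the torus-with-holes from the Klein-bottle-with-holes of the same Euler characteristic. I would argue it locally-to-globally. Near each nondegenerate critical point $X_\R$ is the real locus of a complex node, modelled on $\{z^2\pm w^2=t\}$, whose real total space is a planar region in $\R^2$ and in particular orientable; hence the $1$-handles produced at the saddles are attached untwisted. Now $W$ fibers over the interval $\P^1_\R\smallsetminus\{x_0\}$: over each arc it is one or two cylinders, and consecutive arcs are glued along these orientable handle-neighbourhoods. Because the base is an interval rather than the circle, the gluing pattern is linear and acyclic, so an orientation chosen on the first arc extends consistently across every gluing; it is exactly the closing-up around the whole base circle that would obstruct orientability, and cutting along $F_0$ removes that obstruction (for $r=0$, $p=0$ this recovers $\Kl$ cut open into a cylinder). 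The same argument over the subinterval $[x_{2i-1}-\e,x_{2i}+\e]$ shows each $N_i$ is orientable.

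For (2) and (3) I would classify each pair $(x_{2i-1},x_{2i})$ by the Morse indices of its two critical points: the up-crossing at $x_{2i-1}$ raises the component count $1\to2$, so it has index $0$ (an oval is born) or $1$, while the down-crossing at $x_{2i}$ lowers it $2\to1$, so it has index $2$ (an oval dies) or $1$. A pair of type $(0,2)$ is impossible: the newborn oval would either die at $x_{2i}$, sweeping out a closed $2$-sphere component of the connected, non-spherical surface $X_\R$, or else the non-separating fiber $c_i$ would bound a disc --- both absurd. Thus at most one of the indices $0,2$ occurs per pair, so the number $n_0$ of index-$0$ points equals the number $c$ of $(0,1)$-pairs and $n_2$ equals the number $d$ of $(1,2)$-pairs. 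Writing $a$ for the number of $(1,1)$-pairs, the Morse formula yields $-2p=\chi(X_\R)=n_0-n_1+n_2=2(n_0+n_2)-2r=2(c+d)-2r$, so $c+d=r-p$ and $a=r-(c+d)=p$, which is (2). Finally, for a $(1,1)$-pair $N_i$ is obtained from the cylinder over $c_i$ by splitting and then re-merging, so it is connected, has boundary $c_i\sqcup d_i$, and is orientable by the argument above; since $\chi(N_i)=(-1)^1+(-1)^1=-2$, the classification of surfaces forces $N_i$ to be a torus with two holes, completing (3).
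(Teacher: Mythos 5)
Your reduction of (2) and (3) to Euler--characteristic bookkeeping, and your exclusion of $(0,2)$-pairs via the section, are sound, and your connectedness argument for (1) is the same as the paper's. But the step you yourself flag as ``the main obstacle'' --- orientability of $W$ and of the fragments $N_i$ --- contains a genuine gap, and it is exactly the step where the paper invokes an idea you are missing. Your claim that ``the gluing pattern is linear and acyclic'' is false: over a two-component arc $]x_{2i-1},x_{2i}[$ the preimage consists of \emph{two} cylinders, so the two saddle pieces bounding it are glued to one another along \emph{two} circles, and the decomposition graph of $W$ contains a cycle for every such pair --- these cycles are precisely where the genus is created. Gluing two pairs of pants along two circles produces either a torus with two holes or a Klein bottle with two holes, depending on the identifications, and \emph{both} outcomes carry a Morse function to an interval with the identical level-set combinatorics ($1\to2\to1$ connected components). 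Hence local orientability of the nodal model $\{z^2\pm w^2=t\}$ plus the fibered structure over an interval cannot decide the issue; whether a $1$-handle is ``attached untwisted'' is not a local notion. The paper settles this with a global algebro-geometric input: the fibers are anticanonical, so $w_1(X_\R)$ is Poincar\'e dual to $[F_\R]$ mod $2$; a loop lying in $W$ (resp.\ in $N_i$) is disjoint from a fiber, hence has zero mod $2$ intersection with the fiber class, hence preserves orientation --- so $W$ and each $N_i$ are orientable. Without this (or an equivalent) input, your argument does not exclude Klein-bottle-with-two-holes fragments, i.e.\ it proves neither (1) nor the torus statement in (3).

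A secondary inaccuracy: you assert that $f_\R$ has no critical values inside a one-component arc, so that $A_i$ is a trivial circle bundle. By Lemma \ref{critical}, the one-component arcs may contain \emph{degenerate} critical values (cuspidal fibers); the same oversight makes your use of the Morse formula $\chi(X_\R)=\sum_j(-1)^{\lambda_j}$ formally illegitimate, since $f_\R$ need not be Morse. Both points are repairable --- fibers remain connected through a cuspidal fiber and such a fiber has a cylindrical neighborhood, which is exactly how the paper argues cylindricity of $A_i$ --- but they must be addressed.
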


\begin{proof}
Connectedness in (1) is due to connectedness of $X_\R$ and existence of a section 
for $f_\R$,
while orientability is due to that $w_1(X_\R)$ is dual to a fiber.

It follows from Lemma \ref{critical}(1-2) that
a fragment $f_\R^{-1}[x_{2i-1}-\e,x_{2i}+\e]\subset X_\R$ is
a torus with 2 holes
if both $x_{2i-1}$ and $x_{2i}$ have index 1. Moreover, otherwise the indices differ by 1
and form a pair of critical points which can be mutually canceled in the sense of Morse theory.
This implies (2) and the first part (3). The cylindricity property follows from the above cancelation rule and from connectedness of the fibers in a neighborhood of cuspidal ones (see Lemma \ref{critical}(2)).
\end{proof}

\subsection{A system of cuts}\label{cut-section}
In accordance with Lemma \ref{fragments}, let
us  cut $X_\R$ along a 1-component fiber $c_1$
to obtain a compact surface $N$ (a compactification of $X_\R\sm c_1$),
which is connected, oriented and
projects to the interval $I_N$ obtained by cutting $\P^1_\R$ at a point.
Next, we cut $N$ along one-component real fibers
$c_i$, $d_i$ to split it into subsurfaces $N_1, A_1, \dots, N_p, A_p$ called {\it fragments} of $N$ (see Fig.\,\ref{cutting}).
The projection $f|_{N_i}$
has precisely two critical points separated by a 2-component fiber $a_i\cup b_i$.
The fragments $A_i$ are cylindrical, but the projection $f|_{A_i}$ may be not a fibration.

\begin{figure}[h!]
\caption{A system of cuts: $N_i$, $A_i$, $c_i$, $d_i$, $a_i$, $b_i$}\label{cutting}
\vskip2mm
\includegraphics[height=2.4cm]{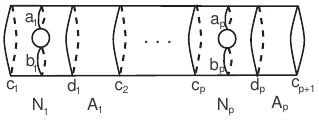}
\end{figure}

We pick one of the two orientations of $N$ for all further considerations, and consider right-handed
Dehn twists $t_x\in\Map(N_i)\subset\Map(N)$
about curves $x=c_{i},d_i,a_i,b_i$ (for injectivity of $\Map(N_i)\to\Map(N)$ see, for example,  \cite[Theorem 3.18]{farb}).
Here,
$t_{c_{i}}$ and $t_{d_i}$ are the {\it boundary Dehn twists},
that is the Dehn twists about curves obtained by a shift of $c_i, d_i$ inside $N_i$.

\begin{lemma}\label{cuts}
For $p\ge0$, 
the Dehn twists $t_{c_i}, t_{a_i}, t_{b_i}\in \Map(N)$, $1\le i\le p$ and $t_{c_{p+1}}$ 
form a basis of
a free abelian subgroup of rank $3p+1$ in  $\Map(N)$.
The image of this group in $\Map(X_\R)$ is
obtained by adding one relation $t_{c_1}t_{c_{p+1}}=1$.
In particular, for $k>0$ this image is a free abelian group of rank $3p$, while
for $p=0$ this image is $\Z/2$ generated by the image of
$t_{c_1}$.
\end{lemma}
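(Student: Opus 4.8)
The plan is to treat the $3p+1$ curves $c_1,\dots,c_{p+1},a_1,b_1,\dots,a_p,b_p$ as a multicurve in $N$ and to invoke the standard fact that Dehn twists about a collection of pairwise disjoint, pairwise non-isotopic, essential simple closed curves generate a free abelian group of rank equal to the number of curves (see \cite{farb}). First I would verify the geometric hypotheses in $N$. Disjointness is immediate: curves $c_i$ with distinct indices lie over distinct points of the base $\P^1_\R$, the pair $a_i,b_i$ are the two components of a single fibre, and the $c_i$ separate the fragments. For non-isotopy I would use that each fragment $N_i$ is a torus with two holes, obtained by gluing two pairs of pants along $a_i\cup b_i$; this shows $a_i\not\simeq b_i$ (they differ in $H_1(N_i)$) and that neither is isotopic to a boundary curve. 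Since the cylinder $A_i$ makes $d_i$ isotopic to $c_{i+1}$, the surface $N$ is the linear chain $c_1\!-\!N_1\!-\!c_2\!-\!\cdots\!-\!N_p\!-\!c_{p+1}$ of genus-one pieces (which also explains why only the $c_i$, and not the $d_i$, appear as generators). Hence the separating curves $c_2,\dots,c_p$ cut off subsurfaces of distinct genus, the two boundary-parallel curves $c_1,c_{p+1}$ are distinct boundary components, and all $3p+1$ curves are essential and pairwise non-isotopic. The multitwist theorem then yields the free abelian subgroup of rank $3p+1$ claimed in $\Map(N)$.

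Next I would analyse the gluing $X_\R=N/(c_1\sim c_{p+1})$ reconstructing $X_\R$. The crucial point is orientation: $N$ is orientable whereas $X_\R=\Kl\#p\T^2$ is not, so the identification of the two boundary circles \emph{preserves} the boundary orientations induced by a fixed orientation of $N$. Writing the collars of $c_1$ and $c_{p+1}$ as the two halves of an annular neighbourhood of the resulting curve $c_1\subset X_\R$, the convention ``outward normal first'' shows that the two collar orientations are opposite as orientations of this annulus. Consequently the right-handed boundary twists $t_{c_1}$ and $t_{c_{p+1}}$ become Dehn twists of \emph{opposite} handedness about one and the same curve, which yields the relation $t_{c_1}t_{c_{p+1}}=1$ in $\Map(X_\R)$. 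This single relation cuts the rank down by one, producing a surjection from a free abelian group of rank $3p$ onto the image of our subgroup.

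It remains to show that $t_{c_1}t_{c_{p+1}}=1$ is the \emph{only} relation, which I expect to be the main obstacle. Rather than computing the kernel of $\Map(N)\to\Map(X_\R)$ abstractly, I would bound the image from below by applying the multitwist theorem a second time, now inside $X_\R$. There the only coincidence among our curves is $c_{p+1}\simeq c_1$, leaving the $3p$ curves $c_1,\dots,c_p,a_1,b_1,\dots,a_p,b_p$; these are two-sided (each is a fibre or a fibre component, with orientable annular neighbourhood), essential, and pairwise non-isotopic. Here the delicate verification is that two distinct interior fibres $c_i,c_j$ cobound in $X_\R$ a subsurface of positive genus and so are not isotopic, and that the $a_i,b_i$ are distinguished by which handle they occupy and by their homology classes. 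Granting these geometric facts and the analogue of the multitwist theorem for two-sided curves on a non-orientable surface, the image is free abelian of rank $3p$. Since a surjection $\Z^{3p}\twoheadrightarrow\Z^{3p}$ between free abelian groups of equal finite rank is an isomorphism (Hopficity of finitely generated abelian groups), the surjection of the previous paragraph is an isomorphism, so $t_{c_1}t_{c_{p+1}}=1$ is indeed the only relation and the image is $\Z^{3p}$ for $p>0$.

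Finally I would dispose of the degenerate case $p=0$, where $X_\R=\Kl$ and $N$ is an annulus. Then the only generator is $t_{c_1}=t_{c_{p+1}}$, the Dehn twist about the core, which generates $\Z$ in $\Map(N)$ (rank $3p+1=1$), and the relation $t_{c_1}t_{c_{p+1}}=1$ becomes $t_{c_1}^2=1$. Hence the image in $\Map(\Kl)$ is the cyclic group $\Z/2$ generated by the image of $t_{c_1}$, as asserted.
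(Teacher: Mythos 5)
Your treatment of $\Map(N)$ coincides with the paper's (which simply cites \cite[Lemma 3.17]{farb}), and your orientation analysis of the gluing is correct and welcome: since $X_\R$ is non-orientable, the identification preserves the induced boundary orientations, so $t_{c_1}$ and $t_{c_{p+1}}$ become opposite-handed twists about the same curve, giving $t_{c_1}t_{c_{p+1}}=1$. The genuine gap is in the step ``no further relations''. The \emph{analogue of the multitwist theorem for two-sided curves on a non-orientable surface} that you ``grant'' is \textbf{false} under the hypotheses you actually verify (two-sided, essential, disjoint, pairwise non-isotopic): on the closed Klein bottle the Dehn twist about the non-separating two-sided curve --- a curve which is two-sided, essential, and bounds neither a disc nor a M\"obius band --- has order $2$, since $\Map(\Kl)\cong\Z/2\oplus\Z/2$ (Lickorish). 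This is not a peripheral pathology: it is precisely the $p=0$ case of the lemma you are proving, i.e.\ exactly the torsion phenomenon created by non-orientable gluing that the lemma records. So for $p\ge1$ the fact you need --- that the $3p$ twists generate $\Z^{3p}$ in $\Map(\Kl\#p\T^2)$ --- is true but cannot be invoked as a routine black box. One either cites \cite[Theorem 3.6]{stukow}, which is the paper's route (it computes the kernel of $\Map(N)\to\Map(X_\R)$ to be exactly $\la t_{c_1}t_{c_{p+1}}\ra$, so no rank-counting is needed at all), or one argues honestly, e.g.\ by passing to the orientation double cover $\til X$ (closed orientable, genus $2p+1$): a twist about a two-sided curve $c$ lifts to $t_{\til c_1}t_{\til c_2}^{-1}$ --- opposite handedness, because the deck transformation reverses orientation --- and one then checks that the $6p$ lifted curves are pairwise non-isotopic and applies \cite[Lemma 3.17]{farb} upstairs. (On the Klein bottle this argument collapses exactly because the two lifts of the fiber are isotopic in the torus, which is why the twist has finite order there; your proposal never rules out this kind of degeneration.)

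There is a second, related gap at $p=0$: you derive $t_{c_1}^2=1$ and conclude that the image is $\Z/2$, but you never show $t_{c_1}\neq 1$ in $\Map(\Kl)$, and your Hopficity/rank-counting device is unavailable here since the twist genuinely has finite order. Nontriviality needs a separate input: Lickorish's computation of $\Map(\Kl)$, or again \cite[Theorem 3.6]{stukow} (the kernel is $\la t_{c_1}^2\ra$, not $\la t_{c_1}\ra$), or the elementary observation that $t_{c_1}$ acts nontrivially on $H_1(\Kl)$, sending the class $\mu$ of a one-sided curve meeting $c_1$ once to $\mu+[c_1]\neq\mu$.
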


\begin{proof} This is a straightforward consequence of \cite[Lemma 3.17]{farb} in what concerns $\Map(N)$ and \cite[Theorem 3.6]{stukow} in what concerns $\Map(X_\R)$.
\end{proof}

\subsection{Fiberwise mapping class groups}\label{fiber-groups}
If $f_F: F\to I_F$ is a fragment
of 
$f:X_\R\to \P^1_\R$ 
fibered over some
segment $I_F=[x_-,x_+]\subset I_N$  (like $N_i,A_i$, and $N$ itself),
we denote by $\til G(F)$ the group 
formed by fiberwise diffeomorphisms $F\to F$ that act as a
group translation
in each fiber of $f_F: F\to I_F$.
The subgroup $G(F)\subset\til G(F)$ is formed by the diffeomorphisms whose restriction to $\partial F$ is
the identity.
The image of the natural 
homomorphism, $\pi_0(G(F))\to\Map(F)$,
will be denoted by $\Maps(F)$
and the 
image in $\Maps(F)$ of elements $g\in G(F)$
by $[g]\in\Maps(F)$.

We include the whole fibration $f:X_\R\to \P^1_\R$  in the list of fragments, and apply to it the same definitions and notation as above
(with replacement $F$ by $X_\R$).

\begin{proposition}\label{commutativity}
For any fragment $F$, groups $\til G(F), G(F)$ and  $\Maps(F)$ are abelian.
\end{proposition}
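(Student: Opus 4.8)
The plan is to reduce the entire statement to the single elementary fact that the translation group of a commutative Lie group is commutative, applied fiber by fiber. First I would fix, in each fiber, the identity element to be the point at which the distinguished section (the real line $L_\R$) meets it. With this choice every nonsingular real fiber $F_t=f_F^{-1}(t)$ becomes a compact abelian Lie group, being the real locus of a nonsingular real elliptic curve: it is isomorphic to $\SSS^1$ when $F_t$ is connected and to $\SSS^1\times\Z/2$ when it has two components (see Lemma \ref{critical}). In either case the group of group-translations of $F_t$ is isomorphic to $F_t$ acting on itself and is therefore abelian; I note in particular that a translation by an element of the non-identity component merely interchanges the two circles of a two-component fiber, which does not disturb commutativity.

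Next I would argue that $\til G(F)$ is abelian. Given $g,h\in\til G(F)$, both are fiberwise, so $g|_{F_t}$ and $h|_{F_t}$ are genuine self-maps of $F_t$, and by the definition of $\til G(F)$ they are translations of the abelian group $F_t$ for every $t$ with $F_t$ nonsingular. Hence $g|_{F_t}\circ h|_{F_t}=h|_{F_t}\circ g|_{F_t}$ on each such fiber. Since the singular fibers project onto the finitely many critical values of $f_F$, the union of the nonsingular fibers is dense in $F$; the two continuous diffeomorphisms $g\circ h$ and $h\circ g$ therefore agree on a dense subset and consequently coincide. This shows $\til G(F)$ is abelian, and as $G(F)$ is by definition a subgroup of $\til G(F)$, it is abelian as well.

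Finally, for $\Maps(F)$ I would use that it is, by definition, the image of $\pi_0(G(F))$ under the natural homomorphism $\pi_0(G(F))\to\Map(F)$. Since $G(F)$ is an abelian topological group, its group $\pi_0(G(F))$ of connected components is abelian, and any homomorphic image of an abelian group is abelian; hence $\Maps(F)$ is abelian. The only point requiring genuine care — and the one I expect to be the main, though mild, obstacle — is the justification of the fiberwise argument across the nodal fibers contained in the fragments $N_i$: one must confirm that a fiberwise group-translation is still a well-defined continuous self-map of $F$ over those singular fibers, so that the density-plus-continuity step above is legitimate. This is precisely what the finiteness of the critical set of $f_F$ supplies.
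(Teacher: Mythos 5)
Your proof is correct and takes essentially the same approach as the paper, whose entire argument is the one-line observation that group shifts commute; your fiber-by-fiber commutation, the density-plus-continuity step across the finitely many singular fibers, and the passage from the abelian topological group $G(F)$ to $\pi_0(G(F))$ and its image $\Maps(F)$ are precisely the routine details that observation leaves implicit. Nothing is missing.
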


\begin{proof} It is an immediate consequence of commutativity of group shifts.
\end{proof}

\begin{lemma}\label{torus-group}
For $i=1,\dots, p$,
$\Maps(A_i)\cong\Z$ with a generator $t_{c_{i+1}}$.
\end{lemma}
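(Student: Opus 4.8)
The plan is to reduce the whole statement to the mapping class group of an annulus. By Lemma \ref{fragments}(3) the fragment $A_i$ is a cylinder whose two boundary circles are $d_i$ and $c_{i+1}=f_\R^{-1}(x_{2i+1}-\e)$; hence $\Map(A_i)$, the group of isotopy classes of diffeomorphisms fixing $\partial A_i$ pointwise, is infinite cyclic and is generated by the Dehn twist about a core circle, which in an annulus is isotopic to a curve parallel to $c_{i+1}$ (see \cite{farb}). Thus $\Map(A_i)=\langle t_{c_{i+1}}\rangle\cong\Z$. Since $\Maps(A_i)$ is by its very definition (Subsection \ref{fiber-groups}) the \emph{image} of $\pi_0(G(A_i))$ in $\Map(A_i)$, it is automatically a subgroup of $\Z$, hence trivial or infinite cyclic. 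Therefore it suffices to exhibit one element $g\in G(A_i)$ whose class $[g]\in\Map(A_i)$ equals $t_{c_{i+1}}$: this forces $\Maps(A_i)$ to contain a generator of $\Map(A_i)$ and so to equal $\langle t_{c_{i+1}}\rangle\cong\Z$.

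To construct such a $g$ I would confine the twisting to a thin collar of the boundary circle $c_{i+1}$. Recall from Lemma \ref{critical}(2) that over the interval $]x_{2i},x_{2i+1}[$ every fiber is $1$-component, hence a single circle carrying the group law of the connected real locus of a real elliptic curve, so isomorphic to $\R/\Z$ as a real Lie group. Choosing $\e$ generic, I may assume that a whole collar $K=f_\R^{-1}[x_{2i+1}-\e-\delta,\,x_{2i+1}-\e]$ consists of such non-singular fibers. Over the contractible base of $K$ the corresponding bundle of circle-groups is trivial, and using the zero section supplied by $L_\R$ (which meets each such single-circle fiber) I fiberwise identify $K\cong(\R/\Z)\times[0,1]$ compatibly with the group law. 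Now set $g$ equal to the identity outside $K$ and equal to the fiberwise translation $(\theta,t)\mapsto(\theta+s(t),t)$ on $K$, where $s\colon[0,1]\to\R/\Z$ is a smooth loop based at $0$ that winds once around $\R/\Z$. Since $s$ vanishes at both ends of $K$, the map $g$ is a diffeomorphism of $A_i$ that acts as a group-shift in every fiber (the identity shift being allowed) and restricts to the identity on $\partial A_i=d_i\sqcup c_{i+1}$; thus $g\in G(A_i)$. Its restriction to $K$ is exactly the standard model of a Dehn twist about a core circle of the annulus, so $[g]=t_{c_{i+1}}^{\pm1}$; orienting the winding of $s$ according to the right-handed convention of Subsection \ref{cut-section} gives $[g]=t_{c_{i+1}}$, which completes the argument.

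The step I expect to be delicate is precisely this confinement of the twist to non-singular fibers. The remark in Subsection \ref{cut-section} warns that $f_\R|_{A_i}$ need not be a fibration, i.e.\ $A_i$ may contain cuspidal fibers, over which the fiber group degenerates to an additive group $\mathbb{G}_a$ and the naive ``shift by $s(t)\in\R/\Z$'' is no longer available as a genuine translation of the whole circle. The purpose of localizing the entire winding inside the collar $K$, which I can always arrange to avoid the finitely many cuspidal values by a generic choice of $\e$ and $\delta$, is exactly to never have to translate across a cuspidal fiber, so that $g$ is a bona fide element of $G(A_i)$ and the annulus computation applies verbatim.
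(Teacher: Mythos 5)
Your proposal is correct and takes essentially the same route as the paper: its one-line proof likewise identifies $\Map(A_i)\cong\Map(S^1\times[0,1])=\Z$ and realizes the boundary Dehn twist $t_{c_{i+1}}$ by a fiberwise group-shift diffeomorphism identical on $\partial A_i$. Your collar construction (supported away from the possible cuspidal fibers) simply makes explicit the realization the paper asserts.
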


\begin{proof}
It follows from 
realizability of boundary
Dehn twists  $t_{c_{i+1}}$ in $\Map(A_i)\cong\Map(S^1\times[0,1])=\Z$
by fiberwise 
group-shift diffeomorphisms identical on $\partial A_i$.
\end{proof}

Given two  smooth
sections $\ll_i:I_F\to F$, $i=1,2$, 
let $\gg{\ll_1}{\ll_2}\in \til G(F)$
denote the uniquely defined element of $\til G(F)$ that sends
$\ll_1$ to $\ll_2$.
If we assume in addition that $\ll_1$  coincides with $\ll_2$ at the endpoints of $I_F$, 
then 
$\gg{\ll_1}{\ll_2}\in G(F)$.

A smooth section $\lambda_0:I_F\to F$ being fixed, we define $\Sec(F,\l_0)$
to be the space of smooth sections
$\lambda:I_F\to F$ satisfying the boundary condition $\lambda(x_\pm)=\ll_0(x_\pm)$. By $\Sec(X_\R)$ we denote the space
of all smooth sections $\lambda : \P^1_\R\to X_\R$. 

\begin{lemma}\label{section-action}
For any fixed smooth section $\lambda_0:I_F\to F$, the mapping $\Sec(F,\l_0)\to G(F)$
assigning to $\lambda\in\Sec(F,\l_0)$
the diffeomorphism $\gg{\lambda_0}{\lambda}\in G(F)$, is a homeomorphism 
with respect to the natural topology.
 This defines a natural epimorphism from $\pi_0(\Sec(F,\l_0))=\pi_0(G(F))$ to $\Map^s(F)$,
 which in its turn induces a bijection between $\Map^s(F)$ and the set of equivalence classes of smooth sections
 $\lambda\in \Sec(F,\l_0)$ under ambient isotopies whose restriction to $\partial F$ is the identity.
\end{lemma}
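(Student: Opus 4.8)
The plan is to first upgrade the assignment $\lambda\mapsto\gg{\lambda_0}{\lambda}$ to a homeomorphism and then read off the bijection by passing to $\pi_0$. For each $x\in I_F$ the fiber $f_F^{-1}(x)$ carries the abelian group structure used to define $G(F)$, in which two points are joined by a unique translation; applying this pointwise to $(\lambda_0(x),\lambda(x))$ produces the fiberwise translation $\gg{\lambda_0}{\lambda}$. It is smooth because $\lambda_0,\lambda$ are smooth and the fiberwise group law varies smoothly, and it restricts to the identity on $\partial F$ precisely because $\lambda(x_\pm)=\lambda_0(x_\pm)$, so $\gg{\lambda_0}{\lambda}\in G(F)$. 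The inverse assignment is $g\mapsto g\circ\lambda_0$, which lands in $\Sec(F,\l_0)$ since $g$ fixes $\partial F$; that the two are mutually inverse uses only that elements of $G(F)$ are fiberwise translations. Both maps are continuous for the $C^\infty$-topologies, hence $\lambda\mapsto\gg{\lambda_0}{\lambda}$ is a homeomorphism and $G(F)$ acts simply transitively on $\Sec(F,\l_0)$. Taking path components gives $\pi_0(\Sec(F,\l_0))=\pi_0(G(F))$, and composing with the tautological map $\pi_0(G(F))\to\Map(F)$, whose image is $\Maps(F)$ by definition, produces the epimorphism $q\colon\pi_0(\Sec(F,\l_0))\to\Maps(F)$, $[\lambda]\mapsto[\gg{\lambda_0}{\lambda}]$.

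It remains to see that $q$ descends to a bijection on ambient-isotopy classes of sections. Surjectivity is clear, since every $c\in\Maps(F)$ equals $q([g\circ\lambda_0])$ for a representative $g\in G(F)$. For injectivity (the \emph{easy} direction) suppose $g_i=\gg{\lambda_0}{\lambda_i}$ satisfy $[g_1]=[g_2]$ in $\Map(F)$, witnessed by an isotopy $H_t$ rel $\partial F$ from $g_1$ to $g_2$; then $\phi_t:=H_t\circ g_1^{-1}$ is an ambient isotopy rel $\partial F$ with $\phi_0=\id$ and $\phi_1(\lambda_1)=g_2\lambda_0=\lambda_2$, so $\lambda_1$ and $\lambda_2$ are ambient isotopic. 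Thus distinct ambient-isotopy classes have distinct $q$-images.

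The hard part is well-definedness: ambient-isotopic sections must have equal $q$-image, equivalently $\Maps(F)$ acts freely on ambient-isotopy classes of sections. Given an ambient isotopy $\phi_t$ rel $\partial F$ with $\phi_1(\lambda_1)=\lambda_2$, one has $[\phi_1]=1$ in $\Map(F)$, whence $\psi:=g_2^{-1}\phi_1 g_1$ fixes $\partial F$ and the arc $\lambda_0(I_F)$ pointwise and satisfies $[\psi]=[g_2]^{-1}[g_1]$ in $\Maps(F)$; so everything reduces to showing that a class of $\Maps(F)$ represented by a diffeomorphism fixing $\lambda_0$ is trivial. I would prove this by detecting $\Maps(F)$ through an ambient-isotopy invariant of the arc $g(\lambda_0)$, namely its homotopy class rel endpoints: since homotopic embedded arcs rel endpoints in a surface are ambient isotopic, it suffices to show that $[g]\mapsto[g(\lambda_0)]\in\pi_1(F,\partial F)$ is injective. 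This last step is the genuine obstacle, because an ambient isotopy need not be fiber-preserving, so one must recover the fiberwise translation amount from a possibly wild ambient motion; I would verify it fragment by fragment on the explicit models — the cylinders $A_i$ (annulus winding number) and the tori-with-two-holes $N_i$ — using the free abelian generators $t_{c_i},t_{a_i},t_{b_i}$ of $\Maps(F)$ furnished by Lemmas \ref{cuts} and \ref{torus-group}, each of which carries $\lambda_0$ to an arc in a strictly different homotopy class rel endpoints.
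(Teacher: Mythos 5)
Your first two paragraphs are correct and coincide with the paper's entire proof: the simply transitive action of $G(F)$ on $\Sec(F,\l_0)$ (with inverse $g\mapsto g\circ\l_0$), the tautological epimorphism onto $\Maps(F)$, and the implication that $[g_1]=[g_2]$ in $\Map(F)$ forces $\l_1,\l_2$ to be ambient isotopic rel $\del F$, via $\phi_t=H_t\circ g_1^{-1}$. That last argument is, nearly word for word, what the authors write; they also label it ``injectivity'', and it is all they prove explicitly.

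The gap is your third paragraph, as you yourself acknowledge. You are right that the asserted bijection also needs the converse implication (ambient isotopic sections have the same image in $\Maps(F)$), and your reduction --- it suffices that no nontrivial class of $\Maps(F)$ has a representative fixing the arc $\l_0$ --- is sound. But the verification you sketch would not go through as described. First, your generating set is wrong: by Lemma \ref{simulteneous} the twists $t_{a_i}$, $t_{b_i}$ do not lie in $\Maps(N_i)$ separately; by Corollary \ref{handle-group} a basis is $t_{c_i}$, $s_i=t_{a_i}t_{b_i}$, $\D_i$ (Lemma \ref{cuts} describes a subgroup of $\Map(N)$, not of $\Maps(N)$), and the delicate generator is precisely $\D_i$, the one that is not a product of twists about fibers. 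Second, checking that each generator moves $\l_0$ off its homotopy class proves nothing about a general element: $g\mapsto[g(\l_0)]$ is not a homomorphism, so cancellation in products must be excluded; what one actually needs --- and what does hold for a fragment fibered over an interval --- is injectivity of $g\mapsto[g(\l_0)]-[\l_0]\in H_1(F)$ on all of $\Maps(F)$, computed on $F$ as a whole, since an ambient isotopy of a union $F=F_1\cup F_2$ need not preserve the pieces, so fragment-by-fragment checks do not assemble. Third, and most seriously, the case in which the paper invokes the bijection (Theorems \ref{lines-vs-sections} and \ref{section-criterion-X}) is the closed surface $F=X_\R$, which your two models never reach: sections are then loops, $\pi_1(F,\del F)$ is not defined, and homology genuinely fails there --- for $X_\R=\Kl\#p\T^2$ with $p\ge1$ the class $t_{c_1}^2$ is nontrivial in $\Maps(X_\R)$ by Proposition \ref{maps-spherical}, yet it acts identically on $H_1(X_\R)$ (it shifts a section class by $2[F_\R]=0$), so separating it from the identity requires conjugacy classes in $\pi_1(X_\R)$ or a lift to the orientation double covering. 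In fairness, the paper's own proof is silent on this entire direction --- it proves only your easy implication and calls the rest straightforward --- so you have put your finger on a real soft spot in the published argument; but your proposal flags the gap rather than closes it.
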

\begin{proof} Homeomorphism property and surjectivity  of the induced map are straightforward. To prove injectivity, assume that two diffeomorphisms,
$g_1=\gg{\lambda_0}{\lambda_1}$ and $g_2=\gg{\lambda_0}{\lambda_2}$, represent the same element of $\Map^s(F)$. Then, $g_2 g_1^{-1}$ represent the 
identity
element, which means
that there exists a continuous family of diffeomorphisms $h_t : F\to F, t\in [0,1],$ such that $h_0=\id$ and $h_1=g_2 g_1^{-1}$. This gives an isotopy between $h_0(\lambda_1)=\lambda_1$
and $h_1(\lambda_1)=\lambda_2$.
\end{proof}

\begin{lemma}\label{simulteneous}
Assume that $F\to I_F$ is a connected 
fragment, and one of its boundary fibers, $\del_- F$ or $\del_+F$, has two connected components, $a$ and $b$.
Then the mapping class
$t_a^mt_b^n$, $m,n\in\Z$, belongs to $\Maps(F)$ if and only if $m=n$.
\end{lemma}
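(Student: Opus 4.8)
The plan is to analyse a group shift through its restriction to a collar of the two-component boundary fibre, proving the two implications separately. Write $\del_+F=a\sqcup b$ (the case of $\del_-F$ being symmetric). By definition an element $g\in G(F)$ is a fibrewise translation $g_x$ by an element $s(x)$ of the group of the fibre over $x$, with $s(x)$ the identity for $x\in\del I_F$. Over the collar of $\del_+F$, where the fibres are two-component, the fibre group is $\R/\Z\times\Z/2$ with identity component the circle $\R/\Z$; since $s$ is continuous and is the identity at the endpoint, over this collar $s(x)$ stays in $\R/\Z$ and hence rotates the two boundary circles $a$ and $b$ simultaneously, by one and the same amount in their group parametrisations. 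This observation — that a single shift function $s$ governs the twisting along the $a$-collar and along the $b$-collar identically — is the heart of the argument.

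For the \emph{if} direction I would realise $t_a^kt_b^k$ by an explicit shift. Choose $\e>0$ so small that all fibres over $[x_+-\e,x_+]$ are two-component, and take $s$ supported in $[x_+-\e,x_+]$, equal to the identity at both ends of this subinterval and winding $k$ times around $\R/\Z$ in between; extend $s$ by the identity over the rest of $I_F$. The resulting $g\in G(F)$ is the identity outside the two disjoint cylinders over $[x_+-\e,x_+]$ swept out by the $a$- and $b$-circles, and on each cylinder it is a full $k$-fold rotation relative to the boundary, hence represents the core twist to the $k$-th power. As these cores are isotopic to $a$ and $b$, we get $[g]=t_a^kt_b^k$, so $t_a^kt_b^k\in\Maps(F)$ for every $k$.

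For the \emph{only if} direction I would detect the twisting at each boundary circle by a properly embedded, fibre-transverse arc. Choose an arc $\alpha$ running to the $a$-component of $\del_+F$ and avoiding a collar of $b$, and an arc $\beta$ running to the $b$-component and avoiding a collar of $a$; both exist since $F$ is connected and $a,b$ are disjoint. Suppose $t_a^mt_b^n=[g]$ for a shift $g$. Comparing images of $\alpha$: on one side $t_a^mt_b^n(\alpha)=t_a^m(\alpha)$, because $t_b$ is supported away from $\alpha$, and this differs from $\alpha$ by $m$ twists along $a$; on the other side $g(\alpha)$ differs from $\alpha$ by the winding of $s$ over the $a$-collar. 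Hence that winding equals $m$, and the same comparison with $\beta$ shows the winding of $s$ over the $b$-collar equals $n$. By the observation above these two windings coincide, so $m=n$. Together with the previous paragraph this gives $t_a^mt_b^n\in\Maps(F)\iff m=n$.

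The step that needs the most care, and which I regard as the main obstacle, is the \emph{only if} argument: one must make precise that the ``twisting of $[g]$ at the boundary circle $a$'' is a genuine isotopy invariant equal to $m$, read off from the germ of $g$ along the collar, and similarly at $b$. I would secure this by invoking Lemma \ref{section-action} to pass from mapping classes to isotopy classes of sections rel $\partial F$: the windings of $s$ over the $a$- and $b$-collars then become winding numbers of the corresponding section-arcs, which are patently isotopy invariants and are patently equal because both are produced by the single shift function $s$. The auxiliary facts — continuity forcing $s$ into the identity component $\R/\Z$, and the identification of a full fibrewise rotation of a cylinder with the boundary Dehn twist — are standard and I would dispatch them briefly.
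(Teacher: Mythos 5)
Your overall strategy (realize the simultaneous twist by a shift supported in a boundary collar; detect the exponents $m,n$ by arcs) is plausible, but both directions of your argument rest on an assertion you never prove, and that assertion is in fact the whole content of the lemma. You claim that since $s(x)$ lies in the identity component $\R/\Z$ of the fibre group, it rotates $a$ and $b$ ``by one and the same amount in their group parametrisations,'' and you pass from this to the conclusion that the resulting mapping class is $t_a^kt_b^k$ rather than $t_a^kt_b^{-k}$. But $t_a$ and $t_b$ are right-handed Dehn twists with respect to the orientation of the surface $F$, not with respect to the group parametrisations: to convert ``rotation by $+\theta$ in group coordinates'' into a signed power of a Dehn twist you must compare, on each of the two cylinders separately, the group-rotation direction with the surface orientation, and nothing in your argument rules out that these comparisons have opposite signs on the $a$-side and the $b$-side. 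If they did, your collar construction would yield $t_a^kt_b^{-k}\in\Maps(F)$ and the lemma would be false as stated (it would read $m=-n$). So distinguishing $t_at_b$ from $t_at_b^{-1}$ is not an auxiliary fact to be ``dispatched briefly,'' and Lemma \ref{section-action} (which only converts mapping classes into isotopy classes of sections) does not address it; your ``only if'' step inherits the same gap, since the two ``windings'' are ``patently equal'' only in group coordinates, not as signed twist exponents.

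The paper settles exactly this point by a symmetry argument your proposal lacks: choosing sections $\ll_0,\ll_1$ that meet $\del_+F$ in $a$ and in $b$ respectively and meet $\del_-F$ at the same point, one gets $h=\gg{\ll_0}{\ll_1}\in\til G(F)$, an orientation-preserving diffeomorphism interchanging $a$ and $b$ which commutes with every shift $g$ (Proposition \ref{commutativity}); hence if $[g]=t_a^mt_b^n$ then $t_a^mt_b^n=[h^{-1}gh]=t_a^nt_b^m$, and $m=n$ because $t_a,t_b$ generate a free abelian subgroup. The ``if'' direction is then obtained from an explicit pair of sections realizing $t_at_b^{\pm1}$, the sign $-1$ being excluded by the implication just proved. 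A secondary, fixable, inaccuracy in your detection step: $s$ restricted to a collar is not a closed loop (it need not equal the identity at the inner end), and the part of $g$ lying over the rest of $I_F$ also moves your arc $\alpha$, so ``the winding of $s$ over the $a$-collar'' is not a well-defined integer equal to $m$ without further normalisation. To repair the proof you must either import the paper's conjugation trick, or prove the orientation coherence directly (e.g.\ by checking in a local model of the node where the two fibre components merge that the fibrewise invariant vector field is proportional to the contraction of the area form with a positive lift of the base direction, with a proportionality factor of constant sign across the node); as written, there is a genuine gap.
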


\begin{proof}
Without loss of generality we may suppose that $\del_+ F=a\cup b$.
Since $F$ is connected, there exist 
smooth
sections $\ll_0$ and $\ll_1$ of $F$ intersecting the fiber $\del_+F$ at some points
of $a$ and $b$ respectively, and the fiber $\del_- F$ both at the same point.
Then the diffeomorphism $h=\gg{\ll_0}{\ll_1}\in\til G(F)$ interchanges $a$ and $b$ and preserves (any chosen) orientation of $F$.

If $t_a^mt_b^n$ belongs to $\Maps(F)$, then, by
Lemma \ref{section-action}, it is a class of a diffeomorphism $g=\gg{\ll_0}{\ll_2}$ for some
$\ll_2\in\Sec(F,\l_0)$.
So, due to Lemma \ref{commutativity}, we have $h^{-1}g h=g$.
On the other hand, $[h^{-1}g h]=t_a^nt_b^m$, since $h$ permutes $a$ and $b$ and is orientation preserving.
This implies $m=n$,  since $t_a, t_b$ generate a free abelian subgroup (see  \cite[Lemma 3.17]{farb}).

It remains to notice that, for $\ll_0, \ll_2$ shown on Fig. \ref{pants-s},
 $g=\gg{\ll_0}{\ll_2}$ gives either $t_at_b$ or $t_at_b^{-1}$, and that the second option is eliminated by the previous argument.
\end{proof}

\begin{figure}[h!]
\caption{Pair of smooth sections representing $t_at_b$.}\label{pants-s}
\includegraphics[height=2.5cm]{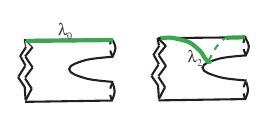}
\end{figure}

By a {\it pair-of-pants fragment}
$f_F: F\to I_F$ we mean a fragment diffeomorphic to a pair-of-pants for which $f_F$ is a Morse function with only one critical point.

\begin{lemma}\label{pants-group}
Assume that $F$ is a pair-of-pants fragment with $c$ and $a\cup b$ as boundary fibers.
Then $\Maps(F)=\la t_at_b,t_c\ra\cong\Z^2$ and $\pi_0(G(F))\to\Maps(F)$ is an isomorphism.
\end{lemma}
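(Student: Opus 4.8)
The plan is to leverage the standard description of the mapping class group of a pair of pants together with the single-critical-point structure of $F$ and the constraint already isolated in Lemma \ref{simulteneous}. First I would recall that, since $F$ is a pair of pants and diffeomorphisms are required to fix $\partial F$ pointwise, $\Map(F)$ is free abelian of rank $3$ with the three boundary Dehn twists $t_a,t_b,t_c$ as a basis (this is the model computation behind \cite[Lemma 3.17]{farb}). Thus every class of $\Maps(F)\subset\Map(F)$ is uniquely of the form $t_a^\alpha t_b^\beta t_c^\gamma$.

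Next I would check that both proposed generators lie in $\Maps(F)$. The twist $t_at_b$ is produced by the fiberwise group shift constructed at the end of the proof of Lemma \ref{simulteneous} (the section of Fig.\,\ref{pants-s}), while $t_c$, being the boundary Dehn twist about the connected boundary fiber $c$, is realized by a group-shift supported near $c$ that rotates each nearby fiber once, exactly as in Lemma \ref{torus-group}. Hence $\langle t_at_b,t_c\rangle\subseteq\Maps(F)$. For the reverse inclusion I would take $g=t_a^\alpha t_b^\beta t_c^\gamma\in\Maps(F)$; since $t_c\in\Maps(F)$ and $\Maps(F)$ is a subgroup (Proposition \ref{commutativity}), the element $g\,t_c^{-\gamma}=t_a^\alpha t_b^\beta$ again lies in $\Maps(F)$, whence $\alpha=\beta$ by Lemma \ref{simulteneous}. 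Therefore $g=(t_at_b)^\alpha t_c^\gamma$ and $\Maps(F)=\langle t_at_b,t_c\rangle$. As $t_at_b$ and $t_c$ are independent in the basis $t_a,t_b,t_c$ of $\Map(F)\cong\Z^3$, this gives $\Maps(F)\cong\Z^2$.

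It remains to show $\pi_0(G(F))\to\Maps(F)$ is injective (surjectivity is the definition of $\Maps(F)$). By Lemma \ref{section-action} this amounts to computing $\pi_0(\Sec(F,\lambda_0))$. Here I would use that $F$ carries the single saddle of $f_F$, splitting $I_F$ at the critical value $x_0$ into a region $\mathrm{I}$ over which the fibers are single circles (limiting to $c$) and a region $\mathrm{II}$ over which they are pairs of circles (limiting to $a\cup b$); the critical fiber is a real nodal cubic whose smooth real locus is $\R^*$, with contractible identity component $\R_{>0}$. A smooth section stays in the identity component over $\mathrm{II}$ (its $\Z/2$-component is locally constant and vanishes at $x_+$), meets the critical fiber in the contractible arc $\R_{>0}$, and over each region is a path in a trivial circle bundle with one endpoint fixed and the other in a contractible set. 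From this I would read off two winding-number invariants $w_{\mathrm I},w_{\mathrm{II}}\in\Z$, prove they form a complete invariant of a section up to fiberwise homotopy rel $\partial F$, and that every pair is realized, yielding $\pi_0(G(F))\cong\Z^2$. Since the shift realizing $t_c$ has $(w_{\mathrm I},w_{\mathrm{II}})=(1,0)$ and the one realizing $t_at_b$ has $(0,1)$, these classes form a basis mapping to the independent elements $t_c,\,t_at_b$ of $\Map(F)\cong\Z^3$; hence the homomorphism is injective, and $\pi_0(G(F))\to\Maps(F)$ is an isomorphism.

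The main obstacle is this last step: rigorously controlling homotopy classes of sections as they traverse the nodal critical fiber. The delicate point is that the identity component of the fiber group degenerates from a circle to the contractible $\R_{>0}$ at $x_0$, so one must verify that a full winding cannot be transferred across $x_0$ — i.e. that $w_{\mathrm I}$ and $w_{\mathrm{II}}$ are genuinely independent and the rank does not collapse to $1$ — while at the same time showing no further invariants appear. The figure-eight structure of the critical fiber, whose two loops correspond to $a$ and $b$ and whose long loop corresponds at the level of $\pi_1$ to $c\sim t_at_b$, is precisely what keeps the two windings distinct; matching this against the group-shift constraint (a single rotation parameter acting equally on $a$ and $b$) is exactly what produces the generator $t_at_b$ rather than $t_a$ and $t_b$ separately.
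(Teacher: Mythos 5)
Your proposal is correct and takes essentially the same approach as the paper: the identification $\Maps(F)=\la t_at_b,t_c\ra\cong\Z^2$ is argued exactly as in the paper (realize $t_c$ and $t_at_b$ by fiberwise group shifts, then invoke Lemma \ref{simulteneous} to force equal powers of $t_a$ and $t_b$). Your winding-number computation of $\pi_0(G(F))=\pi_0(\Sec(F,\lambda_0))\cong\Z^2$ is just an explicit unpacking of the paper's terse injectivity step, which sends $\lambda\mapsto[\lambda*\lambda_0^{-1}]\in H_1(F)$ and appeals to the uniqueness of the critical fiber — the same geometric fact you use, namely that windings on the two sides of the critical value contribute the independent classes $[a]+[b]$ and $[a]$.
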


\begin{proof}
As is  well-known (cf., Lemma \ref{cuts})
$\Map(F)=\la t_a,t_b,t_c\ra\cong\Z^3$,
where each of the boundary twists involved
can be realised by a fiber-preserving map.
Moreover,
in $\Maps(F)$, 
the Dehn twists $t_a$ and $t_b$ can only be applied simultaneously,
by Lemma \ref{simulteneous},
and any such a simultaneous twist can be realized
by an element in $\Maps(F)$.
The kernel of $\pi_0(G(F))\to\Maps(F)$ it trivial since the map $\pi_0(\Sec(F,\l_0))\to H_1(F)$ induced by assigning to 
$\ll\in \Sec(F,\l_0))$ the element of $H_1(F)$ realized by the loop $\ll *\ll_0^{-1}$ is injective (the latter is a trivial consequence of
uniqueness of the critical fiber in $f_F : F\to I_F$).
\end{proof}

\subsection{Exact sequence for adjacent fibration fragments}
We say that
fragments $F_1\to I_{F_1}$ and $F_2\to I_{F_2}$ are {\it adjacent} if
the intervals $I_{F_1}$ and $I_{F_2}$ intersect at one point,
so that $F=F_1\cup F_2$ is fibered over an interval $I_F=I_{F_1}\cup I_{F_2}$.

\begin{lemma}\label{exact-seq}
{\rm(1)} For the union of adjacent 
fragments $F=F_1\cup F_2$ intersecting along a connected curve $\alpha= F_1\cap F_2$, 
we have the exact sequence
\[
0 \to \Z \to \Maps(F_1)\oplus \Maps(F_2) \xrightarrow{\theta} \Maps(F) \to 0 \]
with the kernel $\Z$
generated by $t_{\alpha_1}\oplus t_{\alpha_2}^{-1}$, where $\alpha_i$ is a copy of $\alpha$ in $F_i$.

{\rm (2)} In the case of 2-component intersection $\a\cup \b=F_1\cap F_2$ of connected
fragments $F_1$ and $F_2$
we have
the exact sequence
\[
0 \to \Z \to \Maps(F_1)\oplus \Maps(F_2) \xrightarrow{\theta} \Maps(F) \to\Z/2 \to  0, 
\]
where 
the kernel $\Z$ is generated by $s_1\oplus s_2^{-1}$,
$s_i=t_{\a_i}t_{\b_i}$ 
($\a_i$, $\b_i$ being copies of $\a, \b$ in $F_i$),
and permutation of the components $\a$ and $\b$ by elements of $\Maps(F)$ defines the projection to $\Z/2=\operatorname{Sym}(\a, \b)$.

{\rm (3)} If for  $F_i$, $i=1,2$ from items {\rm (1)} or {\rm (2)} the epimorphisms $\pi_0(G(F_i))\to\Map^s(F_i)$ are isomorphisms,
then so is $\pi_0(G(F))\to\Map^s(F)$.
\end{lemma}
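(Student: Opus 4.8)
The plan is to compute both $\Maps$ and $\pi_0(G)$ through spaces of sections, using Lemma \ref{section-action} to identify $\pi_0(G(F))$ with $\pi_0(\Sec(F,\lambda_0))$, and to analyze the decomposition of $\Sec(F,\lambda_0)$ induced by $F=F_1\cup F_2$. I would fix a smooth section $\lambda_0$ of $F$ and let $B=F_1\cap F_2$ be the internal fibre ($B=\alpha$ in (1), $B=\alpha\cup\beta$ in (2)), sitting over a point $x_0$. Writing $\Sec'(F_i)$ for the sections of $F_i$ that equal $\lambda_0$ on the outer boundary $\del F_i\sm B$ but are free on $B$, the evaluation map $\mathrm{ev}_i\colon\Sec'(F_i)\to B$, $\lambda\mapsto\lambda(x_0)$, is a fibration whose fibre over $\lambda_0(x_0)$ is $\Sec(F_i,\lambda_0)$; and a section of $F$ is exactly a pair $(\lambda_1,\lambda_2)$ with $\mathrm{ev}_1(\lambda_1)=\mathrm{ev}_2(\lambda_2)$. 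Thus $\Sec(F,\lambda_0)$ is the fibre of the difference map $\Sec'(F_1)\times\Sec'(F_2)\to B$ (difference taken in the fibre group $B$), and the gluing homomorphism $\theta$ is induced by $(\lambda_1,\lambda_2)\mapsto\lambda_1\cup\lambda_2$. It is a well-defined homomorphism of abelian groups (Proposition \ref{commutativity}) because elements of $G(F_i)$ fix $B$ pointwise and so patch to a fibrewise translation of $F$ fixing $\del F$.

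For part (1), $B=\alpha\cong S^1$. Surjectivity of $\theta$: a class of $\Maps(F)$ is represented by a section $\lambda$, and translating $\lambda|_{F_2}$ fibrewise (identically on the outer boundary) to bring $\lambda(x_0)$ back to $\lambda_0(x_0)$ along a short arc does not change the ambient-isotopy class and displays $\lambda$ as a glued pair. For the kernel, if $\theta(g_1,g_2)=1$ then the glued section is ambient isotopic to $\lambda_0$ rel $\del F$; tracking the value at $\alpha$ along such an isotopy yields a loop in $\alpha$ of class $n\in\pi_1(\alpha)=\Z$. Restricted to $F_1$, the isotopy drags the $\alpha$-endpoint of the arc $n$ times around $\alpha$, which is the standard realization of the boundary Dehn twist, so $g_1=t_{\alpha_1}^{\,n}$; the same loop seen from $F_2$ is traversed with the opposite boundary orientation, giving $g_2=t_{\alpha_2}^{-n}$. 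Hence $\ker\theta$ is infinite cyclic on $t_{\alpha_1}\oplus t_{\alpha_2}^{-1}$ (of infinite order, Dehn twists being nontrivial in the free abelian $\Maps(F_i)$), while no permutation of fibre components is possible, so $\theta$ is onto; this is the sequence of (1).

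For part (2), $B=\alpha\cup\beta$ is the two-component real fibre $\Z/2\times S^1$. The value $\lambda(x_0)$ lies in $\alpha$ or $\beta$, and its component relative to $\lambda_0(x_0)$ is invariant under ambient isotopy rel $\del F$ because $B$ is disconnected; this defines the epimorphism $\Maps(F)\to\Z/2=\Sym(\alpha,\beta)$, surjective since $F$ is connected and hence carries a section crossing from $\alpha$ to $\beta$. Its kernel consists of the classes represented by sections whose $B$-value lies in the component of $\lambda_0(x_0)$, and for these the splitting argument of (1) applies verbatim, so $\Im\theta=\ker(\Maps(F)\to\Z/2)$. For $\ker\theta$ the relevant dragging is around the $S^1$-factor, and one full loop of a fibrewise translation moves both circles of $B$ at once; by Lemma \ref{simulteneous} only simultaneous twists survive in $\Maps(F_i)$, so the connecting element is $s_i=t_{\alpha_i}t_{\beta_i}$ and, with the same orientation sign, $\ker\theta=\Z\langle s_1\oplus s_2^{-1}\rangle$. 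This yields the four-term sequence of (2).

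For part (3), I would rerun the same fibre-product and fibration computation at the level of section spaces, where $\pi_0(\Sec(F_i,\lambda_0))=\pi_0(G(F_i))$ is an honest equality (no passage to ambient isotopy intervenes), obtaining the analogous exact sequence with every $\Maps$ replaced by $\pi_0(G)$, the same cyclic kernel generated by $t_{\alpha_1}\oplus t_{\alpha_2}^{-1}$ (resp.\ $s_1\oplus s_2^{-1}$) and the same terminal $\Z/2$. The natural epimorphisms $\pi_0(G(-))\to\Maps(-)$ fit these two sequences into a commutative ladder in which the outer vertical maps are the identity on $\Z$ (its generator is a genuine boundary Dehn twist, realized fibrewise) and on $\Z/2$ (the swap invariant is defined identically), while the vertical map on $\pi_0(G(F_1))\oplus\pi_0(G(F_2))$ is an isomorphism by hypothesis. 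The five lemma then forces $\pi_0(G(F))\to\Maps(F)$ to be an isomorphism. The main obstacle throughout is the precise identification of the connecting homomorphism with the boundary Dehn twist (and with the simultaneous twist $s_i$ in case (2)), together with the orientation bookkeeping that produces the inverse on the second factor; equally delicate is keeping the distinction between isotopy through sections ($\pi_0(G)$) and ambient isotopy ($\Maps$) consistent so that the ladder in (3) genuinely commutes.
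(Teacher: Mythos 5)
Your overall architecture is close to the paper's, and your part (3) is essentially its proof: the paper also obtains the lower row from the homotopy exact sequence of the evaluation fibration $G(F_1)\times G(F_2)\to G(F)\to G(F_1\cap F_2)$ (where $G(F_1\cap F_2)\simeq S^1$, resp. $S^1\times\Z/2$, is the shift group of the gluing fiber) and concludes with a commutative ladder whose outer vertical arrows are identities.

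The genuine gap is in your computation of $\ker\theta$ in parts (1) and (2). Triviality of $\theta(g_1\oplus g_2)$ in $\Maps(F)$ gives only an \emph{ambient} isotopy $\phi_t$, rel $\partial F$, carrying the glued section $\lambda$ to $\lambda_0$; such an isotopy is not fiberwise, so ``tracking the value at $\alpha$'' is not defined: the intermediate curves $\phi_t(\lambda)$ are not sections, they may meet $\alpha$ in several points or non-transversally, and the terminal point $\phi_1(\lambda_0(x_0))$ is only known to lie on $\lambda_0$, not on $\alpha$. Your dragging argument is valid precisely when the isotopy can be chosen through sections, i.e.\ it computes the kernel in the $\pi_0(G)$-row (which is what the connecting homomorphism of your fibration controls), not in the $\Maps$-row. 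Transferring it to the $\Maps$-row would require injectivity of $\pi_0(G(F))\to\Maps(F)$ --- but that is exactly the conclusion of part (3), which is available only under its extra hypotheses and whose proof (yours and the paper's alike) takes the exactness of the $\Maps$-row as an input; as it stands the argument is circular. The paper closes this hole with an input formulated at the ambient mapping-class level: $\ker\theta$ lies in $\Ker\{\Map(F_1)\oplus\Map(F_2)\to\Map(F)\}$, which by \cite[Theorem 3.18]{farb} is the free abelian group generated by the opposite boundary-twist pairs $t_{\alpha_1}\oplus t_{\alpha_2}^{-1}$ and $t_{\beta_1}\oplus t_{\beta_2}^{-1}$; intersecting with $\Maps(F_1)\oplus\Maps(F_2)$ and invoking Lemma \ref{simulteneous} (only the simultaneous twists $s_i=t_{\alpha_i}t_{\beta_i}$ are fiberwise) cuts this $\Z^2$ down to $\la s_1\oplus s_2^{-1}\ra$ in case (2), and settles case (1) directly. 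You need this fact, or some equivalent argument about ambient mapping classes, to finish the kernel. A smaller point: in your surjectivity step the correcting fiberwise translation must be supported in a two-sided collar of $\alpha$ inside $F$; translating only $\lambda|_{F_2}$ leaves $\lambda|_{F_1}(x_0)$ in place and produces a discontinuous object at $\alpha$.
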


\begin{proof} 
We treat below the case (2) and skip (1) as a similar and simpler case.

Connectedness of $F_1$ and $F_2$ implies existence of 
smooth
sections $\ll_0$ and $\ll_1$ of $F$
that intersect $a$ and $b$ respectively.
Then $\gg{\ll_0}{\ll_1}\in\Maps(F)$ permutes $a$ and $b$,
which gives surjectivity of $\Maps(F)\to\Z/2$.

For proving the exactness at   $\Maps(F)$, it is sufficient to notice, first, that $Im(\theta)$
preserves the components $\a$ and $\b$ invariant, and second, that any diffeomorphism $g\in G(F)$ preserving $\a$ and $\b$ invariant
can be made identical on these components by twisting via an isotopy in $G(F)$.

On the other hand, $\ker\theta\subset \Ker \{ \Map(F_1)\oplus \Map(F_2) \to \Map(F)\} =\Z^2=\la t_{\a_1}\oplus t_{\a_2}^{-1},
t_{\b_1}\oplus t_{\b_2}^{-1}\ra$ (see, e.g., \cite[Theorem 3.18]{farb}).
 So, if $g_1\oplus g_2\in\Ker\theta$, then $g_i=t_{\a_i}^{k_i}t_{\b_i}^{l_i}$, $i=1,2$, where $k_1+k_2=l_1+l_2=0$,
 while, by Lemma \ref{simulteneous},  $k_i=l_i$.
 Also by Lemma  \ref{simulteneous} the elements $s_1\oplus s_2^{-1}\in \Map(F_1)\oplus \Map(F_2)$ do belong to $\Maps(F_1)\oplus \Maps(F_2)$.
 
 The part (3) follows
 from the following commutative diagrams 
 \[\minCDarrowwidth25pt
 \begin{CD}
 0@>>> \Z @>>> \Maps(F_1)\oplus \Maps(F_2) @>{\theta}>> \Maps(F) @>>> 0 \\
&&@|  @| @AAA  \\
0 @>>> \pi_1(S^1) @>>> \pi_0(G(F_1))\oplus \pi_0(G(F_2)) @>>> \pi_0(G(F)) @>>> 0 
\end{CD}\]
under assumptions of (1), and the following diagram under assumptions of (2)
 \[\minCDarrowwidth15pt
 \begin{CD}
 0@>>> \Z @>>> \Maps(F_1)\oplus \Maps(F_2) @>{\theta}>> \Maps(F) @>>>\Z/2  @>>> 0 \\
&&@|  @| @AAA @| \\
0 @>>> \pi_1(S^1) @>>> \pi_0(G(F_1))\oplus \pi_0(G(F_2)) @>>> \pi_0(G(F)) @>>> \Z/2 @>>> 0 
\end{CD}\]
The lower rows come from the exact homotopy sequence of the group-quotient fibration $G(F_1)\times G(F_2)\to G(F)\to G(F_1\cap F_2)$
where by $G(F_1\cap F_2)$ we understand the group of shifts of the fiber $F_1\cap F_2$ over the point $I_{F_1}\cap I_{F_2}$, which is 
 $S^1$ for a 1-component and $S^1\times\Z/2$ for a 2-component fiber.
\end{proof}

\begin{proposition}\label{no-zigzag-isomorphism}
Let $f_F: F\to I_{F}$ (respectively, $f_\R : X_\R \to \P^1_\R$) 
be  a fragment (respectively, the whole real elliptic surface) not containing zigzags
and cuspidal fibers,
and $\l_0:I_{F}\to F$ (respectively, $\l_0 : \P_\R^1\to X_\R$)
a
smooth
section.
Then, the natural epimorphism from $\pi_0(\Sec(F,\l_0))=\pi_0(G(F))$ to $\Maps(F)$ is an isomorphism.
\end{proposition}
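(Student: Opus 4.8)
The plan is to reduce the statement to its two elementary building blocks — pair-of-pants and cylindrical fragments — and then propagate the isomorphism along a Morse-theoretic decomposition of $F$ (resp.\ $X_\R$) by means of the exact sequences of Lemma \ref{exact-seq}. Since the map $\pi_0(G(F))\to\Maps(F)$ is an epimorphism by the very definition of $\Maps(F)$, only injectivity is at stake; by Lemma \ref{section-action} this is equivalent to the assertion that two smooth sections rel $\partial F$ that are ambient isotopic are already isotopic \emph{through sections}, so the whole point is to show that under the no-zigzag, no-cusp hypothesis every such ambient isotopy can be made fiberwise.

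First I would settle the two elementary pieces. For a pair-of-pants fragment the claim is exactly the last sentence of Lemma \ref{pants-group}. For a cylindrical fragment $A_i$ the hypotheses enter decisively: absence of cuspidal fibers and of zigzags guarantees that $f|_{A_i}\colon A_i\to I_{A_i}$ is an honest locally trivial $S^1$-fibration over a contractible base, so that the fiberwise group-shift structure is globally defined. A group-shift rel boundary is then nothing but a shift function $s\colon I_{A_i}\to S^1$ with $s(\partial I_{A_i})=0$, i.e.\ a based loop, whence $G(A_i)\simeq\Omega S^1$ and $\pi_0(G(A_i))\cong\Z$. As $\Maps(A_i)\cong\Z$ by Lemma \ref{torus-group} and the comparison map is surjective onto it, a surjection $\Z\to\Z$ is forced to be an isomorphism. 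A zigzag (or a cusp) in $A_i$ would, by contrast, permit ambient isotopies not realizable by fiberwise shifts, producing a nontrivial kernel — this is the exact analogue of the phenomenon already isolated in Proposition \ref{stretching}.

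Next I would decompose and assemble. Using the system of cuts of Section \ref{cut-section} together with Lemma \ref{fragments}, and using the absence of cuspidal fibers to ensure all critical points are Morse of index $1$ (Lemma \ref{critical}), any fragment $F$ over an interval splits, after cutting along one-component fibers and along the two-component fibers $a_i\cup b_i$, into pair-of-pants pieces — each $N_i$ being two of them glued along $a_i\cup b_i$ — and cylinders $A_i$. Adding these pieces one at a time and invoking Lemma \ref{exact-seq}(3), in form (1) across a one-component gluing fiber and in form (2) across a two-component one, propagates the isomorphism $\pi_0(G(\cdot))\to\Maps(\cdot)$ from the elementary pieces to all of $F$ by induction on the number of pieces.

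Finally, for the whole surface $f_\R\colon X_\R\to\P^1_\R$ over the circle, I would cut along a connected fiber $c_1$ to obtain the interval fragment $N$, for which the previous step gives the isomorphism, and then close up. The gluing of the two ends of $N$ fits into the group-quotient fibration $G(N)\to G(X_\R)\to G(c_1)$ with $G(c_1)\simeq S^1$, yielding a right-exact sequence $\pi_1(S^1)\to\pi_0(G(N))\to\pi_0(G(X_\R))\to 0$, while the same relation describes $\Maps(X_\R)$ as a quotient of $\Maps(N)$ by Lemma \ref{cuts}. Arranging these two right-exact sequences in a commutative ladder whose middle arrow is the isomorphism just obtained concludes the proof by a diagram chase (the five lemma for the cokernels). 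I expect this closing-up step to be the main obstacle: it is not covered verbatim by Lemma \ref{exact-seq}, so one must verify by hand that the kernel $\Z$ coming from $G(X_\R)\to G(c_1)$ is generated by precisely the class identified with the boundary Dehn twist $t_{c_1}t_{c_{p+1}}$ on the mapping-class side, so that the two sequences share the same relation and the ladder genuinely commutes.
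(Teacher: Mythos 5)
Your proposal is correct and follows essentially the same route as the paper's (very terse) proof: reduce to pair-of-pants fragments via Lemma \ref{pants-group}, propagate with Lemma \ref{exact-seq}(3) along the Morse decomposition guaranteed by the no-zigzag/no-cusp hypothesis, and handle the passage from the interval fragment $N$ to the closed base $\P^1_\R$ by cutting along a connected fiber. The only difference is one of detail, not of method: where the paper simply invokes \cite[Theorem 3.6]{stukow} for the closing-up step, you build the ladder of right-exact sequences from the fibration $G(N)\to G(X_\R)\to G(c_1)$ by hand --- noting correctly that the matching of the two kernels (the image of $\pi_1(S^1)$ versus the class $t_{c_1}t_{c_{p+1}}$ furnished by Lemma \ref{cuts}) is the point that must be verified, which is exactly what the reference to Stukow supplies.
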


\begin{proof} If $F$ is a cylinder or a Klein bottle, then it is evident (cf., Lemma \ref{torus-group}).
Otherwise, absence of zigzags and cuspidal fibers guaranties that $F$ admits 
a decomposition in pair-of-pants fragments. Then the required claim follows immediately from Lemmas
\ref{pants-group} and \ref{exact-seq}(3) if $F$ is a proper fragment. If $F=X_\R$, then it follows from the case $F=N$ by means of \cite[Theorem 3.6]{stukow}.
\end{proof}

\subsection{The elements $\boldsymbol{\D_i\in\Maps(N_i)}$}
\label{Di}
Let $N_i$ be one of the fragments introduced in Section \ref{cut-section}.
Consider the interval $f(N_i)=[y_i,z_i]\subset I_N$, pick a smooth section $\ll_0:[y_i,z_i]\to N_i$, and 
equip the fibers of $f_{\vert_{N_i}}$ with a group structure  for which $\ll_0$ is the 
zero section. With respect to this group structure, the fixed point set of the  fiberwise group involution $g\mapsto g^{-1}$ 
consists of
$\ll_0$, an oval and a smooth section $\ll_1:[y_i,z_i]\to N_i$ as shown on Fig.\,\ref{Delta-section}.
If $\ll_0$ is a part
of some real line $L\subset X$, then $\ll_1$ and the oval are parts of the sextic $C_\R$ associated with $L$.
 
 Let us consider also a smooth section $\ll:[y_i,z_i]\to N_i$ that represents a {\it half of a Dehn twist about a fiber} (in accord with a fixed orientation of $N_i$)
 above each of two small small intervals  $[y_i,y_i+\e]$, $[z_i-\e,z_i]$ and coincides with $\ll_0$ (resp.\,$\ll_1$) on $\partial[y_i,z_i]$ (resp. 
 $[y_i+\e, z_i-\e]$), see Fig. \ref{Delta-section}).
\begin{figure}[h!]
\caption{Smooth section $\ll$ representing $\D_i\in\Maps(N_i)$}\label{Delta-section}
\includegraphics[height=2.5cm]{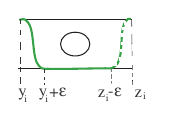}
\caption*{\footnotesize The upper and bottom segments depict the smooth sections $\ll_0$ and $\ll_1$, respectively. 
}
\end{figure}

\begin{proposition}\label{Delta-lemma}
The smooth section $\ll$ introduced above is well-defined up to isotopy 
fixed at the boundary,
 the corresponding to it
element $\D_i=\gg{\ll_0}{\ll}\in\Maps(N_i)$ satisfies the following  properties
\begin{enumerate}
\item  $t_{a_i}\D_i=\D_it_{b_i}$,  $t_{b_i}\D_i=\D_it_{a_i}$,
\item $\D_i^2=t_{c_{i}}t_{c_{i+1}}$,
\end{enumerate}
where  $a_i,b_i,c_i$  are the cut-curves introduced in Section \ref{cut-section}.
Moreover, any $\D\in\Maps(N_i)$, satisfying
$t_{a_i}\D=\D t_{b_i},  t_{b_i}\D=\D t_{a_i},
\D^2=t_{c_{i}}t_{c_{i+1}}$ coincides with $\D_i$.
\end{proposition}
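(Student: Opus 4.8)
The plan is to compute everything inside the section model of Lemma \ref{section-action}, which identifies $\Maps(N_i)$ with isotopy classes rel $\partial$ of smooth sections over $[y_i,z_i]$, and to read off the three properties directly from the description of $\D_i$ as the fibrewise group shift $\gg{\ll_0}{\ll}$, i.e. $s\mapsto s+\ll$ in each fibre. For the well-definedness of $\ll$, note that the prescription fixes $\ll=\ll_0$ on $\partial[y_i,z_i]$, fixes $\ll=\ll_1$ on $[y_i+\e,z_i-\e]$, and requires a half fibre-twist (in the chosen orientation of $N_i$) on each of the two transition intervals. Over the contractible base the space of sections with prescribed endpoints and prescribed winding is connected, so any two admissible $\ll$ are isotopic rel $\partial$; hence $\D_i\in\Maps(N_i)$ is well defined.

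For relation (2) I would use that the square of the shift by $\ll$ is the shift by $2\ll$, i.e. $\D_i^2=\gg{\ll_0}{2\ll}$. On the middle interval $2\ll=2\ll_1=\ll_0$, because $\ll_1$ is a $2$-torsion section (a fixed point of $g\mapsto g^{-1}$), so $\D_i^2$ is supported near the boundary; and on each transition interval doubling a half-twist produces exactly one full Dehn twist about the boundary fibre. This gives $\D_i^2=t_{c_i}t_{d_i}$, and identifying $t_{d_i}=t_{c_{i+1}}$ through the cylinder $A_i$ (Lemma \ref{torus-group}) yields $\D_i^2=t_{c_i}t_{c_{i+1}}$.

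Relation (1) is the crux, and I expect it to be the main obstacle. The formal part is easy: once one knows that $\D_i$ interchanges the two components $a_i$ and $b_i$ of the two-component fibre, conjugation of Dehn twists by the orientation-preserving $\D_i$ gives $\D_i t_{b_i}\D_i^{-1}=t_{a_i}$ and $\D_i t_{a_i}\D_i^{-1}=t_{b_i}$, which is exactly relation (1). The real content is therefore the claim that adding $\ll_1$ swaps the cosets $a_i$ and $b_i$, and this reduces to locating $\ll_1$: one must show that, among the real $2$-torsion points of a two-component fibre, $\ll_0$ lies on the identity component $a_i$ (it carries the line $L$) while $\ll_1$ lies on the oval component $b_i$. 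This requires tracking the real branch points (equivalently the real $2$-torsion) through the two nodal fibres bounding $N_i$: the pair of branch points that collide and become complex as the oval dies is the pair distinct from $\ll_1$, so $\ll_1$ survives on $b_i$. Granting this placement, $\ll_1$ belongs to the non-identity coset, whence $a_i+\ll_1=b_i$ and $\D_i$ exchanges the two components.

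For uniqueness I would exploit that $\Maps(N_i)$ is abelian by Proposition \ref{commutativity} and, in fact, free abelian: the pair-of-pants decomposition of $N_i$ together with Lemmas \ref{pants-group} and \ref{exact-seq}(2) presents $\Maps(N_i)\cong\Z^3$ on the generators $t_{c_i}$, $t_{a_i}t_{b_i}$, $\D_i$ subject only to $\D_i^2=t_{c_i}t_{c_{i+1}}$. If $\D\in\Maps(N_i)$ satisfies the same three relations, set $h=\D\D_i^{-1}\in\Maps(N_i)$; relation (2) for both $\D$ and $\D_i$ together with commutativity forces $h^2=1$, and torsion-freeness then gives $h=1$, i.e. $\D=\D_i$. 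Everything except the component-placement step is a formal consequence of the shift description and the group structure of $\Maps(N_i)$; it is the real-geometric location of $\ll_1$ on $b_i$ that carries the weight of the argument.
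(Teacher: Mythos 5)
Your proposal is correct, and for the construction of $\D_i$ and relations (1)--(2) it follows the same route as the paper: relation (2) comes from observing that $\D_i^2$ is the fibrewise shift by $2\ll$, hence trivial over the middle interval (where $2\ll=2\ll_1=\ll_0$) and a full boundary twist over each collar; relation (1) is reduced, exactly as in the paper, to the statement that $\D_i$ interchanges $a_i$ and $b_i$ --- a fact the paper simply declares ``obviously true'' from the construction and its figure, and which you justify by locating $\ll_1$ on $b_i$ through the behaviour of the real $2$-torsion at the two saddle degenerations bounding $N_i$ (a correct elaboration: at each such degeneration the colliding pair of $2$-torsion points is the pair lying on the oval, one point on each component, so the surviving sections $\ll_0,\ll_1$ sit on different components). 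The genuine divergence is in the uniqueness clause. The paper works inside the exact sequence of Lemma \ref{exact-seq}(2): both $\D$ and $\D_i$ are cross-sections of $\Maps(N_i)\to\Z/2$, so $\D_i\D^{-1}=\theta(\delta_0\oplus\delta_1)$ with $\delta_0,\delta_1$ in the pants groups of Lemma \ref{pants-group}, and an explicit exponent computation using $\D^2=\D_i^2$ and the description of $\ker\theta$ kills $\delta_0\oplus\delta_1$. You instead first assemble the group structure $\Maps(N_i)\cong\Z^3$ (free abelian on $t_{c_i}$, $s_i$, $\D_i$) from the same two lemmas, commutativity, and relation (2), and then conclude from uniqueness of square roots in a torsion-free abelian group. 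This is legitimate and not circular: in the paper that group computation is Corollary \ref{handle-group}, whose proof uses only the existence of $\D_i$ and relations (1)--(2), never the uniqueness clause, so you have merely moved it in front of the uniqueness step. Your variant is in fact slightly stronger --- it shows $\D$ is pinned down by relation (2) alone, without assuming relation (1) --- at the cost of having to verify that the extension of $\Z/2$ by $\operatorname{Im}\theta$ carries no $2$-torsion, i.e.\ that $t_{c_i}t_{c_{i+1}}$ is not twice an element of $\operatorname{Im}\theta$; that parity check is precisely what the paper's explicit kernel computation does in its place.
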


\begin{proof}
Note that for any element $\D\in\Maps(N_i)$ the relations (1) are equivalent to that $\D$ interchanges the curves $a_i$ and $b_i$, 
which is obviously true for $\D=\D_i$.
It follows also from the definition of $\ll$ that $\D_i^2$ 
performs the Dehn twists $t_{c_i}$ and 
$t_{d_i}=t_{c_{i+1}}$
on the intervals $[y_i,y_i+\e]$ and $[z_i-\e,z_i]$ respectively and the identity in $[y_i+\e,z_i-\e]$, so,
(2) is also satisfied.

To show the required uniqueness of $\D_i$, suppose that $\D\in\Maps(N_i)$ is another element satisfying (1) and (2).
Then, the property (1) implies that
 $\D_i$ and $\D$ are both
cross-sections of the epimomorphism $\Maps(N_i)\to\Z/2$ of Lemma \ref{exact-seq}.
Therefore, by Lemma \ref{exact-seq}, we have $\D_i\D^{-1}=\theta(\delta_0\oplus\delta_1)$, $\delta_j\in\Maps(F_j)$, where $N_i=F_0\cup F_1$ is the pair-of-pants decomposition
obtained by cutting $N_i$ along the fiber $a_i\cup b_i$.
Since, according to Lemma \ref{pants-group}, we have $\delta_0=s_i^{k_0}t_{c_i}^{m_0}$ and $\delta_1= s_i^{k_1}t_{c_{i+1}}^{m_1}$,
 the relations (2) for $\D_i$ and $\D$
 give
$$
\theta(\delta_0^2\oplus\delta_1^2)=\D_i^2(\D^{-1})^2=1.
$$
By Lemma \ref{exact-seq}, this implies existence of $k\in\Z$ such that $s_i^{k_0}t_{c_i}^{m_0}=s_i^k$ and $s_i^{k_1}t_{c_{i+1}}^{m_1}=s_i^{-k}$.
Finally, applying Lemma  \ref{pants-group} we conclude that $m_0=m_1=k_0+k_1=0$, which in its turn gives $\D_i\D^{-1}=1$.
\end{proof}

\begin{cor}\label{handle-group}
$\Maps(N_i)\cong\Z^3$ 
with a basis
$t_{c_i}$,
$s_i=t_{a_i}t_{b_i}$ and $\D_i$.
\end{cor}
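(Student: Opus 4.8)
The plan is to realize $N_i$ as a union of two pair-of-pants fragments, run the exact sequence of Lemma \ref{exact-seq}(2), and then use the half-twist element $\D_i$ of Proposition \ref{Delta-lemma} to resolve the resulting $\Z/2$-extension into a free abelian group.

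First I would cut $N_i$ along the two-component fiber $a_i\cup b_i$, which (by the description in Section \ref{cut-section}) separates the two critical points of $f|_{N_i}$ and splits $N_i$ into two pair-of-pants fragments $F_0$ and $F_1$ with boundary fibers $c_i,\,a_i\cup b_i$ and $a_i\cup b_i,\,c_{i+1}$, respectively. By Lemma \ref{pants-group} each of these satisfies $\Maps(F_0)=\la s_i,t_{c_i}\ra\cong\Z^2$ and $\Maps(F_1)=\la s_i,t_{c_{i+1}}\ra\cong\Z^2$, where in both cases $s_i=t_{a_i}t_{b_i}$.

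Next I would apply Lemma \ref{exact-seq}(2) to the decomposition $N_i=F_0\cup F_1$, whose intersection $a_i\cup b_i$ has two components, to obtain
\[
0\to\Z\to\Maps(F_0)\oplus\Maps(F_1)\xrightarrow{\theta}\Maps(N_i)\to\Z/2\to0,
\]
with kernel $\Z$ generated by $s_i\oplus s_i^{-1}$. Since this kernel identifies the two copies of $s_i$, the image $\Im\theta$ is free abelian of rank $3$ with basis $s_i,t_{c_i},t_{c_{i+1}}$. Finally I would bring in $\D_i$: by Proposition \ref{Delta-lemma} it interchanges $a_i$ and $b_i$, hence maps to the generator of the quotient $\Z/2$, so $\Maps(N_i)=\Im\theta+\Z\D_i$, while $\D_i^2=t_{c_i}t_{c_{i+1}}\in\Im\theta$. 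Writing the abelian group $\Maps(N_i)$ additively, this presents it as $\Z^4/\la 2\D_i-t_{c_i}-t_{c_{i+1}}\ra$ on the generators $s_i,t_{c_i},t_{c_{i+1}},\D_i$.

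The last point is where the only real subtlety lies, and it is where I expect the main (if modest) obstacle: one must see that the $\Z/2$ at the end of the exact sequence does not create $2$-torsion but is absorbed into a free rank-$3$ group. This follows because the relator $2\D_i-t_{c_i}-t_{c_{i+1}}$ is a \emph{primitive} vector in $\Z^4$, so the quotient is free abelian of rank $3$; concretely, eliminating $t_{c_{i+1}}=2\D_i-t_{c_i}$ shows that $t_{c_i},\,s_i,\,\D_i$ form a basis. The identity $\D_i^2=t_{c_i}t_{c_{i+1}}$ of Proposition \ref{Delta-lemma}, which says that the square of the lift is a primitive element of $\Im\theta$, is exactly the input that makes $\D_i$ a genuine ``half-twist'' and forces the extension to split off a free summand.
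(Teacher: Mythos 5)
Your proof is correct and is essentially the paper's own argument: the paper derives the corollary ``immediately'' from Lemma \ref{pants-group}, Lemma \ref{exact-seq} and Proposition \ref{Delta-lemma}, which is exactly the decomposition into two pair-of-pants fragments, the exact sequence with $\Z/2$ quotient, and the half-twist $\D_i$ that you use. The only thing you add is the explicit bookkeeping---identifying the two copies of $s_i$ in $\Im\theta$ and noting that the relator $2\D_i-t_{c_i}-t_{c_{i+1}}$ is primitive in $\Z^4$, so the extension yields a free abelian group of rank $3$---which the paper leaves to the reader.
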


\begin{proof}
It follows immediately from Proposition \ref{pants-group} and Lemmas \ref{exact-seq}, \ref{Delta-lemma}.
\end{proof}

\subsection{Computation of the group $\Maps(X_\R)$}\label{topologicalMW}
\begin{proposition}\label{maps-spherical}
If $X_\R= \K\# p\T^2\+q\SSS^2$,
then
$\Maps(X_\R)=\Z^{2p}+\Z/2$ 
is generated by the elements
$t_{c_1}$, $s_i$, $\D_i$, $1\le i\le p$ 
with the only relation
$$
t_{c_1}^\epsilon \prod_{1\le 2i+1\le p}\D_{2i+1}^2=\prod_{1\le 2i\le p}\D_{2i}^2 \qquad \text{where}\ \ \epsilon=1+(-1)^p.
$$
\end{proposition}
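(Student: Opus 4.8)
The plan is to assemble $\Maps(X_\R)$ from the fragment groups $\Maps(N_i)$ and $\Maps(A_i)$ computed earlier, using the adjacency exact sequences of Lemma \ref{exact-seq} inductively along the chain of cuts $N_1, A_1, N_2, A_2, \dots, N_p, A_p$ described in Section \ref{cut-section}, and then to impose the single global relation coming from the identification of the two boundary fibers $c_1$ and $c_{p+1}$ when we glue $N$ back into the closed surface $X_\R$. By Corollary \ref{handle-group} each handle fragment contributes $\Maps(N_i)\cong\Z^3=\la t_{c_i}, s_i, \D_i\ra$, while by Lemma \ref{torus-group} each cylindrical fragment $A_i$ contributes $\Maps(A_i)\cong\Z$ generated by $t_{c_{i+1}}$. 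Gluing $N_i$ to $A_i$ along the single connected fiber $c_{i+1}$ falls under Lemma \ref{exact-seq}(1), whose kernel relation identifies the boundary twist $t_{c_{i+1}}$ coming from $N_i$ with the generator of $\Maps(A_i)$; gluing $A_i$ to $N_{i+1}$ does the same. First I would run this one-point-overlap splicing step by step, checking at each stage that the epimorphism $\pi_0(G(F))\to\Maps(F)$ stays an isomorphism via Lemma \ref{exact-seq}(3), so that the resulting group $\Maps(N)$ is freely generated by $t_{c_1}, s_1, \D_1, \dots, s_p, \D_p$, i.e. is $\Z^{2p+1}$ with the $c$-twists all identified to powers of the $\D_i^2$ through the relations $\D_i^2=t_{c_i}t_{c_{i+1}}$ of Proposition \ref{Delta-lemma}(2).

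The key computation is to telescope these relations. From $\D_i^2=t_{c_i}t_{c_{i+1}}$ for $i=1,\dots,p$ one obtains, by multiplying alternate relations, an expression relating $t_{c_1}$ and $t_{c_{p+1}}$ to the product of the $\D_i^2$. Concretely, separating the indices by parity,
$$
\prod_{1\le 2i\le p}\D_{2i}^2=\prod_{1\le 2i\le p}t_{c_{2i}}t_{c_{2i+1}},\qquad
\prod_{1\le 2i+1\le p}\D_{2i+1}^2=\prod_{1\le 2i+1\le p}t_{c_{2i+1}}t_{c_{2i+2}},
$$
and comparing the two products (the interior $c$-twists cancel in pairs, since each $c_j$ with $2\le j\le p$ occurs once in each parity class) leaves exactly $t_{c_1}$ and $t_{c_{p+1}}$ as the surviving boundary twists, with the survival pattern depending on whether $p$ is even or odd.

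Finally I would invoke the closing-up relation. Reassembling $X_\R$ from $N$ glues the two boundary circles $c_1$ and $c_{p+1}$ together, and by Lemma \ref{cuts} this imposes precisely $t_{c_1}t_{c_{p+1}}=1$ in $\Map(X_\R)$; moreover $t_{c_1}$ has order $2$ in $\Map(X_\R)$ (again Lemma \ref{cuts}), which is the source of the $\Z/2$ summand and of the exponent $\epsilon=1+(-1)^p$. Substituting $t_{c_{p+1}}=t_{c_1}^{-1}=t_{c_1}$ into the telescoped identity yields the stated relation
$$
t_{c_1}^\epsilon \prod_{1\le 2i+1\le p}\D_{2i+1}^2=\prod_{1\le 2i\le p}\D_{2i}^2,
$$
and a rank count confirms $\Maps(X_\R)=\Z^{2p}+\Z/2$. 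The main obstacle I anticipate is bookkeeping the parity of $p$ correctly: one must track which of $c_1, c_{p+1}$ sits in which parity class of the telescoping product so that the exponent $\epsilon$ lands on the correct side, and verify that in the even case the $t_{c_1}$-contribution is genuinely of order two rather than cancelling, which is exactly what produces the $t_{c_1}^\epsilon$ factor and the persistent $\Z/2$.
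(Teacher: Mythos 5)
Your assembly of $\Maps(N)\cong\Z^{2p+1}$ from the fragments (Corollary \ref{handle-group}, Lemma \ref{torus-group}, and the splicing Lemma \ref{exact-seq}), and your telescoping of the relations $\D_i^2=t_{c_i}t_{c_{i+1}}$ of Proposition \ref{Delta-lemma}(2), follow the paper's proof essentially verbatim. However, the closing step contains a genuine error: you claim that $t_{c_1}$ has order $2$ in $\Map(X_\R)$ ``again by Lemma \ref{cuts}'' and then substitute $t_{c_{p+1}}=t_{c_1}^{-1}=t_{c_1}$. Lemma \ref{cuts} asserts the opposite for $p>0$: there the image of the twist subgroup in $\Map(X_\R)$ is \emph{free} abelian of rank $3p$, so $t_{c_1}$ has infinite order; the $\Z/2$ conclusion of that lemma concerns only $p=0$, where $c_1$ and $c_{p+1}$ are already isotopic inside the annulus $N$. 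Likewise, in $\Maps(X_\R)$ the torsion element is not $t_{c_1}$ but the combination $\delta=t_{c_1}^{\epsilon/2}\prod_{1\le 2i+1\le p}\D_{2i+1}\prod_{1\le 2i\le p}\D_{2i}^{-1}$ of Section \ref{order-2-elements}; the $\Z/2$ summand arises because the single relation reads ``twice a primitive vector equals zero'' in $\Z^{2p+1}$, not because any generator is torsion.

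This is not a cosmetic slip, because the two substitutions you conflate produce different relations. In the telescoped identity the boundary twists $t_{c_1}$ and $t_{c_{p+1}}$ combine into $t_{c_1}t_{c_{p+1}}^{-1}$ when $p$ is even and into $t_{c_1}t_{c_{p+1}}$ when $p$ is odd; hence with the correct substitution $t_{c_{p+1}}=t_{c_1}^{-1}$ a factor $t_{c_1}^{\pm2}$ survives exactly for $p$ even, giving $\epsilon=1+(-1)^p$, whereas your substitution $t_{c_{p+1}}=t_{c_1}$ produces the factor exactly for $p$ odd, i.e.\ $\epsilon=1-(-1)^p$, the parity opposite of the statement. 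The case $p=1$ exposes the failure immediately: correctly, $\D_1^2=t_{c_1}t_{c_2}=t_{c_1}t_{c_1}^{-1}=1$, so $\D_1$ is the order-$2$ element (in agreement with Lemma \ref{delta}), while your substitution would yield $\D_1^2=t_{c_1}^2\ne1$. The repair is simple: drop the order-two claim, substitute only $t_{c_{p+1}}=t_{c_1}^{-1}$, and observe that $\Z^{2p+1}$ modulo twice a primitive element is $\Z^{2p}\oplus\Z/2$. A smaller point: the fact that $t_{c_1}t_{c_{p+1}}=1$ is the \emph{only} relation imposed on $\Maps(N)$ upon regluing is what the paper extracts from \cite[Theorem 3.6]{stukow}; Lemma \ref{cuts} by itself only describes the twist subgroup of $\Map(X_\R)$ and does not identify the kernel of $\Maps(N)\to\Maps(X_\R)$.
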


\begin{proof} After we skip the spherical components and cut the component
$\K\# p\T^2$ in the same way as in Sec. \ref{cut-section}, we obtain a surface $N$.
From Lemma \ref{exact-seq} and 
Corollary \ref{handle-group} it follows that
 $\Maps(N)\cong \Z^{2p+1}$ with a basis formed by $t_{c_1}$ and $s_i$, $\D_i$, $1\le i\le p$.
According to 
\cite[Theorem 3.6]{stukow} the group $\Maps(X_\R)$ is obtained from $\Maps(N)$ by adding  the relation
\be\label{tc-relation}t_{c_{p+1}}=t_{c_1}^{-1}.\ee
Finally, the relation required follows from (\ref{tc-relation}) and 
Proposition \ref{Delta-lemma}(2).
\end{proof}

For computation of $\Maps(X_\R)$
in the remaining case, $X_\R=\Kl\+\Kl$, note that our elliptic fibration $f_\R:X_\R\to\P^1_\R$ cannot have critical points. 
So, it is a nonsingular fibration with a fiber $S^1\+S^1$.
The restriction of $f_\R$ to
 each copy of $\K$ admits a pair of disjoint sections, which we denote
$\ll_1^1$, $\ll_2^1$ for one copy and $\ll_1^2$, $\ll_2^2$ for another.

\begin{figure}[h!]
\caption{Sections $\ll_i^j$}\label{lambda-ij}
\includegraphics[height=3cm]{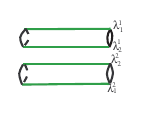}
\end{figure}

\begin{proposition}\label{maps-hyperbolic-case}
If $X_\R=\Kl\+\Kl$, then $\Maps(X_\R)$ is isomorphic to $\Z/2\oplus\Z/2$ and formed by the elements $\{\gg{\ll_1^1}{\ll_i^j}\}_{i,j\in\{1,2\}}$.
\end{proposition}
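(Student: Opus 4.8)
The plan is to identify $\Maps(X_\R)$ with the group of isotopy classes of smooth sections of $f_\R$ and then enumerate these classes directly. As recalled just above, for $X_\R=\Kl\+\Kl$ the map $f_\R\colon X_\R\to\P^1_\R\cong S^1$ has no critical points, so it is a fiber bundle whose fiber is the real locus $T=S^1\+S^1$ of a real elliptic curve with two real components, a topological group with identity component $S^1$ and component group $\Z/2$, and whose structure is by group translations. Since $X_\R$ has empty boundary, $G(X_\R)=\til G(X_\R)$, and Lemma \ref{section-action} applied to $F=X_\R$ with base section $\l_0=\ll_1^1$ yields a group isomorphism $\Maps(X_\R)\cong\pi_0(G(X_\R))\cong\pi_0(\Sec(X_\R))$, the group law on $\pi_0(\Sec(X_\R))$ being fiberwise addition of sections with $\ll_1^1$ taken as the zero section (this law is abelian by Proposition \ref{commutativity}). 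Thus it suffices to classify smooth sections of $f_\R$ up to isotopy, keeping track of this additive structure.

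Taking $\ll_1^1$ as the zero section makes every fiber a group and turns the monodromy around $\P^1_\R$ into a group automorphism $\mu\colon T\to T$. Non-orientability of each Klein bottle forces $\mu$ to preserve each of the two fiber circles and to reverse orientation on each; being a group automorphism fixing the identity, $\mu$ acts by $x\mapsto -x$ on the identity component and trivially on the component group, i.e. $\mu(x,\epsilon)=(-x,\epsilon)$ for $(x,\epsilon)\in S^1\times\Z/2=T$. A section is then a $\mu$-twisted loop, i.e. a path $s\colon[0,1]\to T$ with $s(1)=\mu(s(0))$. Its component coordinate $\epsilon\in\Z/2$ is locally constant, hence constant, and in the identity direction $s(t)=(c(t),\epsilon)$ with $c(1)=-c(0)$ in $S^1=\R/\Z$. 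Lifting $c$ to $\widehat c\colon[0,1]\to\R$, the quantity $\nu(c)=\widehat c(0)+\widehat c(1)\bmod 2\in\Z/2$ is independent of the lift (a change of lift alters $\widehat c(0)+\widehat c(1)$ by an even integer) and is invariant under homotopy rel the twisted boundary condition; conversely it is a complete invariant, since for fixed $\epsilon$ the space of admissible $\widehat c$ with a prescribed value of $\widehat c(0)+\widehat c(1)\in\Z$ is convex. Hence $\pi_0(\Sec(X_\R))\cong\Z/2\times\Z/2$ via $s\mapsto(\epsilon,\nu)$.

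Both invariants are additive for fiberwise addition: the component coordinate adds in $\Z/2$, and $\nu(c_1+c_2)=\nu(c_1)+\nu(c_2)$ because lifts add. Therefore $\pi_0(\Sec(X_\R))\cong\Z/2\oplus\Z/2$, whence $\Maps(X_\R)\cong\Z/2\oplus\Z/2$. It remains to match the four classes with the four elements $\gg{\ll_1^1}{\ll_i^j}$. The sections $\ll_1^1,\ll_2^1$ lie in one copy of $\Kl$ (so $\epsilon=0$) and $\ll_1^2,\ll_2^2$ in the other ($\epsilon=1$); within each copy the two given sections are disjoint, so the difference $\ll_2^j-\ll_1^j$ is a nowhere-zero section of the identity component, and such a $c$ has $\nu(c)=1$ (its lift stays in an interval $(0,1)$, running from $a$ to $1-a$, so $\widehat c(0)+\widehat c(1)=1$). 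By additivity $\nu(\ll_1^j)\ne\nu(\ll_2^j)$, so $\{\ll_1^1,\ll_2^1,\ll_1^2,\ll_2^2\}$ represents all four classes; correspondingly $\{\gg{\ll_1^1}{\ll_i^j}\}_{i,j\in\{1,2\}}$ is the whole group $\Maps(X_\R)\cong\Z/2\oplus\Z/2$.

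The step I expect to be the main obstacle is the middle one: justifying the precise form $\mu(x,\epsilon)=(-x,\epsilon)$ of the monodromy and proving that $\nu$ is a complete isotopy invariant separating disjoint sections, i.e. establishing $\pi_0(\Sec(\Kl))=\Z/2$ cleanly. Once this is secured, the reduction via Lemma \ref{section-action} and the additivity of $(\epsilon,\nu)$ are purely formal.
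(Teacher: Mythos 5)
Your reduction to $\pi_0$ of the space of sections and your $(\epsilon,\nu)$-classification are correct, and in fact they supply the details behind the step the paper dispatches in one sentence (``any smooth section of $X_\R$ is isotopic to one of the four $\ll_i^j$''). But the step you yourself single out as the main obstacle is a genuine gap, and it cannot be closed by the argument you give. From ``$\mu$ is a group automorphism, equal to $x\mapsto -x$ on the identity component and trivial on the component group'' you conclude $\mu(x,\epsilon)=(-x,\epsilon)$. This is a non sequitur: the map $\mu'$ with $\mu'(x,0)=(-x,0)$ and $\mu'(x,1)=(-x+\tfrac12,1)$ is also a continuous automorphism of $S^1\times\Z/2$ (indeed $\mu'(x,1)+\mu'(y,1)=(-x-y+1,0)=(-x-y,0)=\mu'\bigl((x,1)+(y,1)\bigr)$), it too is $-1$ on the identity component, trivial on the component group, orientation-reversing on both circles, and its mapping torus is again $\Kl\+\Kl$, since $x\mapsto -x+\tfrac12$ is conjugate to $x\mapsto -x$ by a rotation. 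So $\mu'$ passes every test you impose. Moreover, the two candidates give different theorems: $\mu'$ interchanges the two points of period 2 lying on the non-identity component, so over the second copy of $\Kl$ the twisted boundary condition becomes $\widehat c(1)=-\widehat c(0)+\tfrac12+k$; the fiberwise double of such a section is the half-period section, which has $\nu=1$, and your own invariants then yield $\pi_0(\Sec(X_\R))\cong\Z/4$ rather than $\Z/2\oplus\Z/2$. Thus the alternative $\Z/2\oplus\Z/2$ versus $\Z/4$ is exactly the alternative $\mu$ versus $\mu'$: the whole content of the proposition sits in the step you left unjustified, and no purely topological feature of $\Kl\+\Kl$ can decide it.

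What rules out $\mu'$ is the real-algebraic structure, and that is precisely where the paper's proof does its work: it takes as the four sections the fixed locus of the fiberwise involution determined by $\ll_1^1$ (the real line $L_\R$ together with the three lifted components of the branch sextic, which for type $\la\,|\,|\,|\,\ra$ are all sections over $\P^1_\R$), i.e.\ the points of period 2 in each real fiber, and argues via the monodromy on the period lattice that these four points are not permuted. Concretely: the monodromy of the family of complex fibers $E$ along $\P^1_\R$ commutes with $\conj_*$ on $H_1(E;\Z)$ and preserves the complex orientation of $E$; since $\conj_*$ is diagonal with eigenvalues $\pm1$ in an integral basis (the fiber has two real components), the monodromy is diagonal with entries $\pm 1$ and determinant $+1$, hence equals $\pm\id$ on $H_1(E;\Z)$, hence is the identity on the $2$-torsion $E[2]\cong H_1(E;\Z/2)$. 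So all four points of period 2 sweep out global sections, which forces $\mu(x,\epsilon)=(-x,\epsilon)$; with this input the rest of your argument goes through. A secondary, easily fixable point: Lemma \ref{section-action} gives only an epimorphism $\pi_0(\Sec(X_\R))\to\Maps(X_\R)$, not an isomorphism; you should invoke Proposition \ref{no-zigzag-isomorphism} (applicable here, as $f_\R$ has no critical fibers at all), or else note that the four sections are pairwise non-homologous in $H_1(X_\R)$, so their classes cannot collide in $\Maps(X_\R)$.
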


\begin{proof} Let us choose as $\ll^1_1$ one of real lines and as $\ll^1_2, \ll^2_1, \ll^2_2$ the 3 
smooth
sections
that form together with $\ll^1_1$ the fixed point set of the fiberwise hyperelliptic involution determined by the choice of $\ll^1_1$ as zero
(see Fig. \ref{lambda-ij}). They do form 4 disjoint 
smooth
sections, since in each fiber the fixed points are the points of period 2, and since the monodromy acts identically on the 
$\conj$-invariant part of the period lattice of a fiber and as multiplication by $-1$ on its anti-invariant part, which provides pairwise distinction between the 
points of period 2. Under fiberwise addition these 4 
smooth
sections form a  group $\Z/2\oplus \Z/2$. Thus, there remain to notice that any 
smooth
section of $X_\R$ is isotopic to one of the four $\ll_i^j$, and to apply Proposition \ref{section-action}.
\end{proof}

\subsection{Element $\boldsymbol{\delta}$ of order 2  in $\boldsymbol{\Maps(X_\R)}$}\label{order-2-elements}
If $X_\R\cong \K\# p\T^2\+q\SSS^2$
with $p\ge 1$, then, in accordance with Proposition \ref{maps-spherical},
the unique element of order 2 in $\Maps(X_\R)$ is  
$$\delta=
t_{c_1}^{\frac{1+(-1)^p}2}
\prod_{1\le 2i+1\le p}\D_{2i+1}\prod_{1\le 2i\le p}\D_{2i}^{-1} .
$$
From the presentation of 
the
elements $\D_i$ by smooth sections (see Fig. \ref{Delta-section}) it follows that 
$\delta=\gg{\ll_0}{\ll}$, where $\ll_0$ is 
a section given by a chosen real line
and $\ll$ is a smooth section whose image in $\Q_\R$
goes below ovals $o_i$, $1\le i\le p$ and above the zigzags (if any), without intersection of $\ll_0$,
as it is shown on
Fig.\,\ref{delta-tritangent}.

\begin{figure}[h!]
\caption{The smooth section $\ll=\delta(\ll_0)$ defining the element $\d\in
\Maps(X_\R)$ of order 2 (the top segment depicts $\ll_0$)}
\label{delta-tritangent}
\includegraphics[height=1.2cm]{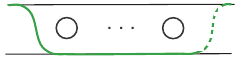}\hskip15mm
\includegraphics[height=1.25cm]{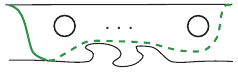}\newline
\hskip5mm without zigzags          \hskip35mm with zigzags \phantom{AAA}
\end{figure}

\subsection{Mordel-Weil group}
Recall that the Mordel-Weil group of an elliptic surface $f:X \to \P^1$ can be defined as a subgroup $\MW (X)$ of the automorphism group $\Aut (X)$ formed by those automorphisms that preserve the fibers of $f$ and act
as a translation in each nonsingular fiber. The Mordel-Weil group acts freely and transitively on the set of
lines,
so that  the latter becomes a torsor over $\MW (X)$. This definition applies
to surfaces over any field. We keep notation $\MW(X)$ for the 
Mordel-Weil group of elliptic surfaces $X$
defined over $\C$, while when $X$ is a real elliptic surface, we notate by $\MW_\R(X)$ the subgroup of $\MW(X)$ formed by the elements 
$g\in \MW(X)$ preserving the real structure. In the latter case, it is the set of real lines in $X$ that becomes a torsor over $\MW_\R(X)$.

Thus, if we fix a line $L\subset X$ (respectively, a real line $L\subset X$) then we can interpret $\MW(X)$ (respectively, $\MW_\R(X)$) as a group structure
on the set of lines in $X$ (respectively, the set of real lines in $X$) by associating with each line $L'\subset X$ (respectively, each real line $L'\subset X$)
an element of $\MW(X)$ (respectively, of $\MW_\R(X)$) that transforms $L$ into $L'$, and which we denote by $\gg{L}{L'}$.

Furthermore, by passing from lines to their homology classes and applying 
the natural correspondence $v\in E_8=\la K,L\ra^\perp\mapsto L_v$  described in Proposition \ref{line-root-correspondence}, one gets the next, well-known, result
(see \cite{shioda}).

\begin{proposition}\label{lattice-MW}
Assume that $X$ is a 
rational relatively minimal
elliptic surface with a fixed line $L\subset X$ and that $f$ has only 
reduced irreducible fibers.
Then the compositions 
$$\begin{aligned}
v\in E_8&\mapsto L_v\mapsto \gg{L}{L_v}\in \MW(X)\\
v\in E_8\cap\ker(1+\conj_*)&\mapsto L_v\mapsto \gg{L}{L_v}
\in \MW_\R(X) \ \text { if } \ X \ \text{ is real }
\end{aligned}$$ 
are group isomorphisms.

In particular, $\MW(X)$ is a free abelian group naturally isomorphic to $E_8$, while $\MW_\R(X)$ is a free abelian group naturally isomorphic to 
$\L=E_8\cap\ker(1+\conj_*)$.
\qed\end{proposition}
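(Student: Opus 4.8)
The plan is to realise $\MW(X)$ as the group of holomorphic sections of $f$ under fibrewise addition, with the fixed line $L$ playing the role of the zero section: for a section $P$ the automorphism $\gg{L}{P}$ is translation by $P$ in every smooth fibre, so that $\gg{L}{P}\circ\gg{L}{Q}=\gg{L}{P\boxplus Q}$, where $P\boxplus Q$ denotes the section obtained by adding $P$ and $Q$ fibrewise. Since $v\mapsto L_v$ is a bijection between $E_8=\la K,L\ra^\perp$ and the set of lines (Proposition \ref{line-root-correspondence}), and $\MW(X)$ acts freely transitively on lines, the composition $v\mapsto\gg{L}{L_v}$ is automatically a bijection $E_8\to\MW(X)$; the whole issue is to check that it is a group homomorphism, which amounts to the single identity $L_{v+w}=L_v\boxplus L_w$.

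First I would recall the mechanism, due to Shioda, that makes fibrewise addition linear on homology classes. Writing $F=-K$ for the class of a fibre and $T=\la[F],[L]\ra=\la K,L\ra$ for the trivial lattice, restriction of divisors to the generic fibre $X_\eta$ (an elliptic curve over the function field of $\P^1$) induces an isomorphism $H_2(X)/T\cong\Pic(X_\eta)$, under which the Mordell--Weil map $P\mapsto[P]-[L]$ becomes the canonical isomorphism $X_\eta(\C(\P^1))\cong\Pic^0(X_\eta)$ of the elliptic curve with its Jacobian. Because the latter is a group homomorphism, one obtains $[P\boxplus Q]\equiv[P]+[Q]-[L]\pmod T$ for all sections $P,Q$. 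Applying this to $P=L_v$, $Q=L_w$ and using $[L_v]=[L]+\tfrac{v^2}{2}K+v\equiv v\pmod T$ (as $[L],K\in T$) yields $[L_v\boxplus L_w]\equiv v+w\pmod T$.

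It then remains to upgrade this congruence to an equality of classes, and here I would use unimodularity. Since $X$ is rational and simply connected, $H_2(X)$ is unimodular by Poincar\'e duality, while $T$ has Gram determinant $-1$ (from $[L]^2=-1$, $[L]\cdot[F]=1$, $[F]^2=0$) and is therefore itself unimodular; hence $H_2(X)=T\oplus T^\perp$ orthogonally with $T^\perp=\la K,L\ra^\perp\cong E_8$. In particular $E_8\cap T=0$, so two elements of $E_8$ congruent modulo $T$ coincide. As $L_v\boxplus L_w$ is again a line, its class equals $L_u$ for a unique $u\in E_8$ with $u\equiv v+w\pmod T$, whence $u=v+w$ and, by injectivity of $v\mapsto L_v$, $L_v\boxplus L_w=L_{v+w}$. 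This establishes the homomorphism property, so $v\mapsto\gg{L}{L_v}$ is a group isomorphism $E_8\cong\MW(X)$, giving the first displayed claim and the freeness and rank assertions.

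For the real statement I would restrict this isomorphism to the sublattice $\L=E_8\cap\Ker(1+\conj_*)$. By Proposition \ref{line-root-correspondence}(3) a line $L_v$ is real precisely when $v\in\L$; and since $L$ is real, $\conj$-equivariance together with the free transitivity of the $\MW$-action shows that $\gg{L}{L_v}$ commutes with $\conj$ exactly when its value $L_v$ on $L$ is real. Hence the isomorphism $E_8\cong\MW(X)$ carries $\L$ bijectively onto $\MW_\R(X)$, yielding $\MW_\R(X)\cong\L$. The only genuinely substantial input is the Shioda linearity $[P\boxplus Q]\equiv[P]+[Q]-[L]\pmod T$; everything else is the bijection already recorded in Proposition \ref{line-root-correspondence} and elementary lattice bookkeeping. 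I expect this Shioda step --- equivalently, the identification of $\MW(X)$ with $H_2(X)/T$ as groups --- to be the main point, which is why the statement is attributed to \cite{shioda}.
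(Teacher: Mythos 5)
Your proof is correct and coincides in substance with the paper's treatment: the paper gives no independent argument for this proposition, citing \cite{shioda} for it as a well-known fact, and your write-up is exactly the standard Shioda argument (restriction to the generic fibre, the Abel--Jacobi homomorphism property, and the unimodular splitting $H_2(X)=T\oplus T^\perp$) that lies behind that citation. One cosmetic slip: the kernel of $\Pic(X)\to\Pic(X_\eta)$ is $\Z F$, so the induced isomorphism is $H_2(X)/T\cong\Pic^0(X_\eta)$ rather than $\Pic(X_\eta)$ --- harmless, since all you actually use is the congruence $[P\boxplus Q]\equiv[P]+[Q]-[L]\pmod T$, which is valid.
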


By definition each element of $\MW_\R(X)$ preserves the real fibers and act on them by translation. Thus, considering its restriction to $X_\R$ we get
a well defined, natural, homomorphism to $\Maps(X)$, which we denote by $\Phi : \MW_\R(X)\to \Maps(X)$.

\section{Proof of Theorem \ref{MW-homomorphism}}\label{sect6}
Let us fix a real line $L\subset X$ and set $g_v=\Phi \gg{L}{L_v}\in \Maps(X)$,
for every $v\in\L=E_8\cap\ker(1+\conj_*)$ (see Proposition \ref{lattice-MW}).
Recall our convention to use the canonical identification of $\L\subset H_2(X)$ with the isomorphic to it $\L\subset H_2(Y)$ (see Section \ref{lines-elliptic-surface})
as identity, and, in particular, to treat (when it does not lead to a confusion) the oval- and bridge-classes of $Y$ as elements of both $\L\subset H_2(Y)$
and $\L\subset H_2(X)$.

\subsection{Preparation}

\begin{proposition}\label{L-and-g}
The smooth sections $\lambda_0, \lambda$ with $\lambda\in \Sec(F,\lambda_0)$
on the  fragments
$F\to I_F$ of $f: X_\R\to\P^1_\R$ which are
depicted on Fig. \ref{local-figure} represent the elements $g=\gg{\lambda_0}{\lambda} \in\Maps(F)$ that are indicated under the corresponding fragment.
\end{proposition}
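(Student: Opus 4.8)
The plan is to identify each depicted element by reading the isotopy class of the section $\lambda$ relative to $\lambda_0$, invoking the dictionary between sections and fiberwise mapping classes of Lemma~\ref{section-action}. That lemma realizes $\Maps(F)$ as the set of isotopy classes rel $\partial F$ of sections in $\Sec(F,\lambda_0)$ via $\lambda\mapsto\gg{\lambda_0}{\lambda}$, so for each fragment $F$ appearing in Fig.~\ref{local-figure} it is enough to match the pictured $\lambda$ against the explicit generators of $\Maps(F)$ already established: the single generator $t_{c_{i+1}}$ for a cylindrical $A_i$ (Lemma~\ref{torus-group}), the pair $t_at_b,\,t_c$ for a pair-of-pants fragment (Lemma~\ref{pants-group}), and $t_{c_i},\,s_i=t_{a_i}t_{b_i},\,\D_i$ for a handle fragment $N_i$ (Corollary~\ref{handle-group}). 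Since each of these fragment groups is free abelian, identifying the pictured element amounts to reading off finitely many integer coordinates together with the $\Z/2$ datum of whether $\lambda$ swaps the components of a two-component boundary fiber.

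First I would pin down the boundary-twist coordinate. A section that agrees with $\lambda_0$ outside a small subinterval and winds once positively, with respect to the fixed orientation of $F$, around the fiber over that subinterval represents the boundary Dehn twist $t_c$; its winding number then gives the exponent of $t_c$. This settles the cylindrical fragments at once and extracts the $t_c$-coordinate in the remaining cases. For a section meeting a two-component boundary fiber $a\cup b$ and crossing the critical point of a pair-of-pants fragment, the induced simultaneous twist is read as $s_i=t_{a_i}t_{b_i}$; here the essential input, namely that the two twists occur with equal exponent and in particular with the sign giving $t_at_b$ rather than $t_at_b^{-1}$, is exactly Lemma~\ref{simulteneous} together with the normalizing picture of Fig.~\ref{pants-s}.

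For the handle fragments the only nonformal point is the identification of $\D_i$, which I would carry out through its characterization in Proposition~\ref{Delta-lemma}: the pictured section interchanges $a_i$ and $b_i$, hence satisfies relation~(1), and its square performs $t_{c_i}t_{c_{i+1}}$, as one reads from the section realizing a half Dehn twist over each of the two end subintervals in the manner of Fig.~\ref{Delta-section}, giving relation~(2). By the uniqueness clause of Proposition~\ref{Delta-lemma} these two checks determine $\D_i$ uniquely, after which every entry of Fig.~\ref{local-figure} is fixed by its $(t_{c_i},s_i,\D_i)$-coordinates. The genuine obstacle is purely a matter of bookkeeping rather than of ideas: one must consistently use the chosen orientation of $F$ to separate positive from negative twists and carefully track whether a given section swaps the two components of a two-component fiber. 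Once the sign normalizations of Lemma~\ref{simulteneous} and of the half-twist behind $\D_i$ are in place, each element is verified by direct inspection, with no further computation required.
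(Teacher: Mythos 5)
Your proposal is correct and follows essentially the same route as the paper: the paper's own proof is the one-line observation that, by the very definitions of $t_c$, $s$, and $\D$ (each given via explicit sections/fiberwise diffeomorphisms), the depicted sections represent the indicated elements, which is exactly what you verify in expanded form via Lemma \ref{section-action}, Lemma \ref{simulteneous}, and the characterization of $\D_i$ in Proposition \ref{Delta-lemma}. Your write-up merely makes explicit the coordinate bookkeeping in the free abelian groups $\Maps(F)$ that the paper leaves to inspection of the figure.
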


\begin{figure}[h!]
\caption{
Examples of [section 
$\mapsto\Maps(F)$-element] correspondence}\label{local-figure}
\includegraphics[height=1.25cm]{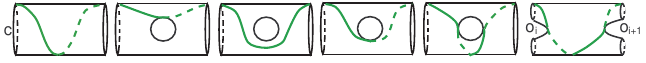}
\newline
\phantom{AAA} $t_c$\hskip17mm $s$\hskip17mm  $\D t_c^{-1}$\hskip13mm  $\D s^{-1}$\hskip11mm  $\D s^{-2}$\hskip10mm  $s_is_{i+1}t_c^{-1}$
\caption*{\footnotesize The top segment depicts the section $\ll_0$, while $\ll$ is drawn in green. By convention, the depicted fragments are equipped with an orientation induced from a fixed orientation of $N$ and, on drawings, this  is the right-hand orientation of the front side.}
\end{figure}

\begin{proof}
By definition of $s, \D,t_c\in\Maps(F)$, each of the elements indicated 
in the bottom of Fig. \ref{local-figure}
is  presented by a  diffeomorphism $g\in G(F)$ sending $\ll_0$ to $\ll$ depicted.
\end{proof}

\begin{lemma}\label{MW-basic}
If 
$X_\R=\Kl\#p\T^2\+q\SSS^2$,
the subgroup $\Im\Phi\subset \Maps(X_\R)$
contains the following elements:
\begin{enumerate}\item
$\D_is_i^{-2}=g_e$ for $e=O_i$ with $i=1,\dots, p$.
\item
$s_is_{i+1}t_{c_{i+1}}^{-1}=g_e$ for $e=B_{i\,i+1}$ with $i=1,2,\dots, p-1$.
\item
$s_i=g_e$ for 
$e= B_i$ 
with $i=1,3$ if $p=4$, $q=0$ and with $i=1,\dots, p$ if $p<4$.
\item
$t_{c_1}=g_e$ if $p=q=0$ and $e$ is any root of $\L=4A_1$.
\end{enumerate}
\end{lemma}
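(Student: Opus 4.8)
Lemma \ref{MW-basic} claims that for $X_\R=\Kl\#p\T^2\+q\SSS^2$, the image of $\Phi$ contains four explicit families of elements, each identified as $g_e=\Phi\gg{L}{L_e}$ for a specific geometric vanishing class $e$ (an oval-class $O_i$, or a bridge-class $B_{i\,i+1}$, $B_i$, or an arbitrary root of $4A_1$), and expressed in the standard generators $t_{c_i},s_i,\D_i$ of $\Maps(X_\R)$ found in Corollary \ref{handle-group} and Proposition \ref{maps-spherical}.

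My plan. The key idea is that each claimed identity is a purely \emph{local} statement: the mapping class $g_e$ differs from the identity only over the portion of $\P^1_\R$ where the line $L_e$ deviates from the reference line $L$, and each listed $e$ is a vanishing class supported in a single fragment $N_i$ (for $O_i$), an adjacent pair of fragments (for $B_{i\,i+1}$), or a pair-of-pants/cylinder piece (for $B_i$, and for the roots of $4A_1$). So the strategy is, for each case, to (i) realize the line $L_e$ concretely as a smooth section $\ll$ that agrees with the reference section $\ll_0$ outside the relevant fragment, and (ii) read off the corresponding element of $\Maps(F)$ using the dictionary established in Proposition \ref{L-and-g} (Fig.\,\ref{local-figure}), then transport it to $\Maps(X_\R)$ via the inclusion of fragment groups. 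The crucial input connecting the geometry of tritangents to these sections is the intersection computation of Section \ref{oval-line-intersection}: by Corollary \ref{types2-3} and Lemma \ref{cycle-line-intersection}, a line $L_{O_i}$ meets the oval-cycle $O_i$ with $L\cdot O=\pm2$, which is exactly what forces a double Dehn twist $s_i^{-2}$ to appear alongside the half-twist $\D_i$ in case (1).

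Carrying it out case by case. For (1), I would take $e=O_i$; by Proposition \ref{one-to-one} the tritangent $\pi(L_{O_i})$ has two tangency points with $o_i$ separated by the $J$-tangency (type $T_0^*$ behavior, cf. Proposition \ref{separate-23}), so the covering section, restricted to the fragment $N_i$, is exactly the section labeled $\D s^{-2}$ on Fig.\,\ref{local-figure}; this gives $g_{O_i}=\D_is_i^{-2}$. For (2), $e=B_{i\,i+1}$ is a bridge between consecutive ovals, so $L_{B_{i\,i+1}}$ separates $o_i$ and $o_{i+1}$ (Lemma \ref{intersection-criterion}(2)), and its section occupies the two adjacent handle-fragments $N_i,N_{i+1}$ and the cylinder $A_i$ between them; reading Fig.\,\ref{local-figure} (the $s_is_{i+1}t_c^{-1}$ picture) yields $s_is_{i+1}t_{c_{i+1}}^{-1}$. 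For (3), $e=B_i$ is a pendant bridge joining $o_i$ to the $J$-component, whose section executes a single simultaneous twist $s_i$ over one handle, giving $g_{B_i}=s_i$; here I would invoke the numeration distinguishing lower/upper ovals (Proposition \ref{lower-upper}) to explain why only $i=1,3$ occur as pendant bridges when $p=4$ (see the $E_8$ graph in Fig.\,\ref{oval-bridge-graphs}). For (4), with $p=q=0$ the surface $X_\R$ is a single Klein bottle (no handles), $\Maps(X_\R)=\Z/2$ generated by $t_{c_1}$ (Lemma \ref{cuts}), and every root of $4A_1$ lifts to a section isotopic to the one realizing $t_{c_1}$.

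Expected main obstacle. The routine part is matching pictures to symbols via Fig.\,\ref{local-figure}; the genuinely delicate step is case (1), justifying that the correct power of the fiber twist is $s_i^{-2}$ rather than $s_i^{-1}$ or something else. This rests on the precise intersection-index bookkeeping of Section \ref{oval-line-intersection}: one must verify that the global homology class $L_{O_i}=L+\frac{O_i^2}{2}K+O_i=L-K+O_i$ (from Proposition \ref{line-root-correspondence}), together with the coherent-orientation conventions of Proposition \ref{cycle-cycle-intersection} and Lemma \ref{cycle-line-intersection}, forces exactly two full fiber twists of the stated sign in the fragment $N_i$. Equally, I would need to confirm the \emph{signs} in (2) and (3)—that the boundary twist enters as $t_{c_{i+1}}^{-1}$ and that $s_i$ appears with a plus sign—which again comes down to the orientation convention (front-sheet = right-hand orientation) fixed in Sections \ref{CD-graphs} and \ref{oval-line-intersection}. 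Once the orientation conventions are pinned down consistently across all four fragments, each identity follows by directly reading the section off the corresponding panel of Fig.\,\ref{local-figure}.
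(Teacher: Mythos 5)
Your strategy for items (1)--(3) --- pin down the geometric position of the line $L_e$ from the lattice data, read off the mapping class from the dictionary of Proposition \ref{L-and-g} (Fig.\,\ref{local-figure}), and fix signs via the intersection bookkeeping of Corollary \ref{tire-bouchon} --- is exactly the paper's proof, including your treatment of (1) via $|L_{O_i}\cdot O_i|=|O_i^2|=2$ and the two tangencies separated by the $J$-tangency (this is the paper's Proposition \ref{separate-23}). One citation slip in (2): you claim the tritangent covered by $L_{B_{i\,i+1}}$ \emph{separates} $o_i$ and $o_{i+1}$, citing Lemma \ref{intersection-criterion}(2); in fact, taking $v=[B_{i\,i+1}]$ and $B=B_{i\,i+1}$ there gives $v\cdot[B]=B_{i\,i+1}^2\equiv 0 \bmod 2$, so there is \emph{no} separation. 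The correct input is part (1) of that lemma together with Proposition \ref{oval-bridge-components}: the tritangent has odd tangency with both $o_i$ and $o_{i+1}$ and has $S_{in}=\varnothing$ (since $v^o=0$), and it is this position that matches the $s_is_{i+1}t_c^{-1}$ panel of Fig.\,\ref{local-figure}. Since you read the right panel anyway, the conclusion stands, but the justification as written is false.

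The genuine gap is case (4). You assert that ``every root of $4A_1$ lifts to a section isotopic to the one realizing $t_{c_1}$,'' but that is precisely what must be proved, and no picture-reading settles it: the section $L_{e\R}$ is a circle disjoint from $L_\R$, and the whole issue is to exclude $g_e=\id$, i.e.\ to show $L_{e\R}$ is \emph{not} isotopic to $L_\R$. The paper's argument is: since $\Maps(X_\R)=\Z/2$ when $p=q=0$ (Proposition \ref{maps-spherical}), it suffices to prove $g_e\ne 0$; this holds because $L_e$ projects to a positive tritangent of type $T_0$ (Section \ref{372}), which forces $L_{e\R}$ to be non-isotopic to $L_\R$ --- concretely, such a tritangent has an odd number of tangencies with the $J$-component, so $L_{e\R}$ meets the lift of $C_\R$ in an odd number of points, while $L_\R$ is disjoint from it, so the two sections are not even $\Z/2$-homologous. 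Without some argument of this kind your case (4) is unproven.
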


\begin{proof}
Up to changing $e$ by $-e$, (or equivalently, changing  $g_e$ by $g_e^{-1}$), the relations (1)--(3) follow from Lemma \ref{intersection-criterion}, Proposition \ref{oval-bridge-components},
and the correspondence between smooth
sections and elements of $\Maps(F)$
in Proposition \ref{L-and-g} (see  Fig.\,\ref{local-figure}).
The correct choice of sign ($e$ rather than $-e$)
is determined by
$L_{e}\cdot O_i= -e\cdot O_i$
(see Corollary \ref{tire-bouchon} ).

To prove (4), note that  $\Maps(X_\R)=\Z/2$ if $p=q=0$, so, we  just need to check that $g_e\ne 0$ for any root $e\in\L=4A_1$.
For each such $e\in\L=4A_1$, the line $L_e$ projects into a positive real tritangent  
of type $T_0$ (see Section \ref{372}), which implies that $L_{e\R}$ is not isotopic to $L_\R$ and thus, $g_e\ne0$.
\end{proof}

\begin{lemma}\label{MW-Klein2}
Let $X_\R=\K\+\K$, and $\{B_1, B_2, B_3, B_4\}$ be the bridge-classes from Lemma \ref{4-J-bridges}.
Then $g_e\in\Im\Phi\subset\Maps(X_\R)$ leaves each of the two connected components of $X_\R$ invariant if $e\in\{B_1, B_2, B_3, B_4\}$, while it
interchanges them if $e=-\frac12(B_1+B_2+B_3+B_4)$.
\end{lemma}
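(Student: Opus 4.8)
\medskip
\noindent\textbf{Proof plan.}
The plan is to reduce the assertion to determining which of the two components of $X_\R$ contains the real locus $L_{e\R}$, and then to read off that component from the known location of the associated positive tritangent on $Q_\R$.

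First I would recall from Proposition~\ref{maps-hyperbolic-case} that in the case $X_\R=\K\+\K$ the group $\Maps(X_\R)\cong\Z/2\oplus\Z/2$ is realized by four pairwise disjoint smooth sections $\ll_i^j$, of which $\ll_1^1,\ll_2^1$ lie in one copy of $\K$ and $\ll_1^2,\ll_2^2$ in the other. Taking the fixed real line to be $L_\R=\ll_1^1$, the component-preserving elements form the index-$2$ subgroup $\{\,\id,\ \gg{\ll_1^1}{\ll_2^1}\,\}$, while the remaining two elements $\gg{\ll_1^1}{\ll_1^2}$ and $\gg{\ll_1^1}{\ll_2^2}$ interchange the two copies of $\K$. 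Since $f_\R\colon X_\R\to\P^1_\R$ has no critical points in this case and $g_e=\Phi\gg{L}{L_e}$ acts as a translation in each fiber, $g_e$ leaves each component of $X_\R$ invariant precisely when $L_{e\R}$ lies in the same component as $L_\R$, and interchanges them otherwise. Thus it suffices to locate the component containing $L_{e\R}$.

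Next I would fix the dictionary between the components of $X_\R$ and the two components of $Q_\R^+$. The basepoint of $|-K_Y|$ is the point $\pi^{-1}(\v)$ over the vertex, which lies in the disc-component of $Q_\R^+$; blowing it up turns the $\Rp2$ covering that disc-component into the copy of $\K$, call it $\K_1$, that contains the exceptional section $L_\R$, whereas the copy $\K_2$ covering the band-component is left unchanged. Consequently the real locus of any real line disjoint from $L$ lies in $\K_1$ if and only if its positive tritangent lies in the disc-component of $Q_\R^+$, and in $\K_2$ if and only if that tritangent lies in the band-component.

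Finally I would invoke the computation already carried out in the proof of Proposition~\ref{12=3x4}: the tritangent $\pi(L_{B_i})$ lies in the disc-component for each $i$, while $\pi(L_e)$ with $e=-\frac12(B_1+B_2+B_3+B_4)$ lies in the band-component. By the dictionary above, $L_{B_i,\R}\subset\K_1$ together with $L_\R$, so $g_{B_i}$ leaves each component invariant, whereas $L_{e\R}\subset\K_2$, so $g_e$ interchanges the two components. I expect the main obstacle to be precisely the bookkeeping of this dictionary—verifying that $L_\R$ is the exceptional section sitting over the disc-component and that the covering $X_\R\to Q_\R^+$ matches $\K_1$ with the disc-component and $\K_2$ with the band-component; once this identification is pinned down, the conclusion is immediate from Propositions~\ref{maps-hyperbolic-case} and~\ref{12=3x4}.
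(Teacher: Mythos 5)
Your proposal is correct and takes essentially the same route as the paper: the paper's own proof likewise reduces the lemma to deciding which component of $X_\R$ contains $L_{e\R}=g_e(L_\R)$, and settles this by exactly the parity computation $L_{B_i}\cdot B_j=-B_i\cdot B_j\equiv 0$ versus $L_e\cdot B_j=\mp\tfrac12 B_j^2\equiv 1 \pmod 2$ that you import from the proof of Proposition~\ref{12=3x4}. The only difference is organizational: the paper redoes that computation inline in $X$, whereas you cite it and spell out the blow-up dictionary between the components of $X_\R$ and the disc/band components of $Q_\R^+$, which the paper leaves implicit.
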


\begin{proof}
For $e,e'\in\{B_1, B_2, B_3, B_4\}$, the intersection number $L_e\cdot e'=-e\cdot e'$
is even, and therefore $L_{e\R}$ does not intersect
the real loci of these four bridges.
Therefore, $L_{e\R}=g_e(L_\R)$ belongs to the same component of $X_\R$ as  $L_\R$, and, thus, $g_e$ does not interchange 
the components of $X_\R$.

For $e=-\frac12(B_1+B_2+B_3+B_4)$ we have $L_e\cdot B_i=-e\cdot B_i=-1$ (for each $i=1,\dots,4$) which implies that $L_{e\R}=g_e(L_\R)$ intersects 
the real locus of
$B_i$ and, thus, $L_{e\R}$ and $L_\R$ belong to different components of $X_\R$.
\end{proof}

\subsection{Case-by-case proof of Theorem \ref{MW-homomorphism}}\label{case-by-case}
Below,
for any $h\in \Maps(X_\R)$ 
we denote by
$[h]\in\Maps(X_\R)/\Phi(\MW_\R)$ its coset.

\begin{proposition}\label{index2}
If $X_\R=\Kl\#4\T^2$, then: 
\begin{enumerate}
\item
$\Ker\Phi=0$ and $\Im\Phi\cong\Z^8$ has index 2 in $\Maps(X_\R)\cong\Z^8\oplus\Z/2$.
\item
The elements  $s_1,s_3, s_2^2,s_4^2, \D_i$, $i\in\{1, \dots, 4\}$, belong to the group $\Im\Phi$ and generate it.
\item
The quotient $\Maps(X_\R)/\Im\Phi=\Z/2$
is generated by the classes $[s_2]=[s_4]=[t_{c_i}]$.
\end{enumerate}

\end{proposition}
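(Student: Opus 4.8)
The plan is to play the presentation of $\Maps(X_\R)$ from Proposition \ref{maps-spherical} against the explicit generators of $\Im\Phi$ furnished by Lemma \ref{MW-basic}, reducing everything to a finite computation in a finitely generated abelian group. For $X_\R=\Kl\#4\T^2$ we are in the case $p=4$, $q=0$, $\L=E_8$, so Proposition \ref{maps-spherical} gives $\Maps(X_\R)\cong\Z^8\oplus\Z/2$ generated by $t_{c_1}$ and $s_i,\D_i$ ($1\le i\le4$), subject to the single relation produced by the identities $\D_i^2=t_{c_i}t_{c_{i+1}}$ (Proposition \ref{Delta-lemma}) and $t_{c_5}=t_{c_1}^{-1}$; the unique element of order two is the class $\delta$ of Section \ref{order-2-elements}. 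On the Mordell--Weil side, Proposition \ref{lattice-MW} identifies $\MW_\R$ with the free rank-$8$ lattice $E_8$, and reading the simple-root basis $\{O_1,B_{12},O_2,B_{23},O_3,B_{34},O_4,B_3\}$ off the oval--bridge graph of Fig.\,\ref{oval-bridge-graphs}, I would write $\Im\Phi=\langle g_{O_1},g_{B_{12}},g_{O_2},g_{B_{23}},g_{O_3},g_{B_{34}},g_{O_4},g_{B_3}\rangle$, the values of these $g_e$ being those listed in Lemma \ref{MW-basic}.

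For part (2) I would first record membership. Directly $s_1=g_{B_1}$ and $s_3=g_{B_3}$ lie in $\Im\Phi$, whence $\D_1=g_{O_1}s_1^2$ and $\D_3=g_{O_3}s_3^2$ do as well. To reach the remaining elements I would eliminate the auxiliary twists $t_{c_i}$ using $\D_i^2=t_{c_i}t_{c_{i+1}}$: the products $g_{B_{12}}g_{B_{23}}$ and $g_{B_{23}}g_{B_{34}}$ have their $t_{c_1}$ (equivalently $\delta$) contributions cancel, yielding $s_1s_2^2s_3\D_2^{-2}$ and $s_2s_3^2s_4\D_3^{-2}$. Dividing out the already-available $s_1,s_3,\D_3$ gives $s_2^2\D_2^{-2}$ and $s_2s_4$ in $\Im\Phi$; combining $s_2^2\D_2^{-2}$ with $g_{O_2}=\D_2s_2^{-2}$ produces $\D_2$ and then $s_2^2$, and combining $(s_2s_4)^2$ with $s_2^2$ produces $s_4^2$, hence $\D_4=g_{O_4}s_4^2$. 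This places all of $s_1,s_3,s_2^2,s_4^2,\D_1,\dots,\D_4$ in $\Im\Phi$; the reverse inclusion, that these exhaust $\Im\Phi$, I would settle together with the index count below.

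The core of the argument is part (1). After substituting $t_{c_2},t_{c_3},t_{c_4}$ via $\D_i^2=t_{c_i}t_{c_{i+1}}$ and $t_{c_1}$ via the torsion relation, I would record the eight generators $g_e$ as integer vectors in the chosen basis of $\Maps(X_\R)$, assemble the $9\times9$ integer matrix whose rows are these vectors together with the defining relation of $\Maps(X_\R)$, and evaluate its determinant; it comes out to $\pm2$. This shows at once that $\Im\Phi$ has rank $8$, so that $\Ker\Phi$, a subgroup of the torsion-free rank-$8$ lattice $\MW_\R\cong E_8$, has rank $0$ and is therefore trivial, and that $\Im\Phi\cong\Z^8$ sits with index $2$ in $\Maps(X_\R)$. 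For part (3), reducing the relations $g_{B_{12}},g_{B_{23}},g_{B_{34}}\in\Im\Phi$ modulo $\Im\Phi$ forces $[s_2]=[t_{c_2}]$, $[s_4]=[t_{c_4}]$, while $\D_i^2=t_{c_i}t_{c_{i+1}}$ gives $[t_{c_i}]=[t_{c_1}]$ for all $i$; hence the order-two quotient $\Maps(X_\R)/\Im\Phi\cong\Z/2$ is generated by the common class $[s_2]=[s_4]=[t_{c_i}]$.

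The step I expect to be most delicate is the elimination of the auxiliary twists $t_{c_i}$ while tracking the torsion $\delta$: the bridge generators $g_{B_{i\,i+1}}$ each carry a $\delta$-contribution, and it is precisely the pattern of cancellations among them that decides whether the listed elements span $\Im\Phi$ itself rather than a proper finite-index subgroup. Getting these cancellations right is what pins down the entry of the determinant responsible for the value $2$ (rather than a larger index), and it is sign-sensitive, so I would double-check the orientation conventions for the $\D_i$ and the exact form of the relation $t_{c_5}=t_{c_1}^{-1}$ against Fig.\,\ref{oval-bridge-graphs} and Proposition \ref{Delta-lemma}. Once the $9\times9$ determinant is evaluated, parts (1) and (3) are immediate, and part (2) reduces to the two inclusions already sketched.
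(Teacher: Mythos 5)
Your treatment of parts (1) and (3) is correct, but it takes a genuinely different route from the paper's. The paper assembles no matrix at all: it computes in the quotient $\Maps(X_\R)/\Im\Phi$, where Lemma \ref{MW-basic} gives $[\D_i]=[s_i]^2$, $[s_1]=[s_3]=1$ and $[s_i][s_{i+1}]=[t_{c_{i+1}}]$, so that the quotient is cyclic of order at most two generated by $[s_2]=[s_4]=[t_{c_i}]$, and then it gets nontriviality for free from a generator count: $\Z^8\oplus\Z/2$ cannot be generated by eight elements, while $\MW_\R\cong E_8$ has rank eight. Your $9\times9$ determinant is a valid substitute, and its value is indeed $\pm2$; I confirm this, with the caveat you yourself flagged: the relation as printed in Proposition \ref{maps-spherical} ($t_{c_1}^2\D_1^2\D_3^2=\D_2^2\D_4^2$) is inconsistent with $\D_i^2=t_{c_i}t_{c_{i+1}}$ and $t_{c_5}=t_{c_1}^{-1}$ (together they would force $t_{c_1}^4=1$); the consistent form is $t_{c_1}^2\D_2^2\D_4^2=\D_1^2\D_3^2$, and with it the determinant equals $-2$. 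Granting that, your deductions $\Ker\Phi=0$, $\Im\Phi\cong\Z^8$ of index $2$, and part (3), are all sound; the paper's route trades your sign-sensitive bookkeeping for a soft counting argument.

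The genuine gap is the generation claim in part (2), and it cannot be settled by the index count you defer it to, because that claim is false. All eight elements $s_1,s_3,s_2^2,s_4^2,\D_1,\dots,\D_4$ lie in the subgroup $F=\la s_1,\dots,s_4,\D_1,\dots,\D_4\ra$, but $\Im\Phi\not\subset F$. Concretely, the assignment $\psi(s_i)=\psi(\D_i)=0$, $\psi(t_{c_1})=1$ descends to a homomorphism $\psi\colon\Maps(X_\R)\to\Z/2$ (the unique defining relation involves only $t_{c_1}^{2}$ and $\D_i^{\pm2}$), and the identities $\D_i^2=t_{c_i}t_{c_{i+1}}$ force $\psi(t_{c_i})=1$ for every $i$; hence $\psi(g_{B_{12}})=\psi(s_1s_2t_{c_2}^{-1})=1$, whereas $\psi$ vanishes on all eight of your listed elements. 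So $g_{B_{12}}\in\Im\Phi$ is not a product of them; they generate a subgroup of index $8$ in $\Maps(X_\R)$, i.e.\ of index $4$ in $\Im\Phi$, and the computation you promise for part (2) would return $8$, not $2$.

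To be clear, this defect sits in the statement itself: the paper's own proof also establishes only the membership claims (implicitly, since $[\D_i]=[s_i]^2$, $[s_1]=[s_3]=1$ and $[s_2]^2=[s_4]^2=1$ in the quotient) and never addresses generation. Your membership derivation, via $g_{B_{12}}g_{B_{23}}=s_1s_2^2s_3\D_2^{-2}$ and $g_{B_{23}}g_{B_{34}}=s_2s_3^2s_4\D_3^{-2}$, is correct and more explicit than anything in the paper. A true generating set must contain an element with $\psi=1$; for instance $s_1$, $s_3$, $g_{B_{12}}=s_1s_2t_{c_2}^{-1}$, $g_{B_{34}}=s_3s_4t_{c_4}^{-1}$, $\D_1,\dots,\D_4$ do generate $\Im\Phi$, and your determinant scheme verifies this corrected version of (2).
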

\begin{proof}
The relations $(1)$ of Lemma \ref{MW-basic} give $[\D_i]=[s_i]^2$ for $i\in\{1, \dots, 4\}$.
The relation $(3)$ gives $[s_i]=1$ for $i=1,3$ and hence $[\D_i]=1$ for $i=1,3$. Therefore, the relations (2)
imply $[s_2]=[s_1][s_2]=[t_{c_{2}}]$, $[s_2]=[s_2][s_3]=[t_{c_{3}}]$, and $[s_4]=[s_3][s_4]=[t_{c_{4}}]$, which together with 
$[t_{c_{i}}][t_{c_{i+1}}]=[\D_i]^2$ (see Lemma \ref{Delta-lemma}) gives 
$[t_{c_1}][t_{c_2}]=1$, $[s_2]^2=[t_{c_{2}}t_{c_{3}}]=[\D_2]^2=[s_2]^4$, and $[t_{c_3}][t_{c_4}]=1$.
This implies $[s_2]^2=1$, $[s_2]=[t_{c_{2}}]=[t_{c_{3}}]$, $[s_2]= [s_2]^{-1}=[t_{c_{2}}]^{-1}=[t_{c_1}]$ and $[s_4]=[t_{c_4}]=[t_{c_3}]^{-1}=[s_2]^{-1}=[s_2]$.
In accordance with Proposition \ref{maps-spherical}, 
this also shows that  element $[s_2]=[s_4]=[t_{c_i}]$, $1\le i\le4$, generates
$\Maps(X_\R)/\Phi(\MW_\R)$.

Finally, it remains to notice that $\Maps(X_\R)\ne \Phi(\MW_\R)$, since $\Maps(X_\R)=\Z^{8}+\Z/2$  requires $>8$ generators contrary to $\MW_\R=\Z^8$.
\end{proof}

\begin{proposition}\label{Phi-p}
If $X_\R = \Kl\#p\T^2$ with $0\le p\le3$, then $\Phi(\MW_\R)=\Maps(X_\R)$ and $\Ker \Phi$ is isomorphic to $\Z^{4-p}$.
\end{proposition}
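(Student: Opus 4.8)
The plan is to split the statement into two parts: the surjectivity $\Phi(\MW_\R)=\Maps(X_\R)$, which carries all the work, and the computation of $\Ker\Phi$, which will then follow formally. Throughout I would use that, by Proposition \ref{lattice-MW}, composing the isomorphism $e\mapsto\gg{L}{L_e}$ with $\Phi$ makes $e\mapsto g_e=\Phi\gg{L}{L_e}$ a group homomorphism $\L\to\Maps(X_\R)$ whose image is exactly $\Phi(\MW_\R)$; hence it suffices to find, among the elements $g_e$, a generating set for $\Maps(X_\R)$. As target I would take the presentation of Proposition \ref{maps-spherical}: $\Maps(X_\R)$ is generated by $t_{c_1},s_1,\dots,s_p,\D_1,\dots,\D_p$.

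First I would feed in Lemma \ref{MW-basic}. Part (3) gives $s_i=g_{B_i}\in\Im\Phi$ for every $i$; part (1) then gives $\D_i=g_{O_i}s_i^2\in\Im\Phi$; and part (2), together with commutativity of $\Maps(X_\R)$ (Proposition \ref{commutativity}), yields the boundary twists $t_{c_{i+1}}=s_is_{i+1}\,g_{B_{i\,i+1}}^{-1}\in\Im\Phi$ for $1\le i\le p-1$. At this stage $\Im\Phi$ contains all generators of $\Maps(X_\R)$ except possibly $t_{c_1}$, so surjectivity reduces to exhibiting $t_{c_1}\in\Im\Phi$.

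For $t_{c_1}$ I would argue by cases. When $p\ge 2$ the twist $t_{c_2}$ is already available, and Proposition \ref{Delta-lemma}(2) gives $t_{c_1}=\D_1^2\,t_{c_2}^{-1}\in\Im\Phi$. When $p=0$ the group $\Maps(X_\R)$ is just $\Z/2=\la t_{c_1}\ra$ and Lemma \ref{MW-basic}(4) supplies $t_{c_1}$ directly. The delicate case is $p=1$, where $\L=D_4\oplus A_1$: here $t_{c_2}$ is \emph{not} produced by part (2) (there is no second oval), and the relation of Proposition \ref{maps-spherical} degenerates to $\D_1^2=1$, so $\D_1^2=t_{c_1}t_{c_2}$ is uninformative. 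I expect this to be the main obstacle. To resolve it I would turn to the $A_1$-summand, namely the bridge-class $B_{11}$ doubly incident to $O_1$. Using that $B_{11}$ is precisely the class met by $\pi^{-1}(\FFF_\R)$ (as in the proof of Proposition \ref{graph-adjacency}), the real line $L_{B_{11}}$ crosses the chosen generatrix, so its section winds once around the base; translating this through the section/$\Maps$-dictionary of Proposition \ref{L-and-g}, I expect $g_{B_{11}}$ to have $t_{c_1}$-component $\pm1$ modulo $\la s_1,\D_1\ra$, which gives $t_{c_1}\in\Im\Phi$ and completes surjectivity.

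Finally, for the kernel I would argue formally. Since $\MW_\R\cong\L$ is free abelian of rank $\rk\L$ (Proposition \ref{lattice-MW}), its subgroup $\Ker\Phi$ is free abelian, and by the surjectivity just proved $\rk\Ker\Phi=\rk\L-\rk\Maps(X_\R)$. As $\Maps(X_\R)\cong\Z^{2p}\oplus\Z/2$ has free rank $2p$, while $\rk\L$ equals $7,6,5,4$ for $p=3,2,1,0$ respectively, we obtain $\rk\Ker\Phi=\rk\L-2p=4-p$ in every case, so $\Ker\Phi\cong\Z^{4-p}$.
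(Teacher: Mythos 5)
Your proposal is correct and follows essentially the same route as the paper's proof: reduce surjectivity to the generators $t_{c_1},s_i,\D_i$ of Proposition \ref{maps-spherical}, extract $s_i$, $\D_i$ and $t_{c_{i+1}}$ from Lemma \ref{MW-basic}, handle $t_{c_1}$ case by case (via $\D_1^2=t_{c_1}t_{c_2}$ when $p\ge2$, via the $A_1$-root $B_{11}$ when $p=1$, via Lemma \ref{MW-basic}(4) when $p=0$), and then get $\Ker\Phi\cong\Z^{4-p}$ by the rank count $\rk\L-2p=4-p$. Your one tentative step --- the ``I expect'' claim that $g_{B_{11}}$ has $t_{c_1}$-component $\pm1$ modulo $\la s_1,\D_1\ra$ --- is precisely the fact the paper itself asserts there ($g_{B_{11}}=t_{c_1}$, justified by its root data in Tab.~\ref{roots}), so the two arguments coincide at that point as well.
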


\begin{proof} 
Under the assumption $1\le p\le3$, the bridge-classes $B_i$ exist for every $i=1,\dots,p$, see Fig. \ref{oval-bridge-graphs}.
Applying Lemma \ref{MW-basic} to $g_e$ with $e=O_i$ and $e=B_i$, we get relations 
$[s_i]=1$
and  $[\D_i]=1$
for every $i=1,\dots,p$ (like in the case $p=4$ for $i=3$).

If $p=2,3$, we apply Lemma \ref{MW-basic} to $g_e$ with $e=B_{12}$ and get
$[t_{c_{2}}]=[s_1][s_2]=1$, which implies $[t_{c_{1}}]=[\D_1]^2[t_{c_{2}}]^{-1}=1$ (see Lemma \ref{Delta-lemma}).
If $p=1$, then 
we deduce $[t_{c_{1}}]=1$ from $g_e=t_{c_{1}}$
for $e=B_{11}$ (see, for example, Tab. \ref{roots}).
If $p=0$, then 
we deduce $[t_{c_{1}}]=1$ from $g_e=t_{c_{1}}$ for any of the roots $e\in \L=4A_1$ (see Lemma \ref{MW-basic}).

According to Proposition \ref{maps-spherical} 
the above computation shows surjectivity of $\Phi$. The latter implies $\Ker \Phi\cong \Z^{4-p}$, since $\MW_\R$ is a free abelian group of rank $4+p$, while $\Maps(X_\R)=\Z^{2p}+\Z/2$.
\end{proof}

\begin{proposition}\label{T-and-S}
If $X_\R=\Kl\#\T^2\+\SSS^2$, then:
\begin{enumerate}
\item  $\Phi(MW_\R)$ is isomorphic to $\Z+\Z/2$
and
 generated by $s_1$ and $\D_1$, while $\Maps(X_\R)/\Phi(\MW_\R)=\Z$ is generated by
 $[t_{c_1}]$.
 \item $\Ker \Phi$ is isomorphic to $\Z^3$.
 \end{enumerate}
\end{proposition}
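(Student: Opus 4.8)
The plan is to compute the image and kernel of $\Phi$ directly, in the spirit of Propositions \ref{index2} and \ref{Phi-p}, now in the case $p=1$, $q=1$, where $\L=\MW_\R\cong D_4\cong\Z^4$ by Proposition \ref{lattice-MW}. First I would pin down the target group: specializing Proposition \ref{maps-spherical} to $p=1$ (so that $\epsilon=1+(-1)^1=0$ and the right-hand product is empty) the defining relation collapses to $\D_1^2=1$. Hence $\Maps(X_\R)=\langle t_{c_1},s_1,\D_1\rangle\cong\Z^2\oplus\Z/2$, with $\langle t_{c_1},s_1\rangle\cong\Z^2$ free and $\D_1$ of order two.

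Next I would extract generators of $\Im\Phi$ from Lemma \ref{MW-basic}. Since $p<4$, part (1) gives $g_{O_1}=\D_1 s_1^{-2}$ and part (3) gives $g_{B_1}=s_1$; therefore $s_1=g_{B_1}\in\Im\Phi$ and $\D_1=g_{O_1}s_1^2\in\Im\Phi$, so $\langle s_1,\D_1\rangle\subseteq\Im\Phi$. The catch is that this accounts for only two of the four members of a root basis of $D_4$: by Fig.\,\ref{oval-bridge-graphs} the lattice is spanned by the central oval-class $O_1$ together with the three pendant bridge-classes $B_1,B_1',B_1''$, and I still must control $g_{B_1'}$ and $g_{B_1''}$.

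This last point is the step I expect to be the main obstacle, and I would settle it through the descriptive topology of Section \ref{topology} rather than a direct section computation. Each of $B_1,B_1',B_1''$ is a single geometric bridge-class with $v^o=0$ and $\delta v^b=\o_1$, so by Proposition \ref{oval-bridge-components} all three cover positive tritangents with the \emph{same} pair $(S_{in},S_{tan})=(\varnothing,\{1\})$ (indeed they realize three of the four residues in the coset $\delta^{-1}(\o_1)$; cf.\ Table \ref{roots1-1}). By Proposition \ref{prop-1-1} all tritangents of this combinatorial type are ambient isotopic in $Q_\R^+$, and — choosing a zigzag-free model, which exists by Proposition \ref{elimin-zigzag} — these isotopies lift fiberwise to the covering Bertini pairs (Lemma \ref{lift-isotopy}). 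Hence each of $L_{B_1'},L_{B_1''}$ is fiberwise isotopic to one of the two lines $L_{\pm B_1}$ covering $\Tri_{[B_1]}$, and via the section description of $\Maps(X_\R)$ (Lemma \ref{section-action}) I conclude $g_{B_1'},g_{B_1''}\in\{s_1,s_1^{-1}\}\subset\langle s_1,\D_1\rangle$. (The sign ambiguity is harmless precisely because only the subgroup generated matters.)

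Combining, $\Im\Phi=\langle \D_1 s_1^{-2},\,s_1\rangle=\langle s_1,\D_1\rangle\cong\Z\oplus\Z/2$, generated by $s_1$ and $\D_1$ as claimed, and $\Maps(X_\R)/\Phi(\MW_\R)=\langle[t_{c_1}]\rangle\cong\Z$, the generator $t_{c_1}$ surviving because no element of $\Im\Phi$ carries a nonzero $t_{c_1}$-exponent. This proves (1). For (2), since $\MW_\R\cong\Z^4$ is free abelian and $\Im\Phi$ has torsion-free rank $1$, the exact sequence $0\to\Ker\Phi\to\Z^4\to\Im\Phi\to0$ forces $\Ker\Phi$ to be free abelian of rank $4-1=3$, i.e.\ $\Ker\Phi\cong\Z^3$.
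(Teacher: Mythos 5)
Your proof is correct, and it diverges from the paper's argument precisely at the step that both treatments identify as the crux. The shared part: Proposition \ref{maps-spherical} gives $\Maps(X_\R)=\la t_{c_1},s_1,\D_1\ra\cong\Z^2\oplus\Z/2$ with the single relation $\D_1^2=1$; Lemma \ref{MW-basic}(1),(3) puts $s_1=g_{B_1}$ and $\D_1=g_{O_1}s_1^2$ into $\Im\Phi$; and since $\MW_\R$ is generated by $g_e$ with $e\in\{O_1,B_1,B_1',B_1''\}$, everything reduces to $g_{B_1'}$ and $g_{B_1''}$. The paper disposes of these by observing that $B_1'$ and $B_1''$ are pendant bridge-classes at $O_1$ geometrically indistinguishable from $B_1$, so the intersection-theoretic computation behind Lemma \ref{MW-basic}(3) (Lemma \ref{intersection-criterion}, Proposition \ref{oval-bridge-components}, Proposition \ref{L-and-g}, Corollary \ref{tire-bouchon}) applies verbatim and yields the exact equalities $g_{B_1'}=g_{B_1''}=s_1$. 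You instead route through the descriptive topology of Section \ref{topology}: all three bridges give tritangents with $(S_{in},S_{tan})=(\varnothing,\{1\})$, Proposition \ref{prop-1-1} makes these tritangents isotopic and (in a zigzag-free model) the covering lines fiberwise isotopic, and Lemma \ref{section-action} converts this into $g_{B_1'},g_{B_1''}\in\{s_1,s_1^{-1}\}$. Your weaker conclusion---generators pinned down only up to inversion---is all that item (1) needs, and your remark that the sign ambiguity is harmless is exactly right; what the paper's direct computation buys is the exact values, which are reused later (Theorem \ref{details-on-MW-homomorphism}, Table \ref{Phi-matrices}), and the absence of any hypothesis on zigzags.

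Two points in your write-up deserve to be made explicit. First, your key step is established only after replacing $X$ by a zigzag-free model, whereas the Proposition concerns every $X$ with $X_\R=\Kl\#\T^2\+\SSS^2$; transferring the conclusion back to $X$ requires transporting the whole package ($\MW_\R\cong\L$, the real lines, the classes $t_{c_1},s_1,\D_1$, hence $\Phi$ and $\Im\Phi$) along a deformation connecting $X$ to the model. This is the same kind of reduction the paper performs elsewhere (``it is enough to prove the statement for a smart sextic''), and Lemma \ref{MW-basic} itself tacitly rests on such a transport, so it is not a defect of substance---but in your proof this transfer is doing real work in silence, and it should be stated. Second, the literal wording of Proposition \ref{prop-1-1} (``the real lines covering the tritangents of the same isotopy type are isotopic'') cannot be used at face value: applied to the two lines of a single Bertini-pair, its fiberwise clause would force $g_{B_1}=g_{-B_1}$, i.e.\ $s_1=s_1^{-1}$, contradicting Proposition \ref{maps-spherical}. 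You correctly use only its consistent content---that each line over one tritangent is fiberwise isotopic to \emph{one of the two} lines over any other tritangent of the same type---which is what your phrase ``one of the two lines $L_{\pm B_1}$'' records; this reading should be flagged rather than left implicit.
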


\begin{proof}
Like in the case 
$X_\R = \Kl\#\T^2$
we obtain the relations $[s_1]=[\D_1]=1$ by applying Lemma \ref{MW-basic} to $g_e$ with $e=O_1$ and $e=B_1$.
Since in the case
$X_\R=\Kl\#\T^2\+\SSS^2$ 
the group $\MW_\R$ is generated by $g_e$ with $e=O_1, B_1, B_1',$ and $B_1''$
(see Fig. \ref{oval-bridge-graphs}), to prove item (1) there remains to notice that
$g_e=s_1$ for both $B_1'$ and $B_2'$, and to apply Proposition \ref{maps-spherical}.
Since the only remaining generator, $t_{c_1}\in\Maps(X_\R)$ is not involved, its coset $[t_{c_1}]$ generates the quotient.
Since $\MW_\R$ is a free abelian group of rank $4$, from $\Phi(\MW_\R)=\Z+\Z/2$ it follows that $\Ker \Phi$ is isomorphic to $\Z^3$.
\end{proof}


\begin{proposition}\label{2K-case}
If $X_\R=\Kl\#\Kl$,  then $\Phi(\MW_\R)=\Maps(X_\R)$ and $\Ker\Phi$ is isomorphic to $\Z^4$.
\end{proposition}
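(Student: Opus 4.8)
The plan is to first identify the two groups involved and then reduce everything to the surjectivity of $\Phi$. By Proposition~\ref{lattice-MW} we have $\MW_\R\cong\L=D_4$, a free abelian group of rank $4$, while by Proposition~\ref{maps-hyperbolic-case} the target is $\Maps(X_\R)\cong\Z/2\oplus\Z/2$. Once surjectivity is established, the kernel is immediate: $\Phi\colon\Z^4\to\Z/2\oplus\Z/2$ is then an epimorphism onto a group of order $4$, so $\Ker\Phi$ has index $4$ in $\Z^4$, whence it is itself free abelian of rank $4$, i.e. $\Ker\Phi\cong\Z^4$.

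For surjectivity I would argue by counting fiberwise-isotopy classes. Proposition~\ref{maps-hyperbolic-case} presents $\Maps(X_\R)$ as the group of fiberwise shifts by the four $2$-torsion sections $\ll_i^j$, and this group acts simply transitively on the four-element set of fiberwise-isotopy classes of smooth sections (each class being some $[\ll_i^j]$). Taking $L=\ll_1^1$, the subgroup $\Im\Phi$ acts freely, and its orbit of $[L_\R]$ is exactly the set of classes realized by the real lines $L_v$, $v\in\L$; in particular this orbit has cardinality $|\Im\Phi|$. Now Proposition~\ref{isotopies-hyperbolic-case} shows that the real lines covering the $12$ positive tritangents fall into three distinct fiberwise-isotopy classes, one for each of $J_1,J_2,J_3$. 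Hence the orbit has at least three elements, so $|\Im\Phi|\ge 3$; since $\Im\Phi\subset\Z/2\oplus\Z/2$ has order $1$, $2$ or $4$, this forces $|\Im\Phi|=4$, i.e. $\Phi$ is onto.

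To make the generators explicit (and to corroborate the count) I would invoke Lemma~\ref{MW-Klein2}: with the bridge-classes $B_1,\dots,B_4$ of Lemma~\ref{4-J-bridges} and $B_0=-\tfrac12(B_1+\dots+B_4)$, the element $g_{B_0}$ interchanges the two $\Kl$-components while each $g_{B_i}$ preserves them. The classes of band-component tritangents (those isotopic to $J_2$ and to $J_3$) are covered by real lines lying in the same $\Kl$-component, which hosts only two section-classes $\ll_1^2,\ll_2^2$; thus both are exhausted, producing two distinct component-swapping elements of $\Im\Phi$ whose product is a nontrivial component-preserving element, and together these generate $\Z/2\oplus\Z/2$. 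The main obstacle is precisely this last point: one must be certain that the tritangents over $J_2$ and over $J_3$ yield two genuinely different fiberwise-isotopy classes in the band-derived $\Kl$-component rather than a single one — as free curves the two boundary circles of the band are isotopic, so the distinction is a fiberwise (equivalently, $2$-torsion-position) one. This is supplied by the class count in Proposition~\ref{isotopies-hyperbolic-case}, which is what makes the orbit-counting argument of the previous paragraph the cleanest route to surjectivity, and hence to $\Ker\Phi\cong\Z^4$.
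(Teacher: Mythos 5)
Your reduction is sound and matches the paper's own proof in outline: $\MW_\R\cong\L=D_4\cong\Z^4$ by Proposition~\ref{lattice-MW}, $\Maps(X_\R)\cong\Z/2\oplus\Z/2$ by Proposition~\ref{maps-hyperbolic-case}, so once $\Phi$ is onto, $\Ker\Phi$ is a finite-index subgroup of $\Z^4$ and hence free abelian of rank $4$; surjectivity is then to be extracted from the real lines covering the $12$ positive tritangents of the type $\la\,|\,|\,|\,\ra$ sextic. Your orbit-counting shortcut (the orbit of $[L_\R]$ under $\Im\Phi$ has cardinality $|\Im\Phi|$, and $|\Im\Phi|\ge 3$ already forces $|\Im\Phi|=4$, subgroups of the Klein four-group having order $1$, $2$ or $4$) is a pleasant economy, but it does not remove the one step where your argument has a genuine gap.

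That step is exactly the one you flag as ``the main obstacle'', and the reference you give there does not close it. Proposition~\ref{isotopies-hyperbolic-case} is an upper-bound statement: it says that \emph{within} each of the three groups the eight covering real lines are fiberwise isotopic to one another. It asserts nothing about lines from \emph{different} groups being non-isotopic, and it contains no ``class count''; so it cannot certify that the $J_2$-group and the $J_3$-group occupy two distinct classes in the band-derived $\Kl$, nor --- a fact your $\ge 3$ count equally needs --- that the $J_1$-group lines are not isotopic to $L_\R$ inside the other $\Kl$ (if both distinctions failed, real lines would realize only two classes, one per component, giving merely $|\Im\Phi|\ge 2$, which is compatible with $\Im\Phi\cong\Z/2$). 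The missing lower bound is precisely what the proposition invoked in the paper, namely Proposition~\ref{12=3x4}, supplies: the group-$i$ tritangents have odd tangency with $J_i$ and hence even tangency with the other two components (the total number of tangencies being three), so a covering real line meets the lift of $J_i$ in an odd number of points and the lifts of $J_j$, $j\ne i$, in an even number, whereas $L_\R$ is disjoint from all three lifts. Since the mod~$2$ intersection number with a fixed embedded circle is an isotopy invariant, the three groups of lines represent three pairwise distinct classes, all different from $[L_\R]$; thus all four classes of Proposition~\ref{maps-hyperbolic-case} are realized by real lines, which is surjectivity. With Proposition~\ref{12=3x4} (or, for the component-swapping part, Lemma~\ref{MW-Klein2}) substituted at this point, your proof is complete and essentially coincides with the paper's.
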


\begin{proof}
Surjectivity of $\Phi$ follows from Proposition \ref{maps-hyperbolic-case} and the possibility to realize the 4 disjoint sections involved by real lines (the latter follows, for example, from Proposition 
\ref{12=3x4}). Since $\MW_\R$ is a free abelian group of rank $4$, from $\Phi(\MW_\R)=\Z/2+\Z/2$ it follows that $\Ker \Phi$ is isomorphic to $\Z^4$.
\end{proof}

\begin{proposition}\label{spherical}
If 
$X_\R=\Kl\+q\SSS^2$ with $0<q<4$,  then $\Phi(\MW_\R)=\Maps(X_\R)$ and $\Ker\Phi$ is isomorphic to $\Z^{4-q}$.
\end{proposition}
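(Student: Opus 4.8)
The plan is to imitate the earlier case analyses (Propositions \ref{Phi-p} and \ref{2K-case}), reducing the whole statement to one surjectivity check plus a rank count. Since $X_\R=\Kl\+q\SSS^2$ has no handles ($p=0$), Proposition \ref{maps-spherical} gives $\Maps(X_\R)=\Z/2$ generated by $t_{c_1}$: indeed $\epsilon=1+(-1)^0=2$, both products in the relation are empty, and the relation collapses to $t_{c_1}^2=1$. On the Mordell--Weil side, Proposition \ref{lattice-MW} together with Tab.\,\ref{C-Y-X-correspondence} gives $\MW_\R=\L=(4-q)A_1\cong\Z^{4-q}$, a nontrivial free abelian group of rank $4-q\in\{1,2,3\}$ precisely because $0<q<4$. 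As the target is $\Z/2$, it will suffice to produce a single $v\in\MW_\R$ with $\Phi(v)\ne0$; the kernel then follows formally.

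For surjectivity I would extend the argument of Lemma \ref{MW-basic}(4) from $p=q=0$ to $0<q<4$. Fix any root $e\in\L=(4-q)A_1$. Exactly as in the relation $L\cdot L_e=0$ used there, $L_e$ is disjoint from $L$, hence descends to a line on the subordinate del Pezzo surface $Y$, and its image in $Q$ is a positive tritangent; since $C_\R$ has type $\la 0\,|\,q\ra$ this tritangent is of type $T_0$ by Theorem \ref{5types}. The point to establish is that $g_e=\Phi\gg{L}{L_e}$ equals the nontrivial element $t_{c_1}$, i.e.\ that the smooth section $L_{e\R}$ is not isotopic to the zero section $L_\R$. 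By Proposition \ref{section-action} isotopy classes of smooth sections are in bijection with $\Maps(X_\R)=\Z/2$, and isotopic sections are homologous; moreover all real lines lie in the Klein-bottle component (a section surjects onto $\P^1_\R=S^1$, whereas each $\SSS^2$ maps only to a proper arc). Hence it is enough to compute $[L_{e\R}]-[L_\R]\in H_1(\Kl)=\Z\oplus\Z/2$. Two sections always agree on the infinite-order (base) summand, so their difference lies in the order-two fiber summand generated by $[F_\R]$, and applying $t_{c_1}$ to a section shifts its class by exactly $[F_\R]$. Thus $g_e=t_{c_1}$ if and only if $[L_{e\R}]-[L_\R]=[F_\R]\ne0$.

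The main obstacle is precisely this last homology computation, which is the only genuinely non-formal step. The natural tool is the Viro homomorphism $\bhv\colon H_2^-(X)\to H_1(X_\R;\Z/2)$ of Section \ref{oval-bridge-classes}: translating the zero section to $L_e$ by the Mordell--Weil action shifts the mod-$2$ real homology class of the section by $\bhv(e)$, so I would identify $\bhv(e)$ with the fiber class $[F_\R]$ for a root $e$. Equivalently, the parity of the number of fiber-windings separating $L_{e\R}$ from $L_\R$ around the base circle is governed by $\tfrac{e^2}2\equiv1\pmod2$, forcing the nonzero class. Carrying this out rigorously requires the explicit geometric realization of the $A_1$-roots by vanishing- (or bridge-) cycles for sextics of type $\la 0\,|\,q\ra$ and the intersection bookkeeping developed in Section \ref{oval-bridge-classes}; this mirrors, and can be deduced in the same way as, the corresponding $p=q=0$ assertion already used in Lemma \ref{MW-basic}(4).

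Granting surjectivity, the kernel is immediate: $\Phi\colon\MW_\R\cong\Z^{4-q}\to\Z/2$ is onto, so $\Ker\Phi$ has index $2$ in $\Z^{4-q}$ and is therefore free abelian of the same rank, $\Ker\Phi\cong\Z^{4-q}$. This agrees with the entry for $X_\R=\Kl\+q\SSS^2$ in Tab.\,\ref{MW-action} and finishes the proof.
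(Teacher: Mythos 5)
Your proof has the same skeleton as the paper's: Proposition \ref{maps-spherical} with $p=0$ gives $\Maps(X_\R)=\Z/2$ generated by $t_{c_1}$; surjectivity of $\Phi$ is reduced to exhibiting one root $e\in\L=(4-q)A_1$ (nonzero precisely because $0<q<4$) with $g_e=\Phi\gg{L}{L_e}\ne 1$; and the kernel statement is the formal index-two rank count. The paper compresses the middle step into the remark that $t_{c_1}=\gg{L}{L'}$ for any pair of disjoint real lines (the same fact as Lemma \ref{MW-basic}(4)), and your further reduction of that step to the homology statement $[L_{e\R}]-[L_\R]=[F_\R]$ is legitimate: non-homologous sections cannot be isotopic, and $\Maps(X_\R)$ has only two elements.

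However, the execution you propose for that homology computation fails at a concrete point. The mod $2$ shift of the section class is not $\bhv(e)$: since $L_e=L+\tfrac{e^2}2K+e=L+F+e$, it equals $\bhv(F+e)=[F_\R]+\bhv(e)$. Moreover $\bhv(e)$ cannot be identified with $[F_\R]$, because it vanishes: using that $\bhv$ sends classes of real curves to the mod $2$ classes of their real loci and preserves intersection indices mod $2$, one gets $\bhv(e)\cdot[F_\R]\equiv e\cdot F=0$ and $\bhv(e)\cdot[L_\R]\equiv e\cdot L=0$; the spherical components contribute nothing to $H_1(X_\R;\Z/2)$ and the intersection form on $H_1(\Kl;\Z/2)$ is nondegenerate, so $\bhv(e)=0$. (Equivalently: for $\L=(4-q)A_1$ the whole of $V=\L/2\L$ is the radical $R$, see Tab.\,\ref{lattices}, and on $\L$ the Viro homomorphism factors through $V/R=0$, cf. Section \ref{MWwithZ2}.) So the plan ``realize the $A_1$-roots by vanishing cycles and show $\bhv(e)=[F_\R]$'' would return $0$ and stall; note also that the oval/bridge machinery of Section \ref{oval-bridge-classes} is set up for types $\la p\,|\,q\ra$ with $p\ge1$, not for $\la 0\,|\,q\ra$. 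Your final equality is nonetheless true, because the fiber class comes from the canonical term: $[L_{e\R}]-[L_\R]=\bhv(-K+e)=[F_\R]+\bhv(e)=[F_\R]\ne0$. This is exactly your second, ``$\tfrac{e^2}2\equiv1$'' formulation (cf. Propositions \ref{MW-Z2setting} and \ref{ESmod2}), which is not equivalent to the first but is the correct one. Alternatively you can bypass $\bhv$ entirely: $[L_\R]\cdot[L_\R]\equiv L\cdot L=1$, so every real line is a one-sided curve in the Klein-bottle component, while disjointness gives $[L_\R]\cdot[L_{e\R}]=0$, hence the two sections are not even mod $2$ homologous. With either repair your argument is complete and coincides in substance with the paper's.
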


\begin{proof} By Proposition 
\ref{maps-spherical}
$\Maps(X_\R)= \Z/2$ with the only nontrivial element $t_{c_1}$. Thus, there remains to notice
that $t_{c_1}=\gg{L}{L'}$ for any pair of disjoint real lines $L,L'\subset X_\R$, and that $\MW_\R$ is a free abelian group of rank $4-q$.
\end{proof}

\subsection{Addendum: Lattice description of $\Ker\Phi$}
Our goal here is to give an explicit expression 
for
$\Ker\Phi$ in terms of standard geometric generators of 
$\L=E_8\cap\ker(1+\conj_*)$,
each generator being a root of $\L$ and represented either by an oval- or a bridge-class
(see Fig. \ref{oval-bridge-graphs} and Proposition \ref{graph-adjacency}).
In the following theorem we consider {\it $n$-chains} formed by sequences of $n$ roots in  $\L$
that have pairwise intersection $1$ if consecutive
and $0$ otherwise. The notation for roots is like in  Section \,\ref{oval-bridge-classes} and Lemma \ref{4-J-bridges}.
For instance, in the case $X_\R=\Kl\#3\T^2$, for which $\L=E_7$, we consider a 7-chain
\raisebox{0.7mm}{\scalebox{0.8}{
\xymatrix@1@=15pt{
B_{1} \ar@{-}[r]&*+{O_1} \ar@{-}[r]&B_{12}\ar@{-}[r]& *+{O_2} \ar@{-}[r]  &B_{23} \ar@{-}[r]&*+{O_3}\ar@{-}[r]&B_3}}}
obtained from the 6-chain on the standard diagram of $E_7$
on Fig.\,\ref{oval-bridge-graphs} by adding the root $B_3$, which extends $E_7$-diagram to $\til E_7$. This implies that
$B_3=-(B_1+2O_1+3B_{12}+4O_2+3B_{23}+2O_3+2B_2)$.
In the case $X_\R=\Kl\+\Kl$, for which $\L= D_4$, we consider 
the 4 bridge-classes $B_1, B_2, B_3, B_4$ and their combination $B_0=-\frac12(B_1+B_2+B_3+B_4)$ (see Lemma \ref{4-J-bridges}).

In the case $X_\R=\Kl\+q\SSS^2$, $0\le r\le4$, we have $\Lambda=(4-q)A_1$, but have no oval- or bridge-classes in the sense of Section \ref{oval-bridge-classes}.
However, for uniformity of notation we will denote by $B_i$, $0\le i\le 4-q$, the elements of a root basis of $\Lambda$ (chosen arbitrarily).

\begin{theorem}\label{details-on-MW-homomorphism}
If $X$ satisfies the assumption $\A$, then
$\Ker\Phi\subset\L$ can be expressed as follows:
\begin{enumerate}
\item If $X_\R=\Kl\#4\T^2$, then $\Ker\Phi=0$.
\item If $X_\R=\Kl\#3\T^2$, then 
\scalebox{0.9}{$\Ker\Phi=\{ a(B_1+O_1+B_{12}+O_2+B_{23}+O_3+B_3) \,|\, a\in2\Z\}.$}
\item If $X_\R=\Kl\#2\T^2$, then 
$$\Ker\Phi=\{a_1(B_1+O_1+B_{12}+O_2+B_{2})+a_2(B_1+O_1+B_{12}+O_2+B_{2}')\,|\, a_1+a_2\in2\Z\}.$$
\item If $X_\R=\Kl\#\T^2$, then 
\[\Ker\Phi=\{a_1(O_1+B_1+B_1')+a_2(O_1+B_1+B_1'')+a_3(O_1+B_1'+B_1'')\,|\, a_1+a_2+a_3\in2\Z\}. \]
\item If $X_\R=\Kl\+q\SSS^2$, $0\le q\le4$, then
$\Ker\Phi=\{\sum_{i=1}^{4-q} a_iB_i \,\,|\,\,\sum_{i=1}^{4-q} a_i\in2\Z\}$.
\item If $X_\R=\Kl\+\Kl$, then
$\Ker\Phi= \{
\sum_{i=0}^{3} a_iB_i \,|\,a_0\in2\Z \, \text{and} \,\sum_{i=1}^{3} a_i\in2\Z\}$.
\end{enumerate}
\end{theorem}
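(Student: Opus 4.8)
The plan is to use that, after the identification $\MW_\R(X)\cong\L$ of Proposition \ref{lattice-MW}, the map $\Phi$ becomes a homomorphism of abelian groups $\L\to\Maps(X_\R)$, $v\mapsto g_v=\Phi\gg{L}{L_v}$, whose values on the \emph{geometric} generators of $\L$ (the oval- and bridge-classes of Fig.\,\ref{oval-bridge-graphs}) are already recorded in Lemma \ref{MW-basic}, supplemented by Lemma \ref{MW-Klein2} when $X_\R=\K\+\K$. Once these images are rewritten inside the explicit presentations of $\Maps(X_\R)$ given in Propositions \ref{maps-spherical} and \ref{maps-hyperbolic-case}, computing $\Ker\Phi$ reduces to a finite $\Z$-linear problem: one singles out the integral combinations of generators annihilated by $\Phi$. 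So I would first fix, in each case, the root basis of $\L$ used in the statement and substitute the corresponding values $g_e$.

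The core computation is the \emph{handle cases} (2)--(4), where $X_\R=\K\#p\T^2$ with $1\le p\le 3$ and the distinguished combinations in the statement are the $n$-chains running from a pendant bridge through the alternating sequence of ovals and connecting bridges. Writing $\Maps(X_\R)$ additively with generators $T_1,S_i,D_i$ (the images of $t_{c_1},s_i,\D_i$), each oval contributes $g_{O_i}=D_i-2S_i$, each pendant bridge contributes $g_{B}=S_i$, and each connecting bridge contributes $g_{B_{i\,i+1}}=S_i+S_{i+1}-T_{i+1}$. Along a chain the $S_i$ cancel (coefficient $1-2+1=0$ at every oval), and, using $T_{i+1}=2D_i-T_i$ together with the defining relation of Proposition \ref{maps-spherical}, the $T$-terms telescope, leaving
\[
\Phi(\text{chain})=\sum_i D_i-\sum_{i\ge2}T_i=\delta,
\]
the unique element of order $2$ in $\Maps(X_\R)$ from Section \ref{order-2-elements}. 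Thus every chain maps to $\delta$: in (2), $\Phi(w)=\delta$ for the single $7$-chain $w$; in (3), $\Phi(w_1)=\Phi(w_2)=\delta$ for the two $5$-chains differing only in their terminal pendant $B_2,B_2'$ (note $g_{B_2}=g_{B_2'}=s_2$); and in (4), $\Phi(v_k)=\delta=D_1$ for the three combinations $O_1+B_1+B_1'$, $O_1+B_1+B_1''$, $O_1+B_1'+B_1''$. In each case $\sum a_k(\text{chain}_k)\in\Ker\Phi$ iff $\big(\sum a_k\big)\delta=0$, i.e.\ iff $\sum a_k$ is even --- exactly the congruence in the statement.

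The remaining cases are lighter. Case (1) ($\L=E_8$) is just $\Ker\Phi=0$ from Proposition \ref{index2}. For (5) ($\L=(4-q)A_1$, $\Maps(X_\R)=\Z/2=\la t_{c_1}\ra$) each chosen basis root maps to the generator $t_{c_1}$ (Lemma \ref{MW-basic}(4) and the proof of Proposition \ref{spherical}, where $L_{B_i}$ is disjoint from $L$), so $\Phi(\sum a_iB_i)=\big(\sum a_i\big)t_{c_1}$ and the kernel is the even-sum sublattice. For (6) ($\L=D_4$, $\Maps(X_\R)=\Z/2\oplus\Z/2$) I use Lemma \ref{MW-Klein2}: in the splitting $\Z/2\oplus\Z/2=\Sym(\a,\b)\oplus(\text{within-component})$, the three pendants $B_1,B_2,B_3$ map to the nontrivial within-component element $(0,1)$ (they carry $L=\ll_1^1$ to the second section $\ll_2^1$ of its own Klein bottle, by Proposition \ref{maps-hyperbolic-case}), while $B_0=-\tfrac12\sum B_i$ maps to the swap $(1,\ast)$; reading off when $\Phi(\sum_{i=0}^3 a_iB_i)$ vanishes gives the two congruences $a_0\equiv0$ and $a_1+a_2+a_3\equiv0\ (\mathrm{mod}\,2)$.

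The step I expect to be the genuine obstacle is not the inclusion ``stated set $\subseteq\Ker\Phi$'' (immediate from the above) but the reverse: that the stated sublattice $M$ is the \emph{whole} kernel, not a proper finite-index subgroup. My plan is to argue by comparison of invariants rather than by a bare-hands description of $\Ker\Phi$. Theorem \ref{MW-homomorphism} (Table \ref{MW-action}) already records $\Im\Phi=\L/\Ker\Phi$ abstractly, and since $M\subseteq\Ker\Phi$ there is a natural surjection $\L/M\twoheadrightarrow\L/\Ker\Phi=\Im\Phi$; hence it suffices to verify that $\L/M$ is abstractly isomorphic to $\Im\Phi$ (same free rank and same torsion), after which the Hopfian property of finitely generated abelian groups forces this surjection to be an isomorphism, so $M=\Ker\Phi$. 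Confirming $\L/M\cong\Im\Phi$ is where the lattice arithmetic enters: one checks that each chain is a \emph{primitive} vector of $\L$ and computes the torsion produced by the even-sum condition. For example in (2), expressing $B_3=-(B_1+2O_1+3B_{12}+4O_2+3B_{23}+2O_3+2B_2)$ shows $w$ has a coordinate $\pm1$, hence is primitive, and $\L/2\Z w\cong\Z^6\oplus\Z/2=\Im\Phi$. Analogous primitivity plus index-$2$ (resp.\ index-$4$) computations settle (3) and (4) (resp.\ (6)), while (5) is the plain index-$2$ sublattice with $\L/M\cong\Z/2=\Im\Phi$.
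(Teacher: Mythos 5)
Your proposal is correct, and its computational heart coincides with the paper's: your telescoping identity showing that every chain maps to the order-two element $\delta$ is exactly the paper's Lemma \ref{delta}, obtained from the same input (the values $g_{O_i}=\D_is_i^{-2}$, $g_{B_i}=s_i$, $g_{B_{i\,i+1}}=s_is_{i+1}t_{c_{i+1}}^{-1}$ of Lemma \ref{MW-basic}, and the relation $\D_i^2=t_{c_i}t_{c_{i+1}}$ of Proposition \ref{Delta-lemma}), and both arguments rest on the abstract structure of $\Ker\Phi$ and $\Im\Phi$ already established in Theorem \ref{MW-homomorphism}. The genuine divergence is the endgame. The paper proves that the stated even-sum sublattice $M$ is all of $\Ker\Phi$ by a sandwich: Lemma \ref{primitivity} (a sublattice spanned by at most three pairwise orthogonal roots is primitive, proved by a discriminant argument) together with the rank of $\Ker\Phi$ from Propositions \ref{Phi-p} and \ref{index2} forces $M\subseteq\Ker\Phi\subseteq\la\text{chains}\ra$, and the top possibility is excluded because each chain maps to $\delta\neq 1$; the remaining topologies are settled by the explicit matrices of Table \ref{Phi-matrices}. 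You instead compare the abstract groups $\L/M$ and $\Im\Phi$ and invoke the Hopfian property of finitely generated abelian groups. This is a valid and clean alternative: it even spares you the nontriviality of $\delta$ (if the chains mapped to $1$, the torsion of $\L/M$ would already contradict $\L/M\cong\Im\Phi$), at the price of replacing the single uniform Lemma \ref{primitivity} by case-by-case coordinate and minor computations establishing primitivity and the torsion of $\L/M$.

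One caveat, which applies equally to the paper's own terse treatment of Table \ref{Phi-matrices}: in case (6) your assertion that $g_{B_i}$, $i=1,2,3$, is the nontrivial component-preserving element does not follow from Proposition \ref{maps-hyperbolic-case} by itself. It requires noting that $L\cdot L_{B_i}=0$, so $L_{B_i\R}$ and $L_\R$ are disjoint sections of the same Klein bottle component, and that two disjoint smooth sections of a fibered Klein bottle lie in distinct $\Z/2$-homology classes (each section class has odd mod $2$ self-intersection), hence are non-isotopic; this is the same fact the paper invokes in the proof of Proposition \ref{spherical}. With that supplement, all six cases of your argument go through.
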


\begin{lemma}\label{delta}
If $X_\R\cong\K\#p\T^2$ with $0\le p\le 3$, then
the order 2 element $\delta\in\Maps(X_\R)$
is as follows:
\begin{itemize}\item
If $p=3$, then $\d=\D_1\D_2^{-1}\D_3$ is represented as $g_e$, where $e$ is a 7-chain root
$$e=B_1+O_1+B_{12}+O_2+B_{23}+O_3+B_3
\in \L=E_7.$$
\item
If $p=2$, then $\d=\D_1\D_2^{-1}t_{c_1}^{-1}$ is represented as $g_e$,
where $e$ is a 5-chain root 
 $$e=B_1+O_1+B_{12}+O_2+B_2
 \in \L=D_6.$$
 \item
If $p=1$, then $\delta=\D_1$ is represented as $g_e$, where $e$ is 
a 3-chain root
$$e=B_1+O_1+B_1'
\in \L=D_4+A_1.
$$
\item
If $p=0$, then $\delta=t_{c_1}$ is represented as $g_{e}$, where $e$ is 
any root in $\L=4A_1$.
\end{itemize}
\end{lemma}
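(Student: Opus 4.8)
My plan is to exploit that $e\mapsto g_e$ is a group homomorphism and to read off $g_e$ for each chain root from Lemma \ref{MW-basic}, matching the outcome to the order-two element $\delta$ already identified in Section \ref{order-2-elements}. The displayed formula for $\delta$ in each case is that element, so what must be established is only its coincidence with $g_e=\Phi\gg{L}{L_e}$ for the indicated root $e$. The homomorphism property is the crux of the reduction: since $\Phi$ is a homomorphism and $v\mapsto\gg{L}{L_v}$ is the lattice isomorphism $\L\xrightarrow{\sim}\MW_\R$ of Proposition \ref{lattice-MW}, one has $g_{e+e'}=g_eg_{e'}$. Thus $g_e$ for a chain root is the product, in the abelian group $\Maps(X_\R)$ (Proposition \ref{commutativity}), of the generator values of Lemma \ref{MW-basic}: a pendant bridge to the $J$-component contributes $s_i$ (item (3), whose proof applies verbatim to the auxiliary pendant bridges $B_1',B_1''$), an oval $O_i$ contributes $\D_is_i^{-2}$ (item (1)), and an internal bridge $B_{i\,i+1}$ contributes $s_is_{i+1}t_{c_{i+1}}^{-1}$ (item (2)); it then remains to simplify each product to the stated $\delta$.

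I would treat the clean case $p=3$ ($\L=E_7$) first. For $e=B_1+O_1+B_{12}+O_2+B_{23}+O_3+B_3$ the product of the seven factors above collapses to
\[
g_e=\D_1\D_2\D_3\,t_{c_2}^{-1}t_{c_3}^{-1},
\]
because the exponents of each $s_i$ sum to $1-2+1=0$ and so all $s_i$ cancel. The relation $\D_2^2=t_{c_2}t_{c_3}$ of Proposition \ref{Delta-lemma}(2) then rewrites the surviving twists as $t_{c_2}^{-1}t_{c_3}^{-1}=\D_2^{-2}$, giving $g_e=\D_1\D_2^{-1}\D_3=\delta$. The case $p=1$ ($\L=D_4+A_1$) is shorter still: for $e=B_1+O_1+B_1'$ the two pendant bridges contribute $s_1\cdot s_1$, cancelling the $s_1^{-2}$ of $O_1$ and leaving $g_e=\D_1=\delta$, with no twist to dispose of.

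The two boundary cases are where the real care lies, and I expect them to be the main obstacle: here the chain abuts a boundary fiber, so the leftover twist can be absorbed only after invoking the closing relation $t_{c_{p+1}}=t_{c_1}^{-1}$ of Lemma \ref{cuts}. For $p=2$ ($\L=D_6$) the analogous expansion with $e=B_1+O_1+B_{12}+O_2+B_2$ yields $g_e=\D_1\D_2\,t_{c_2}^{-1}$, and I would then compute $\D_2t_{c_2}^{-1}=\D_2^{-1}(\D_2^2t_{c_2}^{-1})=\D_2^{-1}t_{c_3}=\D_2^{-1}t_{c_1}^{-1}$, using $\D_2^2=t_{c_2}t_{c_3}$ and $t_{c_3}=t_{c_1}^{-1}$, so that $g_e=\D_1\D_2^{-1}t_{c_1}^{-1}=\delta$. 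For $p=0$ ($\L=4A_1$, $\Maps(X_\R)=\Z/2$) the identity $g_e=t_{c_1}=\delta$ for every root $e$ is exactly item (4) of Lemma \ref{MW-basic}. Throughout, the $e$-versus-$(-e)$ sign ambiguity in Lemma \ref{MW-basic} is harmless, since $\delta$ has order two and hence $g_{-e}=g_e^{-1}=g_e$; the only delicate bookkeeping is keeping the boundary twists $t_{c_1},t_{c_{p+1}}$ straight and matching them to the displayed shape of $\delta$.
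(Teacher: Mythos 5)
Your proposal is correct and coincides with the paper's own argument: the paper likewise expands $g_e$ multiplicatively over the chain using Lemma \ref{MW-basic} (including the tacit extension of item (3) to the pendant bridges $B_1'$), cancels the $s_i$'s, and absorbs the leftover boundary twists via $\D_i^2=t_{c_i}t_{c_{i+1}}$ and $t_{c_{p+1}}=t_{c_1}^{-1}$, arriving at exactly the same intermediate expressions ($\D_1\D_2\D_3t_{c_2}^{-1}t_{c_3}^{-1}$ for $p=3$, $\D_1\D_2t_{c_2}^{-1}$ for $p=2$) and citing Lemma \ref{MW-basic}(4) for $p=0$.
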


\begin{proof}
The 
specified
expressions for $\delta$ through the generators of $\Maps(X_\R)$ follow directly from Proposition 
\ref{maps-spherical}.
In the case $p=3$ we apply this expression, use Lemma \ref{MW-basic} and  Proposition \ref{Delta-lemma}(2),
and get
$$\begin{aligned}
g_e=g_{B_1}g_{O_1}g_{B_{12}}g_{O_2}g_{B_{23}}g_{O_3}g_{B_3}=
&s_1(\D_1s_1^{-2})(s_1s_2t^{-1}_{c_2})(\D_2s_2^{-2})(s_2s_3t^{-1}_{c_3})(\D_3s_3^{-2})s_3=\\
&\D_1\D_2\D_3t^{-1}_{c_2}t^{-1}_{c_3}=\D_1\D_2\D_3\D_2^{-2}=\delta.
\qquad\qquad\qquad\end{aligned}
$$
In the case $p=2$, we get similarly
$$\begin{aligned}
g_e=&g_{B_1}g_{O_1}g_{B_{12}}g_{O_2}g_{B_{2}}=
s_1(\D_1s_1^{-2})(s_1s_2t^{-1}_{c_2})(\D_2s_2^{-2})s_2=\D_1\D_2t^{-1}_{c_2}=\\
&\D_1\D_2^{-1}(t_{c_2}t_{c_3})t^{-1}_{c_2}=\D_1\D_2^{-1}t^{-1}_{c_1}=\d.
\end{aligned}$$
In the case $p=1$, we get
$$g_{e}=g_{B_1}g_{O_1}g_{B_{1}'}=s_1(\D_1s_1^{-2})s_1=\D_1=\d.
$$
For $g_e=t_{c_1}$ in the case $p=0$ see Lemma \ref{MW-basic}(4).
\end{proof}

\begin{lemma}\label{primitivity}
Assume that $\LL\subset E_8$ is a
root lattice and 
$\LL'\subset \LL$ is
generated by some pairwise orthogonal roots $e_1,\dots,e_n$,  $n\le3$.
Then $\LL'$ is primitive in $\LL$.
\end{lemma}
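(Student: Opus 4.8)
The plan is to verify the defining property of primitivity directly. Recall that $\LL'\subset\LL$ is primitive exactly when $\LL/\LL'$ is torsion-free, equivalently when $(\LL'\otimes\Q)\cap\LL=\LL'$. So I would take an arbitrary $x\in\LL$ lying in the rational span of $\LL'$ and aim to show $x\in\LL'$. Writing $x=\sum_{i=1}^n q_ie_i$ with $q_i\in\Q$, the first step is to pin down the denominators. Since the $e_i$ are pairwise orthogonal and each satisfies $e_i^2=-2$ (recall that $\LL$, being a sublattice of $E_8$, is even and negative definite), pairing with $e_i$ gives $x\cdot e_i=q_ie_i^2=-2q_i$; as both $x$ and $e_i$ lie in $\LL$, this number is an integer, whence $q_i\in\tfrac12\Z$ for every $i$.

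The second step isolates the half-integer part. Set $m_i=2q_i\in\Z$, so that $x=\tfrac12\sum_i m_ie_i$. Subtracting the integral combination $\sum_i\lfloor m_i/2\rfloor e_i\in\LL'$, I reduce to the element $y=\tfrac12\sum_{i\in S}e_i\in\LL$, where $S=\{\,i:m_i\text{ is odd}\,\}$, and proving $x\in\LL'$ becomes equivalent to showing $S=\varnothing$. Evenness of $\LL$ now finishes the argument: $y^2=\tfrac14\sum_{i\in S}e_i^2=-\tfrac{|S|}2$ must be an even integer, so $|S|\equiv0\pmod4$; since the hypothesis $n\le3$ gives $|S|\le3$, we conclude $|S|=0$, all $m_i$ are even, and hence $x=\sum_i(m_i/2)e_i\in\LL'$, as required.

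I do not anticipate a genuine obstacle: the whole argument rests only on the evenness of the ambient lattice together with the orthogonality of the roots, and the bound $n\le3$ enters solely to exclude the single dangerous value $|S|=4$. It is worth recording that the hypothesis is sharp — for $n=4$ the conclusion fails precisely because $\tfrac12(e_1+e_2+e_3+e_4)$ can itself be an element (indeed a root) of the lattice, as already happens for the class $B_0=-\tfrac12(B_1+B_2+B_3+B_4)$ in the $D_4$-situation of Lemma \ref{4-J-bridges}. This is exactly the phenomenon that the restriction $n\le3$ is designed to rule out.
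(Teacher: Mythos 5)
Your proof is correct. It reaches the conclusion by a more elementary route than the paper, whose entire proof is the one-liner that for $n\le 3$ the discriminant form $\DD(\LL')=n[-\tfrac12]$ has no nonzero isotropic element; primitivity then follows from the standard correspondence between even overlattices of $\LL'$ and isotropic subgroups of its discriminant form (the saturation $(\LL'\otimes\Q)\cap\LL$ is such an overlattice, since $\LL\subset E_8$ is even). You instead verify saturation directly: pairing with the $e_i$ pins the coefficients to half-integers, reduction modulo $\LL'$ leaves $y=\tfrac12\sum_{i\in S}e_i$, and evenness of $\LL$ forces $|S|\equiv 0 \pmod 4$, hence $S=\varnothing$. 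The arithmetic core of the two arguments is identical --- your $y$ is precisely a would-be nonzero isotropic element of $n[-\tfrac12]$, and your congruence $|S|\equiv 0\pmod 4$ is exactly the isotropy condition --- but your version is self-contained and avoids invoking the overlattice/discriminant-form machinery, at the cost of a few more lines; the paper's version is shorter and makes the role of the discriminant form explicit. Your closing observation that the bound $n\le 3$ is sharp, witnessed by $B_0=-\tfrac12(B_1+B_2+B_3+B_4)$ in the $D_4$ situation, is exactly the phenomenon recorded in Lemma \ref{4-J-bridges}, and is a worthwhile sanity check that no cheaper hypothesis would do.
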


\begin{proof}
If $n\le 3$, then  $\DD(\LL')=n[-\frac{1}2]$ has no isotropic element, which implies primitivity of the embedding $\LL'\subset\LL$.
\end{proof}

\begin{proof}[Proof of Theorem \ref{details-on-MW-homomorphism}] In the case $p=3$, we have $\ker\Phi\cong\Z$ (see Proposition \ref{Phi-p}) and the result follows from Lemma \ref{delta} combined with Lemma \ref{primitivity}.

In the case $p=2$, in addition to the 5-chain $e$ from Lemma \ref{delta},
we have $\delta=g_{e}=g_{e'}\in\Maps(X_\R)$ for another 5-chain $e'$ (a subgraph of $\L=D_6$) obtained by replacing $B_2$  by $B_2'$.
It does give the same element of $\Maps(X_\R)$, since, according to Lemma \ref{MW-basic}(3), $g_{B_2}=g_{B_2'}$.
Thus, the combinations $a_1e+a_2e'$ 
with $a_1+a_2\in2\Z$
give a subgroup $\Z^2$ of $\ker\Phi\cong\Z^2$ (see Proposition \ref{Phi-p}) 
and applying Lemma \ref{primitivity} we conclude that the kernel
should coincide with this subgroup.

In the case $p=1$, by Lemma \ref{delta} we have $\delta=g_{e}$, where $e$ is presented by a 3-chain
 $B_1+O_1+B_1'$ (a subgraph of $\L=D_4+A_1$).
For the same reasons as in the case 
$p=2$,
the element  $\delta$ is presented by 
two
other 3-chains in the summand $D_4$ of $\L$.
From here, the sublattice $\mathcal K\subset \L$ formed by integer combinations of these three 3-chains
with coefficients $a_1,a_2,a_3$  satisfying $a_1+a_2+a_3\in 2\Z$ is contained in
the $\ker\Phi$. Since the rank, $3$, of $\mathcal K$ is the same as that of $\ker\Phi$ (see Proposition \ref{Phi-p})
and the lattice of all integer combinations of these three 3-chains is primitive
due to Lemma \ref{primitivity}, we conclude $\mathcal K=\ker\Phi$.

In the case $p=q=0$, the proof follows the same lines, using Lemma \ref{delta} and Proposition \ref{Phi-p}
(in this case $\L=4A_1$ and the primitivity argument becomes trivial).

The case $X_\R=\K\+q\SSS^2$, $0<q<4$, is analogous to the case 
$p=q=0$ and differs only in the rank of $\L=(4-q)A_1$ and 
usage of Proposition \ref{spherical} instead of Proposition \ref{Phi-p}.

In the cases $X_\R= (\K\#\T^2)\+\SSS^2$ and $X_\R= \K\+\K$ we 
make use of Propositions \ref{T-and-S}, \ref{2K-case} and provide matrices of the homomorphism $\L\to\Maps(X_\R)$
(see Tab. \ref{Phi-matrices}),
which are calculated 
using Lemmas \ref{MW-basic} and \ref{MW-Klein2}.
The kernels 
claimed in
Theorem \ref{details-on-MW-homomorphism} are then found from these matrices.
\end{proof}

\begin{table}[h!]
\caption{Matrices of  $\Z^4=\L\to\Maps(X_\R)$, $v\mapsto g_v$
}\label{Phi-matrices}
\hskip-5mm\scalebox{0.85}{(1) $\L=D_4\to \Maps(\K\#\T^2\+\S^2)=\Z^2+\Z/2$}\hskip10mm
 \scalebox{0.85}{(2) $\L=D_4\to \Maps(\K\+\K)=\Z/2+\Z/2$}
\vskip2mm

\hskip-4mm\begin{tabular}{c|ccccc}
&$O_1$&$B_1$&$B_1'$&$B_1''$\\
\hline
$s_1$&-2&1&1&1\\
$\D_1$&1&0&0&0\\
$t_{c_1}$&0&0&0&0
\end{tabular}
\hskip12mm
\begin{tabular}{c|cccc}
&$B_0$&$B_1$&$B_2$&$B_3$\\
\hline
$\gg{\ll_1^1}{\ll_2^1}$&$*$&1&1&1\\
$\gg{\ll_1^1}{\ll_1^2}$&1&0&0&0\\
\end{tabular}
\vskip1.5mm
\caption*{\footnotesize In the second matrix $*$ stands for $0$ or $1$ depending on orientations chosen for $B_1,B_2,B_3,B_4$.
By $\gg{\ll_1^1}{\ll_2^1}\in\Maps(X_\R)$ we denote an element 
preserving the  components of $\K\+\K$,
while $\gg{\ll_1^1}{\ll_1^2}$ denotes an element 
which interchanges them (see Lemma \ref{MW-Klein2}).}
\end{table}

\section{Proof of Theorems \ref{line-number} and \ref{vanishing-classes}
}\label{SectProofs}
Here, we follow the setting and notation of Sections \ref{sect5}, \ref{sect6}. In particular, we fix 
a real elliptic surface $f:X\to \P^1$ satisfying the assumption $\A$ and a real line $L\subset X$.

\subsection{Proof of Theorem \ref{line-number}}
The possible topological types of $X_\R$
are listed in Tab.\,\ref{C-Y-X-correspondence}, see Theorem \ref{11-X-classes}.

If $X_\R=\K\#p\T^2$ with $p\ge 1$, 
then $s_1^n\in
\Phi(\MW_\R)$ for any $n\in\Z$ 
(see Proposition \ref{Phi-p} if $1\le p\le3$ and Proposition \ref{index2}  if $p=4$).
 On the other hand, $g(L)\subset X$ is 
 a real line, for any $g\in\MW_\R$. Since 
 $L_{\R}$ intersects the fiber $a_1\cup b_1$ at a point of $a_1$, the homology class 
$[s_1^n(L_{\R})]\in H_1(X_\R)$ is $[L_{\R}]+n[a_1]$ and we obtain $\Nlin=\infty$, since $[a_1]\in H_1(X_\R)$ has infinite order.

If $X_\R=\K\#\T^2\+\SSS^2$, the arguments are literally the same, except that we refer to Proposition \ref{T-and-S}(1) to justify that $s_1^n\in\Phi(\MW_\R)$.

If $X_\R=\K$, then there exist only two classes in $H_1(X_\R)$ realizable by 
smooth
sections: $[L_{\R}]$ and $[L_{\R}]+[c_1]$, where $[c_1]$
is the order 2 element of $H_1(X_\R)$.
The class $[L_{\R}]+[c_1]$ is realized by the line $t_{c_1}(L_\R)$ 
(see Proposition \ref{Phi-p} applied to $p=0$), so $\Nlin=2$.

If $X_\R=\K\+q\SSS^2$ with $0<q\le3$,  we refer to Proposition \ref{spherical}, and the same arguments as for $X_\R=\K$
give $\Nlin=2$.

If $X_\R=\K\+4\SSS^2$, then $\L=0$, which implies $\Nlin=1$ (see Proposition \ref{line-root-correspondence}).

If $X_\R=\K\+\K$, then $H_1$ of each component contains only
2 classes realizable by 
smooth
sections. Finally, Proposition \ref{2K-case}
and the same arguments as for $X_\R=\K$ imply that all 4 are realizable by real lines, which gives $\Nlin=4$.
\qed

\subsection{Proof of Theorem \ref{vanishing-classes}}\label{proof-infinity-vanishing}
\begin{lemma}\label{line-through-node} Let $C\subset Q$ be a real nonsingular sextic $C\subset Q$ of type $\la p\,|\,q\ra$ with $p\ge 1$. Then, 
for any of the 
$p$ positive
ovals $o_i, 1\le i\le p$, of $C_\R$,
there exists a degeneration of $C$ to a real 1-nodal sextic $C_0$ contracting $o_i$  to a solitary real point. Furthermore, 
for any such degeneration there exists a real hyperplane section passing through
the node of $C_0$ and tangent to $C_0$ at two other (not necessarily real) points.
\end{lemma}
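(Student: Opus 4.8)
The plan is to establish the two assertions separately: the existence of a real $1$-nodal degeneration contracting $o_i$ is elementary and local, while the existence of the distinguished hyperplane section will be deduced from the tritangent enumeration of Theorem \ref{5types} by a limiting argument. For the first assertion I would use that a positive oval $o_i$ bounds a disc $D_i\subset Q_\R^-$ whose interior is disjoint from $C_\R$. In suitable real local coordinates on $Q_\R$ near $D_i$ the sextic reads $\{|x|^2=c\}$ with $c>0$ small, so that $o_i$ is the circle $|x|^2=c$ and $D_i$ the disc $|x|^2\le c$; letting $c\to0$ contracts $o_i$ to the solitary point $x=0$ and creates a single real node of local type $x^2+y^2=0$. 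Since the sextics $C=Q\cap S$ are cut out by the linear system of cubics $S$, this local real deformation is induced by a real family $C_t=Q\cap S_t$ nonsingular for $t>0$ and $1$-nodal for $t=0$; on the double cover the same degeneration contracts the oval-cycle $O_i$, which is a $(-2)$-sphere since $O_i^2=-2$. This simultaneously produces the required $C_0$ and furnishes, for any prescribed such degeneration, a concrete family $C_t\to C_0$ to which the limiting argument below applies.

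For the hyperplane section I would first produce, for every small $t>0$, a positive tritangent $\Tri_t$ of $C_t$ having a single simple tangency with the shrinking oval $o_i(t)$ and its two remaining tangencies on the $J$-component. Such a tritangent is of type $T_1$ and exists by the enumeration: by Lemma \ref{intersection-criterion} a simple tangency with $o_i$ corresponds to a root $e\in\L$ with $e\cdot O_i=1$, and the explicit codes of Table \ref{pics120} (together with their derivatives for $p<4$, and with Table \ref{roots1-1} in the case $\la1\,|\,1\ra$) exhibit, for each index $i$, a $T_1$ tritangent whose unique oval-tangency is with $o_i$ and whose other two tangencies lie on the $J$-component. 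Writing $H_t\in(\P^3)^*$ for the hyperplane carrying $\Tri_t$ and using compactness of $(\P^3)^*$, I would pass to a limit point $H_0$ of the family $\{H_t\}$ as $t\to0$. The tangency point with $o_i(t)$ tends to the node $p_0$, whence $p_0\in H_0$; meanwhile the two $J$-tangency points lie on the noncontracting component $J$ at positive distance from $p_0$ and converge to two points of $C_0\sm\{p_0\}$ at which $H_0$ is tangent, tangency being a closed condition. As the statement only requires two, possibly non-real, tangency points, this yields the desired section through the node.

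The main obstacle will be to ensure that $H_0$ is a genuine bitangent-through-node rather than a degenerate section. That $H_0$ does not pass through the vertex $\v$ follows from B\'ezout applied to generatrices, in the spirit of Lemma \ref{removable} and Proposition \ref{restriction-tangencies}: if $H_0\ni\v$ then $H_0\cap Q$ is a pair of generatrices, the one through $p_0$ meets $C_0$ with multiplicity $\ge2$ at the node, and any second tangency to $J$ on that same generatrix would push its intersection with $C_0$ beyond the bound $3$ (a generatrix meets the cutting cubic in only three points); distributing the node and the two $J$-tangencies among the two generatrices produces the same overload in every case. The remaining delicate point is that the two $J$-tangency points stay distinct in the limit; should they collide into a single contact of order $4$, one perturbs $H_0$ inside the pencil of real hyperplanes through $p_0$ to resplit it into two tangencies, which is possible since tangency to $C_0$ at two points is a codimension-two condition on this linear system and the statement permits the two tangency points to be non-real. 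This controls the limit for any degeneration contracting $o_i$ and completes the argument.
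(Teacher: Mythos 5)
Your route is genuinely different from the paper's. The paper projects $Q$ from the node $p_0$: this maps $C_0$ birationally onto a \emph{nonsingular plane quartic} $A$, identifies hyperplane sections through $p_0$ with lines in $\P^2$, and reduces the second assertion to the classical existence of real bitangents of a real smooth quartic. Your limiting argument replaces this by degenerating $T_1$-tritangents of the nearby nonsingular sextics $C_t$, and a good part of it is sound: such tritangents with odd tangency on $o_i$ exist by Lemma \ref{intersection-criterion} and the enumeration (Theorem \ref{5types}, Tab. \ref{pics120}), the oval-tangency point forces $p_0\in H_0$ in the limit, the non-contracting tangencies stay away from $p_0$, and your B\'ezout count on generatrices correctly excludes $\v\in H_0$.

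The genuine gap is your treatment of the collision case, and the proposed fix is not just incomplete but false. If the two tangency points of $H_t$ merge as $t\to 0$, then $H_0$ has a single contact of order $4$ with $C_0$ at a point $q$, and \emph{no} small perturbation of $H_0$ (inside the net of hyperplanes through $p_0$, or even inside the full $3$-dimensional system) can recreate two tangencies near $q$. Locally, write the branch of $C_0$ at $q$ as $y=x^4$ with $H_0$ cutting $y=0$; a nearby section cuts $y=\varepsilon_1x+\varepsilon_0$, and asking $x^4-\varepsilon_1x-\varepsilon_0$ to have two double roots forces it to equal $(x^2+ax+b)^2$, whence $a=b=0$ and $\varepsilon_1=\varepsilon_0=0$. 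In the language of the projected quartic: a contact of order $4$ is a hyperflex, and hyperflexes are \emph{isolated} among the $28$ bitangents, so they cannot be ``resplit''. Your codimension-two reasoning in fact proves the opposite of what you need: a codimension-two condition in a two-parameter family defines isolated points, which one cannot reach by perturbing a nearby non-solution. Since the lemma quantifies over \emph{all} degenerations, and real quartics with real hyperflexes exist (e.g. $yz^3=x^4+y^4$ with the line $y=0$), you cannot dismiss this case; repairing your argument would require, say, showing that not every limit of real $T_1$-tritangents tangent to $o_i$ can be a hyperflex, whereas the paper's projection argument sidesteps the issue by making all bitangents of $A$ available at once.

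A secondary gap: your proof of the first assertion assumes what is to be proved. The local model $\{|x|^2=c\}$, $c\to0$, only describes what a contraction looks like once a family exists; the claim that ``this local real deformation is induced by a real family $C_t=Q\cap S_t$'' does not follow from the fact that sextics are cut by cubics, and it is exactly the nontrivial global statement, for which the paper cites \cite[Lemma 2.3.4]{TwoKinds}.
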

\begin{proof}
The first part of the statement is a straightforward consequence of existence of deformations contracting simultaneously
 all the ovals, see \cite[Lemma 2.3.4]{TwoKinds}.

To prove the second part, recall that for any point $x\in Q_\R$, the projection from $x$ establishes a real birational isomorphism
$\pi : Q\dashrightarrow \P^2$. This isomorphism can be decomposed into the blowup at $x$ followed 
by the blowup of the vertex of $Q$, then contracting the proper image  of the generatrix $h_x$ passing through $x$, and finally, contracting the proper  image of the exceptional divisor of the second blowup. The 
exceptional divisor $E_x$ of the first blowup is mapped by $\pi$ to a
real  line $\pi(E_x)\subset \P^2$ passing through the point
$y=\pi( h_x)$.  Respectively, $\pi^{-1} : \P^2\dashrightarrow Q$ consists in blowing up the real point $y\in \P^2$ followed by the blowup of the intersection point between the exceptional divisor $I_y$ 
and the proper image of the real line $\pi(E_x)$, and 
finally, contracting the proper image of $I_y$.

If $x$ coincides with the node of a 1-nodal sextic $C_0\subset Q$, then the image of 
$C_0$ is a nonsingular quartic $A\subset \P^2$, and $\pi(E_x)$ is its tangent at the point $y=\pi(h_x)\in A$. 
Due to the above description of $\pi^{-1}$, the
requested hyperplane section is provided by lifting of 
any of the real double tangents to $A$. The existence of the latter ones is well known.
\end{proof}

\begin{proof}[Proof of Theorem \ref{vanishing-classes}]
Due to the stability of  real vanishing classes under deformation,
and to the deformation classification of real
nonsingular relatively minimal rational elliptic surfaces containing a real line, it is sufficient to prove the statement using a single example. So, we pick a real nonsingular sextic, observe that each of its positive ovals represents a vanishing class on
the associated real elliptic surface, and check that the  $MW_\R$-orbit of such a vanishing class
consists of  an
infinite number of elements.

More precisely, following Lemma  \ref{line-through-node} we consider a generic,   invariant under complex conjugation, one-parameter 
complex-analytic family $C_\tau, 0\le \vert \tau\vert\le \e, \tau\in \C,$ of sextics such that $C_\tau$ with $\tau\in\R, \tau>0,$
are real nonsingular sextics of type 
$\la p\,|\,q\ra$ while $C_0$ is a real 1-nodal sextic with a solitary node obtained from contracting a positive oval  $o_i\subset C=C_\tau, \tau>0$.
The
associated complex analytic
family $X'_\tau$ of rational elliptic surfaces inherits a complex conjugation such that $X'_\tau$ is real with $(X'_\tau)_\R=\K\# p\T^2\+ q\SSS^2$ for $\tau\in \R, \tau>0$. 
Then, by a base change $\tau=t^2$ followed by Atiyah's smoothing construction (see \cite{Atiyah}), we obtain a smooth complex analytic family of
surfaces, $X_t$, such that $X_t=X'_{t^2}$ for $t\ne 0$ while $X_0$ is the minimal resolution of $X'_0$. Furthermore, since the nodal degeneration $X'_{\tau>0}\to X'_0$ is contracting a circle $o_i\subset (X'_\tau)_\R$ (case of signature 1 in terminology of \cite{IKS}), the real structure on $X_0$ lifted from $X'_0$
and that real structure on $\{X_{t\ne 0}\}$ lifted from the real structure on $\{X'_{\tau\ne 0}\}$ for which $(X_t,\conj)=(X'_{t^2}, \conj)$, they fit
together and define a real structure on the total space of the Atiyah family $\{X_{t}\}$ (see \cite{IKS} for details). In particular, this shows that $[o_i]\in H_1((X_{\sqrt{\tau}})_\R)$ is a real vanishing class for any choice of orientation on $o_i$.

Next, due to stability of $(-1)$-curves (see \cite{Kodaira}), 
any of two real lines
$L'\subset X_0$ 
covering the hyperplane section 
provided by Lemma \ref{line-through-node}
extends, at least for small values of $t \in \C$, to an analytic family of 
lines $L'_t\subset X_t$. Due to the uniqueness  of this extension, and since $L'_0=L'$ is real, the family $\{L'_t\}$ is also real, so that, for each small real $t$ 
the line $L'_t$ is also real.
Having also a real family
of zero sections $L_t\subset X_t$, we may reparametrize the family $X_t$ via $g^n_t\in \MW(X_t)$, $n\in \Z$, $g_t=\gg{L_t}{L'_t}$,
and thus deduce that
$g^n_\e(o_i)$ is a real vanishing cycle for any $n\in \Z$.

The intersection index of $L'=L'_0$, and hence of $L'_t$ for any $t$,  with the vanishing class $[O_i]\in H_2((X_{\sqrt{\tau}})_\R)=H_2((X'_{\tau})_\R))$ is equal to $\pm 1$. Thus, applying Corollary \ref{tire-bouchon}
and Proposition \ref{L-and-g}, 
we conclude that $\Phi(g_\e)\vert_{N_i}$ is equal either to $(s_i)^{\pm 1}$ or $(\Delta_i s_i^{-1})^{\pm 1}$.
Therefore, $\Phi(g^n_\e\vert_{N_i})(o_i)$ reduces to iteration of Dehn twists and, as a result, is equal to $\pm(o_i\pm n(a_i-b_i))$
(with respect to orientations shown on Fig. \ref{ab-orientation}), 
which gives us an infinite number of 
real vanishing classes in $H_1((X_\e)_\R)=H_1((X'_{\tau>0})_\R) $.
\end{proof}

\begin{figure}[h!]
\caption{Preferred orientations}\label{ab-orientation}
\includegraphics[height=2.5cm]{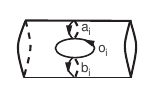}
\end{figure}

\section{Realizability of a smooth section by a line.}\label{realizability}
\subsection{The case without obstruction}
\begin{theorem}\label{lines-vs-sections}
If $X$ satisfies assumption A, and $X_\R$ is different from
$\K\#4\T^2$, $\K\#\T^2\+\SSS^2$, and $\K\+4\SSS^2$,
a class in $H_1(X_\R)$ is realized by a smooth section $\P^1_\R\to X_\R$ if and only if it is realized by a real line.
\end{theorem}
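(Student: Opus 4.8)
The plan is to recognize the statement as essentially a reformulation of the surjectivity of $\Phi$ established in Theorem \ref{MW-homomorphism}, once both ``realized by a smooth section'' and ``realized by a real line'' are transported into $\Maps(X_\R)$. First I would fix a real line $L\subset X$ and take $\lambda_0=L_\R$ as a reference smooth section of $f_\R:X_\R\to\P^1_\R$. Applying Lemma \ref{section-action} to the whole fibration $F=X_\R$, the rule $g\mapsto g(\lambda_0)$ sets up a bijection between $\Maps(X_\R)$ and the set of ambient-isotopy classes of smooth sections of $f_\R$. On the other side, by Proposition \ref{lattice-MW} the real lines form a torsor over $\MW_\R$, and by the very definition of $\Phi$ the image $\Phi(\gg{L}{L'})\in\Maps(X_\R)$ of a real line $L'$ satisfies $\Phi(\gg{L}{L'})(\lambda_0)=L'_\R$. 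Hence, under the above bijection, the isotopy classes of smooth sections that contain a real line are exactly those coming from the subgroup $\Im\Phi\subset\Maps(X_\R)$.

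Next I would pass to homology. Since an ambient isotopy is in particular a homotopy, the class $[\,\cdot\,]\in H_1(X_\R)$ of a smooth section is an isotopy invariant, so the set of classes realized by smooth sections is $\{\,[g(\lambda_0)]:g\in\Maps(X_\R)\,\}$, whereas the set of classes realized by real lines is $\{\,[g(\lambda_0)]:g\in\Im\Phi\,\}$. The inclusion of the second set into the first is automatic, because a real line is a smooth section; so the whole content lies in the reverse inclusion, for which it suffices to have $\Im\Phi=\Maps(X_\R)$. In fact such surjectivity yields the sharper conclusion that every smooth section is ambient isotopic to a real line, which is the form in which the result is quoted in the Introduction.

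Finally I would invoke Theorem \ref{MW-homomorphism} (cf.\ Tab.\,\ref{MW-action}): for each topological type of $X_\R$ other than $\K\#4\T^2$, $\K\#\T^2\+\SSS^2$ and $\K\+4\SSS^2$ one has $\Im\Phi=\Maps(X_\R)$, the surjectivity being supplied by Proposition \ref{Phi-p} when $X_\R=\K\#p\T^2$ with $0\le p\le3$, by Proposition \ref{2K-case} when $X_\R=\K\+\K$, and by Proposition \ref{spherical} when $X_\R=\K\+q\SSS^2$ with $0<q<4$. Substituting this into the previous step closes the argument. The main (and really the only) obstacle is this case-by-case surjectivity, which is precisely the content of Theorem \ref{MW-homomorphism}; it is no accident that the three excluded types are exactly those for which $\Im\Phi$ is a proper subgroup of $\Maps(X_\R)$ (of index $2$ for $\K\#4\T^2$, of infinite index for $\K\#\T^2\+\SSS^2$, and of index $2$ for $\K\+4\SSS^2$), so that for them the homology statement genuinely fails and is instead governed by the obstructions of Theorems \ref{section-criterion-X} and \ref{section-criterion-X1|1}.
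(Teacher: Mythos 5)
Your proposal is correct and takes essentially the same route as the paper's own proof: both reduce the theorem to the surjectivity of $\Phi$ established in Theorem \ref{MW-homomorphism} (via Propositions \ref{Phi-p}, \ref{2K-case}, \ref{spherical}), using Lemma \ref{section-action} to identify ambient isotopy classes of smooth sections with $\Maps(X_\R)$, so that the homology classes realized by smooth sections (resp.\ by real lines) form the $\Maps(X_\R)$-orbit (resp.\ the $\Im\Phi$-orbit) of $[L_\R]$. The only difference is cosmetic: the paper additionally cites Proposition \ref{no-zigzag-isomorphism}, while your argument makes explicit that the epimorphism of Lemma \ref{section-action} already suffices for the homology-level claim.
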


\begin{proof}
According to Theorem \ref{MW-homomorphism} the homomorphism $\Phi: \MW_\R(X)\to \Maps(X_\R)$ is epimorphic in the indicated cases. The orbit of the class $[L_\R]\in H_1(X_\R)$ 
of an arbitrary chosen real line $L$
under the induced $\Im(\Phi)$-action in $H_1(X_\R)$ is formed by the classes realizable by
real lines. 
The epimorphism $\pi_0(\Sec(F,\l_0))\to\Map^s(F)$
(see Lemma \ref{section-action}) implies that
a similar orbit of the whole group $\Maps(X_\R)$ by Proposition \ref{no-zigzag-isomorphism}
is formed by the classes realized by smooth real section.
\end{proof}
\subsection{Criterion for a smooth section to be realizable by a line}\label{criterion}
The fixed line $L\subset X$ determines (as any other line on $X$) a fiberwise involution $\b_L:X\to X$
that preserves $L$. It is the lift of the Bertini involution $\beta : Y\to Y$,
and its fixed point set is $L\cup C$ where we 
identified the sextic $C\subset Q$ and its lift to $X$.

Let $X_\R=\Kl\#4\T^2$.
Then $C$ considered as a sextic in $Q$ is of type $\la 4\,|\,0\ra$,
and we
numerate the ovals of $C_\R$ 
so that $o_1, o_3$ are the lower ovals and $o_2, o_4$ are the upper ones (see Subsection \ref{lower-upper-section} and Proposition \ref{lower-upper}).

Considering a generic real smooth section $\l:\P^1_\R\to X_\R$ of $f$ 
we let $l=\l(\P_\R^1)$ and define
 subsets $S_{in},S_{tan}\subset\{1,2,3,4\}$ associated with $\l$, extending our previous definition given in the case $L_\R\cap l=\varnothing$
 (as it was given in the context of 
 real tritangents to $C\subset Q$, see Section \ref{Sin-Stan}).
Namely, we observe that $l\cup \b_L(l)$ divides $X_\R$ into two 
closed regions with $l\cup 
\b(l)$ as a common boundary,
\begin{figure}[h!]
\caption{
Region $F_l$ is shaded}\label{Fl}
\includegraphics[height=2.5cm]{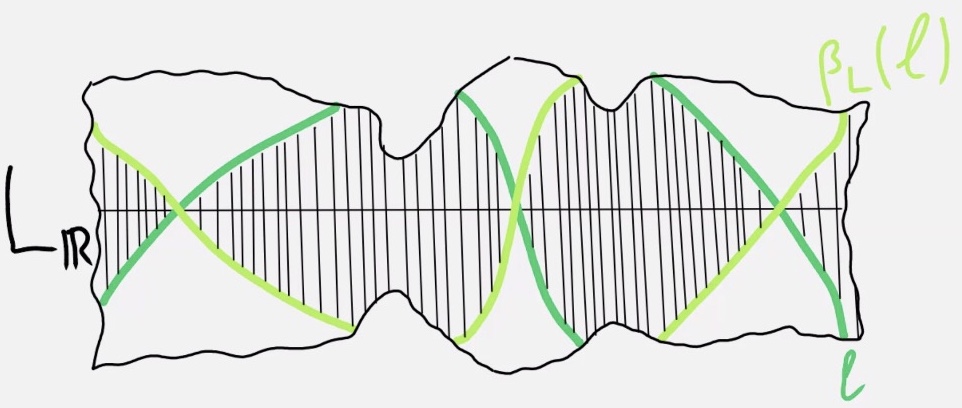}
\end{figure}
denote by $F_l\subset X_\R$ the 
region containing $L_\R$
(see Fig. \ref{Fl})
and set
$$S_{tan}(l)=\{i\,|\,o_i\circ l=1\!\mod2\}\quad\text{and}\quad
S_{in}(l)=\{i\,|\,o_i\subset F_l\}.
$$ 
When working with the sets $S_{tan}(l), S_{in}(l)$ in concrete situations, we descend from $X$ to $Y$ and apply
the terminology and encoding introduced in Sec. \ref{encoding}.

\begin{lemma}\label{simultaneous}
 The residue $r=|S_{in}(l)\sm S_{tan}(l)|+|S_{tan}(l)\cap\{1,3\}|+l\circ L_\R\!\mod2$ is preserved under replacement
$l$ by $l'=g(l)$ with $g\in \Map^s(X_\R)$ if and only if $g$ belongs to $\Phi(\MW)$.
\end{lemma}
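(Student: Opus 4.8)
The claim (Lemma \ref{simultaneous}) is that the $\Z/2$-valued quantity
$$r=|S_{in}(l)\smallsetminus S_{tan}(l)|+|S_{tan}(l)\cap\{1,3\}|+l\circ L_\R \pmod 2$$
is invariant under the $\Phi(\MW_\R)$-action on smooth sections, and *only* under that action — i.e. $r$ is exactly the obstruction to $g$ lying in $\Phi(\MW_\R)$, which is the index-$2$ subgroup of $\Maps(X_\R)=\Z^8\oplus\Z/2$ in the case $X_\R=\K\#4\T^2$.

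Let me think about how to attack this.

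**The structure.** We're in the case $X_\R = \K\#4\T^2$, so $\L=E_8$, $\MW_\R\cong E_8$ is free abelian of rank 8, and $\Maps(X_\R)\cong\Z^8\oplus\Z/2$. By Proposition \ref{index2}, $\Phi(\MW_\R)\cong\Z^8$ has index 2 in $\Maps(X_\R)$, and the quotient $\Maps(X_\R)/\Phi(\MW_\R)=\Z/2$ is generated by $[s_2]=[s_4]=[t_{c_i}]$.

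So the lemma is really asking: find a function on smooth sections (or on $\Maps(X_\R)$) that detects the nontrivial coset. The quantity $r$ is defined in terms of the combinatorics $(S_{in}, S_{tan})$ plus the intersection $l\circ L_\R$.

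**Reduction.** The key point is that $g\in\Phi(\MW_\R)$ iff $[g]=0$ in $\Maps(X_\R)/\Phi(\MW_\R)=\Z/2$. So I need to show: the difference $r(l') - r(l)$ (where $l'=g(l)$) equals the class $[g]\in\Z/2$.

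This is a homomorphism statement! Both $l\mapsto r(l)$ change and $g\mapsto [g]$ are being compared. Because $\Maps(X_\R)$ acts on sections (Lemma \ref{section-action}), and $[g]$ is additive, it suffices to check the change of $r$ under the *generators* of $\Maps(X_\R)$, namely $s_1,\dots,s_4,\D_1,\dots,\D_4, t_{c_i}$, and verify:
- $r$ is unchanged under generators lying in $\Phi(\MW_\R)$ (these are $s_1,s_3, s_2^2, s_4^2, \D_i$ by Prop \ref{index2}(2));
- $r$ flips under a generator of the nontrivial coset (e.g. $t_{c_1}$, or $s_2$).

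**How each generator acts on $(S_{in},S_{tan})$ and on $l\circ L_\R$.** This is where the real work is. For each generator $g$ I need to understand geometrically:
1. How does $S_{tan}$ change? $S_{tan}$ counts odd tangencies $o_i\circ l$. Since $g$ is a fiberwise shift, applying it to $l$ changes $l\circ o_i \bmod 2$ by $g_*\cdot[o_i]$ — more precisely by the intersection of the "displacement" with the oval class. By Lemma \ref{cycle-line-intersection} / Corollary \ref{tire-bouchon}, a Dehn twist $s_i=t_{a_i}t_{b_i}$ changes $l\circ o_i$ in a controlled way.
2. How does $S_{in}$ change? $S_{in}$ records which ovals lie in the region $F_l$ bounded by $l\cup\b_L(l)$. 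Pushing $l$ across an oval by $\D_i$ or $s_i$ moves that oval between $F_l$ and its complement.
3. How does $l\circ L_\R$ change? Applying $g$ to $l$ shifts the intersection with the fixed line $L_\R$.

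**The main obstacle.** The hard part is the careful bookkeeping in step 2–3 above: correctly tracking, for each of the nine generators, the simultaneous change in $|S_{in}\smallsetminus S_{tan}|$, in $|S_{tan}\cap\{1,3\}|$, and in $l\circ L_\R \bmod 2$, and confirming these add up so that $r$ is invariant precisely on $\Phi(\MW_\R)$. The subtle point is the asymmetry between lower ovals $\{1,3\}$ and upper ovals $\{2,4\}$ (Prop \ref{lower-upper}): a twist $s_i$ on a lower vs. upper oval affects the term $|S_{tan}\cap\{1,3\}|$ differently, and this asymmetry is exactly what distinguishes $s_1,s_3\in\Phi(\MW_\R)$ from $s_2,s_4\notin\Phi(\MW_\R)$. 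I expect the cleanest route is to reuse the arithmetic already established: formula \eqref{q0-formula} shows $\q_0(v)=|S_{in}\smallsetminus S_{tan}|+|S_{tan}\cap\{1,3\}| \bmod 2$ for a genuine tritangent $l_v$, so $r$ is a geometric extension of $\q_0$ to arbitrary sections, with the correction term $l\circ L_\R$ accounting for $l$ meeting $L_\R$. The plan is thus to show $r(g(l))-r(l)=[g]$ by verifying it on generators, leaning on Lemma \ref{arithm}, Corollary \ref{tire-bouchon}, and Proposition \ref{L-and-g} to convert each generator's geometric action into its effect on the three summands.
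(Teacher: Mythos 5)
Your framing coincides with the paper's: regard the change of $r$ as a function of $g$, observe that it suffices to prove $r(g(l))-r(l)=[g]$, where $[g]$ is the coset of $g$ in $\Maps(X_\R)/\Phi(\MW_\R)\cong\Z/2$, and verify this on the generators $s_1,\dots,s_4,\D_1,\dots,\D_4,t_{c_1}$ of $\Maps(X_\R)$, using Proposition \ref{index2} to decide which of them lie in $\Phi(\MW_\R)$. This reduction, via the cocycle identity $r(g_1g_2(l))-r(l)=\bigl(r(g_1(g_2(l)))-r(g_2(l))\bigr)+\bigl(r(g_2(l))-r(l)\bigr)$, is sound, and you correctly single out the lower/upper oval asymmetry as what separates $s_1,s_3$ from $s_2,s_4$.

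The gap is that the verification on generators, which is the entire substance of the paper's proof, is never carried out: your text announces the bookkeeping as ``the real work'' and then only sketches which tools might do it. The paper's proof is precisely this bookkeeping (Table \ref{change-t-s-D}): for each of $t_{c_i}^{\pm1}$, $s_i^{\pm1}$ with $i$ lower or upper, and $\D_i^{\pm1}$, and for \emph{each} possible mutual position of $l$ and the oval $o_i$ (overpassing, underpassing, overtangent, undertangent), one computes the change of the three summands $|S_{in}\sm S_{tan}|$, $|S_{tan}\cap\{1,3\}|$, $l\circ L_\R$ and checks that the total change is $0$ for $s_1^{\pm1},s_3^{\pm1},\D_i^{\pm1}$ and $1$ for $s_2^{\pm1},s_4^{\pm1},t_{c_i}^{\pm1}$ in every position --- this position-independence is exactly what legitimizes the generator check, and it is not automatic: $s_i$ changes $|S_{in}\sm S_{tan}|$ only when $l$ underpasses $o_i$ and changes $l\circ L_\R$ only when $l$ overpasses it, while $\D_i$ changes $l\circ L_\R$ only in the non-tangent positions (the paper needs a separate figure, Fig.\,\ref{Delta-action}, for this last point). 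Your hoped-for shortcut through formula (\ref{q0-formula}) cannot replace this computation: that formula is an identity for tritangents, i.e.\ for real lines, whereas the lemma quantifies over arbitrary smooth sections $l$, for which the three summands individually depend on the position of $l$ and only the case-by-case analysis shows that their sum changes by a quantity depending on $g$ alone. As written, the proposal is a correct plan that stops where the proof begins.
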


\begin{proof}
It is enough to check that for the generators of $\Phi(\MW)$, 
$g\in\{s_1,s_3, s_2^2,s_4^2$, $\D_i \,|\,  i=1,\dots, 4\}$ (see Proposition \ref{index2}(2)),
the residue $r$ does not change, while for 
$g=t_{c_i}\in \Map^s(X_\R)$
which represents the generator
of $\Map^s(X_\R)/\Phi(\MW)=\Z/2$
(see Proposition \ref{index2}(3)), the residue $r$ changes.

Table \ref{change-t-s-D} shows 
how $r$ and each of its summands, $|S_{in}(l)\sm S_{tan}(l)|$, $|S_{tan}(l)\cap\{1,3\}|$, $l\circ L_\R\!\mod2$, vary
under the action of $t_{c_i}$, $s_i$ and $\D_i$.

For $g=t_{c_i}^{\pm1}$, the sets $S_{in}$ and $S_{tan}$ are not affected, while $l\circ L_\R$ changes by $1$, since
the classes $[l'],[l]\in H_1(X_\R)$ differ by the fiber-class.
The action of $s_i$ alternates the pairity of 
$l\circ o_i$, and, in particular, changes
 $|S_{in}\sm S_{tan}|$ if $l$ is underpassing oval $o_i$,
while the intersection index $l\circ L_\R$ alternates
only if $l$ is overpassing $o_i$.
The action of $\D_i$ does not affect 
$l\circ o_i$, while alternates
"overpasses" and "underpasses" of $l$ over $o_i$.
Fig. \ref{Delta-action} 
shows how the action of $\D_i$ affects $l\circ L_\R$.
\end{proof}

\begin{table}[h!]
\caption{Variation of $r$ and its summands}
\label{change-t-s-D}
\begin{tabular}{|c|c|c|c|c|c|}
\hline
$g$&position of $o_i$ and $l$&$|S_{in}\sm S_{tan}|$&$|S_{tan}\cap\{1,3\}|$&$l\circ L_\R\!\mod 2$&$r$\\
\hline
$t_{c_i}^{\pm1}$& in all positions&0&0&$1$&$1$\\
\hline
$s_1^{\pm1}$ or $s_3^{\pm1}$&\bottan\ or \bott&0&$\pm1$&$1$&0\\
$s_1^{\pm1}$ or $s_3^{\pm1}$&\uptan\ or \upp&$\pm1$&$\pm1$&0&0\\
$s_2^{\pm1}$ or $s_4^{\pm1}$&\bottan\ or \bott&0&0&$1$&$1$\\
$s_2^{\pm1}$ or $s_4^{\pm1}$&\uptan\ or \upp&$\pm1$&0&0&$1$\\
\hline
$\D_i^{\pm1}$&\bottan&0&0&0&0\\
$\D_i^{\pm1}$&\bott&$\pm1$&0&$1$&0\\
$\D_i^{\pm1}$&\uptan&0&0&0&0\\
$\D_i^{\pm1}$&\upp&$\pm1$&0&$1$&0\\
\hline
\end{tabular}
\end{table}

\begin{figure}[h!]
\caption{ }\label{Delta-action}
\includegraphics[height=3.5cm]{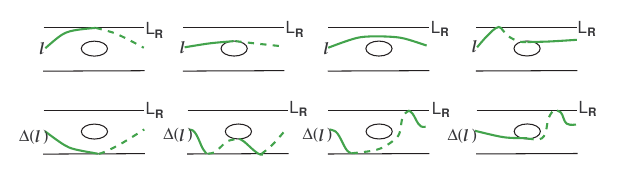}
\end{figure}

\begin{theorem}\label{section-criterion-X}
If $X$ satisfying the assumption $\A$ is
 endowed with a fixed real line $L$ and has $X_\R=\Kl\#4\T^2$, then 
a smooth section 
$l\subset X_\R$ is ambient isotopic to the real locus of a real line if and only if the sets
$S_{in}(l),S_{tan}(l)\subset\{1,\dots,4\}$ defined by $l$ satisfy
$$|S_{in}(l)\sm S_{tan}(l)|+|S_{tan}(l)\cap\{1,3\}|=l\circ L_\R+1\!\mod2.$$
\end{theorem}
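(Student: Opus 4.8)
The plan is to transport the problem into the abelian group $\Maps(X_\R)$ and to use that the real lines fill out exactly the index-$2$ subgroup $\Im\Phi$. First I would fix the reference section $\l_0=L_\R$ and invoke Lemma \ref{section-action}, which makes $l\mapsto\gg{\l_0}{l}$ a bijection between ambient-isotopy classes of smooth sections of $f_\R$ and elements of $\Maps(X_\R)$. Under this bijection the real loci of real lines correspond precisely to $\Im\Phi$: by Proposition \ref{lattice-MW} the group $\MW_\R(X)$ acts simply transitively on real lines with base point $L$, and $(L_v)_\R=\Phi(\gg{L}{L_v})(\l_0)$ with $\Phi(\gg{L}{L_v})\in\Im\Phi$. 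Thus a smooth section $l$ is ambient isotopic to the real locus of a real line if and only if $\gg{\l_0}{l}\in\Im\Phi$.

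By Proposition \ref{index2} the subgroup $\Im\Phi$ has index $2$ in $\Maps(X_\R)$, so there is a surjective homomorphism $\rho:\Maps(X_\R)\to\Z/2$ with $\ker\rho=\Im\Phi$, and the previous step turns the theorem into the identity
$$
r(l)\equiv 1+\rho(\gg{\l_0}{l})\bmod2,\qquad
r(l):=|S_{in}(l)\sm S_{tan}(l)|+|S_{tan}(l)\cap\{1,3\}|+l\circ L_\R.
$$
To prove this identity I would compare $l$ with a reference real line. Given any generic section $l$ and any section $l_1$, the fiberwise translation $g=\gg{l_1}{l}\in\Maps(X_\R)$ sends $l_1$ to $l$ on the nose, so Lemma \ref{simultaneous} applies verbatim and yields $r(l)=r(g(l_1))=r(l_1)+\rho(g)$ in $\Z/2$ (the lemma says $r$ is unchanged exactly when $g\in\Im\Phi$, and $r$ is $\Z/2$-valued). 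In particular this already shows that $r$ depends only on the class $\gg{\l_0}{l}$, hence is an ambient-isotopy invariant.

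It remains to evaluate the constant by taking $l_1=L'_\R$ for a real line $L'$ disjoint from $L$, i.e.\ one covering a positive tritangent (such lines exist on a sextic of type $\la4|0\ra$). Then $l_1\circ L_\R=0$, and the sets $S_{in}(l_1),S_{tan}(l_1)$ defined in Section \ref{criterion} restrict, on sections disjoint from $L_\R$, to the tritangent sets of Section \ref{Sin-Stan}; hence Lemma \ref{arithm} gives $|S_{in}(l_1)\sm S_{tan}(l_1)|+|S_{tan}(l_1)\cap\{1,3\}|\equiv1$ and so $r(l_1)\equiv1$. Since $\gg{\l_0}{l_1}=\Phi(\gg{L}{L'})\in\Im\Phi$, we have $\rho(\gg{\l_0}{l_1})=0$, and writing $\gg{\l_0}{l}=\gg{\l_0}{l_1}\,\gg{l_1}{l}$ and using that $\rho$ is a homomorphism gives $\rho(g)=\rho(\gg{l_1}{l})=\rho(\gg{\l_0}{l})$. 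Combining, $r(l)\equiv1+\rho(\gg{\l_0}{l})$, whence $l$ is isotopic to a real line iff $\rho(\gg{\l_0}{l})=0$ iff $r(l)\equiv1\bmod2$, which is the asserted criterion.

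The genuinely substantial input here is Lemma \ref{simultaneous}, and I expect its verification to be the main obstacle: one has to check, generator by generator (the content of Table \ref{change-t-s-D}), how each of the three summands of $r$ responds to $s_i$, $\D_i$ and $t_{c_i}$, reading off the effect from the overpass/underpass encoding and from the action of the half-twist $\D_i$ on the intersection index with $L_\R$. Granting that lemma together with the index-$2$ computation of Proposition \ref{index2}, everything else is the bookkeeping above; the only further point worth stating explicitly is the compatibility of the extended sets $S_{in},S_{tan}$ with the tritangent definition, which licenses the base-point evaluation via Lemma \ref{arithm}.
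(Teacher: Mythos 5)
Your proposal is correct and follows essentially the same route as the paper's proof: it rests on exactly the same three ingredients — the bijection of Lemma \ref{section-action} between $\Maps(X_\R)$ and ambient isotopy classes of smooth sections (restricting to $\Phi(\MW_\R)$ on classes of real lines), the invariance statement of Lemma \ref{simultaneous}, and the base-case evaluation on a real line disjoint from $L$ via Lemma \ref{arithm}. Your explicit introduction of the quotient homomorphism $\rho:\Maps(X_\R)\to\Z/2$ from Proposition \ref{index2} merely spells out the bookkeeping the paper leaves implicit.
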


\begin{proof}
Due to Proposition \ref{arithm},
the statement holds for 
smooth
sections $l$ represented by real lines disjoint from $L$. 
As it follows from Lemma \ref{section-action},
the mapping $g\in \Map^s(X_\R)\mapsto l=g(L_\R)$ establishes a bijection between $\Map^s(X_\R)$ and the set of ambient isotopy classes of smooth sections $l\subset X_\R$, and restricts
to a bijection between $\Phi(\MW)$ and the ambient isotopy classes of 
smooth sections represented by real lines.\
Thus, the claim follows from Lemma \ref{simultaneous}.
\end{proof}

The next theorem concerns 
the case $X_\R=\Kl\#\T^2\+\SSS^2$.
In this case the curve $C_\R\subset Q_\R$ has two ovals and we denote by $o$ the positive one.
On this oval the projection $f_\R:X_\R\to\P^1_\R$ has 
two critical points, $x,y\in o$, that we connect by a curve $\g\subset X_\R$ (see the leftmost sketch on Fig. \ref{1-1-obstruction})
with the following properties:
\begin{itemize}\item
$\g$ is a section of $f_\R$ over the interval $\P^1_\R\sm \operatorname{Int} (f_\R(o))$ bounded by the critical values $f_\R(x)$, $f_\R(y)$. 
\item
$\g$ intersects
neither $L_\R$ 
nor the J-component of $C_\R$.
\end{itemize}
We choose arbitrarily a coorientation of $\g$, and note that the intersection index $\g\circ l\in\Z$
is well-defined for any smooth section $l\subset X_\R$. Clearly, this index is preserved under continuous variations of $l$ in the space of smooth
sections.

Note that for the Bertini-partner $\g'=\b_L(\g)$ of $\g$ the same properties are satisfied and $\g\cup\g'$ form a simple closed curve
since $\g\cap\g'=\{x,y\}$. Moreover, $\b_L$ induces a coorientation of $\g'$ compatible with that of $\g$, so that $\g\cup\g'$ becomes a cooriented closed curve.
Since, 
in addition,
$\g\cup\g'$ bounds in $X_\R$, this implies that $\g\circ l=-\g'\circ l$ for any smooth section $l$.

\begin{figure}[h!]
\caption{
}\label{1-1-obstruction}
\includegraphics[height=3cm]{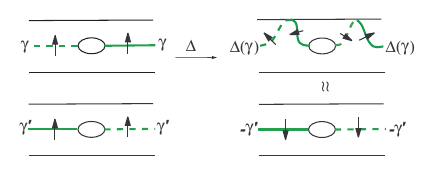}
\end{figure}

\begin{theorem}\label{section-criterion-X1|1}
If $X$ satisfying the assumption $\A$ is endowed with a fixed real line $L$ and has $X_\R=\Kl\#\T^2\+\SSS^2$, then 
a smooth section 
$l\subset X_\R$ is 
ambient
isotopic to the real locus of a real line if and only if $\g\circ l=0$.
\end{theorem}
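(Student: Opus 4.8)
The plan is to set up an integer-valued invariant $\g\circ l$ on the space of smooth sections, show it is the exact obstruction to realizability by a line, and verify the two directions separately. First I would note that by Theorem \ref{MW-homomorphism} (see Tab.\,\ref{MW-action}), for $X_\R=\Kl\#\T^2\+\SSS^2$ the image $\Im(\Phi)\cong\Z\oplus\Z/2$ has infinite index in $\Maps(X_\R)\cong\Z^2\oplus\Z/2$, with $\Ker(\Phi)\cong\Z^3$. Hence the real lines realize only an index-infinite subset of the ambient isotopy classes of smooth sections, and there must be a $\Z$-valued obstruction cutting out exactly $\Phi(\MW_\R)$ inside $\Maps(X_\R)$. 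The candidate is $l\mapsto\g\circ l$, and the heart of the proof is to identify the kernel of this homomorphism (on the torsor of sections, normalized at $L_\R$) with the image of $\Phi$.

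The key steps, in order, are as follows. By Lemma \ref{section-action} and Proposition \ref{no-zigzag-isomorphism}, the map $g\mapsto g(L_\R)$ gives a bijection between $\Maps(X_\R)$ and ambient isotopy classes of smooth sections, restricting to a bijection between $\Phi(\MW_\R)$ and the classes realized by real lines. So it suffices to show, for $g\in\Maps(X_\R)$ and $l=g(L_\R)$, that $\g\circ l=0$ iff $g\in\Phi(\MW_\R)$. I would compute $\g\circ l$ on the explicit generators of $\Maps(X_\R)$ from Proposition \ref{maps-spherical}, namely $s_1,\D_1,t_{c_1}$. Since $\g$ is a section over the complement of $f_\R(o)$ avoiding $L_\R$ and the $J$-component, and is constructed from the two critical points on the positive oval $o$, its Bertini-partner $\g'=\b_L(\g)$ satisfies $\g\circ l=-\g'\circ l$, forcing $\g\circ l$ to vanish on any section that stays disjoint from the relevant fiber translations. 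The generators $s_1$ (the simultaneous twist $t_{a_1}t_{b_1}$ across the handle) and $\D_1$ lie in $\Phi(\MW_\R)$ by Proposition \ref{T-and-S}(1), and I expect both to preserve $\g\circ l$; the remaining generator $t_{c_1}$, whose coset generates $\Maps(X_\R)/\Phi(\MW_\R)\cong\Z$ (Proposition \ref{T-and-S}(1)), should shift $\g\circ l$ by $\pm1$, because $t_{c_1}$ adds the spherical-fiber class $[c_1]$ and $\g$ meets exactly one connected component of that fiber once.

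To complete the argument I would check both directions. For necessity: starting from $l=L_\R$ itself we have $\g\circ L_\R=0$ since $\g$ is disjoint from $L_\R$ by construction; then any $g\in\Phi(\MW_\R)$, being a word in $s_1$ and $\D_1$ (up to the order-$2$ element $\delta$, which does not affect the integer $\g\circ l$), leaves $\g\circ l$ unchanged, so every section realized by a real line has $\g\circ l=0$. For sufficiency: if $l=g(L_\R)$ has $\g\circ l=0$, then writing $g$ in terms of the generators and using that only the $t_{c_1}$-exponent moves $\g\circ l$, the vanishing of $\g\circ l$ forces that exponent to be even---equivalently, $[g]=0$ in $\Maps(X_\R)/\Phi(\MW_\R)=\Z$---so $g\in\Phi(\MW_\R)$ and $l$ is ambient isotopic to a real line. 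The main obstacle will be the careful bookkeeping in the generator computation: confirming that $s_1$ and $\D_1$ genuinely preserve $\g\circ l$ while $t_{c_1}$ changes it by a unit, which requires tracking the intersection of the translated section with $\g$ through the handle and across the critical values $f_\R(x),f_\R(y)$ bounding $f_\R(o)$ (cf.\ the configuration in Fig.\,\ref{1-1-obstruction}). Once this local computation is pinned down, the global statement follows formally from Lemma \ref{section-action}.
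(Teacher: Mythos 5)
Your proposal is correct and takes essentially the same approach as the paper: reduce via Lemma \ref{section-action} to deciding whether $g\in\Maps(X_\R)$ with $l=g(L_\R)$ lies in $\Phi(\MW_\R)$, decompose $g$ as $\D_1^{\k}s_1^{n}t_{c_1}^{m}$ by Proposition \ref{maps-spherical}, verify that $s_1$ and $\D_1$ preserve $\g\circ l$ (for $\D_1$ this is precisely the paper's observation that it carries $\g$ to $\g'$ with reversed coorientation, combined with $\g\circ l=-\g'\circ l$) while $t_{c_1}$ shifts it by $\pm1$, and conclude by Proposition \ref{T-and-S} that $g\in\Phi(\MW_\R)$ if and only if $m=0$. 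One slip to fix: vanishing of $\g\circ l=\pm m$ forces the $t_{c_1}$-exponent to be zero, not merely ``even'', which is what your own identification $[g]=0$ in $\Maps(X_\R)/\Phi(\MW_\R)\cong\Z$ actually requires; with that correction your argument coincides with the paper's proof.
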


\begin{proof}
Consider $g\in G(X_\R)$ such that $g(l)=L_\R$.
By Proposition  \ref{maps-spherical}, the image of $g$ in $\Maps(X_\R)$ can be written as image of
$\Delta_1^{\k}s_1^{n}t_{c_1}^{m}$ where $n,m\in \Z$, $\k\in\{0,1\}$, and $\Delta_1, s_1, t_{c_1}$ stand for
standard representatives of the generators of $\Maps(X_\R)$ specified in Proposition \ref{maps-spherical}.
Clearly, $s_1$ leaves $\g$ invariant, while $\D_1$ sends $\g$ to a curve isotopic to $\g'=\beta_L(\g)$
with the opposite coorientation (see Fig.\,\ref{1-1-obstruction}). Therefore,
$$
\g\circ l=g(\g)\circ L_\R=\begin{cases}(\gamma+mc_1)\circ L_\R=m,\quad &\text{if}\,\, \k=0,\\
(-\gamma'+mc_1)\circ L_\R=m,\quad &\text{if}\,\, \k=1.
\end{cases}
$$
On the other hand, according to Proposition\,\ref{T-and-S}  the mapping class of $\Delta_1^{\k}s_1^{n}t_{c_1}^{m}$ belongs to $\Phi(\MW)$ if and only if $m=0$.
Thus, there remains to notice that due to Lemma \ref{section-action} $l$ is ambient isotopic to a real line, if and only $ = g^{-1}=(\Delta_1^{\k}s_1^{n}t_{c_1}^{m})^{-1}$ belongs to $\Phi(\MW)$.
\end{proof}

\begin{remark}
The conclusion of Theorem \ref{section-criterion-X1|1} 
becomes stronger if we add the assumption
that $X_\R\to \P^1_\R$ has no zigzags.
In this case, the same proof, but with Lemma  \ref{section-action} replaced  by Proposition \ref{no-zigzag-isomorphism}, shows that 
$\gamma\circ l =0$ if and only if $l$ is isotopic to a real line through smooth sections (not only ambient isotopic as in Theorem \ref{section-criterion-X1|1}).
\end{remark}

\section{The action of $\MW_\R(X)$ in $H_1(X_\R)$}\label{sect8}

\subsection{Decomposition of $H_1(X_\R)$}\label{decompH1} Let us fix a real line 
$L_{\R}\subset X_\R=\K^2\#p\T^2\+q\SSS^2$ and choose a connected fiber $F_\R\subset X_\R$. Then, as in Section \ref{cut-section}, consider
the subsurfaces $N_i$ of the non-spherical component $\K^2\#p\T^2$ of $X_\R$ and their skeletons $a_i\cup o_i\cup b_i, 1\le i\le p$, 
see Fig. \ref{cutting},
where we assume in addition that $o_i$ are positive ovals of the sextic $C$ defined by $L$ (see Sec.\,\ref{criterion}) and $b_i$ are disjoint from $L_\R$.
We orient $L_{\R}$ in accord with the orientation of $\P^1_\R$. To orient the circles $o_i$ we notice that each of them splits into a pair of arcs connecting critical points of the projection $f_\R:X_\R\to\P^1_\R$: the {\it upper arc} intersecting $a_i$ and the {\it lower arc} intersecting $b_i$. We orient $o_i$ so that $f_\R$ preserves
the orientation on the lower arc (and, thus, reverses on the upper one). For $a_i, b_i$,  we fix an orientation of $X_\R\sm F_\R$ and then orient $a_i, b_i$ in a way to obtain the following local intersection indices
\be\label{local-indices} o_i\circ a_i=b_i\circ o_i=1,\quad \text{from where } a_i\circ [L_{\R}]=1.\ee
Finally, we notice that $[F_\R]$ 
is 
the 2-torsion element of $H_1(X_\R)$ and forms together with $[L_\R], o_i, b_i$ a basis of $H_1(X_\R)$. This leads to a natural decomposition
\be\label{decomposition-formula}H_1(X_\R)=\la [F_\R]\ra\oplus\Big\lbrack\bigoplus_{i=1}^p\la b_i,o_i\ra\Big\rbrack\oplus\la [L_{\R}]\ra=
\Z/2\oplus\Big\lbrack\bigoplus_{i=1}^p(\Z\oplus\Z)\Big\rbrack\oplus\Z.\ee
With respect to this basis,
the class of any 
smooth
section has a 
coordinate expression
\be\label{in-coordinates}
\kk [F_\R]+\sum_{i=1}^p(m_ib_i+\k_i o_i)+ [L_\R],\quad \kk\in\Z/2, \ m_i\in\Z,\  \k_i\in\{0,1\}.
\ee

The proof of the following is straightforward.

\begin{proposition}
For a fixed real line $L$, 
the identification $\la b_i,o_i\ra=\Z\oplus\Z$
may be changed only by an automorphism $n\mapsto -n$ of the first $\Z$-summand,
which happens
if the 
orientation of $N_i$
is changed by another
choice of $F_\R$ or/and another choice of a fixed orientation of $X_\R\sm F_\R$.
\qed\end{proposition}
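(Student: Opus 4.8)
The plan is to treat the two summands of $\la b_i,o_i\ra$ separately and to show that all the freedom in the construction is concentrated in the orientation of the single curve $b_i$. First I would observe that the oriented class $o_i$ is defined purely in terms of $L$ and the fixed orientation of $\P^1_\R$: the curve $o_i$ is a positive oval of the sextic $C$ cut out by $L$, and its orientation is prescribed by the rule that $f_\R$ be orientation-preserving on its lower arc. Neither the connected fiber $F_\R$ nor the auxiliary orientation of $X_\R\sm F_\R$ enters this prescription, so the second $\Z$-summand $\la o_i\ra$ is canonical and is left untouched by the choices in question.

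Next I would isolate where $F_\R$ and the orientation of $X_\R\sm F_\R$ actually intervene. The underlying \emph{unoriented} curves $a_i,b_i,o_i$ (hence their homology classes up to sign) are already fixed once $L$ and the fibration are fixed: $a_i$ and $b_i$ are the two components of the $2$-component fiber inside $N_i$, distinguished by the rule that $a_i$ meets $L_\R$ while $b_i$ does not. Thus the only remaining datum is the orientation of $b_i$, which by construction is pinned down by the local index normalization $b_i\circ o_i=1$ from \eqref{local-indices}, evaluated with respect to the orientation of $N_i$ induced from the chosen orientation of $X_\R\sm F_\R$.

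The third step is to verify that the choices affect nothing beyond the orientation of $N_i$. Since $X_\R\sm F_\R$ is connected and orientable (Lemma \ref{fragments}(1)), it admits exactly two orientations, each inducing one on the fragment $N_i$; and moving $F_\R$ within the connected-fiber locus (a union of intervals by Lemma \ref{critical}) transports the complement isotopically, so up to the global flip the induced orientation of $N_i$ is the same. It then remains to compute the effect of reversing the orientation of $N_i$: the sign convention for intersection indices reverses, $o_i$ stays fixed, and the constraint $b_i\circ o_i=1$ forces $b_i\mapsto-b_i$. This gives exactly the automorphism $n\mapsto-n$ of the first summand, with the second summand unchanged, as claimed.

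The main obstacle I anticipate is the bookkeeping in the third step: confirming that two different connected fibers $F_\R$ really yield mutually compatible cut decompositions in which the curves $o_i$ and $b_i$ match up, so that comparing the two identifications reduces to comparing orientations of $N_i$ rather than comparing genuinely different bases. Once that matching is established, the sign computation driven by $b_i\circ o_i=1$ is routine.
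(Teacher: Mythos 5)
Your core argument is the straightforward verification that the paper itself omits (it states only that ``the proof of the following is straightforward''), and that core is correct: the oriented class $o_i$ is determined by $L$ and the fixed orientation of $\P^1_\R$ alone, since the upper/lower arcs are distinguished by which fiber component meets $L_\R$; the unoriented curves $a_i,b_i,o_i$ are determined up to isotopy by $L$ and the fibration; and the orientation of $b_i$ is then a function of the induced orientation of $N_i$ only, via the normalization $b_i\circ o_i=1$ of (\ref{local-indices}), so reversing that orientation fixes $o_i$ and forces $b_i\mapsto -b_i$, i.e.\ exactly the automorphism $n\mapsto-n$ of the first summand.

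However, one subsidiary assertion in your third step is false: that moving $F_\R$ within the connected-fiber locus ``transports the complement isotopically, so up to the global flip the induced orientation of $N_i$ is the same''. The connected-fiber locus is disconnected (one portion inside each cylindrical piece $A_j$ of Lemma \ref{fragments}); moving $F_\R$ within one such portion is indeed an isotopy, but moving it to a different one is not, and there the global-flip claim breaks down precisely because $X_\R$ is non-orientable. Concretely, take $F_\R\subset A_p$ and $F'_\R\subset A_1$, and choose orientations of $X_\R\sm F_\R$ and $X_\R\sm F'_\R$ that agree on the component of $X_\R\sm(F_\R\cup F'_\R)$ containing $N_1$; if they also agreed on the other component, which contains $N_2,\dots,N_p$, the two orientations would glue to an orientation of all of $X_\R$, a contradiction. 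Hence for $p\ge2$ a change of $F_\R$ alone reverses the orientation of some fragments $N_i$ while preserving others --- exactly the possibility that the proposition's wording (``changed by another choice of $F_\R$ or/and another choice of a fixed orientation'') is designed to cover. Fortunately your false sentence is not load-bearing: the rest of your argument shows that the identification of $\la b_i,o_i\ra$ depends only on the induced orientation of $N_i$, which can take only two values, so the per-$i$ conclusion holds regardless of how the choices correlate the orientations of different fragments. Delete that sentence and the proof is complete.
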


\subsection{Matrix description of the action of $\Maps(X_\R)$ in $H_1(X_\R)$}
Here, in addition to $t_{c_i}, s_i, \D_i\in \Maps(X_\R)$ we consider auxilliary elements  $\bar\D_i=\D_i t_{c_{i+1}}^{-1}$.
If $p=1$ we use notation $a,b,c,o,s,\D, \bar \D,\dots$ instead of $a_1,b_1,c_1,o_1,s_1,\D_1, \bar \D_1,\dots$.

\begin{lemma}\label{block-matrix-action}
If $X_\R=\K\#\T^2$, then the matrices of the action of $\D, \bar \D, t_{c}, s\in\Maps(X_\R)$ in $H_1(X_\R)$ with respect to the decomposition
 $$H_1(X_\R) = \la [F_\R]\ra\oplus\la b\ra\oplus\la o\ra\oplus\la [L_\R]\ra\cong\Z/2\oplus\Z\oplus\Z\oplus\Z$$
  are as follows {\rm(}integers in brackets 
  stand for their $\Z/2$-residues{\rm):}

\hskip-4mm\scalebox{0.77}{$M_\D=\begin{bmatrix*}[r]
[1]&[1]&0&[1]\\
0&-1&0&0\\
0&0&-1&1\\
0&0&0&1
\end{bmatrix*}$}
\hskip1mm
\scalebox{0.77}{$M_{\bar\D}=\begin{bmatrix*}[r]
[1]&[1]&0&0\\
0&-1&0&0\\
0&0&-1&1\\
0&0&0&1
\end{bmatrix*}$}
\hskip1mm
\scalebox{0.77}{$M_{t_{c}}=\begin{bmatrix*}[c]
[1]&0&0&[1]\\
0&1&0&0\\
0&0&1&0\\
0&0&0&1
\end{bmatrix*}$}
\hskip1mm
\scalebox{0.77}{$M_s=
\begin{bmatrix*}[r]
[1]&0&[1]&[1]\\
0&1&2&-1\\
0&0&1&0\\
0&0&0&1
\end{bmatrix*}
$}
\end{lemma}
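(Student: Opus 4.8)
The plan is to compute each of the four matrices by tracking how the generating diffeomorphisms act on the chosen basis $[F_\R], b, o, [L_\R]$ of $H_1(X_\R)$, using the geometric descriptions of $s$, $\D$, $t_c$ as fiberwise group-shifts on the pair-of-pants/handle fragment $N_1=N$ together with the orientation and intersection conventions fixed in Section \ref{decompH1} (equations (\ref{local-indices})). Since each generator is represented by a self-diffeomorphism of $X_\R$, it suffices to determine the images of the four basis classes and read off the columns. The class $[F_\R]$ is the $2$-torsion fiber class, so every fiber-preserving map fixes it; this pins down the first column of each matrix as $([1],0,0,0)^{\mathsf T}$ (i.e. the top-left entry is $[1]$ and the rest vanish), which is exactly what appears.

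First I would handle $t_c$, the boundary Dehn twist, as the easiest case: it acts trivially on the fragment away from the boundary and only inserts a fiber when a section crosses the twisting annulus, so $t_c$ fixes $b$ and $o$ and sends $[L_\R]\mapsto [L_\R]+[F_\R]$ (the image section differs from $L_\R$ by one fiber-class), giving the stated $M_{t_c}$. Next I would treat $s=t_at_b$ using Lemma \ref{simulteneous} and the picture on Fig.\,\ref{local-figure}: $s$ fixes $b$ and $o$ up to fiber-classes, and its effect on $[L_\R]$ and on $o$ is computed from the intersection numbers $o\circ a=b\circ o=1$ and $a\circ[L_\R]=1$. The key point is that applying the simultaneous twist $t_at_b$ to the section $L_\R$ (which meets the double fiber $a\cup b$ on the $a$-side) produces a section whose homology class acquires $2o$ in the $b$-free computation, matching the column $(\,[1],2,1,0\,)^{\mathsf T}$ for the image of $o$ and the entry giving $s([L_\R])=[L_\R]-o+[F_\R]$; I would verify the signs against the orientation conventions (\ref{local-indices}) rather than guess them.

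For $\D$ and $\bar\D=\D t_{c}^{-1}$ I would use Proposition \ref{Delta-lemma}: $\D$ interchanges $a_i$ and $b_i$ (relation (1)) and squares to $t_{c}t_{c+1}$ (relation (2)). The relation $t_a\D=\D t_b$ tells me $\D$ swaps the roles of $a$ and $b$, so on homology it sends $b\mapsto -b$ (with a possible fiber-class correction) and $o\mapsto -o+[L_\R]$-type combination; the half-Dehn-twist description via the section $\ll$ on Fig.\,\ref{Delta-section} gives the precise image of $[L_\R]$. The cleanest route is to compute $M_{\D}$ directly from the section picture, then obtain $M_{\bar\D}$ from $M_{\bar\D}=M_{\D}M_{t_c}^{-1}$, which only alters the $[F_\R]$-contribution in the $[L_\R]$-column — this is visibly the difference between the two stated matrices (the $(1,4)$ entry drops from $[1]$ to $0$).

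The main obstacle I expect is getting every sign and every $\Z/2$-entry exactly right, since these are the content of the lemma and cannot be finessed. In particular the coherence of orientations (Section \ref{positivity}, Corollary \ref{tire-bouchon}) controls whether $o$ picks up $+o$ or $-o$ and whether $[L_\R]$ picks up $\pm o$ under $\D$; the $\Z/2$-bracketed entries in the top row record exactly when an integer coefficient must be reduced mod $2$ because $[F_\R]$ has order $2$. I would cross-check the final four matrices against the group relations they must satisfy — namely $M_{\D}^2=M_{t_c}M_{t_{c+1}}$ (here $M_{t_{c+1}}=M_{t_c}^{-1}$ on $X_\R$ by (\ref{tc-relation}), so $M_{\D}^2$ should be the identity on the free part), $M_{t_c}^2=\id$, and the commutation $M_sM_{\D}=M_{\D}M_s$ implied by Proposition \ref{commutativity} — which provides an independent verification that the signs and residues are consistent.
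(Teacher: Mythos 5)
Your plan follows the paper's proof in all essentials: the first column of every matrix comes from invariance of the order-2 class $[F_\R]$; the boundary twist $t_c$ fixes $b$, $o$ and adds $[F_\R]$ to $[L_\R]$; the columns of $M_\D$ and $M_s$ are obtained by tracking the basis classes through the geometric action, using the conventions (\ref{local-indices}) and $a+b=[F_\R]$; and $M_{\bar\D}$ is computed as $M_\D M_{t_c}^{-1}=M_\D M_{t_c}$. Your extra verification via $\D^2=t_{c_1}t_{c_2}=1$ in $\Maps(X_\R)$, $t_c^2=1$, and commutativity is a sensible supplement not present in the paper (and the four stated matrices do satisfy these relations).

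However, several intermediate values you commit to are wrong, and in exactly the entries the lemma asserts. Under $s=t_at_b$, since $L_\R$ meets the fiber $a\cup b$ once, on the $a$-side, one gets $s([L_\R])=[L_\R]+a=[F_\R]-b+[L_\R]$ --- not $[L_\R]-o+[F_\R]$; note that your value contradicts the fourth column $([1],-1,0,1)$ of the very matrix $M_s$ you claim it matches. Similarly $s(o)=t_at_b(o)=t_a(o+b)=o+b-a=o+2b-[F_\R]$: the class acquires $2b$, not ``$2o$''. And $\D$ sends $o$ to exactly $-o$; the $o$-term you anticipate sits instead in the other column, $\D([L_\R])=[F_\R]+o+[L_\R]$ (your phrase ``$o\mapsto-o+[L_\R]$-type combination'' reads the third row of $M_\D$ as though it were its third column). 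These are $b$/$o$ and row/column transpositions, not the sign ambiguities you propose to settle by the orientation conventions, so the computation as written would not reproduce the stated matrices. Carried out correctly --- with $o\circ a=b\circ o=1$, $a\circ[L_\R]=1$, $a=[F_\R]-b$ --- your scheme yields precisely the paper's values; and your own commutativity check would in fact detect the erroneous version of $M_s$, since it fails to commute with $M_\D$.
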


\begin{proof}
The first column of all matrices is $1,0,0,0$ because the order 2 element is invariant. The Dehn twist
$t_{c}$ acts trivially on $\la b,o\ra$ and sends the homology class $[L_\R]$ to $[L_\R]+[F_\R]$ which gives $M_{t_{c}}$.
To obtains $M_\D$ we notice that $\D$ sends $o$ to $-o$ and $b$ to $a=[F_\R]-b$, whereas $[L_\R]$ is sent to $[L_\R]+o+[F_\R]$.
The product $M_\D M_{t_{c}}^{-1}=M_\D M_{t_{c}}$ is the matrix of $\bar \D$.

To obtain $M_s$ we notice that $s$ preserves the class $b$, sends $[L_\R]$ to $[L_\R]+a=[L_\R]-b+[F_\R]$ and
$o$ to $t_at_b(o)=t_a(o+b)=o+b-a=o+2b-[F_\R]$, since our choice of orientations gives
$b\cdot o=-a\cdot o=1$, $a+b=[F_\R]$.
\end{proof}

\begin{cor}\label{matrices-Fbo}
\scalebox{0.80}{$M_{s}^mM_{t_c}^n=
\begin{bmatrix*}[r]
[1]&0&[m]&[m+n]\\
0&1&2m&-m\\
0&0&1&0\\
0&0&0&1
\end{bmatrix*}
%
\,\text{and}\,\,
M_{\bar\D}M_{s}^mM_{t_c}^n=
\begin{bmatrix*}[r]
[1]&[1]&[3m]&[n]\\
0&-1&-2m&m\\
0&0&-1&1\\
0&0&0&1
\end{bmatrix*}.
$}
\newline
In particular, for any $\k\in\{0,1\}$, $n,m\in\Z$, and $g=\bar\D^\k s^mt_c^{n}$, we have
$$[g(L)_\R]=[L_\R]+\kk [F_\R]+(-1)^{1-\k}mb+\k o,\ \ \kk=[n+(1-\k)m]\in\Z/2. \qed$$
\end{cor}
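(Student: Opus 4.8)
The plan is to read the statement as a purely mechanical computation inside the linear representation $g\mapsto M_g$ of the abelian group $\Maps(X_\R)$ on $H_1(X_\R)$ furnished by Lemma \ref{block-matrix-action}. Since this representation is a group homomorphism and $\Maps(X_\R)$ is abelian (Proposition \ref{commutativity}), for $g=\bar\D^\k s^m t_c^n$ one has $M_g=M_{\bar\D}^{\,\k}\,M_s^{\,m}\,M_{t_c}^{\,n}$, with $M_{\bar\D}^{\,0}=\id$ and $M_{\bar\D}^{\,1}=M_{\bar\D}$. Thus it suffices to compute $M_s^{\,m}$, $M_{t_c}^{\,n}$, their product, and then the single left-multiplication by $M_{\bar\D}$.

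First I would obtain $M_s^{\,m}$ by induction on $m$: since $M_s$ fixes the $b$-line and acts as the identity on the $[F_\R]$- and $[L_\R]$-diagonal entries, applying $M_s$ once more to the inductive hypothesis shows that under $s^m$ one has $o\mapsto [m][F_\R]+2m\,b+o$ and $[L_\R]\mapsto [m][F_\R]-m\,b+[L_\R]$, while $[F_\R]$ and $b$ are fixed. The matrix $M_{t_c}^{\,n}$ is immediate, as $t_c$ fixes $b,o$ and sends $[L_\R]$ to $[L_\R]+[F_\R]$, so it differs from the identity only by the entry $[n]$ in the $([F_\R],[L_\R])$ slot. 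Multiplying yields $M_s^{\,m}M_{t_c}^{\,n}$, the only interaction being that the two contributions to the $[F_\R]$-coordinate of the image of $[L_\R]$ add up to $[m]+[n]=[m+n]$; this is the first displayed matrix. A final left-multiplication by $M_{\bar\D}$ produces the second matrix, after which the coordinate expression for $[g(L)_\R]$ is simply the fourth column of $M_g$: it reads $([m+n],-m,0,1)$ when $\k=0$ and $([n],m,1,1)$ when $\k=1$, matching $\kk=[\,n+(1-\k)m\,]$, the $b$-coefficient $(-1)^{1-\k}m$, and the $o$-coefficient $\k$.

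The one place demanding care — and the only plausible source of error in an otherwise routine calculation — is the bookkeeping of the mixed arithmetic: the first coordinate lives in $\Z/2$ while the remaining three live in $\Z$. Concretely, when forming $M_{\bar\D}M_s^{\,m}M_{t_c}^{\,n}$ the $(1,3)$-entry comes out as $[m]+[2m]=[m]$, which is exactly why it may equally be recorded as $[3m]$ as in the statement (the two agree modulo $2$), and the $(1,4)$-entry collapses via $[m+n]+[m]=[n]$. Keeping the $\Z/2$-row strictly modulo $2$ throughout — and fixing once and for all the convention $M_{gh}=M_gM_h$ for the order of composition — is all that is needed to make every entry land as asserted.
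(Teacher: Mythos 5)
Your proposal is correct and is exactly the argument the paper intends: the corollary carries a \qed in its statement precisely because it is the routine matrix product $M_{\bar\D}^{\,\k}M_s^{\,m}M_{t_c}^{\,n}$ of the matrices from Lemma \ref{block-matrix-action}, with the last column read off as the class $[g(L)_\R]$. Your bookkeeping of the mixed $\Z/2$--$\Z$ arithmetic (e.g.\ $[m]+[2m]=[3m]=[m]$ and $[m+n]+[m]=[n]$) matches the entries as printed, so there is nothing to add.
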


\subsection{A special decomposition in $\Maps(X_\R)$}

\begin{proposition}\label{g-action}
If $X_\R\cong\K\#p\T^2\+q\SSS^2$, then every
element $g\in\Maps(X_\R)$ can be presented in a form
\be\label{geometric-presentation}g=\bar\D_1^{\k_1}\dots\bar\D_p^{\k_p}t_{c_1}^{n_1}\dots t_{c_p}^{n_p} s_1^{m_1}\dots s_p^{m_p},\quad
\k_i\in\{0,1\}, n_i,m_i\in\Z,
\ee
and such presentation is unique.

With respect to this presentation,
the class of $g(L)_\R$ in $H_1(X_\R)$ is
$$
[g(L)_\R]=[L_\R]+\kk [F_\R]+(-1)^{1-\k_1}m_1b_1+\k_1o_1+\dots+(-1)^{1-\k_p}m_pb_p+\k_po_p 
$$
where $\kk=n_1+\dots+n_p+m_1(1-\k_1)+\dots+m_p(1-\k_p)\, \,{\rm mod\,2}.$
\end{proposition}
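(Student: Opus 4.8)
The plan is to deduce both assertions from the presentation of $\Maps(X_\R)$ obtained in Proposition \ref{maps-spherical}, after first passing to the generating set $\bar\D_i, t_{c_i}, s_i$ occurring in the statement. By Proposition \ref{commutativity} the group $\Maps(X_\R)$ is abelian, so factors may be reordered freely. Using $\bar\D_i=\D_it_{c_{i+1}}^{-1}$ together with $\D_i^2=t_{c_i}t_{c_{i+1}}$ (Proposition \ref{Delta-lemma}(2)) gives $\bar\D_i^2=t_{c_i}t_{c_{i+1}}^{-1}$, hence $t_{c_{i+1}}=t_{c_i}\bar\D_i^{-2}$ and, inductively, $t_{c_i}=t_{c_1}\prod_{j<i}\bar\D_j^{-2}$. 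Telescoping the identity $\prod_{i=1}^p\bar\D_i^2=t_{c_1}t_{c_{p+1}}^{-1}$ and inserting the relation $t_{c_{p+1}}=t_{c_1}^{-1}$ of (\ref{tc-relation}) yields the single clean relation $t_{c_1}^2=\prod_{i=1}^p\bar\D_i^2$. Since $\{t_{c_1},\bar\D_i,s_i\}$ generate $\Maps(X_\R)$ and this relation holds, the natural surjection from the abelian group $\langle t_{c_1},\bar\D_i,s_i\mid t_{c_1}^2=\prod\bar\D_i^2\rangle\cong\Z^{2p}\oplus\Z/2$ onto $\Maps(X_\R)\cong\Z^{2p}\oplus\Z/2$ is an isomorphism (finitely generated abelian groups being Hopfian), so I may compute inside this presentation.

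For existence and uniqueness of the normal form I will show that the map $\Psi\colon\{0,1\}^p\times\Z^p\times\Z^p\to\Maps(X_\R)$, $(\k,n,m)\mapsto\prod_i\bar\D_i^{\k_i}\prod_i t_{c_i}^{n_i}\prod_i s_i^{m_i}$, is a bijection. Substituting $t_{c_i}=t_{c_1}\prod_{j<i}\bar\D_j^{-2}$ rewrites $\Psi(\k,n,m)=t_{c_1}^{\,\sigma}\prod_i\bar\D_i^{\,\k_i-2S_i}\prod_i s_i^{m_i}$, where $\sigma=\sum_i n_i$ and $S_i=\sum_{j>i}n_j$, so that $S_p=0$. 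Two words $t_{c_1}^a\prod\bar\D_i^{b_i}\prod s_i^{c_i}$ represent the same element precisely when the $c_i$ agree and the tuples $(a,b_1,\dots,b_p)$ differ by an integer multiple of the relation vector $(2,-2,\dots,-2)$. From this I recover the coordinates in turn: the $m_i=c_i$ are forced because the $s_i$ span a free direct summand; each $\k_i$ is forced as $b_i\bmod 2$, the relation vector having even entries; and once the $\k_i$ are fixed, the constraint $S_p=0$ pins down the single relation parameter, hence all $S_i$ and therefore all $n_i=S_{i-1}-S_i$. This simultaneous existence and uniqueness — in particular the way the order-two torsion is absorbed by the $\{0,1\}$-valued $\k_i$ together with the forced reduction — is the main point of the argument.

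It remains to compute $[g(L)_\R]$ for $g$ in normal form, where I reduce to the case $p=1$ already recorded in Corollary \ref{matrices-Fbo}. By commutativity I regroup $g=\prod_i\big(\bar\D_i^{\k_i}t_{c_i}^{n_i}s_i^{m_i}\big)$. Each factor $\bar\D_i,s_i$ is supported on the fragment $N_i$ and so acts trivially on $b_j,o_j$ for $j\ne i$, while every boundary twist $t_{c_i}$ fixes all $b_j,o_j$ and sends $[L_\R]\mapsto[L_\R]+[F_\R]$; thus the $i$-th factor acts on $\langle[F_\R],b_i,o_i,[L_\R]\rangle$ exactly as in Lemma \ref{block-matrix-action} and trivially elsewhere. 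Applying the formula of Corollary \ref{matrices-Fbo} block by block and summing the contributions (the class $[F_\R]$ being $2$-torsion and invariant) gives $[g(L)_\R]=[L_\R]+\kk[F_\R]+\sum_i\big((-1)^{1-\k_i}m_ib_i+\k_io_i\big)$ with $\kk\equiv\sum_i\big(n_i+(1-\k_i)m_i\big)\bmod 2$, as claimed. The genuine difficulty lies in the coordinate bookkeeping of the second paragraph; the homological computation is then routine.
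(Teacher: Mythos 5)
Your proof is correct, and its overall skeleton coincides with the paper's: both pass to the generating set $\{t_{c_1},\bar\D_i,s_i\}$, extract the single relation $t_{c_1}^2=\prod_i\bar\D_i^2$ from Proposition \ref{maps-spherical}, Proposition \ref{Delta-lemma}(2) and the relation $t_{c_{p+1}}=t_{c_1}^{-1}$, and both evaluate $[g(L)_\R]$ block by block via Lemma \ref{block-matrix-action} and Corollary \ref{matrices-Fbo}, using that $\bar\D_i^{\k_i}s_i^{m_i}$ is supported in $N_i$ while the boundary twists only add the torsion class $[F_\R]$. Where you genuinely diverge is the existence--uniqueness step. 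The paper argues in two strokes: the relations involve only even powers of the $\bar\D_i$, so the $\k_i$ are determined; and once these are stripped off, the remaining word lies in the subgroup generated by the $t_{c_i},s_i$, which is free abelian by Lemma \ref{cuts}. You instead validate the explicit presentation $\Z^{2p+1}/\langle(2,-2,\dots,-2,0,\dots,0)\rangle$ by a Hopfian argument (a surjection between two groups abstractly isomorphic to $\Z^{2p}\oplus\Z/2$ is an isomorphism) and then do the coordinate bookkeeping by hand, with the constraint $S_p=0$ pinning down the relation parameter. Your route buys self-containedness: it treats existence and uniqueness simultaneously, needs only the abstract isomorphism type and generation statement of Proposition \ref{maps-spherical} rather than the freeness input of Lemma \ref{cuts}, and incidentally gets the intermediate identity right ($\bar\D_i^2=t_{c_i}t_{c_{i+1}}^{-1}$, whereas the paper's proof writes the inverted $t_{c_{i+1}}t_{c_i}^{-1}$, evidently a typo, since both then use the same telescoped relation $\prod_i\bar\D_i^2=t_{c_1}^2$). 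The cost is length: the paper's appeal to Lemma \ref{cuts} settles the same point in two lines.
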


\begin{proof}
Proposition \ref{maps-spherical} implies
 that
$\bar \D_i$, $s_i$ and $t_{c_1}$ generate $\Maps(X_\R)$ with only one relation $\bar\D_1^2\dots\bar\D_p^2=t_{c_1}^2$.
Then a presentation $g=t_{c_1}^n\prod_{i=1}^p(\bar\D_i^{k_i}s_i^{m_i})$ is transformed to the form (\ref{geometric-presentation})
using the relations $\bar\D_i^2=t_{c_{i+1}}t_{c_i}^{-1}$ 
and $t_{c_{p+1}}=t_{c_1}^{-1}$.

Since
the relations in $\Maps(X_\R)$ involve only even powers of $\bar \D_i$, an equality
\[
\bar\D_1^{\k_1}\dots\bar\D_p^{\k_p}t_{c_1}^{n_1}\dots t_{c_p}^{n_p} s_1^{m_1}\dots s_p^{m_p}=
\bar\D_1^{\k_1'}\dots\bar\D_p^{\k_p'}t_{c_1}^{n_1'}\dots t_{c_p}^{n_p'} s_1^{m_1'}\dots s_p^{m_p'}
\]
may hold only if
$\k_i=\k_i'$
for each $1\le i\le p$.
So, to prove uniqueness
of presentation in the form  (\ref{geometric-presentation}), it is left to notice that 
$t_{c_i},s_i$, $i=1,\dots,p$ generate a free abelian subgroup in $\Maps(X_\R)$, which follows from Lemma \ref{cuts}.

To evaluate the class $[g(L)_\R]\in H_1(X_\R)$
we
determine the contribution of each factor $\bar\D_i^{\k_i}s_i^{m_i}t_{c_i}^{n_i}$ precisely like
in Lemma \ref{block-matrix-action} and Corollary \ref{matrices-Fbo}.
\end{proof}

\subsection{Proof of Theorem \ref{real-action-matrix}}
By Proposition \ref{g-action}, each element $g\in\Maps(X_\R)$, and, in particular, such that $L'=g(L)$,
can be decomposed in the form
(\ref{geometric-presentation}). This
identifies the coordinate expression of $[L'_\R]$ with the last column of the matrix $M$. The first column of $M$ is determined by
the invariance of the $\Z/2$-generator, $[g(F)_\R]=[F_\R]$. 
The Dehn twists $t_{c_i}^{n_i}$ being supported in neighborhoods of the fibers $c_i$ act only on $[F_\R]\in H_1(X_\R)$, but not on
$b_i,o_i\in H_1(N_i)$.
The factor $\bar\D_j^{\k_j}s_j^{m_j}$ of $g$ 
acts identically on $b_i,o_i\in H_1(N_i)$, $j\ne i$, since the corresponding diffeomorphism is supported in $N_j$.
Thus, the action of $g$ on $b_i,o_i\in H_1(N_i)\subset H_1(X_\R)$ is reduced to the action of
$\bar\D_i^{\k_i}s_i^{m_i}$, and its calculation is literally the same as
in Lemma \ref{block-matrix-action} and Corollary \ref{matrices-Fbo}.
\qed

\subsection{Proof of Theorem \ref{MW-sum-general}}
Immediate from multiplication of the matrix of $g$ as given in Theorem \ref{real-action-matrix} by the column of the coordinates of $[L''_\R]$,
and an observation
that $(-1)^{\k_{1i}}m_{2i} - 2m_{1i}\k_{2i}+m_{1i}= (-1)^{\k_{1i}}m_{2i}+(-1)^{\k_{2i}}m_{1i}$.
\qed

\begin{remark}
Theorem \ref{MW-sum-general} gives a simple description of the group operation  induced from
$\MW_\R$ on the set $\Hl\subset H_1(X_\R)/Tors$ of classes realized by real lines.
Namely, 
for $X_\R=\Kl\#p\T^2\+q\SSS^2$, 
this set is contained in $L_\R+\Big\lbrack\oplus_{i=1}^p(\Z b_i+\{0,1\}o_i)\Big\rbrack$,
the group operation 
on the direct sum $\oplus_{i=1}^p(\Z b_i+\{0,1\}o_i)$ is component-wise, and 
on each of the summands it turns into multiplication of triangular matrices
$
\pm\begin{bmatrix}
1 &n\\
0&1
\end{bmatrix}\in{\rm SL}(2,\Z)$
via an identification
$$m b_i\mapsto 
\begin{bmatrix}
1 &-m\\
0&1
\end{bmatrix},\quad
mb_i+o_i\mapsto \begin{bmatrix}
-1 &m\\
0&-1
\end{bmatrix}.$$

\end{remark}

\section{Concluding remarks}\label{concluding}

\subsection{Modulo 2 real MW-action}\label{MWwithZ2}
Fixing a line $L$ on a 
rational relatively minimal
complex 
elliptic surface $X$ without multiple fibers leads to a direct sum decomposition
$$H_2(X)=\la F\ra\oplus W_L\oplus\la L\ra\cong\Z\oplus E_8\oplus\Z,
$$
where $F$ stands for a fiber and $W_L=F^\perp\cap L^\perp\cong E_8$. The following proposition is well known
(for coordinate presentation of lines and notation $L_w$, see Prop. \ref{line-root-correspondence}(2)).

\begin{proposition}\label{Eichler-Siegel}
If $X$ has only one-nodal singular fibers, then
the automorphism in $H_2(X)$ induced by a $\MW$-transform
sending $L$ to $L_w=kF+w+L$, $k=\frac{w^2}2$, has a block-matrix presentation {\rm (}in the above derect-sum-decomposition{\rm )}
$$\begin{bmatrix}
1&w^*&k\\
0&I_V&w\\
0&0&1
\end{bmatrix},\qquad
\begin{bmatrix}m\\v\\n\end{bmatrix}
\mapsto 
\begin{bmatrix}m+v\cdot w+kn\\
v+nw\\n\end{bmatrix}\ 
\begin{matrix}m,n\in\Z,\ v\in W_L=E_8,\\
v\cdot w \text{ 
states for product in $E_8$}. 
\end{matrix}
$$
In terms of $D=L_w-L=kF+w$, this
action can be wriiten as
$$x\mapsto x+(Fx)D-((Dx)+\frac12D^2(Fx))F.$$
In particular, any other line, $L_{w'}=k'F+w'+L$, is sent to the line 
$$
L_{w+w'}=(k+w\cdot w'+k')F+(w+w')+L. \phantom{aaaaaaaaaa}\qed 
$$
\end{proposition}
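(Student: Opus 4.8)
The plan is to identify the automorphism $g_*$ of $H_2(X)$ induced by the $\MW$-transform $g$ with $g(L)=L_w$ through three structural properties, and then to recover the matrix by pure linear algebra. Throughout I use the intersection form of $X$, normalized on $W_L\cong E_8$ so that it is the positive-definite root form (opposite in sign to the surface restriction, in which roots have square $-2$); with this normalization $k=\tfrac12 w^2$, matching the statement. First, being induced by a fiber-preserving automorphism, $g_*$ is an isometry and $g_*(F)=F$. The decisive input is that a translation is fiberwise isotopic to the identity on each smooth fiber, hence acts trivially on the fiberwise first homology $R^1f_*\Z$; in the filtration $\langle F\rangle\subset F^\perp\subset H_2(X)$ this says that $g_*$ induces the identity on every graded piece. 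Concretely $g_*(v)-v\in\langle F\rangle$ for $v\in W_L=F^\perp\cap L^\perp$, while $g_*(L)=L_w$ keeps $L$-coordinate $1$. This forces the upper-triangular shape
\[
g_*(F)=F,\qquad g_*(v)=v+\phi(v)F,\qquad g_*(L)=kF+w+L,
\]
for a linear functional $\phi\colon W_L\to\Z$ still to be pinned down.

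Next I would fix $\phi$ and $k$ from the isometry property. Evaluating $g_*(L)\cdot g_*(v)=L\cdot v=0$ and using $F\cdot L=1$, $W_L\perp\langle F,L\rangle$ shows that $\phi$ is, up to the normalization sign, the functional $v\mapsto w\cdot v$, i.e. the top-right block $w^*$; evaluating $g_*(L)^2=L^2=-1$ gives $w^2+2k=0$, that is $k=\tfrac12 w^2$ in the root normalization. The remaining pairings among $F$, $L$ and pairs in $W_L$ are then automatically preserved, so these two conditions already produce the asserted matrix, and the displayed coordinate action is simply its product with the column $[m,v,n]^{t}$.

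Then I would verify the intrinsic reformulation and the final image by substitution. For $x=mF+v+nL$ one has $F\cdot x=n$, $D\cdot x=kn+w\cdot v$ and $D^2=w^2$, where $D=L_w-L=kF+w$; inserting these into $x+(F\cdot x)D-((D\cdot x)+\tfrac12 D^2(F\cdot x))F$ returns exactly the vector produced by the matrix, so the two descriptions of $g_*$ agree. Applying $g_*$ to $L_{w'}=k'F+w'+L$ and using $\phi(w')=w\cdot w'$ gives $(k+w\cdot w'+k')F+(w+w')+L$, which equals $L_{w+w'}$ because $\tfrac12(w+w')^2=k+w\cdot w'+k'$; this is the stated image and simultaneously re-proves the $\MW$ group law.

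The main obstacle is the structural step rather than the algebra: the whole argument hinges on $g_*$ acting as the identity on the middle graded piece $F^\perp/\langle F\rangle$, equivalently on the fiberwise $H^1$. This is precisely where the hypothesis of only one-nodal (hence irreducible) singular fibers is needed, since it guarantees $W_L=F^\perp\cap L^\perp\cong E_8$ with no contribution from reducible fiber components, which would otherwise force Weyl-type correction terms and destroy the clean unipotent shape. I would supply this triviality either from the isotopy of fiberwise translations to the identity on each torus fiber (so they act trivially on $R^1f_*\Z$), or by invoking the standard computation of the Mordell-Weil action on the N\'eron--Severi lattice in \cite{shioda}.
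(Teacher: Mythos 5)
Your argument is correct, but note that the paper does not actually prove this proposition: it is stated as well known, with the coordinate form of line classes referred to Proposition \ref{line-root-correspondence}(2) and thereby to Shioda's Mordell--Weil lattice theory \cite{shioda}. So you are supplying a genuine proof where the paper supplies a citation. Your route --- unipotence of $g_*$ with respect to the filtration $\la F\ra\subset F^\perp\subset H_2(X)$, deduced from triviality of the translation action on $R^1f_*\Z$, followed by linear algebra using the isometry property and $g_*(L)=[L_w]$ --- is sound, and your sign bookkeeping between the intersection form and the positive-definite $E_8$ normalization works out; to pass from $g_*[L_{w'}]=[L_{w+w'}]$ to $g(L_{w'})=L_{w+w'}$ you also implicitly use that a line is determined by its homology class, again Proposition \ref{line-root-correspondence}(2). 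The step you leave sketched is the real crux: identifying $F^\perp/\la F\ra$ with $H^1(\P^1,R^1f_*\Z)$ requires Poincar\'e duality, degeneration of the Leray spectral sequence (available because $f$ has a section), and irreducibility of all fibers (so that the middle Leray step equals the annihilator of $F$), plus triviality of the action on the stalks at the nodal fibers; you correctly flag this and offer \cite{shioda} as a fallback, which matches the paper's own level of rigor. A shorter route staying entirely inside the paper's toolkit: $H_2(X)$ is spanned by $F$ and the classes of sections, since $v=[L_v]-[L]-k_vF$, and Proposition \ref{lattice-MW} gives $\gg{L}{L_w}(L_v)=L_{v+w}$, whence $g_*(v)=[L_{v+w}]-[L_w]-k_vF=v+(k_{v+w}-k_v-k_w)F=v+(v\cdot w)F$ with no spectral sequence at all; the trade-off is that this consumes the group-law input, which your argument instead recovers as a corollary.
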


In the real setting, we fix a real line $L$ and associate with it a decomposition
$$H_1(X_\R; \Z/2)=\la F_\R\ra\oplus \W_L\oplus\la  L_\R\ra=\Z/2\oplus\W_L\oplus\Z/2,\quad \W_L= F_\R^\perp\cap L_\R^\perp
$$
where we do not distinguish in notation the real loci $F_\R, L_\R$ and the classes realized by them in $H_1(X_\R; \Z/2)$.

\begin{proposition}\label{MW-Z2setting}
The automorphism in $H_1(X_\R;\Z/2)$
induced by a real $\MW$-transform
sending $L$ to $ L_w=k F+w+L, w\in W_L, k=\frac{w^2}2\in\Z$,
has a block-matrix form
$$\begin{bmatrix}
1&w^*&k\\
0&I_V&w\\
0&0&1
\end{bmatrix},\qquad  
\begin{bmatrix}\mu \\ v \\ \nu \end{bmatrix}
\mapsto 
\begin{bmatrix}\mu+v\cdot w+k\nu\\
v+\nu w \\ \nu\end{bmatrix}\ 
\begin{matrix}\mu,\nu\in\Z/2,v\in \W_L\\
\mu+v\cdot w+k\nu\in\Z/2\\
v+\nu w\in\W_L.
\end{matrix}
$$
or in terms of the class $D=L_{w\R}-L_\R\in H_1(X_\R;\Z/2)$
this action on $x\in H_1(X_\R;\Z/2)$ is 
$$x\mapsto x+(F_\R\cdot x)D+((D\cdot x)+k(F_\R\cdot x))F_\R\!\mod2.$$
\end{proposition}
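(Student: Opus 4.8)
The plan is to obtain the stated formula as the reduction modulo $2$ of the integral Eichler--Siegel formula of Proposition \ref{Eichler-Siegel}, transported from $H_2(X)$ to $H_1(X_\R;\Z/2)$ along a natural reduction map. Since the transform is real, in $L_w=kF+w+L$ the class $w$ lies in $\L=E_8\cap\ker(1+\conj_*)$, so I only need to work with the conjugation-relevant sublattice $M=\la F\ra\oplus\L\oplus\la L\ra\subset H_2(X)$. First I would introduce the reduction $r\colon M\to H_1(X_\R;\Z/2)$ that sends $F\mapsto[F_\R]$ and $L\mapsto[L_\R]$ (genuine real loci) and restricts on $\L$ to the Viro homomorphism $\bhv$; note that $\bhv(w)$ pairs trivially with both $[F_\R]$ and $[L_\R]$ (because $w\cdot F=w\cdot L=0$), so $r(\L)\subset\W_L$ and $r$ respects the three-block decomposition. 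The map $r$ factors through $M/2M$, and a dimension count, together with surjectivity onto the spanning set $[F_\R],\W_L=\bhv(\L),[L_\R]$, shows the induced map is an isomorphism onto $H_1(X_\R;\Z/2)=\la[F_\R]\ra\oplus\W_L\oplus\la[L_\R]\ra$.

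Second, I would check the two compatibilities that make the transport work. Equivariance, $\Phi(g)_*\circ r=r\circ g_*$, is essentially tautological: $\Phi(g)$ is the restriction $g|_{X_\R}$, so for a real cycle $A$ one has $\Phi(g)_*[A_\R]=[(gA)_\R]=r(g_*A)$, and the same naturality holds for $\bhv$ on the anti-invariant classes in $\L$; additivity of $r$ reduces the general case to representatives made mutually transverse. Intersection compatibility, $r(a)\cdot r(b)=a\cdot b\bmod2$, is provided by $[F_\R]\cdot[L_\R]=F\cdot L=1$, $[F_\R]^2=0$, together with the compatibility of $\bhv$ with intersection indices modulo $2$ that is already recorded, for the mixed pairings, in Lemma \ref{intersection-criterion}; this identifies the Gram data of the real decomposition with the reduction of the Gram data of $M$.

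With $r$ in hand, one has $\Phi(g)_*=r\circ g_*\circ r^{-1}$, so applying $r$ to the block matrix of Proposition \ref{Eichler-Siegel} yields the asserted matrix: the first column is $\Phi(g)_*[F_\R]=[F_\R]$ (fibers are preserved), the last column is $\Phi(g)_*[L_\R]=[L_{w\R}]=k[F_\R]+\bhv(w)+[L_\R]$, and the middle block sends $v\in\W_L$ to $v+(v\cdot w)[F_\R]$, all read off from the transvection entries $w^{*}v=v\cdot w$. Here I would record the two numerical simplifications: $\tfrac12D^2=\tfrac12(kF+w)^2=\tfrac12w^2=k$ because $F^2=F\cdot w=0$, and the minus sign in the integral formula $x\mapsto x+(Fx)D-((Dx)+\tfrac12D^2(Fx))F$ disappears modulo $2$; substituting $r(D)=k[F_\R]+\bhv(w)=D$ gives precisely $x\mapsto x+(F_\R\cdot x)D+((D\cdot x)+k(F_\R\cdot x))[F_\R]$.

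The main obstacle is the rigorous setup of the reduction map $r$ and its equivariance, rather than the final bookkeeping. The delicate point is that the three summands of $M$ are of different conjugation type --- $F$ is invariant, $L$ is neither, and $\L$ is anti-invariant --- so $r$ is genuinely defined piecewise, and I must verify it is a single additive homomorphism that simultaneously intertwines $g_*$ with $\Phi(g)_*$ and sends intersection products to their mod-$2$ reductions; the anti-invariant block is the one requiring the Viro homomorphism and the form-compatibility of $\bhv$. Once this bridge is established, Proposition \ref{MW-Z2setting} follows formally from Proposition \ref{Eichler-Siegel}.
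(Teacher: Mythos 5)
Your strategy coincides with the paper's: the paper proves Proposition \ref{MW-Z2setting} in one line, as a direct application of the Viro homomorphism to Proposition \ref{Eichler-Siegel}, and your reduction map $r$ is precisely that homomorphism restricted to $M=\la F\ra\oplus\L\oplus\la L\ra$. However, one intermediate claim in your write-up is false, and the key step as written rests on it: you assert that $r$ descends to an isomorphism $M/2M\to H_1(X_\R;\Z/2)$ and then invoke $\Phi(g)_*=r\circ g_*\circ r^{-1}$. By the paper's own Section \ref{MWwithZ2}, the restriction $\bhv|_{\L}$ factors through $V=\L/2\L$ with kernel exactly the radical $R$, and Table \ref{lattices} shows $R\ne0$ for every deformation class except $\L=E_8$ (i.e.\ $X_\R=\Kl\#4\T^2$) and the trivial case $\L=0$ (i.e.\ $X_\R=\Kl\+4\SSS^2$). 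Concretely, since $H_1(X_\R;\Z/2)=\la F_\R\ra\oplus\W_L\oplus\la L_\R\ra$ with $\W_L\cong V/R$, one has $\dim_{\Z/2}M/2M=2+\rk\L$ while $\dim_{\Z/2}H_1(X_\R;\Z/2)=2+\rk\L-\dim_{\Z/2}R$; for $X_\R=\Kl\#3\T^2$ these numbers are $9$ and $8$. So the dimension count you appeal to refutes, rather than confirms, injectivity, and $r^{-1}$ does not exist in general.

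The gap is repairable without any new idea, because your final computation only ever applies $r$ to the columns of the integral matrix, and for that surjectivity plus equivariance suffice: equivariance $\Phi(g)_*\circ r=r\circ g_*$ forces $g_*(\ker r)\subset\ker r$ (if $r(a)=0$ then $r(g_*a)=\Phi(g)_*r(a)=0$), hence $\Phi(g)_*$ is the map induced by $g_*$ on $M/\ker r\cong H_1(X_\R;\Z/2)$, and reading off its matrix from Proposition \ref{Eichler-Siegel} gives the statement. One further simplification: the ``delicate point'' you single out, that the three summands of $M$ have different conjugation types, is a misconception. The classes $[F]$ and $[L]$ are themselves anti-invariant, since $\conj$ restricts to an orientation-reversing involution on any real algebraic curve; hence $M\subset H_2^-(X)=\ker(1+\conj_*)$, and $r$ is not a piecewise construction but simply the restriction of the single homomorphism $\bhv:H_2^-(X)\to H_1(X_\R;\Z/2)$, whose additivity, naturality under real automorphisms, and compatibility with the mod $2$ intersection form are exactly the standard properties of the Viro homomorphism that the paper quotes from \cite{TwoKinds}.
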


\begin{proof} Direct application of the Viro homomorphism to Proposition \ref{Eichler-Siegel}.
\end{proof}

The restriction $\L_X \to W_L^\R$ of the Viro homomorphism $\bhv : H_2^-(X) \to H_1(X_\R;\Z/2)$ (see \cite[Sec. 2.2]{TwoKinds})
factorizes through $V_X=\L_X/2\L_X$ to an isomorphism $V_X/R_X\to W_L^\R$ where $R_X=\{v\in V_X\,|\,v \cdot V_X=0\}$.
The pullback identification of $\L_X$ with $\L=\L_Y$ induces a natural identification of $V_X, R_X$ with $V, R$ studied in Sec. \ref{mod2arithmetic}.
In particular, the function $\q_0:V\to\Z/2$ introduced there descends to $W_L^\R$ if and only if $\q_0$ vanishes on $R$. The latter happens if and only if
$X_\R$ is $\Kl\#4\T^2$, or $\Kl\#\T^2\+\SSS^2$, or $\Kl\+\Kl$. When such a descent
 exists we keep for it the same notation, $\q_0:\W_L\to\Z/2$.

\begin{proposition}\label{ESmod2}
In the above real setting, assume that $X_\R=\Kl\#p\T^2\+q\SSS^2$ with a fixed real line  $L\subset X$.
Then any other real line $L'\subset X$ has
an expression
$L'_\R=\k F_\R+v+L_\R\in H_1(X_\R;\Z/2)$, $\k\in\Z/2$, $v\in \W_L$. Conversely:
\begin{enumerate}\item
If $(p,q)$ is different from $(4,0)$ and $(1,1)$, a class $\k F_\R+v+L_\R$ is realizable by a real line for any $\k\in\Z/2$, $v\in \W_L$.
\item
If $(p,q)$ is $(4,0)$ or $(1,1)$, then class $\k F_\R+v+L_\R$ 
is realizable by a real line if and only if $\k=\q_0(v)$.
\end{enumerate}
\end{proposition}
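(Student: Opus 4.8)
The plan is to reduce the statement to the modulo~$2$ arithmetic of $\L$ developed in Section~\ref{mod2arithmetic}, feeding it through the explicit description of the real $\MW$-action on $H_1(X_\R;\Z/2)$ given in Proposition~\ref{MW-Z2setting}. First I would recall that, by Proposition~\ref{line-root-correspondence}, every real line of $X$ is $L_w$ for a unique $w\in\L$, with $L_w=kF+w+L$ in $H_2(X)$ and $k=\tfrac{w^2}2$. Applying the Viro homomorphism $\bhv$ to this identity, exactly as in Proposition~\ref{MW-Z2setting}, expresses the class of $L_w$ in the decomposition $H_1(X_\R;\Z/2)=\la F_\R\ra\oplus\W_L\oplus\la L_\R\ra$ as
$$[L_{w\R}]=\q_0([w])\,F_\R+\overline{[w]}+L_\R,$$
where $[w]\in V=\L/2\L$ is the residue of $w$, $\overline{[w]}$ is its image under the isomorphism $V/R\cong\W_L$ induced by $\bhv$, and the coefficient of $F_\R$ is $k\bmod2=\tfrac{w^2}2\bmod2=\q_0([w])$ (this is well defined on $V$, since replacing $w$ by $w+2u$ changes $\tfrac{w^2}2$ by an even integer). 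This already gives the first assertion: as $w$ runs over $\L$ the residue $[w]$ runs over all of $V$, so $\overline{[w]}$ attains every value $v\in\W_L$.

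Next I would fix $v\in\W_L$ and ask which coefficients $\k$ of $F_\R$ occur. The residues $[w]\in V$ with $\overline{[w]}=v$ form a single coset $w_0+R$ of the radical $R\subset V$. Because $R$ lies in the radical of the $\Z/2$-pairing on $V$, the bilinear cross term vanishes and $\q_0$ is additive along this coset:
$$\q_0(w_0+r)=\q_0(w_0)+\q_0(r)\qquad(r\in R).$$
Consequently the set of realizable values of $\k$ for the given $v$ is precisely $\q_0(w_0)+\q_0(R)$, and the whole dichotomy turns on whether $\q_0$ vanishes identically on $R$.

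Finally I would invoke the computation recorded just before the statement (which rests on Lemmas~\ref{radical} and~\ref{downstairs}): within the family $X_\R=\Kl\#p\T^2\+q\SSS^2$ the restriction $\q_0|_R$ vanishes identically exactly for $(p,q)=(4,0)$, where $R=0$, and $(p,q)=(1,1)$, where $\L=D_4$, and is nonzero in the remaining cases. When $\q_0|_R\not\equiv0$ there is an $r\in R$ with $\q_0(r)=1$, so $\q_0(w_0)+\q_0(R)=\{0,1\}$ and both $\k=0$ and $\k=1$ are realized — this is case~(1). When $\q_0|_R\equiv0$ the additivity above shows $\q_0(w_0+r)=\q_0(w_0)$ for all $r\in R$, so $\q_0$ descends to $\W_L=V/R$ and the unique realizable coefficient is $\k=\q_0(v)$ — this is case~(2).

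I expect the only delicate point to be bookkeeping rather than substance: keeping straight the four objects $\L\supset2\L$, $V=\L/2\L$, its radical $R$, and $\W_L\cong V/R$, together with $\q_0$ and its descent, and in particular verifying that the $F_\R$-coefficient produced by Proposition~\ref{MW-Z2setting} is genuinely $\q_0([w])$ and depends only on the coset $[w]$ and not on the chosen lift $w\in\L$. Once the additivity of $\q_0$ along $R$-cosets is secured, the dichotomy is forced by the single computation of $\q_0|_R$, so no further geometric input is required.
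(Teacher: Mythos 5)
Your proposal is correct and follows essentially the same route as the paper's own proof: both express real lines as $L_w=L+\frac{w^2}2F+w$, $w\in\L$ (Proposition \ref{line-root-correspondence}(3)), push this through the Viro homomorphism to get $\k=\q_0([w])$ and $v=\Upsilon(w)$ in the decomposition $\la F_\R\ra\oplus\W_L\oplus\la L_\R\ra$, and then resolve the dichotomy by whether $\q_0$ vanishes on the radical $R=\ker(V\to\W_L)$. The only difference is one of exposition: you spell out the additivity of $\q_0$ along $R$-cosets, which the paper leaves implicit in the phrase ``we can get any $\k$ independently of $v$ by choosing an appropriate $w\in\Upsilon^{-1}(v)$.''
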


\begin{proof}
The  coordinate expression for 
the $\Z/2$-homology classes of real lines follows from that of $\Z$-homology classes of complex lines
in Proposition \ref{Eichler-Siegel}  by applying
the Viro homomorphism, which sends 
$F, L\in H_2(X)$ to $F_\R, L_\R\in H_1(X_\R; \Z/2)$, and $\L\subset H_2(X)$ onto $\W_L\subset H_1(X_\R;\Z/2)$.

By Proposition \ref{line-root-correspondence}(3) the set of $\Z$-homology classes of real lines is
$$\{L'=L+\frac{w^2}2F+w\,|\, w\in\L\}\subset H_2(X).$$
As we apply the Viro homomorphism, this gives $L'_\R=\k F_\R+v+L_\R\in H_1(X_\R;\Z/2)$
with $v=\Upsilon (w)$ and $\k=\frac{w^2}2\rm\,mod\, 2$. The Viro homomorphism establishes an isomorphism between $V/R$ and $\W_L$ preserving 
the intersection indices $\rm{mod}\,  2$, and therefore there remains to notice that in the case of non vanishing $\q_0\vert_R$ (in which $\q_0$
does not descend to $\W_L$) we can get any $\k\in\Z/2$ independently of $v\in\W_L$ by choosing an appropriate $w\in\Upsilon^{-1}(v)$.
\end{proof}

\begin{remark}
A similar result holds for real 
del Pezzo surfaces $Y$ of degree 1: \newline
{\it If  $Y_\R$ is $\Rp2\#4\T^2$ or $\Rp2\#\T^2\+\SSS^2$, and $K_\R$ is the real canonical divisor {\rm(}dual to $w_1(Y_\R)${\rm)},
then a class $h\in H_1(Y_\R; \Z/2)$
is realized by a real line if and only if}
$$ h\in 
K_\R+\{v\in K_\R^\perp\,|\, \q_0(v)=1.\}
$$ 
This is a straightforward application of Propositions \ref{one-to-one} and \ref{lift-to-root} by means of the Viro homomorphism.
\end{remark}

\subsection{An obstruction to the realizability
of homology classes by real lines}\label{obstruction}
In Theorem \ref{real-action-matrix} to simplify the formulation we omitted a description of the range for the coefficients 
$\kk\in\Z/2,\ m_i\in\Z,\ \k_i\in\{0,1\}$ realizable by real lines $L'$ in 
coordinate expression
$$L'_\R= \kk F_\R+\sum_{i=1}^4m_ib_i+\sum_{i=1}^4\k_io_i + L_\R.$$
It can be deduced from Proposition \ref{ESmod2} (cf. also Theorem \ref{section-criterion-X})
that for $X_\R=\Kl\#4\T^2$  the coefficients
$m_i\in\Z,\ \k_i\in\{0,1\}$ can take any values, while 
$$
\kk= m_1+m_3+\sum_{i=1}^4m_i\k_i+\sum_{i=1}^4\k_i\,\,\rm{mod}\,2.
$$
Due to the same proposition, in the case of $X_\R=\Kl\#\T^2\+\SSS^2$ we have a 
relation
$$
\kk= m+m\k+\k=\begin{cases}m \,\rm{mod}\,2&\text{ if }\k=0\\
1\,\rm{mod}\,2&\text{ if }\ \k=1
\end{cases}\,
$$
whenever the 
$\kk F_\R+mb+\k o+ L_\R\in H_1(X_\R)$ is realizable by a real line.

\subsection{Application: Conics tangent to a pair of lines and a cubic}\label{decomposable-quintics}
Consider a pair $L_1, L_2\subset\P^2$ of distinct real lines and a nonsingular real cubic $A\subset\P^2$ transversal to $L_1\cup L_2$. 
Let us
enumerate the set $\Cal B$ of
real nonsingular conics $B\subset\P^2$ tangent to both $L_1, L_2$ and tritangent to $A$. 
Consider for that the double covering $\pi : Q\to \P^2$ branched along $L_1\cup L_2$ and observe that the real structure of $\P^2$ lifts to two real structures on
$Q$ that differ by composing with a deck transformation $s:Q\to Q$ of $\pi$. Furthermore, for each $B\in \Cal B$, its preimage 
$\pi^{-1}(B)$ splits into a pair of distinct conic sections, $l$ and $s(l)$, which are tritangent to 
the sextic $C=\pi^{-1}(A)$ and real with respect to one, and only one,
of the real structures. In the opposite direction we deal with an alternative. If for a tritangent $l\subset Q$, which is real with respect to one of the real structures, we have $l\ne s(l)$, then the pair $l, s(l)$ projects to a conic $B\in\Cal B$. 
If, on the contrary, $l=s(l)$ is real with respect to one real structure, then $l$ is real with respect to the other real structure too and projects to a real line
passing through one of the 6 intersection points of $A$ with $L_1\cup L_2$ and tangent to $A$ at some other point. 
This leads to a formula $|\Cal B| = \frac1{2}(|\Cal T_1| + |\Cal T_2|)- |\Cal R|$ where $\Cal T_1$ and $\Cal T_2$ denote the sets of tritangents to $C$ which are 
real with respect to the corresponding real structures on $Q$, while $\Cal R$ is the set of real lines in $\P^2$ passing through one of the 6 intersection points of $A$ with $L_1\cup L_2$ and tangent to $A$ at some other point. 
\begin{figure}[h!]
\caption{}\label{M-d-quintic}
\includegraphics[height=2cm]{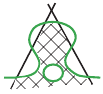}
\end{figure}
For example, in the case of configuration $L_1, L_2, A$ shown at Fig. \ref{M-d-quintic}
for one of the 2 covering real structures on $Q$ the sextic is of type  $\la4\vert0\ra$, and of type $\la 1\vert 1\ra$ for the other real structure, so that  
we obtain
$|\Cal B|=\frac1{2} (120 + 24) -24 =48$, with all conics 
from the set $\Cal B$ lying in the shaded domain
(because all $|\Cal T_2|=24$ tritangents to the sextic of type $\la1|1\ra$ must be represented by the $|\Cal R|=24$ lines).

\subsection{Five types of real theta characteristics on real sextics lying on a quadric cone}\label{5types-theta}
As is known, a nonsingular complete intersection of a quadric surface with a cubic surface in $\P^3$ is a canonically embedded curve of genus 4. Furthermore, every non-hyperelliptic genus 4 curve $C$ arises as such a complete intersection sextic. 
The corresponding quadric, $Q\supset C$ is defined uniquely by sextic $C$ and it
 is a quadratic cone if and only if $C$ has a 
 vanishing even theta-characteristic, $\theta_0$.
The latter is of dimension 2 and, thus, defines a map $\pi : C\to \P^1$ 
which can be identified
with  the central projection of $Q\to \P^1$ from the vertex 
$v\in Q$, where $\P^1$ is identified with
 the generating conic of $Q$.
 
Over the reals, $\theta_0$ and $\pi$
are real too. 
They allow us to distinguish the $J$-component of $C_\R$ from its ovals. Namely,
the restriction $\pi|_{C_\R}:C_\R\to \P^1_\R$
is of degree 1 on the $J$-component and of degree 0 on the ovals. 

On the other hand, the real tritangents to $C$ are in 1-to-1 correspondence with the real odd theta-characteristics. Together with above property of $\pi$,
we may distinguish 4 types of real odd theta-characteristics, equivalently 4 types of real tritangents, by counting the number $\tau$ of ovals on which a given
characteristic has odd number of zeros, $0\le\tau\le3$.
For $\tau\ne0$, the corresponding tritangents are of type $T_\tau$, while for $\tau=0$ we have types $T_0$ and $T_0^*$.

It would be interesting to find how to distinguish in a language of theta-character\-is\-tics
positive tritangents from negative, and elliptic ones from hyperbolic.

\subsection{Non rational elliptic surfaces}\label{nonrational}
In the case of non rational elliptic surfaces the Mordell-Weil group is no more stable under deformations in the class of elliptic surfaces.
So, none of the questions treated in this paper seems to be meaningful
beyond the rational case.
However, it looks 
 interesting to find how the {\it maximal rank} of the real Mordell-Weil group depends on the geometric genus of the elliptic surface.
For instance,
in the case of genus 1 (elliptic K3 surfaces) the maximal rank of the Mordell-Weill group is 18, both over $\C$ and over $\R$ (see \cite{Cox} for $\C$; a similar application of strong Torelli can be adapted to $\R$). It seems to be unknown whether such a coincidence holds for genus $>1$.

\subsection{10 real vanishing classes on del Pezzo surfaces}\label{puzzle10}
The set of complex vanishing cycles on a del Pezzo surface
 $Y$  is formed by the $(-2)$-roots in $K^\perp\subset H_2(Y)$.
By analogy, one could think that for a real $Y$ any $-2$-root in $\L=K^\perp\cap\ker(1+\conj_*)$
gives a real vanishing class, but
it is far from the truth. For example, if
$Y$ is a real del Pezzo surface of degree $K^2=1$ 
with $Y_\R=\Rp2\#4\T^2$,
then $\L=E_8$ has 120 pairs, $\pm e$, of roots,
but among them
only 10 pairs are real vanishing classes: the 4 pairs of oval-classes and 6 pairs of  bridge-classes depicted on the rightmost diagram in the first row of
Fig. \ref{oval-bridge-graphs}.

Mysteriously, the same number 10 appears  
for real del Pezzo surfaces 
$Y$ 
of other degrees  $2\le d=K^2\le5$,
as we count
pairs of real vanishing classes in the maximal case $Y_\R=\Rp2\#
(9-d)\Rp2$.
 On Fig. \ref{graphs} we show the intersection graph
 of these real  vanishing classes 
 for $d=1,\dots, 4$. Each vertex stands for a pair, $\pm e\in \L$, of real vanishing classes, while edges
 indicate the intersection indices $\pm 1$. For $d=1, 2$ the graphs are bipartite wherein the oval-classes and the bridge-classes are 
 represented by circle- and cross-vertices, respectively.

\begin{figure}[h!]
\caption{}\label{Graphs of real vanishing classes}\label{graphs}
\includegraphics[height=2.7cm]{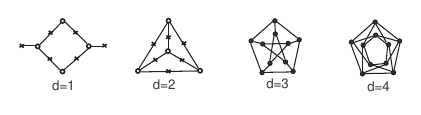}
\end{figure}

\end{document}